\newtheorem{thm}{Theorem}[subsection]
\newtheorem{cor}[thm]{Corollary}
\newtheorem{lem}[thm]{Lemma}
\newtheorem{prop}[thm]{Proposition}
\theoremstyle{definition}
\newtheorem{defn}[thm]{Definition}
\newtheorem{que}[thm]{Question}
\newtheorem{conj}[thm]{Conjecture}
\newtheorem{exe}[thm]{Example}
\newtheorem{rem}[thm]{Remark}
\numberwithin{equation}{section}
\newcommand{\ppt}{\bar{\partial}}
\newcommand{\pt}{\partial}
\newcommand{\TConv}{\textnormal{TConv}}
\newcommand{\Sel}{\textnormal{Sel}}
\newcommand{\Comm}{\textnormal{Comm}}
\newcommand{\mapsot}{\reflectbox{\ensuremath{\mapsto}}}
\newcommand{\N}{\mathbf{N}}
\newcommand{\Z}{\mathbf{Z}}
\newcommand{\R}{\mathbf{R}}
\newcommand{\C}{\mathbf{C}}
\newcommand{\Q}{\mathbf{Q}}
\newcommand{\vb}{\textnormal{vb}}
\newcommand{\bbb}{\textnormal{b}}
\newcommand{\Supp}{\textnormal{Supp}}
\newcommand{\ind}{\textnormal{ind}}
\newcommand{\SL}{\textnormal{SL}}
\newcommand{\GL}{\textnormal{GL}}
\newcommand{\PSL}{\textnormal{PSL}}
\newcommand{\Ker}{\textnormal{Ker}}
\newcommand{\Aut}{\textnormal{Aut}}
\newcommand{\End}{\textnormal{End}}
\newcommand{\Isom}{\text{Isom}}
\newcommand{\CB}{\mathcal{B}}
\newcommand{\SX}{\mathcal{S}}
\newcommand{\eps}{\varepsilon}
\newcommand{\tu}{\bigtriangleup}
\newcommand{\VD}{\mathcal{VD}}
\newcommand{\tr}{\mathrm{tr}}
\begin{document}
\title[Commensurated subsets, wallings and cubings]{Group actions with commensurated subsets, wallings and cubings}
\author{Yves Cornulier}%
\address{Laboratoire de Math\'ematiques\\
B\^atiment 425, Universit\'e Paris-Sud 11\\
91405 Orsay\\FRANCE}
\email{yves.cornulier@math.u-psud.fr}
\subjclass[2010]{Primary 20F65, Secondary 20B25, 05C12, 20B27, 20E22}

%20F65  	Geometric group theory
%20-XX			Group theory and generalizations
%20Fxx		Special aspects of infinite or finite groups
%20F67  	Hyperbolic groups and nonpositively curved groups
%05Cxx		Graph theory  05C12  	Distance in graphs
%	05Exx		Algebraic combinatorics
%05E18  	Group actions on combinatorial structures

%	20Bxx		Permutation groups%
%	20B27  	Infinite automorphism groups
%20E22  	Extensions, wreath products, and other compositions [
%	20E08  	Groups acting on trees

%\subjclass[2000]{20F69 %Asymptotic properties of groups
%(Primary); 20E22, %Extensions, wreath products, and other compositions 
%43A05, %Measures on groups and semigroups, etc 
%43A65 %Representations of groups, semigroups, etc.
%(Secondary)}

\date{August 25, 2015}%\today}

\begin{abstract}
We study commensurating actions of groups and the associated properties FW and PW, in connection with wallings, median graphs, CAT(0) cubings and multi-ended Schreier graphs.
\end{abstract}
%{\large Comments and corrections welcome!}

\maketitle

\section{Introduction}

\subsection{Commensurated subsets and associated properties}

\subsubsection{Context}
The source of CAT(0) cubings can be found in several originally partly unrelated areas, including median graphs, group actions on trees, ends of Schreier graphs, Coxeter groups, cubulations of 3-dimensional manifolds. The link with Kazhdan's Property T was gradually acknowledged, first in the case of trees, then for finite-dimensional CAT(0) cubings and then for general CAT(0) cubings; the same arguments were also found at the same time in different languages, notably in terms of wall spaces. The present paper is an attempt to give a synthesis of those different point of views, and especially to advertise the most elementary approach, namely that of group actions with commensurated subsets.

Let us emphasize that actions on CAT(0) cubings have now reached a considerable importance in geometric group theory. However, maybe partly because of the scattering of points of view and the elaborateness of the notion of CAT(0) cubing, it is sometimes considered as a intermediate tool, for instance to prove that some group does not have Property T or has the Haagerup Property. This is certainly unfair, and CAT(0) cubings and consorts are worth much better than being subcontractors of those analytic properties, and therefore we introduce the following terminology.

\subsubsection{Actions with commensurated subsets}

Consider an action of a group $G$ on a discrete set $X$ (we assume throughout this introduction that groups are discrete, but will address the setting of topological groups as well). We say that a subset $M\subset X$ is {\em commensurated} by the $G$-action if $$\ell_M(g)=\#(M\tu gM)<\infty,\qquad \forall g\in G,$$ where $\tu$ denotes the symmetric difference\footnote{The notion of commensurated subset is the set analogue of the notion of {\em commensurated subgroup} which is not considered in this paper.}. We say that $M$ is {\em transfixed} if there exists a $G$-invariant subset $N$ with $M\tu N$ finite. Transfixed subsets are the trivial instances of commensurated subsets.

\subsubsection{Property FW}

\begin{defn}We say that $G$ has Property FW if for every $G$-set, every commensurated subset is transfixed.
\end{defn}

For every $G$-set $X$ with a commensurated subset $M$, define the function $\ell_M(g)=\#(gM\tu M)$; every such function on $G$ is called a {\em cardinal definite function}\footnote{When $X$ is replaced by an arbitrary measure space and cardinality is replaced by the measure, we obtain the notion of {\em measure definite function} addressed by Robertson and Steger in~\cite{RS}.} on $G$.

Property FW for the group $G$ turns out to have several equivalent characterizations, both geometric and combinatorial, see Section \ref{comp} for the relevant definitions and Proposition \ref{fwint} for the proofs.
\begin{enumerate}[(i)]
\item\label{fwdef} $G$ has Property FW;
\item\label{fwcb} every cardinal definite function on $G$ is bounded;
\item\label{fwca} every cellular action on any CAT(0) cube complex has bounded orbits for the $\ell^1$-metric (we allow infinite-dimensional cube complexes);
\item\label{fwcf} every cellular action on any CAT(0) cube complex has a fixed point;
\item\label{fwmeb} every action on a connected median graph has bounded orbits;
\item\label{fwme} every action on a nonempty connected median graph has a finite orbit;
\item\label{fwcod} (if $G$ is finitely generated and endowed with a finite generating subset) every Schreier graph of $G$ has at most 1~end;
\item\label{fww} for every set $Y$ endowed with a walling and compatible action on $Y$ and on the index set of the walling, the action on $Y$ has bounded orbits for the wall distance;
\item\label{zhilb} every isometric action on an ``integral Hilbert space" $\ell^2(X,\Z)$ ($X$ any discrete set), or equivalently on $\ell^2(X,\R)$ preserving integral points, has bounded orbits;
\item\label{zx} for every $G$-set $X$ we have $H^1(G,\Z X)=0$.
\end{enumerate}

The implication (\ref{fwca})$\Rightarrow$(\ref{fwcf}) looks at first sight like a plain application of the center lemma; however this is not the case in general since the CAT(0) cube complex is not complete and we would only deduce, for instance, a fixed point in the $\ell^2$-completion. The argument (which goes through median graphs) is due to Gerasimov and is described in \S\ref{mger}.

Note that FW means ``fixed point property on walls", in view of (\ref{fww}). \footnote{The terminology FW is borrowed from Barnhill and Chatterji \cite{BC}.}

%used by Barnhill and Chatterji in \cite{BC} for a weaker condition (concerning actions on finite-dimensional CAT(0) cube complexes).}

It follows from (\ref{zhilb}) that Property FH implies Property FW. Recall that Property FH means that every isometric action on a real Hilbert space has bounded orbits, and is equivalent (for countable groups) to the representation-theoretic Kazhdan's Property T by Delorme-Guichardet's Theorem (see \cite{BHV} for a general introduction to these properties). Also, Property FH was characterized by Robertson-Steger \cite{RS} in a way very similar to the above definition of Property FW, namely replacing the action on a discrete set $X$ and cardinality of subsets by a measure-preserving action on a measured space and measure of its measurable subsets.

 Also, since trees are the simplest examples of CAT(0) cube complexes, it follows from (\ref{fwca}) that Property FW implies Serre's Property FA: every action on a tree has bounded orbits. This can also be directly viewed with commensurating actions: if a group $G$ acts on a tree, then it acts on the set of oriented edges, and for each fixed vertex $x_0$, the set of oriented edges pointing towards $x_0$ is a commensurated subset, which is transfixed only if the original action has bounded orbits. 
 
 Thus FW is a far-reaching strengthening of Property FA, while weaker than Property FH. Note that Property FH is of much more analytical nature, but is has no combinatorial characterization at this time. A considerable work has been done to settle partial converses to the implication FW$\Rightarrow$FA, the first of which being Stallings' characterization of finitely generated groups with several ends. However, there are a lot of groups satisfying FA but not FW, see Example \ref{FApFW}.

\subsubsection{Property PW}

In view of (\ref{fwcb}), it is natural to introduce the opposite property PW, which was explicitly introduced in \cite{CSVa}:

\begin{defn}The group $G$ has Property PW if it admits a proper commensurating action, in the sense that the cardinal definite function $\ell_M$ is proper on $G$.
\end{defn}
(Recall that $f:G\to\R$ proper means $\{x: |f(x)|\le r\}$ is finite for all $r<\infty$.) Obviously, a group has both Properties PW and FW if and only if it is finite. We say that an isometric action of a discrete group on a metric space $X$ is {\em proper} if for some $x\in X$, the function $g\mapsto f(x,gx)$ is proper; then this holds for all $x\in X$. Property PW has, in a similar fashion, equivalent restatements:

\begin{enumerate}[(i')]
\addtocounter{enumi}{2}
\item\label{pwca} there exists a proper cellular action on a (possibly infinite-dimensional) complete CAT(0) cube complex with the $\ell^1$-metric;
\addtocounter{enumi}{1}%\label{vide2}
\item\label{pwme} there is a proper isometric action on a connected median graph;
\addtocounter{enumi}{2}
\item\label{pww} there exists a set $Y$ endowed with a walling and compatible actions on this set and on the index set of the walling, such the action on $Y$  endowed with the wall distance is metrically proper;
\item\label{pzhilb} there exists a proper isometric action on an ``integral Hilbert space" $\ell^2(X,\Z)$ (for some discrete set $X$), actually extending to $\ell^2(X,\R)$. 
\end{enumerate}

Here we enumerate in accordance with the corresponding characterizations of Property FW; for instance we omit the analogue of (\ref{fwcb}) because it would be tautological, while (\ref{fwcod}) has no trivial restatement. Actually I do not know any purely combinatorial definition of Property PW (not explicitly involving any kind of properness); nevertheless it is a very important feature in geometric group theory, and its refinements (such as proper cocompact actions on finite-dimensional proper cube complexes) play an essential role in the understanding of 3-manifold groups, among others. 

Similarly as above, we see that Property PW implies the Haagerup Property, which for a countable group asserts the existence of a proper isometric action on a Hilbert space. One of the main strengths of the Haagerup Property for a group $G$ is that it implies that $G$ satisfies the Baum-Connes conjecture in its strongest form, namely with coefficients \cite{HK}.

%\subsubsection{How to read this paper?}\marginpar{!OBSOLETE!} The long section \ref{comp} contains a synthesis of previous work, using, insofar as possible, a unified language. After a possible touchdown at the examples in Section \ref{claex}, the beginner is rather advised to start with Section \ref{comto} (where all groups can be assumed to be discrete in a first reading), which studies commensurating actions in general, and Section \ref{fwetc}, which uses the background of Section \ref{comto} to the study of Properties FW and PW. The material of Section \ref{comp} is only used in
%\S\ref{liin} to justify the equivalences stated in the introduction. Section \ref{s_ab} applies basic results on commensurating actions from Section \ref{comto} to the study of cardinal definite functions on abelian groups, with applications to properties FW and PW.

\subsubsection{How to read this paper?} After a possible touchdown at the examples in Section \ref{claex}, the reader will find basic notions in Section \ref{comp}. The study of commensurating actions is developed in Section \ref{comto} (where all groups can be assumed to be discrete in a first reading), and is specified to the study of Property FW and its cousins in Section \ref{fwetc}. Section \ref{s_ab} applies basic results on commensurating actions from Section \ref{comto} to the study of cardinal definite functions on abelian groups, with applications to properties FW and PW. Finally, Section \ref{medgr}, which only uses as a prerequisite the definitions of Section \ref{comp}, especially surveys previous work about median graphs; this survey is at this point is far from comprehensive, but includes important notions such as the Sageev graph associated to a commensurating action, and Gerasimov's theorem that any bounded action on a connected median graph has a finite orbit.

\smallskip

\noindent {\em Warning.} A significant part of this paper consists of non-original results (see \S\ref{abt}); however the way they are stated here can differ from the classical (and divergent) points of view. This paper is partly an attempt to a synthesis of these points of view (see Section \ref{comp}).

%\marginpar{!!!} IL FAUT REVOIR L'ANNONCE DU PLAN, LE DERNIER CHAPITRE "ABOUT", VERIFIER COHERENCE CHAP MEDIAN, AJOUTER MURS, AJOUTER THOMPSON.

\smallskip

\noindent {\bf Acknowledgements.} I thank Serge Bouc, Pierre-Emmanuel Caprace, Vincent Guirardel, Fr\'ed\'eric Haglund, Michah Sageev, Todor Tsankov and Alain Valette for useful conversations. I am grateful to Peter Haissinsky for a number of corrections.

\tableofcontents

\section{Classical examples}\label{claex}

\subsection{Examples with Property PW}

Let us give examples of groups satisfying Property PW. It can be hard to give accurate references, insofar as the link different characterizations of Property PW were not originally well-understood, and also because Property PW was often obtained accidentally. Therefore, in the following enumeration, I use a chronological order taking into account the availability of the methods rather than any findable explicit assertion. From this prospective, we can agree that Property PW for the trivial group was realized soon after the Big Bang and that probably a few years later Lucy considered as folklore that it is also satisfied by $\Z$ (but I would be happy to acknowledge any earlier reference).
 
The first next examples are groups acting properly on trees; however a finitely generated group with this property is necessarily {\bf virtually free}, so this gives a small class of groups. However, unlike the property of acting properly on a tree, Property PW is stable under taking {\bf direct products} and {\bf overgroups of finite index} (see Proposition \ref{pwfi}). It follows, for instance, that every finitely generated, {\bf virtually abelian} group has Property PW; interestingly this provides the simplest counterexamples to the implication FA$\Rightarrow$FW (e.g., a nontrivial semidirect product $\Z^2\rtimes (\Z/3\Z)$ does the job). All the previous examples are groups acting properly on a finite product of trees. Further instances of such groups are {\bf lattices} (or discrete subgroups) {\bf in products of rank one simple groups} over non-Archimedean local fields, and also Burger-Mozes' groups, which are infinite, finitely presented and simple. A typical example that is not cocompact is the {\bf lamplighter} group $F\wr\Z$ (where $F$ is any finite group), which acts properly on the product of two trees, or a non-cocompact lattice such as $\SL_2(\mathbf{F}_q[t,t^{-1}])$.

In chronological order, the next examples seem to be {\bf Coxeter groups}. This was proved by Bozejko, Januszkiewicz and Spatzier \cite{BJS}. Actually, they use a certain action on a space with walls which was explicitly provided (modulo the language of walls), along with all necessary estimates, by Tits \cite[2.22]{Tit}. While this proves Property PW for all Coxeter groups, they claimed as main result the much weaker assertion that their infinite subgroups of Coxeter groups do not have Kazhdan's Property~T. Niblo and Reeves \cite{NRe} refined the result by exhibiting a proper action of any finitely generated Coxeter group on a finite-dimensional proper CAT(0) cube complex; their action is cocompact when the Coxeter group is Gromov-hyperbolic but not in general. Eventually Haglund-Wise \cite{HW2} showed that any finitely generated Coxeter group admits a finite index subgroup with cocompact such action.

%They actually do more, by providing an explicit cubulation of the Cayley graph; it seems to be the first construction of this type. 

Let us point out that at that time, the Haagerup Property, explicitly studied and characterized in Akemann-Walter \cite{AW}, was not yet popular as it became after being promoted by Gromov \cite[7.A,7.E]{Gro} and then in the book by Valette and the other authors of \cite{CCJJV}.

Let us give some more recent examples.
\begin{itemize}
\item Wise proved in \cite{Wi04} that every group with a finite {\bf small cancelation} presentation of type $C'(1/6)$ or $C'(1/4)$-$T(4)$ acts properly cocompactly on a finite-dimensional, locally finite cube complex, and thus have Property PW. Arzhantseva and Osajda \cite{AO} recently proved that finitely generated groups with infinite $C'(1/6)$ presentations also have Property~PW.

%Wise \cite{Wi04} proved that every finitely generated group with a (possibly infinite) small cancelation presentation of type $C'(1/6)$ or $C'(1/4)$-$T(4)$ fails to have Property FW (and have Property PW in the finitely presented case). In the infinitely presented case, it has been recently announced by Arzhantseva and Osajda \cite{AO} that these groups also have Property~PW. 

\item Bergeron and Wise then obtained a cubulation which, combined with a result of Kahn and Markovic \cite{KM}, implies that every {\bf torsion-free cocompact lattice of} $\SL_2(\C)$ admits a proper cocompact action on a finite-dimensional, locally finite CAT(0) cubing. 

\item Hsu and Wise \cite{HsW} proved that the fundamental group of a graph of free groups with cyclic edge groups, provided it has no non-Euclidean Baumslag-Solitar subgroups, has a proper cocompact action on a locally finite CAT(0) cubing.

\item Ollivier and Wise \cite{OW} proved that Gromov random groups at density $d<1/6$ act properly cocompactly on a finite dimensional, locally finite CAT(0) cube complex. 

\item Gautero \cite{Gau} proved that the Formanek-Procesi group, a non-linear semidirect product of two non-abelian free groups of finite rank, has a proper action on a finite-dimensional CAT(0) cube complex.
\end{itemize}

All the previous examples act properly on a finite-dimensional, locally finite CAT(0) cube complex. A very different example was discovered by Farley, namely {\bf Thompson's groups} \cite{Far} and more generally diagram groups over finite semigroup presentations. This was extended by Hughes to suitable groups of local similarities of compact ultrametric spaces \cite{Hu}\footnote{Note that Hughes and Farley calls a proper commensurating action (i.e., whose associated cardinal definite function is proper) a {\em zipper action}. One reason for which we do not follow this terminology is that in order to understand a (possibly proper) commensurating action, it can be useful to decompose the set into orbits and analyse individually each orbit; when passing to an orbit we generally lose the properness. Commensurated subsets are called {\em immobile} in \cite{Ner}.}.

The author, Stalder and Valette proved in \cite{CSVa} that Property PW is stable under taking {\bf standard wreath products}. In general (e.g., in the case of $\Z\wr\Z$), the methods outputs an infinite-dimensional cube complex even in both groups act on finite-dimensional ones. The statement given in \cite{CSVa} was weaker, because the authors had primarily the Haagerup Property in mind, but the method in \cite{CSV} allows to get the general case. See \S\ref{s_wp} and Proposition \ref{stpw}.

There are natural strengthenings of Property PW, such as, in order of strength: having a proper action on a {\em finite-dimensional} CAT(0) cube complex, having a proper action on a {\em finite-dimensional locally finite} CAT(0) cube complex, having a proper {\em cocompact} action on a finite-dimensional CAT(0) cube complex. We do not address them in this paper, although they are of fundamental importance. These classes are much more restricted. For instance, it was observed in \cite{CSVa} that the wreath product $(\Z/2\Z)\wr F_2$ has Property FW but does not have any proper action on a finite-dimensional CAT(0) cube complex. N.~Wright's result that finite-dimensional CAT(0) cube complexes have finite asymptotic dimension \cite{Wr} provides a wealth of other examples, including the wreath product $\Z\wr\Z$, which has Property PW by \cite{CSVa} (completed in Proposition \ref{stpw}), or Thompson's group $F$ of the interval, which has Property PW by Farley \cite{Far}. I do not know if, for a finitely generated group, having a proper cellular action on a (finite-dimensional~/ proper~/ finite-dimensional and proper) CAT(0) cube complex are distinct properties. [For non-proper actions, there exists \cite{ABJLMS,CK} a finitely generated group $Q$ with no non-trivial cellular action on any locally finite finite-dimensional cube complex; so the free product $Q\ast Q$ also shares this property, but it also acts with unbounded orbits on a (not locally finite) tree.]

\subsection{Examples with the negation of Property FW}\label{negfw}
If $G$ is a finitely generated group and $H$ is a subgroup, let us say that $H$ is {\em coforked} in $G$ if $G/H$ has at least 2 ends. This is called ``codimension 1 subgroup" by Sageev \cite{S95} and in some subsequent papers but this terminology can be confusing as there is in general no underlying notion of dimension, or other unrelated such notions in some cases, and because it is not well-reflected by the geometry of $H$ inside $G$, as shown by the following example:

\begin{exe}
Consider the infinite dihedral group $G_1=\langle a,b\mid a^2,b^2\rangle$ with generating set $\{a,b,aba\}$ and the group $G_2=\langle t,u\mid [t,u],u^2\rangle\simeq\Z\times (\Z/2\Z)$ with generating set $\{tu,u\}$. Then there exists a isomorphism between the Cayley graphs of these groups mapping $\{1,a\}$ to $\{1,u\}$. On the other hand, $\{1,a\}$ is not coforked in $G_1$, while $\{1,u\}$ is coforked in $G_2$.
\end{exe}

Note that the trivial group is coforked in the infinite dihedral group, showing that an overgroup of finite index of a coforked subgroup may not be coforked.
On the other hand, a finite index subgroup of a coforked subgroup is always coforked in the whole group.

%, and a coforked subgroup is coforked in any coforked subgroup containing it. ??????????????????????

All the above examples of Property PW groups, provided they are infinite, fail to have Property FW, i.e., admit a coforked subgroup (assuming they are finitely generated). Note that any group having a quotient without Property FW also fails to have Property FW. In particular, any finitely generated group virtually admitting a homomorphism onto $\Z$, or equivalently having an infinite virtually abelian quotient, fails to have Property FW. In this case, the kernel of a homomorphism of a finite index subgroup onto $\Z$ is a coforked subgroup.

Also, having in mind that Property FW is inherited by finite index subgroups, all groups having a finite index subgroup splitting as an nontrivial amalgam fails to have Property FW, a coforked subgroup being the amalgamated subgroup. Also, countable infinitely generated groups fail to have Property FW, since they do not have Property FA \cite{Ser}.

%Wise \cite{Wi04} proved that every finitely generated group with a (possibly infinite) small cancelation presentation of type $C'(1/6)$ or $C'(1/4)$-$T(4)$ fails to have Property FW (and have Property PW in the finitely presented case). In the infinitely presented case, it has been recently announced by Arzhantseva and Osajda \cite{AO} that these groups also have Property~PW. 

Ollivier and Wise \cite{OW} proved that Gromov random groups at density $d<1/5$ admit a coforked subgroup. 

A wreath product $A\wr_S B=A^{(S)}\rtimes B$, with $A$ nontrivial, $S$ an infinite $B$-set and $B$ countable never has Property FW (Proposition \ref{wnfw}). Provided $S$ admits an infinite orbit $Bs$, a careful look at the construction shows that $A^{S\smallsetminus\{1\}}$ is a coforked subgroup. Of course if $A$ or $B$ does not have Property PW, the wreath product does not have Property PW. 

The first Grigorchuk group and the Basilica group are examples of self-similar groups with a natural action on a regular rooted tree. The first has subexponential growth and thus is amenable \cite{Gri} while the second has exponential growth \cite{GrZ} but yet is amenable \cite{BV}. Both admit Schreier graphs with 2 ends; for the Grigorchuk group this is established in \cite{GrK} and for the Basilica group this is obtained in \cite{DDMN}.  
In both cases, the coforked subgroup is obtained as the stabilizer of a suitable boundary point of the rooted tree. Accordingly, these groups do not have Property FW; it is not known if they have Property PW.

The derived subgroup of the full topological group associated to an infinite minimal subshift is, by a result of Matui \cite{Ma1}, an infinite, finitely generated simple group; it was subsequently shown to be amenable by Juschenko and Monod \cite{JM}. This group admits a transitive action on $\Z$ for which every element acts with bounded displacement. In particular, it commensurates $\N$; this shows that this group does not have Property FW. It is not known whether it has Property PW.

\subsection{Examples with the negation of Property PW}

For a time the only known source of non-PW (countable) groups was the class of groups without the Haagerup Property, and the only known source of groups without the Haagerup Property was groups with an infinite subgroup with relative Property T. Other examples appeared in \cite{CorJLT}, yet, as observed in \cite{Cor}, they have an infinite {\em subset} with relative Property T. 

The question whether any group with the Haagerup Property has Property PW has circulated for many years until solved in the negative by Haglund, who proved that a group with Property PW has all its infinite cyclic subgroups undistorted (a more direct approach of Haglund's theorem is given as Corollary \ref{zundi}). In particular, plenty of amenable groups do not have Property FW, although all amenable groups have the Haagerup Property \cite{AW}. 

Another approach can be used by used by using Houghton's characterization of coforked subgroups of virtually polycyclic groups: it follows that a virtually polycyclic group has Property PW if and only if it is virtually abelian (see Section~\ref{s_po}).

\subsection{Examples with Property FW}

Groups with Kazhdan's Property T are the initial source of examples of groups with Property FW. The question of finding FW groups without Property T was asked in the second arxiv version of \cite{CDH} and no example seems to have been written before \cite{Cor2}. One example can be derived from Haglund's theorem and a bounded generation argument due to Carter and Keller, namely $\SL_2(\Z[\sqrt{k}])$, where $k$ is an arbitrary non-square positive integer. This group actually has the Haagerup Property. In general, I conjecture:

\begin{conj}
Let $S$ be a connected semisimple Lie group with no compact simple factor, whose Lie algebra has $\R$-rank at least 2. Then every irreducible lattice in $S$ has Property FW.
\end{conj}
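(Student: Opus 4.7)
The plan is to split into two cases according to whether the ambient group $S$ has Kazhdan's Property (T), equivalently, whether every simple factor of $S$ does: this fails precisely when some simple factor is locally isomorphic to $\textnormal{SO}(n,1)$ or $\textnormal{SU}(n,1)$.

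If $S$ has (T), then by Kazhdan's theorem every lattice $\Gamma$ in $S$ inherits (T) and hence Property FH. Since Property FH implies Property FW via the integral Hilbert space characterization (\ref{zhilb}) recorded in the introduction, we are done in this case.

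Now suppose some simple factor of $S$ is locally $\textnormal{SO}(n,1)$ or $\textnormal{SU}(n,1)$. Since the $\R$-rank of $S$ is at least $2$, the semisimple group $S$ must then have at least two simple factors, and Margulis' arithmeticity theorem ensures that every irreducible lattice $\Gamma < S$ is arithmetic. By characterization (\ref{fwmeb}), proving Property FW for $\Gamma$ reduces to showing that every action of $\Gamma$ on a connected median graph $Y$ has bounded orbits. Given such an action $\rho:\Gamma\to\textnormal{Aut}(Y)$, the Margulis Normal Subgroup Theorem (applicable since $\R$-rank$(S)\ge 2$ and $\Gamma$ is irreducible) forces $\ker\rho$ to be either finite or of finite index in $\Gamma$. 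If $\ker\rho$ has finite index, then $\rho$ factors through a finite quotient and all orbits are bounded trivially; one is thus reduced to the essentially faithful case.

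The essentially faithful case is the crux, and this is where the main obstacle lies. The natural program is to apply boundary-theoretic superrigidity, in the spirit of Bader--Furman and Burger--Monod bounded cohomology, to the induced $\Gamma$-action on the Roller boundary of $Y$: irreducibility of $\Gamma$ together with the presence of at least two simple factors in $S$ should produce a $\Gamma$-equivariant measurable map from the Furstenberg--Poisson boundary of $S$ into the Roller boundary of $Y$, and the analysis of this map should extract either a $\Gamma$-invariant halfspace or a measurable splitting incompatible with irreducibility, either of which collapses the action to one with bounded orbits. The technical hurdle is carrying this cubical superrigidity argument through when $Y$ is allowed to be infinite-dimensional and non-locally-finite, as is necessary for the full strength of Property FW; in the locally finite, finite-dimensional cubical setting much more is available, but the general setting is precisely what keeps the statement here in the status of a conjecture rather than a theorem.
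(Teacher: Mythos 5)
This statement is explicitly labeled a \emph{conjecture} in the paper; the author gives no proof, only the observations that (a) it holds whenever $S$ itself has Property~T, since lattices inherit T and hence FH, hence FW, and (b) it holds for $\SL_2(\Z[\sqrt{k}])$ in $\SL_2(\R)^2$. You correctly reproduce observation (a), and you are candid that your outline for the remaining case does not close the gap; to that extent there is no error, but neither is there a proof to compare against, because none exists in the paper.

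It is worth noting, though, that the one nontrivial unconditional instance the paper does establish --- $\SL_2(\Z[\sqrt{k}])$, the prototypical irreducible lattice in a rank-$\ge 2$ product of rank-one factors --- is handled by an argument completely different from the boundary-theoretic program you sketch. In Example~\ref{sl2z2} the paper uses exponential distortion of the unipotent subgroups (so that, by Corollary~\ref{zundi}, the pair $(\Gamma,U_{12}\cup U_{21})$ has relative Property FW) together with bounded generation by unipotents (Carter--Keller--Paige, via Witte Morris), and the elementary remark that relative FW passes to bounded powers of a subset. This distortion-plus-bounded-generation route is much more concrete than cubical superrigidity and is presumably what the author has in mind for the ``further instances'' promised in \cite{Cor2}; your Margulis-NST-plus-Roller-boundary strategy is an independent program and, as you say yourself, it currently runs into the problem that the median graph is allowed to be infinite-dimensional and non-locally-finite, where the available cubical superrigidity machinery does not apply. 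Also, one small inaccuracy: the normal subgroup theorem reduction to a faithful (modulo finite center) action does not by itself simplify the problem much for FW, since FW is not generally sensitive to faithfulness --- the real content is entirely in bounding orbits of the essentially faithful action, which your outline leaves open.
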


(Irreducible means that the projection of the lattice modulo any simple factor of $S$ has a dense image.) The conjecture holds for $S$ when it has Kazhdan's Property T since this is inherited by the lattice. It also includes the above case of $\SL_2(\Z[\sqrt{k}])$, which is an irreducible lattice in $\SL_2(\R)^2$. Further instances of the conjecture are given in \cite{Cor2}.

\subsection{Between Property FW and PW}

The main general question is to describe cardinal definite functions on a given group. Since this is a bit too much, as it is sensitive on the choice of a commensurated subset in its commensuration class, the question can be relaxed to: determine cardinal definite functions up to addition of bounded functions. This has a complete answer for finitely generated abelian groups, see Proposition \ref{decab}.
It would be simplistic to reduce this to the bare study to Properties PW and FW. Also, even if a group has Property PW, there is still an interesting work to understand in which ways it can act properly. The natural study, given a group, say finitely generated, is to consider the class of commensurating actions: the first step being to characterize its coforked subgroups among its subgroups, and then to describe the commensurated subsets in the associated coset spaces (or more generally the finite partitions by commensurated subsets), which essentially amounts to describing the space of ends of the corresponding Schreier graphs.

% considering the groups in between only as mixtures from these two classes (see the Examples in \S\ref{negfw}). 

\section{Commensurating actions and other properties}\label{comp}

If $G$ is a group, we say that a function $G\to\R$ is cardinal definite\footnote{The terminology ``cardinal definite" is in analogy with the notion of measure definite function introduced by Robertson and Steger \cite{RS}, where discrete sets with the counting measure are replaced by general measure spaces.} if there exists a $G$-set $X$ and a $G$-commensurated subset $M\subset X$ such that $f(g)=\ell_M(g)=\#(M\tu gM)$ for all $g\in G$. When $G$ is a topological group, we require, in addition, that $X$ is a continuous discrete $G$-set (i.e., stabilizers of points are open subgroups of $G$) and $M$ has an open stabilizer. We actually check (Lemma \ref{lemcd}) that $f$ is cardinal definite on the topological group $G$, if and only if it is a continuous function on $G$ and is cardinal definite as function on $G^\delta$, which denotes $G$ endowed with the discrete topology.

\subsection{Wallings}

There is a well-known close connection between commensurating actions and wall spaces of Haglund-Paulin \cite{HP}. The suitable definition to make this connection a perfect dictionary is the following. 

\begin{defn}
Given a set $V$ and denoting by $2^V$ its power set, a {\em walling} on $V$ with index set $I$ is a map
\begin{eqnarray*}
I\stackrel{\mathcal{W}}\to 2^V\\
i\mapsto W_i
\end{eqnarray*}
such that for all $u,v\in V$, the number $d_\mathcal{W}(u,v)$ of $i$ such that $W_i$ {\em separates} $u$ and $v$ (in the sense that $u\neq v$ and $\#(W_i\cap\{u,v\})=1$) is finite. The $W_i$ are called halfspaces. 

Given a group $G$ and a $G$-set $V$, a $G$-{\em walling} (or $G$-{\em equivariant} walling) is the data of an index $G$-set $I$ and a walling $I\to 2^V$, which is $G$-equivariant, $G$ acting by left multiplication on the power set $2^V$. 

If $f:V_1\to V_2$ is a map and $\mathcal{W}$ a walling on $V_2$, define the {\em pull-back} of $\mathcal{W}$ to $V_1$ by $f$ as the composite map $f^*\mathcal{W}=(f^{-1})\circ\mathcal{W}$ where $(f^{-1})$ is the inverse image map $2^{V_2}\to 2^{V_1}$.

\end{defn}
Note that $d_\mathcal{W}(u,v)$ is a pseudo-distance on $V$ and, in the equivariant case, is preserved by $G$. Also note that the pull-back $f^*\mathcal{W}$ is clearly a walling, and that the map $f$ such as above is an isometry $(V_1,d_{f^*\mathcal{W}})\to (V_2,d_{\mathcal{W}})$; moreover if $f$ is a $G$-map between $G$-sets and $\mathcal{W}$ is equivariant, then so is $f^*\mathcal{W}$. 

\begin{prop}\label{can_bij}
Given a $G$-set $X$, there is are canonical reciprocal bijections
\begin{eqnarray*}
\{\textnormal{comm. subsets of }X\} & \longrightarrow & \{G\textnormal{-wallings on }G\textnormal{ indexed by }X\}\\
A & \mapsto & \{h\in G\mid x\in hA\}_{x\in X};\\
\{x\in X\mid 1\in W_x\} & \mapsot & (W_x)_{x\in X}.
\end{eqnarray*}
(Here $G$ is viewed as a $G$-set under left multiplication.) If the commensurated subset $A\subset X$ and the walling $\mathcal{W}$ correspond to each other under this bijection, then the pseudo-distances $d_A$ and $d_\mathcal{W}$ are equal.
\end{prop}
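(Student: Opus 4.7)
The key identity driving everything is
\[
h\in W_x\;\Longleftrightarrow\; x\in hA\;\Longleftrightarrow\; h^{-1}x\in A,
\]
so the plan is to do straightforward bookkeeping using this translation, together with the computation of $W_x\cap\{u,v\}$ in terms of symmetric differences of translates of $A$.

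First I would verify that $A\mapsto(W_x)_{x\in X}$ lands in $G$-wallings. Equivariance: for $g\in G$,
\[
gW_x=\{gh:h^{-1}x\in A\}=\{k:k^{-1}gx\in A\}=W_{gx},
\]
which matches the left action on $2^G$. For the finiteness axiom, observe that $u\in W_x\iff x\in uA$, so the set of $x$ such that $W_x$ separates $u$ and $v$ is exactly $uA\tu vA$; translating by $u^{-1}$ gives $A\tu u^{-1}vA$, which is finite by commensuration of $A$, and its cardinality is $\ell_A(u^{-1}v)$. Conversely, from a $G$-walling $(W_x)$ set $A=\{x\in X:1\in W_x\}$. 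Then
\[
A\tu gA=\{x\in X:1\in W_x\}\tu\{x:1\in g^{-1}W_x=W_{g^{-1}x}\},
\]
which by a change of variable equals $\{x:W_x\text{ separates }1\text{ and }g\}$, hence is finite: $A$ is commensurated.

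Next I would check that the two constructions are mutually inverse by direct substitution. Starting from $A$, the recovered subset is $\{x:1\in W_x\}=\{x:x\in A\}=A$. Starting from $(W_x)$, form $A$ and then $W'_x=\{h:x\in hA\}=\{h:h^{-1}x\in A\}=\{h:1\in W_{h^{-1}x}\}$; using the equivariance $W_{h^{-1}x}=h^{-1}W_x$ already established, this becomes $\{h:h\in W_x\}=W_x$. The pseudo-distance claim is then the first computation above read backwards:
\[
d_\mathcal{W}(u,v)=\#\{x\in X:W_x\text{ separates }u,v\}=|uA\tu vA|=\ell_A(u^{-1}v)=d_A(u,v).
\]

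There is no genuine obstacle here; the work is entirely notational. The only pitfall to watch is keeping the convention for the $G$-action on $2^G$ consistent (it is left multiplication, so one must verify $gW_x=W_{gx}$ rather than $W_{g^{-1}x}$), and checking that the finiteness axiom of a walling really corresponds to commensuration in both directions rather than just one. In the topological setting one should additionally note that openness of the stabilizer of $A$ corresponds to continuity of $x\mapsto W_x$ (equivalently, openness of the stabilizer of each $W_x$), but since the statement is only set-theoretic, this is not needed for the proof of the proposition itself.
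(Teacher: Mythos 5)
Your proof is correct and follows essentially the same route as the paper, which only sketches the "straightforward verification" that $A$ is commensurated iff $\mathcal{W}$ is a walling; you simply spell out the details (equivariance, the identification of the separating set with $uA\tu vA$, and the mutual-inverse check), all of which are accurate.
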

\begin{proof}
These maps are reciprocal, defining an even larger bijection between subsets of $X$ and $G$-equivariant maps $X\to 2^G$. Given corresponding $A\subset X$ and the $G$-equivariant mapping $\mathcal{W}:X\to 2^G$, a straightforward verification shows that $A$ is commensurated if and only if $\mathcal{W}$ is a walling, also showing the last statement.
\end{proof}

\begin{cor}\label{ccardw}
Let $G$ be a group and $f:G\to\R$ a function. Equivalences:
\begin{enumerate}
\item\label{wa1} $f$ is cardinal definite;
\item\label{wa2} there exists a $G$-set $V$, a $G$-walling $\mathcal{W}$ on $V$ and $v\in V$ such that $f(g)=d_\mathcal{W}(v,gv)$ for all $g$.
\item\label{wa3} there is a $G$-walling $\mathcal{W}$ on $G$ such that $f(g)=d_\mathcal{W}(1,g)$ for all $g$.
\end{enumerate}
\end{cor}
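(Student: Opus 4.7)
The corollary is essentially a packaged restatement of Proposition~\ref{can_bij}, so my plan is to reduce each implication to applying that bijection, possibly after a pull-back. The implication (\ref{wa3})$\Rightarrow$(\ref{wa2}) is immediate: take $V=G$ with its left translation action and $v=1$. Thus the real content is the cycle (\ref{wa1})$\Rightarrow$(\ref{wa3}) and (\ref{wa2})$\Rightarrow$(\ref{wa1}).

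For (\ref{wa1})$\Rightarrow$(\ref{wa3}), suppose $f(g)=\#(M\tu gM)$ for some $G$-set $X$ and commensurated subset $M\subset X$. Observe that the pseudo-distance $d_M(g,h)=\#(gM\tu hM)$ is left $G$-invariant, so $f(g)=d_M(1,g)$. Proposition~\ref{can_bij} hands us a $G$-walling $\mathcal{W}$ on $G$ indexed by $X$ with $d_\mathcal{W}=d_M$, and then $f(g)=d_\mathcal{W}(1,g)$ as required.

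For (\ref{wa2})$\Rightarrow$(\ref{wa1}), I would pull back the walling along the orbit map. Concretely, suppose $\mathcal{W}:I\to 2^V$ is a $G$-walling and $v\in V$, and let $\varphi:G\to V$ be the $G$-equivariant map $g\mapsto gv$. Form the pull-back $\varphi^*\mathcal{W}=(\varphi^{-1})\circ\mathcal{W}:I\to 2^G$, which by the remark following the definition of walling is again a $G$-walling (on $G$, with the same index $G$-set $I$), and $\varphi$ is an isometry from $(G,d_{\varphi^*\mathcal{W}})$ to $(V,d_\mathcal{W})$. In particular,
\[
d_{\varphi^*\mathcal{W}}(1,g)=d_\mathcal{W}(v,gv)=f(g).
\]
Now apply the reverse direction of Proposition~\ref{can_bij}: the $G$-walling $\varphi^*\mathcal{W}$ on $G$ indexed by $I$ corresponds to a commensurated subset $A=\{i\in I\mid 1\in W_i^{\varphi}\}\subset I$ of the $G$-set $I$, and the pseudo-distances agree, so $f(g)=d_A(1,g)=\#(A\tu gA)=\ell_A(g)$. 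This realizes $f$ as a cardinal definite function.

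I do not expect a genuine obstacle here; the entire statement is bookkeeping on top of Proposition~\ref{can_bij}. The only point worth double-checking is that the pull-back construction preserves $G$-equivariance and the walling condition (finiteness of the number of separating halfspaces between any two points), but both are formal consequences of the definitions: equivariance of $\varphi$ transports the $G$-action on $2^V$ to that on $2^G$, and the finiteness is exactly the statement that $\varphi$ is an isometry of pseudo-metrics.
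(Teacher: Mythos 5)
Your proof is correct and follows the same approach as the paper: reduce everything to Proposition~\ref{can_bij}, using the pull-back of the walling along the orbital map $g\mapsto gv$ for the nontrivial direction. The only cosmetic difference is that the paper proves $(2)\Rightarrow(3)$ and invokes $(3)\Leftrightarrow(1)$, whereas you chain the pull-back directly into Proposition~\ref{can_bij} to prove $(2)\Rightarrow(1)$; the underlying argument is identical.
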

\begin{proof}
The equivalence between (\ref{wa1}) and (\ref{wa3}) is immediate from Proposition \ref{can_bij} and (\ref{wa3})$\Rightarrow$(\ref{wa2}) is trivial, while (\ref{wa2})$\Rightarrow$(\ref{wa3}) is obtained by pulling back the walling to $G$ by the orbital map $g\mapsto gv$. 
\end{proof}

\begin{rem}
There are several nuances between the original definition of wall spaces and the definition used here:
\begin{enumerate}
\item we allow $\emptyset$ and $V$ among the halfspaces;
\item we allow multiple halfspaces in the sense that $i\mapsto W_i$ is not required to be injective (although clearly, each halfspace occurs with finite multiplicity, except possibly $\emptyset$ and $V$);
\item we use an explicit $G$-structure on the indexing family of the walling, rather than considering a subset of the power set (as in the original definition) or a discrete integral measure on the power set (as in \cite{CSV}). 
\item\label{no2} we consider subsets (that is, half-spaces, as in \cite{CSV}) rather than partitions into two subsets (as in the original definition). Unlike in \cite{NRo,HrWi}, we do not assume stability under complementation.
\end{enumerate}
Among those nuances, the most essential is probably the one in (\ref{no2}). The necessity of considering half-spaces rather than partitions was observed in many places, including \cite{HrWi, CSV}. The construction of \cite{CSVa,CSV} (see \S\ref{s_wp}), which maps a walling (or a measured walling) on a group $\Gamma$ to a walling on the wreath product $A\wr\Gamma$, is very far from symmetric with respect to complementation! (For instance, it maps the walling by singletons to a bounded walling, but maps the walling by co-singletons to an unbounded walling as soon as $\Gamma$ is infinite.)
\end{rem}

\begin{rem}
A variant of the above notion of walling is when the halfspaces are self-indexed. Here, this corresponds to the assumption that the family of halfspaces is $x\mapsto W_x$ is injective. Although easier at first sight, this condition is pretty unnatural; for instance, it prevents the use of pullbacks by non-surjective maps. Also, it does not seriously change the definitions. For example, if $G$ is finitely generated, then for any $G$-invariant walling on $G$, the multiplicity of any halfspace is actually bounded (see Corollary \ref{finw}), so removing redundant halfspaces preserves, for instance, the properness of the wall distance. See also Remark \ref{r_wa}.
\end{rem}

Let us provide a topological version of Proposition \ref{can_bij}. Let $G$ be a topological group. A continuous discrete $G$-set is a $G$-set in which all pointwise stabilizers $G_x$, $x\in X$, are open subgroups of $G$. We call {\em topological} commensurated subset of $X$ a commensurated subset $A$ whose global stabilizer is open in $G$, or equivalently if the cardinal definite function $\ell_A$ is continuous. We will see in Lemma \ref{autcon2} that if $G$ is locally compact, then any commensurated subset of $X$ is actually a topological commensurated subsets, but this is not true for an arbitrary topological group.

\begin{prop}\label{can_bijt}Let $G$ be a topological group and $X$ a discrete $G$-set. The bijection of Proposition \ref{can_bij} actually is a bijection
$$\{\textnormal{comm. subsets of }X\} \stackrel{\sim}\longrightarrow  \{\textnormal{clopen }G\textnormal{-wallings on }G\textnormal{ indexed by }X\}.$$
If the commensurated subset $A\subset X$ and the walling $\mathcal{W}$ correspond to each other under this bijection, then the pseudo-distances $d_A$ and $d_\mathcal{W}$ are equal. In particular, the above bijection restricts to a bijection
$$ \left\{\begin{array}{c}
 \textnormal{top. comm.}  \\
 \textnormal{subsets of }X 
\end{array}\right\}\stackrel{\sim}\longrightarrow
 \left\{\begin{array}{c}
 \textnormal{clopen $G$-wallings of $X$}  \\
 \textnormal{with continuous wall pseudodistance} 
\end{array}\right\}.$$
\end{prop}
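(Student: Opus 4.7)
The plan is to upgrade the set-theoretic bijection of Proposition~\ref{can_bij} to the topological setting in two steps: first, verify that on a continuous discrete $G$-set every commensurated subset automatically produces a clopen walling (and that, conversely, a clopen walling is in particular a walling); second, identify openness of the global stabilizer of $A$ with continuity of the wall pseudodistance.

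For the first step, on a continuous discrete $G$-set each orbital map $\phi_x : G \to X$, $h \mapsto h^{-1}x$, is continuous: the fiber over $y\in X$ is either empty or a left coset $h_0 G_y$ of the open pointwise stabilizer $G_y$, hence open. Any $A \subset X$ is clopen in the discrete topology of $X$, so
\[
W_x \;=\; \{h \in G : h^{-1}x \in A\} \;=\; \phi_x^{-1}(A)
\]
is clopen in $G$; thus the forward map of Proposition~\ref{can_bij} lands in clopen $G$-wallings. In the reverse direction, a clopen $G$-walling is a fortiori a $G$-walling, and the inverse map $(W_x)_{x\in X} \mapsto \{x \in X : 1 \in W_x\}$ of Proposition~\ref{can_bij} already produces a commensurated subset, with equality of pseudo-distances $d_A = d_\mathcal{W}$ inherited from that proposition.

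For the restricted bijection I would exploit that $d_\mathcal{W}$ is $G$-invariant on $G\times G$ (immediate from the equivariance $W_{gx}=gW_x$ by reindexing the halfspaces separating two points). Continuity of $d_\mathcal{W}$ on $G\times G$ is therefore equivalent to continuity of $g\mapsto d_\mathcal{W}(1,g)=\ell_A(g)$ at the identity. Since $\ell_A$ takes nonnegative integer values, this in turn is equivalent to the existence of a neighborhood $U$ of $1$ on which $\ell_A\equiv 0$, i.e.\ to $U$ being contained in the global stabilizer of $A$. Conversely, if the global stabilizer $H$ of $A$ is open, then for $h\in H$,
\[
\ell_A(hg)=\#(A\tu hgA)=\#(hA\tu hgA)=\#(A\tu gA)=\ell_A(g),
\]
so $\ell_A$ is left-$H$-invariant, hence locally constant, hence continuous; this transfers to continuity of $d_\mathcal{W}$ via the equality established above.

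The only mildly delicate point is the reduction to continuity of $\ell_A$ at the identity, which relies on both the $G$-invariance of $d_\mathcal{W}$ and the crucial integer-valuedness of $\ell_A$; everything else is a formal transfer through the algebraic bijection of Proposition~\ref{can_bij}.
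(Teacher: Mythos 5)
Your proposal is correct and follows essentially the same route as the paper: showing $W_x=\phi_x^{-1}(A)$ is a union of cosets of an open stabilizer (hence clopen), and then relating openness of $\mathrm{Stab}(A)$ to continuity of $\ell_A$ via left-$H$-invariance of $\ell_A$ and integer-valuedness. The paper states these last steps more tersely ("the third follows"), while you spell out the routine verification.
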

\begin{proof}
The stabilizer $G_x$ of $x$ is an open subgroup and we have $G_xW_x=W_x$. Therefore $W_x$, being a union of right cosets of $G_x$, is clopen. Thus the bijection of Proposition \ref{can_bij} actually has the above form. The second statement is contained in Proposition \ref{can_bij} and the third follows.
\end{proof}

\begin{rem}The data of a commensurated subset $A$ in $G/H$ is obviously equivalent to that of a subset $B\subset G$ which is right-$H$-invariant and satisfies $\#((B\tu gB)/H)<\infty$ for all $g\in G$ (up to taking the inverse of $B$, these conditions are precisely those considered by Sageev in \cite[Theorem 2.3]{S95}). Namely, denoting by $\pi$ is the projection $G\to G/H$, we have $B=\pi^{-1}(A)$ and $A=\pi(B)$. In this language, the halfspace $W_g$ associated to $g\in G/H$ in Proposition \ref{can_bij} is just $W_g=B^{-1}g^{-1}$. 
\end{rem}

%\begin{rem}The data of a commensurated subset $A$ in $G/H$ is obviously equivalent to that of a subset $B\subset G$ which is left-$H$-invariant and satisfies $\#(H\backslash (B\tu Bg))<\infty$ for all $g\in G$ (these conditions are precisely those considered by Sageev in \cite[Theorem 2.3]{S95}). Namely, if $\pi$ is the projection $G\to G/H$ and $\iota$ is the inverse map $g\mapsto g^{-1}$ in $G$, then $B=\iota(\pi^{-1}(A))$ and $A=\pi(\iota(B))$. In this language, the halfspace $W_g$ associated to $g\in G/H$ in Proposition \ref{can_bij} is just $W_g=gB$. \end{rem}

\subsection{Ends of Schreier graphs and coforked subgroups}

Here we extend the notion given above for finitely generated discrete groups.

\begin{defn}We say that an open subgroup $H$ of a topological group $G$ is {\em coforked} if there is in $G/H$ a commensurated subset, infinite and coinfinite, with an open stabilizer.
\end{defn}
The openness of the stabilizer is automatic when $G$ is locally compact (see Corollary \ref{cauco}), so in that case, if $H$ is an open subgroup of $G$, it is coforked in the topological group $G$ if and only it is coforked in $G_\delta$, where $G_\delta$ denotes $G$ endowed with the discrete topology. There are natural extensions of this definition for the subgroups that are not supposed open, but they are not relevant here (see Remark \ref{r_codim})

\begin{defn}If $G$ is a group and $S$ a symmetric generating subset of $G$, and $X$ is a $G$-set, the {\em Schreier graph} $\textnormal{Sch}(X,S)$ is the graph whose vertex set is $X$, and with an oriented edge $(x,sx)$ labeled by $s\in S$ for each $(s,x)\in S\times X$. We say that $(X,S)$ is of {\em finite type} if the 0-skeleton of the Schreier graph is locally finite (i.e., every vertex is linked to finitely many vertices).
The $S$-{\em boundary} of a set $Y\subset X$ of vertices is the set of elements in $Y$ joined by an edge to an element outside $Y$.
\end{defn}

Note that $(X,S)$ is of finite type when $S$ is finite, but also holds in the important case where $G$ admits a group topology for which $S$ is compact and $X$ is a continuous discrete $G$-set. (In practice, it means that when $G$ is a topological group with a compact generating set, we can apply the following to the underlying discrete group $G^\delta$.)

We have the following, which for $S$ finite is essentially the contents of \cite[Theorem 2.3]{S95}.

\begin{prop}\label{ends}
If $(X,S)$ is of finite type, a subset of $X$ with finite $S$-boundary is commensurated. Conversely, if $G$ is a topological group, $X$ is a continuous discrete $G$-set, $S$ is compact and $M$ is a commensurated $G$-set with open stabilizer, then $M$ has a finite $S$-boundary.

In particular, if $G$ is locally compact and $S$ a compact generating subset, and $X$ is a continuous discrete $G$-set, the set of subsets of $X$ with finite $S$-boundary is equal to the set of $G$-commensurated subsets in $X$ (which in particular does not depend on $S$).
\end{prop}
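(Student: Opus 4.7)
The plan splits naturally along the two implications, plus the ``in particular'' clause.

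\textbf{Direction 1 (finite $S$-boundary $\Rightarrow$ commensurated).} I plan to show that $H=\{g\in G:\#(M\tu gM)<\infty\}$ is a subgroup of $G$ containing $S$. Since $S$ generates $G$, this forces $H=G$. That $H$ is a subgroup is elementary: it contains $1$, is closed under inversion (translate by $g^{-1}$), and is closed under products via the triangle-type inequality $\#(M\tu ghM)\le \#(M\tu gM)+\#(gM\tu ghM)=\#(M\tu gM)+\#(M\tu hM)$. To show $S\subset H$, I verify for each $s\in S$ the inclusion $M\tu sM\subset \partial_S M\cup s\partial_S M$. Indeed, if $x\in M\setminus sM$ then $x\in M$ and $s^{-1}x\notin M$; since $S$ is symmetric, $s^{-1}\in S$, witnessing $x\in\partial_S M$. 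If $x\in sM\setminus M$ then $s^{-1}x\in M$, $x\notin M$, and the edge labeled $s$ from $s^{-1}x$ to $x$ witnesses $s^{-1}x\in\partial_S M$, whence $x\in s\partial_S M$. Both sides are finite by hypothesis.

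\textbf{Direction 2 (commensurated with open stabilizer, $S$ compact $\Rightarrow$ finite $S$-boundary).} The key is that the family of translates $s^{-1}M$, as $s$ varies over $S$, takes only finitely many values. Let $G_M$ be the setwise stabilizer of $M$, which is open by assumption. For any $h\in G_M$, $(hs)^{-1}M=s^{-1}h^{-1}M=s^{-1}M$, so the map $s\mapsto s^{-1}M$ is constant on each right coset $G_M s$. The intersections $G_M s\cap S$ form an open cover of $S$; by compactness finitely many cosets $G_Ms_1,\dots,G_Ms_n$ cover $S$. Since $\partial_S M=\bigcup_{s\in S}(M\setminus s^{-1}M)$ and $M\setminus s^{-1}M\subset M\tu s^{-1}M$ is finite for each $s$ by the commensurating hypothesis, we obtain $\partial_S M=\bigcup_{i=1}^{n}(M\setminus s_i^{-1}M)$, a finite union of finite sets.

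\textbf{``In particular'' statement.} Assume now $G$ locally compact, $S$ a compact generating subset, and $X$ a continuous discrete $G$-set. To apply Direction 1 I need $(X,S)$ of finite type: for $x\in X$, the stabilizer $G_x$ is open, the left cosets $sG_x$ (as $s$ runs over $S$) form an open cover of $S$, and compactness gives a finite subcover, so $Sx$ is finite. To apply Direction 2 to a commensurated $M$, I need $M$ to have open stabilizer; this is exactly the content of Corollary \ref{cauco} cited earlier in the text. Combining the two directions gives the desired equality of the two families of subsets, which is then visibly independent of the particular choice of $S$.

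The genuinely nontrivial step is the coset-reduction in Direction 2: without openness of $G_M$ and compactness of $S$ together, one cannot pass from finiteness of each individual symmetric difference $M\tu sM$ to finiteness of their union over $s\in S$. Everything else is either a subgroup-closure argument or a standard compactness/open-cover bookkeeping.
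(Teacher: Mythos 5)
Your proof is correct and follows essentially the same route as the paper's: the paper likewise decomposes $M\tu sM$ into $M\smallsetminus sM$ and $s(M\smallsetminus s^{-1}M)$, both contained in the $S$-boundary, and for the converse observes that $g\mapsto gM$ is locally constant (open stabilizer) hence finite-valued on the compact set $S$. The only differences are cosmetic: you spell out the subgroup-closure step implicitly taken for granted in the paper, and you work with right cosets $G_Ms$ where the paper observes local constancy directly; the reduction of the "in particular" statement to Corollary~\ref{cauco} is identical.
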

\begin{proof}
If $(X,S)$ is of finite type and $s\in S$, then $M\tu sM$ is contained in the union of $M\smallsetminus sM$ and $s(M\smallsetminus s^{-1}M)$, which are both finite, so $M$ is commensurated.

Conversely, under the additional assumptions and assuming $M$ commensurated, the function $g\mapsto gM$ is locally constant and thus has a finite image in restriction to the compact set $S$. So the $S$-boundary of $M$, which is the union $\bigcup_{s\in S}M\smallsetminus sM$ and thus is a finite union of finite sets, is finite.

The second statement follows modulo the fact that for a locally compact group and continuous discrete $G$-set, the stabilizer of a commensurated subset is automatically open, see Corollary \ref{cauco}.
\end{proof}

Let us define the space of ends as follows. First, given a graph (identified with its set of vertices) and a subset $Y$, we define $\pi_0(Y)$ as the set of components of $Y$, where two vertices in $Y$ are in the same component if and only if they can be joined by a sequence of edges with all vertices in $Y$. Then the space of ends of a graph $L$ is the (filtering) projective limit of $\pi_0(L\smallsetminus F)$, where $F$ ranges over finite subsets of $L$; endowing $\pi_0(L\smallsetminus F)$ with the discrete topology, the projective limit is endowed with the projective limit topology; if $L$ has finitely many components and has a locally finite 0-skeleton, this is a projective limit of finite discrete sets and thus is a compact totally disconnected space, and is metrizable if $L$ is countable.

Moreover, the space of ends is nonempty if and only if $L$ is infinite, and has at least two points if and only if there exists a finite set of vertices whose complement has two infinite components.
Also note that the space of ends of $X$ is the disjoint union of ends of spaces of its connected components.

\begin{cor}Let $G$ be a locally compact, compactly generated group and $H$ an open subgroup in $G$. Then $H$ is coforked in $G$ if and only if the Schreier graph of $G/H$, with respect to some/any compact generating subset of $G$, has at least 2 ends.\qed
\end{cor}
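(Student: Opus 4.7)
The plan is to reduce the corollary to the standard graph-theoretic fact that, for a connected locally finite infinite graph $L$, having at least two ends is equivalent to the existence of a subset of vertices $M \subset L$ which is infinite, coinfinite, and has finite edge-boundary. Once this reduction is in place, both directions are essentially dictionary.

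First I would fix a compact symmetric generating set $S \subset G$; since $H$ is open, the coset space $G/H$ is a continuous discrete $G$-set, and the pair $(G/H, S)$ is of finite type (finitely many $S$-translates of any coset). Moreover $S$ generates $G$, so the Schreier graph $\mathrm{Sch}(G/H, S)$ is connected. I would then combine two earlier results: by Corollary \ref{cauco}, in the locally compact setting every commensurated subset of $G/H$ automatically has open stabilizer, so $H$ is coforked if and only if there exists an infinite coinfinite commensurated subset $M \subset G/H$; and by Proposition \ref{ends}, the commensurated subsets of $G/H$ are exactly the subsets with finite $S$-boundary. Combining these, $H$ is coforked if and only if $G/H$ admits an infinite coinfinite subset of finite $S$-boundary.

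For the equivalence with having $\geq 2$ ends, suppose first that $\mathrm{Sch}(G/H, S)$ has $\geq 2$ ends; then there is a finite set of vertices $F$ such that $\mathrm{Sch}(G/H, S) \smallsetminus F$ has at least two infinite components. Picking one such component $C$, any edge from $C$ to its complement must land in $F$ (otherwise its other endpoint would lie in the same component as a vertex of $C$), so the $S$-boundary of $C$ is finite; moreover $C$ is infinite by choice and coinfinite because another infinite component exists. Conversely, given an infinite coinfinite $M$ with finite $S$-boundary, let $F$ be the union of the $S$-boundary of $M$ and of $G/H \smallsetminus M$, which is a finite set. After removing $F$, no edge of the Schreier graph joins $M \smallsetminus F$ to $(G/H \smallsetminus M) \smallsetminus F$, so these two disjoint nonempty (in fact cofinite-in-infinite, hence infinite) subsets lie in distinct components, yielding at least two ends.

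Finally, the "some/any" clause in the statement follows immediately from the fact that the class of commensurated subsets of $G/H$ does not depend on $S$ (again by Proposition \ref{ends}); hence the existence of an infinite coinfinite commensurated subset is a property of $G$ and $H$ alone, not of $S$. The only place requiring any care is making sure the openness of the stabilizer is not an extra obstruction, but this is exactly handled by the local compactness hypothesis through Corollary \ref{cauco}; without it, one would have to distinguish coforkedness in $G$ from coforkedness in $G^\delta$.
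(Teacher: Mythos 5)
Your proof is correct and matches the argument the paper treats as immediate (hence the \qed with no written proof): it is exactly the combination of Proposition \ref{ends}, the automatic openness of stabilizers from Corollary \ref{cauco}, and the elementary characterization of having at least two ends via an infinite coinfinite subset with finite boundary. The only minor imprecision is in the converse direction, where you should note that a connected locally finite graph minus a finite set has finitely many components (each meets the finite set of vertices adjacent to $F$), so each of the two infinite pieces $M\smallsetminus F$ and $(G/H\smallsetminus M)\smallsetminus F$ contains an infinite component, rather than literally being a component.
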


The space of ends can be conveniently described using Boolean algebras. Given a Boolean algebra $A$, define a {\em multiplicative character} as a unital ring homomorphism of $A$ onto $\Z/2\Z$. Multiplicative characters are also sometimes called ultrafilters, because a multiplicative character on a power set $2^X=(\Z/2\Z)^X$ is the same as an ultrafilter on $X$, and an multiplicative character on $2^X/2^{(X)}$ is the same as a non-principal ultrafilter on $X$. The set $\chi(A)$ multiplicative characters of $A$ is a compact set, for the topology of pointwise convergence. A basic result is the following

%an {\em ultrafiltering}\footnote{Ultrafilterings of Boolean algebras are more commonly (but slightly abusively) called {\em ultrafilters}, because ultrafilters on a set $X$ canonically correspond to ultrafilterings on the power set $2^X$.}

\begin{lem}\label{ssstone}
For any Boolean algebra $A$, we have $\bigcap_{\xi\in\chi(A)}\Ker(\xi)=\{0\}$. In particular, we have $\chi(A)=\emptyset$ if and only if $A=\{0\}$.
\end{lem}
\begin{proof}
Clearly, a Boolean algebra admits no nonzero nilpotent element. 
On the other hand, if $A$ is an arbitrary commutative ring, an elementary application of Zorn's lemma shows that the intersection of all prime ideals is the set of nilpotent elements \cite[Theorem 1.2]{Mat}. Thus if $A$ is a Boolean algebra, this intersection is trivial. Moreover, a Boolean algebra which is a domain is necessarily equal to $\{0,1\}$; in other words, the prime ideals in a Boolean algebra are exactly the kernels of multiplicative characters. So the proof is complete.
\end{proof}

If $E$ is a topological space, write $C(X)$ for the Boolean algebra of continuous functions $E\to\Z/2\Z$.

\begin{thm}[Stone's representation theorem]
Let $E$ be a topological space. Then we have a canonical continuous map $E\to \chi(C(E))$, given by evaluation. If $E$ is a totally disconnected compact space, then this is a homeomorphism.

Let $A$ be a Boolean algebra. Then we have a canonical unital algebra homomorphism $A\to C(\chi(A))$, given by evaluation, and it is an isomorphism.
\end{thm}
\begin{proof}The injectivity of $E\to\chi(C(E))$ is equivalent to the fact that clopen subsets separate points. For its surjectivity, consider $\phi\in\chi(C(E))$; it is enough to show that there exists $x\in E$ such that $\phi(f)=f(x)$ for all $f\in C(E)$. Assume the contrary: for all $x\in E$ there exists $f\in C(E)$ such that $\phi(f)=1$ and $f(x)=0$ (as we can choose by replacing $f$ by $1-f$ if necessary). Since $E$ is compact and totally disconnected, hence 0-dimensional, this implies that there exists a clopen partition $E=E_1\sqcup\dots\sqcup E_k$ and $f_1,\dots, f_k\in C(E)$ such that $f_i$ vanishes on $E_i$ and $\phi(f_i)=1$. Hence if $f=\prod f_i$ we get $f=0$ and $\phi(f)=1$, a contradiction.

The injectivity of $A\to C(\chi(A))$ is established by Lemma \ref{ssstone}. For the surjectivity, it is enough to show that $1_\Omega$ is in the image for any basic clopen subset of $C(\chi(A))$, namely of the form $\Omega=\{\phi:\phi(f_1)=\phi(f_2)=\dots=\phi(f_k)=1\}$. Since taking $f=\prod f_i$ it can also be described as $\Omega=\{\phi:\phi(f)=1\}$, we see that $1_\Omega$ is the image of $f$.
\end{proof}

The assignments $E\mapsto C(E)$ and $\mapsto\chi(A)$ are contravariant functors from the category of topological spaces to the category of Boolean algebras, and the Stone representation theorem thus shows that they define inverse equivalences of categories between the category of totally disconnected compact topological spaces and the category of Boolean algebras.

We now turn back to our setting.
As above, we assume that $(X,S)$ is of finite type; for instance $G$ is a compactly generated locally compact group and $S$ is a compact generating subset. Let $X$ be a discrete $G$-set, and let $\textnormal{Comm}_G(X)$ the set of topological $G$-commensurated subsets of $X$, which is a Boolean subalgebra of $2^X$. For $M\subset X$ and $x\in X$, we define $\delta_x(M)$ to be 1 or 0 according to whether $x\in M$. 

\begin{prop}
If $X$ has finitely many $G$-orbits, the natural compactification
\begin{eqnarray*}
X & \to &\chi(\textnormal{Comm}_G(X))\\
x & \mapsto & \delta_x,
\end{eqnarray*}
coincides with the end compactification of the Schreier graph $\textnormal{Sch}(X,S)$, and restricts 
to a natural homeomorphism between the set of ends of the graph $\textnormal{Sch}(X,S)$ and the space of multiplicative characters $\chi(\textnormal{Comm}_G(X)/2^{(X)})$.
\end{prop}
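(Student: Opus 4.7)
The plan is to identify $\Comm_G(X)$ with the Boolean algebra $\mathcal{B}$ of subsets of $X$ having finite $S$-boundary in $L:=\textnormal{Sch}(X,S)$ (via Proposition \ref{ends}, assuming $(X,S)$ of finite type), and then to describe $\chi(\mathcal{B})$ combinatorially in terms of the graph structure of $L$. The crucial preliminary observation is that the finite-orbits hypothesis forces $L$ to have only finitely many connected components (each orbit is $S$-connected); combined with local finiteness, this ensures that for every finite $F\subset X$, the complement $L\smallsetminus F$ decomposes into \emph{finitely many} connected components $C_1,\dots,C_n$, each of which lies in $\mathcal{B}$.

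First I would set up the bijection. Vertices $x\in X$ yield the principal ultrafilterings $\delta_x$. For a non-principal $\xi\in\chi(\mathcal{B})$ --- equivalently, since $2^{(X)}$ is an ideal in $\mathcal{B}$, an ultrafiltering factoring through $\mathcal{B}/2^{(X)}$ --- apply $\xi$ to $L\smallsetminus F=\bigsqcup_{i=1}^n C_i$. The disjointness gives $\xi(C_i)\xi(C_j)=\xi(\emptyset)=0$ for $i\ne j$, so at most one $\xi(C_i)$ equals $1$; combined with $\sum_i\xi(C_i)=\xi(L\smallsetminus F)=1-\xi(F)=1$, this forces exactly one $C_F^\xi$ with $\xi(C_F^\xi)=1$, necessarily infinite since $\xi$ kills finite sets. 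As $F$ grows, these $C_F^\xi$ nest coherently and define an end of $L$. Conversely, an end $e$ determines an ultrafiltering $\xi_e$ via $\xi_e(A)=1$ iff $C_F^e\subset A$ for some/any finite $F\supset\partial A$ (well-defined since every component of $L\smallsetminus\partial A$ is either $\subset A$ or $\subset A^c$); checking that $\xi_e$ is a unital ring homomorphism vanishing on finite sets, and that these assignments are mutually inverse, is routine.

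Next I would match the topologies. The Stone topology on $\chi(\mathcal{B})$ has the clopen basis $\{U_A\}_{A\in\mathcal{B}}$ with $U_A=\{\xi:\xi(A)=1\}$, while the end-compactification topology on $X\sqcup\textnormal{Ends}(L)$ is generated by the singletons $\{x\}$ ($x\in X$) together with the clopens ``vertices in $C$ plus ends represented by $C$'' for $C$ a component of $L\smallsetminus F$ with $F$ finite. Under the bijection the latter basic clopens are precisely the sets $U_C$; conversely, any $A\in\mathcal{B}$ decomposes as $A=\partial A\sqcup(A\smallsetminus\partial A)$ with $\partial A$ finite and $A\smallsetminus\partial A$ a union of finitely many components of $L\smallsetminus\partial A$, so $U_A$ is a finite union of singletons $\{\delta_x\}$ (for $x\in\partial A$) together with finitely many basic end-compactification clopens $U_C$. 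Hence the two topologies coincide, establishing the first assertion.

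The main obstacle is the finiteness claim underlying Step 1, that $L\smallsetminus F$ has only finitely many connected components for every finite $F$: this genuinely requires both local finiteness of $L$ and the finite-orbit hypothesis, without which $L\smallsetminus F$ could split into infinitely many components and the disjointness-plus-summation argument forcing a unique ``infinite direction'' for $\xi$ would collapse. Once Step 1 is in place, the homeomorphism $\textnormal{Ends}(L)\cong\chi(\mathcal{B}/2^{(X)})$ in the final sentence is automatic, since $\chi(\mathcal{B}/2^{(X)})$ is precisely the closed subspace of non-principal ultrafilterings in $\chi(\mathcal{B})$, corresponding to ends under the bijection.
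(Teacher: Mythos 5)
Your proof is correct, and the core mechanism is the same as the paper's: identify $\Comm_G(X)$ with the Boolean algebra $Q(L)$ of finite-boundary subsets of the Schreier graph $L$, observe that the Dirac ultrafilterings account exactly for the ultrafilterings not killing $2^{(X)}$, and identify the remaining Stone space $\chi(Q(L)/2^{(L)})$ with the ends. Where you diverge is in how the last identification is carried out. The paper writes $Q(L)/2^{(L)}=\varinjlim_F Q_F(L)/2^F$, notes that each $Q_F(L)/2^F$ is the finite Boolean algebra on $\pi_0(L\smallsetminus F)$, and appeals to the fact that $\chi(-)$ converts this inductive limit into the projective limit $\varprojlim_F \pi_0(L\smallsetminus F)$, which is the definition of the end space. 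You instead unwind this duality by hand: a non-principal $\xi$ selects, by the disjointness-plus-$\Z/2$-summation argument, exactly one component $C_F^\xi$ of each $L\smallsetminus F$, these nest, and the converse assignment builds $\xi_e$ from an end; you then separately match the clopen bases to conclude the topologies agree. Both routes require exactly the same two finiteness inputs (local finiteness of $L$ and finitely many components, so that each $\pi_0(L\smallsetminus F)$ is finite), and you correctly flag that the argument collapses without them. The trade-off is that the paper's version is terser but leaves the topological identification of the \emph{whole} compactification implicit, whereas your version is longer but makes the base-to-base comparison of the Stone topology and the end-compactification topology explicit.
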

\begin{proof}
We can view $\chi(\textnormal{Comm}_G(X)/2^{(X)})$ as a closed subset of $\chi(\textnormal{Comm}_G(X))$, consisting of those multiplicative characters vanishing on $2^{(X)}$. An easy verification shows that its complement consists of the Dirac multiplicative characters $(\delta_x)_{x\in X}$. This is an open discrete, and dense subset in $\chi(\textnormal{Comm}_G(X))$.

To check that this is exactly the end compactification of $\textnormal{Sch}(X,S)$, we just have to show that given a graph with locally finite 0-skeleton, with vertex set $L$ and finitely many components, if its set of ends is defined as above, then it is naturally identified with $\chi(Q(L)/2^{(L)})$, where $Q(L)$ is the set of subsets of $L$ with finite boundary. 

So $Q(L)/2^{(L)}$ is the inductive limit, over finite subsets $F$ of $L$, of the $Q_F(L)/2^F$, where $Q_F(L)$ is the set of subsets of $L$ with boundary contained in $F$; thus $\chi(Q(L)/2^{(L)})$ is the projective limit of all $\chi(Q_F(L)/2^F)$. The Boolean algebra $Q_F(L)/2^F$ can be described as the  set of unions of connected components of $L\smallsetminus F$. Since the latter is finite, $\chi(Q_F(L)/2^F)$ can be thus identified with the set of connected components of $L\smallsetminus F$. Thus $\chi(Q(L)/2^{(L)})$ is identified with the set of ends of $L$.
\end{proof}

We will prove in the sequel that for a compactly generated locally compact group $G$, Property FW can be tested on transitive $G$-sets (see Proposition \ref{fwfwp}(\ref{fwww})). In particular, such a group $G$ has Property FW if and only it has no coforked open subgroup.

\begin{rem}\label{r_codim}
In this paper we only consider coset spaces of open subgroups. But ends of pairs of groups $H\subset G$ with $G$ locally compact compactly generated and $H$ closed in $G$ can also be considered. We do not give the general definition here (due to Houghton \cite{Ho74}), but in case $G$ is a connected Lie group, it corresponds to the ends of the connected manifold $G/H$. For instance, the upper unipotent subgroup $U$ of $\SL_2(\R)$ has a 2-ended coset space $\SL_2(\R)/U$. More generally, for $n\ge 2$, in $\SL_n(\R)$, the stabilizer of a nonzero vector in $\R^n$ has a 2-ended coset space, namely $\R^n\smallsetminus\{0\}$ (which is homeomorphic to $\R\times\mathbf{S}^{n-1}$). Since $\SL_n(\R)$ has Kazhdan's Property T for $n\ge 3$, this shows that the present study does not carry over this context.
\end{rem}

\subsection{On the language of ``almost invariant" subsets}

In the literature, the following language is sometimes used: let $G$ be a group and $H$ a subgroup. An $H$-almost invariant subset of $G$ means a left-$H$-invariant subset of $G$ whose projection on $H\backslash G$ is commensurated by the right action of $G$.

The usual data in this context is the following: a finite family $(H_i)$ of subgroups of $G$ and an $H_i$-almost invariant subset $X_i$ of $G$, and $E$ is the family of left $G$-translates of all $H_i$.

We should emphasize that these data are equivalent to that of a $G$-set $X$ and a commensurated subset $M\subset X$, with the additional specification of a finite family $(x_1,\dots,x_n)$ including one element in each orbit. Namely given the above data, consider the disjoint union $\bigsqcup_{i=1}^n H_i$ and $M=\bigcup X_i^{-1}/H_i$; conversely given $X$, $M$, and $x_i$ as above, we can define $H_i$ as the stabilizer of $x_i$ and $X_i=\{g\in G\mid g^{-1}x_i\in M\}$.

Inasmuch as the indexing of orbits and choice of representative points is artificial, the data of the $G$-set $X$ and the commensurated subset $A$ seem enough. Actually, the family of the left translates of the $X_i^{-1}$ are nothing else than the walling $(W_x)$ given in Proposition \ref{can_bij}. To emphasize the difference of point of views, we can observe that when we consider an action of $G$ on a set with a commensurated subset, it is very natural to restrict this action to a smaller subgroup (this is used many times in \S\ref{s_ab}, when we restrict to abelian subgroups), while that this does look natural using the additional data (notably because the orbit decomposition of the subgroup is not the same).

\begin{rem}
Given a $G$-invariant involution $\sigma:X\to X$, the condition that $W_{\sigma(x)}=W_x^*$ for all $x$ ($^*$ denoting the complement) is equivalent to the requirement that $A$ is a fundamental domain for $\sigma$, i.e., $X=A\sqcup\sigma(A)$ (see \S\ref{icm}). 
\end{rem}

%When $X$ is a $G$-set with commensurated subset $A$, it can canonically \big(with respect to the data of $(X,A)$\big) be embedded in a $G$-set with commensurated subset with such an involution, namely $X'=X\times\{0,1\}$ with the involution $\sigma:(x,0)\leftrightarrow (x,1)$ and the commensurated subset $A'=(A\times\{0\})\sqcup (A^*\times\{1\})$.

The Kropholler conjecture, usually termed in the previous language can be restated as follows (bi-infinite means infinite and co-infinite):

\begin{conj}[Kropholler]
Let $G$ be a finitely generated group and $G/H$ a transitive $G$-set. Assume that $G$ commensurates an bi-infinite subset $M\subset G/H$. Assume in addition that $M$ is $H$-invariant. Then $G$ splits over a subgroup $L$ commensurate (in the group sense) to $H$, i.e.\ such that $H\cap L$ has finite index in both $H$ and $L$.
\end{conj}

The conjecture holds when $H$ is finite by Stallings' theorem. The assumption that $M$ is $H$-invariant is essential as otherwise the conjecture would infer that Properties FW and FA are equivalent (which is not the case, see Example \ref{FApFW}).

Dunwoody extends the conjecture to arbitrary discrete groups; the conclusion being replaced by the existence of an unbounded action of $G$ on a tree for which each edge orbit contains an edge whose stabilizer is commensurate (in the group sense) to $H$.

Let us also mention, in the language of commensurated subsets, a basic lemma of Scott \cite[Lemma~2.3]{Sco98}. Let $G$ be a group acting on a set $X$ with a commensurated subset $M$. Define (as in Proposition \ref{can_bij}) $W_x=\{g\in G\mid x\in gM\}$. Denote by $p_x$ the anti-orbital map $g\mapsto g^{-1}x$; note that it maps $W_x$ into $M$.

\begin{prop}
Let $G$ be a compactly generated locally compact group and $X$ a continuous discrete $G$-set with commensurated subset $M$. Let $x,y\in X$ be points such that $M\cap Gx$ and $M\cap Gy$ are infinite. Then $p_x(W_x\cap W_y)$ is finite if and only if $p_y(W_x\cap W_y)$ is finite. 
\end{prop}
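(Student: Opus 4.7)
The plan is to pass to the diagonal orbit $Z = G\cdot(x,y) \subseteq X\times X$ and the intersection $T = Z\cap(M\times M)$. Unwinding definitions, the substitution $h=g^{-1}$ identifies $p_x(W_x\cap W_y)$ with the first projection $\pi_1(T) = \{hx : hx\in M,\ hy\in M\}$, and symmetrically $p_y(W_x\cap W_y) = \pi_2(T)$. The statement becomes: $\pi_1(T)$ is finite iff $\pi_2(T)$ is finite. The two directions are interchanged by swapping the roles of $x$ and $y$, so I will prove only the forward one, assuming $\pi_1(T)$ finite and deducing $\pi_2(T)$ finite.

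Fix $(x', y_0')\in T$ and note, since $\pi_1:Z\to Gx$ is the $G$-equivariant quotient, the fiber $\pi_1^{-1}(x') \cap Z$ equals $\{x'\}\times G_{x'}y_0'$, where $G_{x'}$ is the stabilizer of $x'$ in $G$. Intersecting with $T$, one has $\pi_1^{-1}(x')\cap T = \{x'\}\times(G_{x'}y_0'\cap M)$. Consequently
\[
\pi_2(T) \;=\; \bigcup_{x'\in\pi_1(T)} \bigl(G_{x'}y_0'\cap M\bigr),
\]
a union indexed by the finite set $\pi_1(T)$. It is therefore enough to show that each intersection $G_{x'}y_0'\cap M$ is finite.

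I argue this by contradiction. Suppose $G_{x'}y_0'\cap M$ is infinite for some $(x',y_0')\in T$; I claim $M\cap Gx \subseteq \pi_1(T)$, which, combined with the assumption that $M\cap Gx$ is infinite, contradicts the finiteness of $\pi_1(T)$. Given $z\in M\cap Gx$, write $z=gx'$ for some $g\in G$ (possible since $Gx'=Gx$). Commensuration of $M$ gives that $M\tu g^{-1}M$ is finite, so $G_{x'}y_0'\cap g^{-1}M$ differs from the infinite set $G_{x'}y_0'\cap M$ by at most finitely many points and is in particular nonempty. Picking $\gamma\in G_{x'}$ with $\gamma y_0'\in g^{-1}M$, the pair $g\gamma\cdot(x',y_0') = (gx',\, g\gamma y_0') = (z,\, g\gamma y_0')$ lies in $Z$, has first coordinate in $M$ by choice of $z$, and second coordinate in $M$ by choice of $\gamma$, so it belongs to $T$. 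Hence $z\in\pi_1(T)$. The central idea is this \emph{translation trick}: commensuration lets us carry an infinite column of $T$ above $x'$ over to an infinite column above every other $z\in M\cap Gx$ at the cost of only finitely many lost elements per translation, forcing $\pi_1(T)$ to contain all of $M\cap Gx$. The hypothesis that both $M\cap Gx$ and $M\cap Gy$ are infinite is used precisely to make this contradiction bite; no appeal to compact generation or local compactness is needed.
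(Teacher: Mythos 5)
Your argument is correct, and it takes a genuinely different and more elementary route than the paper's. The paper fixes a compact symmetric generating set $S$, picks $z\in M\cap Gx$ outside the finite set $p_x(W_x\cap W_y)$, and shows that every $v\in p_y(W_x\cap W_y)$ lies within $S$-distance $k$ of the (finite) $S$-boundary of $M$ in the locally finite Schreier graph on $Gy$; both compact generation and local compactness are used essentially there (the latter indirectly, to guarantee $M$ has finite boundary and the graph is locally finite via Proposition~\ref{ends}). You instead pass to the diagonal orbit $Z=G\cdot(x,y)$ and exploit the fiber structure of $\pi_1|_T$: each fiber is $\{x'\}\times(G_{x'}y_0'\cap M)$, so $\pi_2(T)$ is a finite union (indexed by the finite set $\pi_1(T)$) of such sets, and the translation trick — commensuration lets an infinite fiber over one $x'$ be carried to a witness in $T$ over every $z\in M\cap Gx$, forcing $M\cap Gx\subseteq\pi_1(T)$ — shows each fiber is finite. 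The gain is real: your proof uses nothing but the abstract $G$-set structure and commensuration of $M$, so it strips the compact generation and local compactness hypotheses from the statement entirely. As a minor point, the paper's phrase ``since $M\cap Gx$ is not transfixed'' is stronger than needed (and stronger than the stated hypothesis); the paper actually only uses that $M\cap Gx$ is infinite while $p_x(W_x\cap W_y)$ is finite, exactly as you do.
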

\begin{proof}
By symmetry, we only have to prove the forward implication. Fix a compact symmetric generating subset $S$ of $G$. Assume that $p_x(W_x\cap W_y)$ is finite. Since $M\cap Gx$ is not transfixed, there exists $z\in M$ not in $p_x(W_x\cap W_y)$. Since the latter is finite, there exists $k$ such that for every $x'\in p_x(W_x\cap W_y)$ there exists $s\in S^k$ such that $z=sx$. In other words, for every $g\in W_x\cap W_y$, there exists $s\in S^k$ such that $z=sgx=p_x(g^{-1}s^{-1})$. Since $z\in M\cap Gx$ and $z\notin p_x(W_x\cap W_y)$, we have $g^{-1}s^{-1}\in W_x$ and it follows that $g^{-1}s^{-1}\notin W_y$, i.e.\ $sgy\notin M$. Thus we have proved that for every $v\in p_y(W_x\cap W_y)$ (which is contained in $M$), there exists $s\in S^k$ such that $sv\notin M$. So the elements in $p_y(W_x\cap W_y)$ are at bounded distance to the boundary of $M$ in the Schreier graph of $X$ with respect to $S$. Since the latter is locally finite and $M$ has a finite boundary, we deduce that $p_y(W_x\cap W_y)$ is finite. 
\end{proof}

If $M\cap Gx$ is finite but not $M\cap Gy$, the conclusion of the previous proposition fails, as shown by elementary examples in \cite[Remark 2.4]{Sco98}.

\section{Commensurating actions of topological groups}\label{comto}

\subsection{The commensurating symmetric group}

If $X$ is a set, let $\SX(X)$ be the group of its permutations. It is endowed with its usual Polish topology, which is a group topology, for which a basis of neighborhoods of the identity is given by point stabilizers. 

Given a topological group $G$ and an abstract $G$-set $X$ endowed with the discrete topology, it follows from the definitions that the following conditions are equivalent:
\begin{itemize}
\item for every $x\in X$, the stabilizer $G_x$ is open in $G$;
\item the structural map $G\times X\to X$ is continuous;
\item the structural homomorphism $G\to\SX(X)$ is continuous.
\end{itemize}
We then say that $X$ is a continuous discrete $G$-set.

Consider now a set $X$ and a subset $M$. Let $\SX(X,M)$ be the group of permutations of $X$ commensurating $M$. It acts faithfully on the set $\textnormal{Comm}_M(X)$ of subsets of $X$ commensurate to $M$. We endow $\SX(X,M)$ with the group topology induced by the inclusion in $\SX(\textnormal{Comm}_M(X))$. 

\begin{lem}\label{basxm}
A basis of neighborhoods of the identity in $\SX(X,M)$ is given by the subgroups $H_M(F)$ for $F$ finite subset of $G$, where $H_M(F)$ is the pointwise stabilizer of $F$ in the stabilizer of $M$. In particular, the inclusion $\SX(X,M)\to\SX(X)$ is continuous.
\end{lem}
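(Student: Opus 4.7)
The plan is to unravel the definition of the topology on $\SX(X,M)$, which is the one induced by the embedding $\SX(X,M)\hookrightarrow \SX(\Comm_M(X))$, and compare it with the family $\{H_M(F):F\subset X\textnormal{ finite}\}$. I would first observe that a basis of neighborhoods of the identity in $\SX(\Comm_M(X))$ is by definition given by pointwise stabilizers of finite subsets $\{N_1,\dots,N_k\}\subset\Comm_M(X)$; hence a basic neighborhood of $1$ in $\SX(X,M)$ is the subgroup $U(N_1,\dots,N_k)$ of elements of $\SX(X,M)$ fixing each $N_i$ setwise.

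The first step is to show each $H_M(F)$ is open. Given a finite $F\subset X$, I would consider the finite collection $\{M\}\cup\{M\tu\{x\}:x\in F\}\subset\Comm_M(X)$ and check that an element $\sigma\in\SX(X,M)$ fixing $M$ setwise and also fixing $M\tu\{x\}$ setwise must satisfy $\sigma(x)=x$, simply because $\sigma$ must preserve the symmetric difference $(M\tu(M\tu\{x\}))=\{x\}$. Thus $U(M,M\tu\{x_1\},\dots,M\tu\{x_n\})\subset H_M(F)$, so $H_M(F)$ is open.

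For the converse, given a basic neighborhood $U(N_1,\dots,N_k)$, I set $F=\bigcup_{i=1}^k (M\tu N_i)$, which is finite since each $N_i$ is commensurable to $M$. If $\sigma\in H_M(F)$, then $\sigma(M)=M$ and $\sigma$ fixes $F$ pointwise; writing $N_i=M\tu(M\tu N_i)$ and using that $\sigma$ commutes with $\tu$, I get $\sigma(N_i)=\sigma(M)\tu\sigma(M\tu N_i)=M\tu(M\tu N_i)=N_i$, so $H_M(F)\subset U(N_1,\dots,N_k)$. Combined with the previous step, this shows the $H_M(F)$ form a basis of neighborhoods of $1$.

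Finally, for the continuity of $\SX(X,M)\to\SX(X)$: a basic neighborhood of the identity in $\SX(X)$ is the pointwise stabilizer of a finite subset $F\subset X$, whose preimage in $\SX(X,M)$ contains $H_M(F)$ and hence is open. The only point to be slightly careful about is the verification in the first step that the indicated pointwise stabilizer really forces $\sigma(x)=x$; but since $\sigma$ preserves both $M$ and $M\tu\{x\}$ setwise, it preserves their symmetric difference, which is the singleton $\{x\}$, so no genuine obstacle arises.
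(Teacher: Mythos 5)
Your proof is correct and follows essentially the same approach as the paper. The only cosmetic difference is that where you use the pair $\{M,\,M\tu\{x\}\}$ to trap $\sigma(x)=x$, the paper uses $\{M\cup\{x\},\,M\smallsetminus\{x\}\}$; these amount to the same observation.
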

\begin{proof}
Let us first observe that the action of $\SX(X,M)$ on $X$ is continuous. Indeed, the stabilizer of $x\in X$ contains the intersection of the stabilizers of $M\cup\{x\}$ and of $M\smallsetminus\{x\}$, and therefore is open. This shows that all $H_M(F)$ are open in $\SX(X,M)$.

Let $\mathcal{P}$ be a finite subset of $\textnormal{Comm}_M(X)$ and $H$ its pointwise stabilizer. Define $F=\bigcup_{N\in\mathcal{P}}M\tu N$. Then $F$ is finite and $H$ contains $H_M(F)$. This shows that the $H_M(F)$ form a basis of neighborhoods of the identity.
\end{proof}

Given a continuous discrete $G$ set and commensurated subset, a natural requirement is that $M$ has an open stabilizer. This is (obviously) automatic if $G$ is discrete, but also, more generally, when $G$ is locally compact, or has an open Polish and separable subgroup, see \S\ref{auco}.
It follows from Lemma \ref{basxm} that this holds if and only if the homomorphism $G\to\SX(X,M)$ is continuous. 

We should note that the automatic continuity does not hold in general. The simplest counterexample is the tautological one: if $X$ is any infinite set and $M$ an infinite subset with infinite complement, then $\SX(X,M)$ is dense in $\SX(X)$ and we see that the stabilizer of $M$ in $\SX(X)$ is not open. Therefore, if we define $G$ as the group $\SX(X,M)$ endowed with the topology induced by the inclusion into $\SX(X)$, then $X$ is a continuous discrete $G$-set and $M$ is commensurated but does not have an open stabilizer.

\begin{rem}
There is a natural faithful action of $\SX(X,M)$ on the two-point compactification of $X$ given by the disjoint union of the one-point compactifications of $M$ and its complement. The compact-open topology then coincides with the topology of $\SX(X,M)$ described above. When both $M$ and its complement are infinite, $\SX(X,M)$ has index 1 or 2 in the full homeomorphism group of this 2-point compactification (the index is 2 precisely when $M$ and its complement have the same cardinality).
\end{rem}

\begin{defn}\label{d_cd}Let $G$ be a topological group. We define a {\em cardinal definite} function on $G$ as a function of the form $\ell_M$ for some continuous discrete $G$-set $X$ and commensurated subset $M\subset X$ with open stabilizer.
\end{defn}

\begin{lem}\label{cardefsu}
On a topological group, a sum of finitely many cardinal definite functions is cardinal definite. More generally, an arbitrary sum of cardinal definite functions, if finite and continuous, is cardinal definite.
\end{lem}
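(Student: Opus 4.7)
The plan is to realize the sum as the length function of a single commensurated subset obtained by a disjoint union construction, and then use integer-valuedness to handle the topology automatically.

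For the finite case, suppose $f = \sum_{i=1}^n f_i$ with each $f_i = \ell_{M_i}$ arising from a continuous discrete $G$-set $X_i$ and commensurated $M_i \subset X_i$ with open stabilizer. I would form the disjoint union $X = \bigsqcup_{i=1}^n X_i$, which is again a continuous discrete $G$-set since the stabilizer of any point is the stabilizer in the corresponding $X_i$, hence open. Setting $M = \bigsqcup_{i=1}^n M_i$, one has $gM \mathop{\triangle} M = \bigsqcup_i (gM_i \mathop{\triangle} M_i)$, so $\ell_M(g) = \sum_i \ell_{M_i}(g) = f(g)$; in particular $M$ is commensurated. Its stabilizer equals $\bigcap_{i=1}^n \mathrm{Stab}(M_i)$, a finite intersection of open subgroups, hence open. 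This shows $f$ is cardinal definite.

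For the general case, let $(f_i)_{i \in I}$ be an arbitrary family with each $f_i = \ell_{M_i}$ for $M_i \subset X_i$ as above, and assume $f = \sum_{i \in I} f_i$ is everywhere finite and continuous. I would apply the same construction: $X = \bigsqcup_{i \in I} X_i$ is a continuous discrete $G$-set, and with $M = \bigsqcup_{i \in I} M_i$ we again have $\ell_M(g) = \sum_{i \in I} \ell_{M_i}(g) = f(g) < \infty$, so $M$ is commensurated with $\ell_M = f$.

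The only remaining point is openness of the stabilizer of $M$, and this is where the step that could potentially cause difficulty actually trivializes: since $\ell_M = f$ takes values in $\mathbf{N}$, one has
\[
\mathrm{Stab}(M) \;=\; \ell_M^{-1}(\{0\}) \;=\; f^{-1}\bigl([0,\tfrac12)\bigr),
\]
which is open by continuity of $f$. Hence $M$ is a commensurated subset with open stabilizer and $\ell_M = f$, so $f$ is cardinal definite. (Equivalently, one can invoke Lemma~\ref{lemcd}, according to which continuity of $\ell_M$ together with its being cardinal definite on $G^\delta$ is enough.) The finite case is a special instance, since a finite sum of continuous functions is continuous, so no separate argument is really needed once the general statement is in place.
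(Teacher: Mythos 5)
Your proof is correct and follows essentially the same route as the paper: form the disjoint union $\bigsqcup X_i$ with $M=\bigsqcup M_i$, observe $\ell_M=\sum\ell_{M_i}=f$, and obtain openness of $\mathrm{Stab}(M)=\{\ell_M=0\}=f^{-1}([0,\tfrac12))$ from continuity of $f$. Your separate treatment of the finite case via a finite intersection of open stabilizers is a harmless extra, as you yourself note.
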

\begin{proof}
Let $(\ell_i)_{i\in I}$ be cardinal definite functions on $G$. Write $\ell_i=\ell_{M_i}$ with $M_i\subset X_i$ is a commensurated subset with open stabilizer, $X_i$ being a continuous discrete $G$-set. Assume that $\sum\ell_i$ is everywhere finite and continuous. Let $X$ be the disjoint union of the $X_i$ and $M\subset X$ the union of the $M_i$. Then $X$ is a continuous discrete $G$-set, $M$ is commensurated by $G$ and $\ell_M=\sum\ell_i$; since it is continuous, $M$ has an open stabilizer (namely $\{\ell_M=0\}$) and thus $\ell_M$ is cardinal definite.
\end{proof}

Let $G$ be a topological group. Denote by $G_\delta$ the group $G$ endowed with the discrete topology.

\begin{lem}\label{lemcd}
A function $G\to\R$ is cardinal definite if and only if it is cardinal definite on $G_\delta$ and is continuous on $G$.
\end{lem}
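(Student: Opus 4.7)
\textit{Forward direction.} Suppose $f = \ell_M$ is cardinal definite on $G$, with witness $(X, M)$ as in Definition \ref{d_cd}. The same witness exhibits $f$ as cardinal definite on $G_\delta$. For continuity of $f$ on $G$, I use the length-function inequality $|f(gh) - f(g)| \le f(h)$, which follows from $M \triangle ghM \subseteq (M \triangle gM) \cup g(M \triangle hM)$; this reduces continuity everywhere to continuity at $1$. There $f$ is $\Z_{\ge 0}$-valued with $f^{-1}(0) = \text{Stab}_G(M)$ open by hypothesis, so $f$ vanishes on an open neighborhood of $1$.

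\textit{Backward direction, preliminary reductions.} Given $f = \ell_M$ cardinal definite on $G_\delta$ and continuous on $G$, the same argument gives $U := \text{Stab}_G(M) = f^{-1}(0)$ open. My plan is to upgrade $(X, M)$ to a witness satisfying Definition \ref{d_cd}. First I reduce to a transitive $G$-set: decomposing $X = \bigsqcup_i Gx_i$ and setting $M_i = M \cap Gx_i$, one has $f = \sum_i \ell_{M_i}$ as a sum of non-negative $\Z$-valued functions; continuity of $f$ at $1$ forces each $\ell_{M_i}$ to vanish on a neighborhood of $1$, and combined with the length-function inequality this makes each $\ell_{M_i}$ continuous. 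By Lemma \ref{cardefsu}, if every $\ell_{M_i}$ is cardinal definite on $G$, then so is $f$. Hence I may assume $X$ is transitive.

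\textit{Backward direction, main step.} I now translate to the walling language: by Proposition \ref{can_bij}, $M$ corresponds to the $G$-walling $W_x = \{h \in G : x \in hM\}$ on $G$. For $u \in U$ one has $huM = hM$, so $W_x U = W_x$, meaning each $W_x$ is a union of left $U$-cosets, hence clopen; moreover the wall distance equals the continuous function $f$. By Proposition \ref{can_bijt}, it suffices to present this clopen $G$-walling as indexed by a continuous discrete $G$-set (with the wall distance unchanged). The orbit $G \cdot M \subset 2^X$ is isomorphic as a $G$-set to $G/U$, a continuous discrete $G$-set. I would reindex the walling by pulling it back through a $G$-equivariant surjection from a suitable continuous discrete $G$-set $X'$ (built from $G/U$ together with enough auxiliary copies to match multiplicities) onto the ``walling content'' of $X$, and then invoke Proposition \ref{can_bijt} to extract the required topological commensurated subset $M' \subset X'$ with $\ell_{M'} = f$.

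\textit{Main obstacle.} The subtle point is the bookkeeping in the reindexing: the naive quotient $x \mapsto W_x$ may collapse wall-indices (altering the multiplicity with which each clopen appears), while merely passing to $G/U$ generally changes the wall-distance by a factor related to the index of $U \cap G_{x_0}$ in $U$. I expect to need to choose explicitly, for each orbit $Gx_i$, an open subgroup $V_i \le G$ with $V_i \cap \text{Stab}(M)$ of the correct finite index, and to set $X' = \bigsqcup_i G/V_i$ with a compatible clopen walling whose wall-count function is exactly $f$. Once this multiplicity matching is in place, Proposition \ref{can_bijt} converts the data into a commensurated subset with open stabilizer on a continuous discrete $G$-set, which is the desired witness for cardinal definiteness of $f$ on the topological group $G$.
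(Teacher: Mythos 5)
Your forward direction is correct, and your reduction to the transitive case is exactly the paper's (decompose into orbits, note that the non-negative integer-valued summands $\ell_{M_i}$ each vanish on an open neighborhood of $1$, then invoke Lemma~\ref{cardefsu}). The problem is the ``main step,'' and you say so yourself: the multiplicity bookkeeping is left as an acknowledged obstacle, so the proposal is not a proof.

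The missing idea is the following. For a transitive $X$ with $M$ not $G$-invariant, $\bigcup_{g,h}(gM\tu hM)$ is a nonempty $G$-invariant subset, hence equals $X$. Consider the $G$-invariant equivalence relation $x\sim y$ defined by $\mathbf 1_{gM}(x)=\mathbf 1_{gM}(y)$ for all $g\in G$ (equivalently $W_x=W_y$ in your walling picture); each class is contained in any $gM\tu hM$ containing a point of it and hence is \emph{finite}, and by transitivity all classes have the same cardinality $n$. The quotient $X'=X/\!\sim$ is then a continuous discrete $G$-set: the stabilizer of a class is a finite intersection of stabilizers of sets of the form $gM\tu hM$, and each of those contains the open subgroup $g\,\mathrm{Stab}(M)\,g^{-1}\cap h\,\mathrm{Stab}(M)\,h^{-1}$. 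Since $M$ is a union of classes, its image $M'\subset X'$ is well-defined, has open stabilizer (containing $\mathrm{Stab}(M)=\ell_M^{-1}(0)$, open by continuity), and satisfies $\ell_M=n\,\ell_{M'}$. Lemma~\ref{cardefsu} then gives that $\ell_M$ is cardinal definite on the topological group $G$. Note that the direction of the map matters: the paper \emph{quotients} $X\to X'$ and compensates the resulting factor $n$ by summation, whereas your plan of a surjection $X'\to X$ from a continuous discrete $G$-set and then pulling back cannot keep the wall count unchanged unless the surjection is a bijection, in which case $X'\cong X$ is again non-continuous. Also, the relevant base space is $G/V$ with $V$ the (open) stabilizer of a $\sim$-class, not $G/U$ with $U=\mathrm{Stab}(M)$; these subgroups are generally different, so ``$G/U$ plus auxiliary copies'' is not the right ansatz.
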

\begin{proof}
The ``only if" condition is clear. Conversely, suppose that $\ell$ is continuous and cardinal definite on $G_\delta$. So $\ell=\ell_M$ for some $G$-set $X$ and commensurated subset $M$. Note that $X$ may not be a continuous $G$-set. Decomposing $X$ into $G$-orbits, each of the corresponding cardinal definite functions on $G_\delta$ are continuous (since they vanish on the stabilizer of $M$, which is open). Hence in view of Lemma \ref{cardefsu}, we can suppose that $X$ is $G$-transitive. We can suppose that $M$ is not invariant; thus $\bigcup_{g,h\in G}gM\tu hM$ is not empty; being $G$-invariant, it is therefore equal, by transitivity, to all of $X$. The Boolean algebra generated by the $gM\tu hM$ when $g,h$ range over $G$ defines an invariant partition of $X$ by finite subsets; by transitivity all these subsets have the same cardinal $n$.
 
Note that $M$ is a union of components of the partition. Define $X'$ as the quotient of $X$ by this partition and $M'$ the image of $M$ in $X'$. The stabilizer of any element of $X'$ is the stabilizer of some component of the partition of $X$ and thus, as a finite intersection of subsets of the form $gM\tu hM$, is open. So $X'$ is a continuous discrete $G$-set. Since $\ell_M$ is open, the stabilizer of $M$, and hence of $M'$, is open. So $\ell_{M'}$ is cardinal definite. Since $\ell_{M}(g)=n\ell_{M'}(g)$, we deduce from Lemma \ref{cardefsu} that $\ell_M$ is cardinal definite. 
\end{proof}

%\begin{rem}If $M$ is infinite, the commensurating symmetric group $\SX(X,M)$ can be identified with the self-homeomorphism group, with the compact-open topology, of the disjoint union $\hat{M}\sqcup (X\smallsetminus M)$, where $\hat{M}$ is the one-point compactification of $M$.\end{rem}REDONDANT

\begin{que}
If $P$ is a set, define a {\em cardinal definite kernel} on $P$ as a function $P\times P\to\R$ of the form \[(x,y)\mapsto\kappa(x,y)=\#(A_x\tu A_y)\] for some set $X$ and function $x\mapsto A_x$ from $P$ to the power set of $X$, requiring that $\kappa$ takes finite values, i.e.\ that the $A_x$ are in a single commensuration class. Clearly is $G$ is a group and $f$ is a cardinal definite function on $G$ then $(g,h)\mapsto f(g^{-1}h)$ is a left-invariant cardinal definite kernel. Conversely, if $\kappa$ is a left-invariant cardinal definite kernel, is the function $g\mapsto 2f(1,g)$ cardinal definite? The analogous question for measure definite kernels and functions is also open, see \cite[Remark 2.9]{CSV}. (Here the factor 2 is needed, otherwise this is false, e.g.\ if $G$ is a cyclic group of order 2.)
\end{que}

\subsection{Cofinality $\neq\omega$}\label{s_cof}

Here we give a finiteness result on commensurating actions of topological groups with uncountable cofinality. It is useful even in the special case of a finitely generated acting group. The following definition is classical for discrete groups.

\begin{defn}
We say that a topological group has {\em uncountable cofinality} (or {\em cofinality} $\neq\omega$) if it cannot be written as the union of an infinite (strictly) increasing sequence of open subgroups, or equivalently if any continuous isometric action of $G$ on any ultrametric space has bounded orbits. \end{defn}

For instance, if $G$ is generated by a compact subset, then it has cofinality $\neq\omega$. For $\sigma$-compact locally compact groups, the converse holds; in particular, a countable discrete group has cofinality $\neq\omega$ if and only if it is finitely generated. On the other hand, there exist uncountable discrete groups with cofinality $\neq\omega$, such as the full group of permutations of any infinite set.

%For instance, a countable discrete group has cofinality $\neq\omega$ if and only if it is finitely generated, and more generally a $\sigma$-compact locally compact group has cofinality $\neq\omega$ if and only if it is compactly generated. 

\begin{prop}\label{cgco2}
Let $G$ be a topological group with uncountable cofinality. Let $X$ be a discrete continuous $G$-set.
Let $M\subset X$ be a commensurated subset with an open stabilizer, and let $(X_i)_{i\in I}$ be the orbit decomposition of the $G$-set $X$. Then $X_i\cap M$ is $G$-invariant for all but finitely many $i$'s. 
\end{prop}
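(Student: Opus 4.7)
The plan is to argue by contradiction and exhibit a strictly increasing chain of open subgroups exhausting $G$, which is forbidden by the hypothesis. So suppose $J := \{i \in I : M_i \text{ is not } G\text{-invariant}\}$ is infinite, where $M_i := M \cap X_i$. Pick distinct elements $j_1, j_2, \dots \in J$ and set
\[
H_n := \bigcap_{k > n} \mathrm{Stab}_G(M_{j_k}).
\]
I will verify four properties: (a) each $H_n$ is open, (b) $H_n \subseteq H_{n+1}$, (c) each $H_n$ is a proper subgroup, and (d) $\bigcup_n H_n = G$.

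Property (b) is immediate from the definition, and (c) follows since $j_{n+1} \in J$ provides some $g_0 \in G$ with $g_0 M_{j_{n+1}} \neq M_{j_{n+1}}$, so $g_0 \notin \mathrm{Stab}(M_{j_{n+1}}) \supseteq H_n$. The key point for (a) is that $\mathrm{Stab}_G(M) \subseteq H_n$: indeed if $gM = M$, then since $G$ permutes the orbits (each setwise invariant under $G$) we have $gM_i = gM \cap gX_i = M \cap X_i = M_i$ for every $i$. Because $\mathrm{Stab}_G(M)$ is open by hypothesis and any subgroup containing an open subgroup is open, $H_n$ is open. For (d), note that $M \tu gM = \bigsqcup_i (M_i \tu gM_i)$ gives $\ell_M(g) = \sum_i \ell_{M_i}(g)$; since this sum is finite, $g$ stabilizes $M_i$ for all but finitely many $i$, so $g \in H_n$ as soon as $n$ is large enough that no exceptional orbit occurs among $j_{n+1}, j_{n+2}, \dots$.

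Given (c) and (d), I extract a strictly increasing subsequence as follows: having chosen $n_1 < \dots < n_k$, pick $g \in G \smallsetminus H_{n_k}$ (possible by (c)), and take $n_{k+1}$ to be the least index with $g \in H_{n_{k+1}}$ (exists by (d)); necessarily $n_{k+1} > n_k$ and $H_{n_k} \subsetneq H_{n_{k+1}}$. Since $n_k \to \infty$ and the $H_n$ are already increasing with union $G$, we still have $\bigcup_k H_{n_k} = G$, producing an infinite strictly increasing chain of open subgroups exhausting $G$, contradicting uncountable cofinality. The only subtle step is checking that $H_n$ is open; that is where the openness of $\mathrm{Stab}_G(M)$ enters in an essential way (an infinite intersection of open subgroups is not a priori open), and this is the one place the hypothesis that $M$ has open stabilizer gets used.
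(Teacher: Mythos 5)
Your proof is correct and takes essentially the same approach as the paper: you define the same chain of open subgroups (the paper's $G_n$ is your $H_n$ with $J_n=\{j_k:k>n\}$), use the same key observation that $\mathrm{Stab}_G(M)\subseteq H_n$ gives openness, and the same finiteness of $M\tu gM$ gives exhaustion. The only cosmetic difference is that the paper derives $G_n=G$ for some $n$ directly from uncountable cofinality and contradicts the definition of $J$, whereas you first verify properness of each $H_n$ and extract a strictly increasing subsequence; both are immediate reformulations of the same argument.
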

\begin{proof}
Let us first give the argument when $G$ is generated by a compact set $S$. Since $M$ has an open stabilizer, the function $g\mapsto gM$ is locally constant and thus has a finite image in restriction to $S$, and in particular the union $\bigcup_{s\in S}(M\tu sM)$ is finite and thus is contained in the union of finitely many $G$-orbits; if $Z$ is another $G$-orbit, it follows that $Z$ is $G$-invariant.

Let us now prove the general case. Let $J$ be the set of $i$ such that $X_i\cap M$ is not $G$-invariant. We need to show that $J$ is finite. Otherwise, there exists a decreasing sequence of non-empty subsets $J_n\subset J$ with $\bigcap J_n=\emptyset$. Define 
\[G_n=\{g\in G:\;\forall i\in J_n:g(M\cap X_i)=M\cap X_i\}.\]
Note that $G_n$ contains the stabilizer of $M$, which is open by assumption. 
So $(G_n)$ is an ascending sequence of open subgroups, and $\bigcup G_n=G$ because for a fixed $g$, if $n$ is large enough, the finite subset $M\tu gM$ does not intersect $\bigcup_{i\in J_n}X_i$. So by the cofinality assumption, $G=G_n$ for some $n$, that is, $M\cap X_i$ is $G$-invariant for all $i\in J_n$. This contradicts the definition of $J$.
\end{proof}

\begin{cor}\label{ficd}
Let $G$ be a topological group with uncountable cofinality. Then every cardinal definite function on $G$ is a finite sum of cardinal definite functions associated to transitive actions of $G$.\qed
\end{cor}

Using the dictionary between commensurating actions and wallings (Proposition \ref{can_bijt}), we get

\begin{cor}\label{finw}
Let $G$ be a topological group with uncountable cofinality. Then for every clopen $G$-walling on $G$, there are finitely many $G$-orbits of halfspaces distinct from $\emptyset$ and $G$. In particular, halfspaces distinct from $\emptyset$ and $G$ have a bounded multiplicity.
\qed\end{cor}

Proposition \ref{cgco2} also has a geometric interpretation for actions on median graphs, see Corollary \ref{fmo}.

%\begin{cor}\label{fior}Let $G$ be a topological group with uncountable cofinality and let $G$ act continuously by automorphisms on a connected median graph $V$; fix a vertex $x_0\in V$. Let $\mathcal{H}=\mathcal{H}_V$ be the set of biconvex subsets in $V$. Let $\mathcal{H}(Gx_0)$ be the set of biconvex subsets containing the whole orbit $Gx_0$ and $\mathcal{H}'(Gx_0)$ the set of biconvex subsets whose complement contains the entire orbit $Gx_0$ (these are clearly disjoint $G$-invariant subsets of $\mathcal{H}$). Then \[\mathcal{H}\smallsetminus (\mathcal{H}(Gx_0)\sqcup\mathcal{H}'(Gx_0))\]consists of finitely many $G$-orbits for the action of $G$ on $\mathcal{H}$.\qed\end{cor}

\subsection{Automatic continuity}\label{auco}

%Given a topological space $X$, a family of subsets $(Y_i)_{i\in I}$ has {\em locally bounded multiplicity} if the function $f=\sum_{i\in I}\mathbf{1}_{Y_i}$ is finite and for every $x\in X$ there exists a neighborhood $V$ of $x$ such that $f$ is bounded on $V$.

%A second automatic continuity statement holds in the following case, which includes Polish groups.

\begin{prop}\label{autcon1}
Let $G$ be a Baire separable topological group (e.g., a Polish group). 
Let $G$ act continuously on a discrete set $X$ with a commensurated subset $M$.
 Then the corresponding homomorphism $G\to\SX(X,M)$ is continuous (i.e., $M$ has an open stabilizer, or, still equivalently, $\ell_M$ is continuous).
%Then $G$ satisfies the automatic continuity property of Proposition \ref{autcon2}.
\end{prop}
\begin{proof}
Let $D$ be a dense countable subgroup. Define $Y=\bigcup_{g\in D}M\tu gM$. Then $Y$ is a countable $D$-invariant subset of $X$; since $Y$ has a closed stabilizer in $G$, it follows that $Y$ is $G$-invariant. Then $M\cap Y^c$ is $G$-invariant. Thus the stabilizer $H$ of $M$ in $G$ is equal to the stabilizer of $M\cap Y$, which has countable index since the orbit of $M\cap Y$ ranges over subsets of $Y$ that are commensurate to $M\cap Y$. Moreover, $H$ is a closed subgroup of $G$ since the $G$-action on $X$ is continuous. By the Baire property, it follows that $H$ is an open subgroup of $G$. 
\end{proof}

Note that the result immediately extends to topological groups having a dense separable open subgroup. Let us now provide a result encompassing locally compact groups.

For a topological group $G$, consider the following property:

\begin{itemize}
%\item  (*) every family of nonempty clopen subsets of $G$ with locally bounded multiplicity is countable; 
\item[(*)] if $o(G)$ is the intersection of all open subgroups of $G$, then $G/o(G)$ is non-Archimedean, in the sense that it admits a basis of neighborhoods of 1 consisting of open subgroups. 
\end{itemize}

% For instance, every locally compact group satisfies (*), as well as any closed subgroup of the group of permutations of any set.

\begin{exe}
For a Hausdorff topological group $G$, we have the following chain of implications: $G$ is non-Archimedean $\Rightarrow$ the intersection of open subgroups of $G$ is reduced to $\{1\}$ $\Rightarrow$ $G$ is totally disconnected.

For a locally compact group, all implications are equivalences and thus every locally compact group satisfies (*). 
However, in general both implications are not equivalences, even in the realm of abelian Polish groups. An example of a nontrivial Polish group that is totally disconnected, but with no open subgroup other than itself is given by C.~Stevens in \cite{Ste}. An example of a Polish group in which the intersection of open subgroups is trivial, but that is not non-Archimedean is the group of functions $\N\to\Q$ such that $\lim_{+\infty}f=0$, under addition, with the topology defined by the invariant complete distance 
\[d(f,g)=\|f-g\|_\infty+\sum_{n\ge 0}2^{-n}\delta_{0,f(n)-g(n)},\]
where $\delta$ is the Kronecker symbol.

Note that all topological subgroups of $\SX(X)$, for any set $X$, are non-Archimedean and thus satisfy (*). Actually, a Polish group is non-Archimedean if and only if it is isomorphic to some closed subgroup of $\SX(\N)$.
\end{exe}

%Note that locally compact groups satisfy (*). Also, any closed subgroup of $\SX(X)$ for any set $X$ satisfies (*) (these include all totally disconnected locally compact groups).Given 

\noindent 

%.  Suppose that $X$ is countable, or that $G$ has an open subgroup satisfying either (*) or (**).

\begin{prop}\label{autcon2}
Let $G$ be a Baire topological group with Property (*). 
Then $G$ satisfies the automatic continuity property of Proposition \ref{autcon1}.
%Let $G$ act continuously on a discrete set $X$ with a commensurated subset $M$. Then the corresponding homomorphism $G\to\SX(X,M)$ is continuous (i.e., $M$ has an open stabilizer, or, still equivalently, $\ell_M$ is continuous).
\end{prop}
\begin{proof}
%If $X$ is countable, the commensuration class $\Comm_M(X)$ is countable, and hence the stabilizer $H$ of $M$ has an at most countable index, say $G=\bigcup_n g_n H$. Again using the countability (of $M$ now), we see that $M$ is a countable union of closed subsets $M_j$ (each $M_j$ being a countable intersection of left coset of point stabilizers). Thus $G$ is the countable union of closed subsets $\bigcup_{n,j}g_nM_j$. By the Baire property, some $g_nM_j$ has a nonempty interior. So $M$, which contains $g_n^{-1}(g_nM_j)$, has a nonempty interior, and since it is a subgroup, it follows that $M$ is open.Let us now prove the other cases ($X$ is no longer assumed to be countable). Since we need to prove the continuity of a homomorphism, we can pass to an open subgroup and suppose that $G$ satisfies either (*) or (**).

Use the notation of Proposition \ref{autcon1}. Observe that
\begin{align*}\ell_M(g)= & \#(M\tu gM)=\#(gM\smallsetminus M)+\#(g^{-1}M\smallsetminus M)\\ =& \sum_{x\notin M}\mathbf{1}_{gM}(x)+\mathbf{1}_{g^{-1}M}(x).\end{align*}
Since each $x\in X$ has an open stabilizer, each function $x\mapsto u_x(g)=\mathbf{1}_{gM}(x)+\mathbf{1}_{g^{-1}M}(x)$ is continuous, as well as each finite sum of these. It follows that $\ell_M$, as a filtering supremum of continuous functions, is lower semicontinuous. Hence for every $r$, $K_r=\{x\in G:\ell(x)\le r\}$ is closed. By the Baire property, there exists $r$ such that $K_r$ has non-empty interior. Note that $K_r$ is symmetric and $K_rK_r\subset K_{2r}$. It follows that $K_{2r}$ is a neighborhood of 1 in $G$. 

%Now we conclude in two different manners. If $G$ satisfies (*), define $U_x=\{g\mid u_x(g)\neq 0\}$; the above shows that the family $(U_x)_{x\in X}$ By (*), it follows that the set $X'$ of $x$ such that $U_x$ is nonempty is at most countable; it is clearly $G$-invariant. Thus the stabilizer of $M$ equals the stabilizer of $M\cap X'$ and therefore is open by the countable case. Suppose that $G$ satisfies (**) instead.

The action of $o(G)$ on $X$ is trivial and $\ell_M$ is $o(G)$-invariant; thus $\ell_M$ is bounded on $o(G)K_{2r}$. By (*), the latter contains an open subgroup $L$ of $G$. Since $\ell_M$ is bounded on $L$, the subset $M$ is $L$-transfixed (by Theorem \ref{btx}), and hence has the same stabilizer as some finite subset of $X$. By continuity of the action on $X$, we deduce that $M$ has an open stabilizer.
\end{proof}

%It is obvious that if $X,Y$ are topological spaces with a continuous injection of $X$ into $Y$ with dense image and $X$ satisfies Condition (*) of Lemma \ref{autcon2} then so does $Y$. In particular, separable topological spaces satisfy Condition (*). This is also true for $\sigma$-compact locally compact groups:

%\begin{lem}\label{lcbm}
%Let $G$ be a $\sigma$-compact locally compact group. Then every family of nonempty open subsets with locally bounded multiplicity is countable. \end{lem}

%Note that, in contrast, the Stone-Cech boundary of $\N$ is a compact space containing continuum many pairwise disjoint clopen subsets and thus fails to satisfy (*) of Lemma \ref{autcon2}.

%\begin{proof}[Proof of Lemma \ref{lcbm}]Fix a left Haar measure $\lambda$ and write $G=\bigcup_n\Omega_n$, where each $\Omega_n$ is an open subset with compact closure. Suppose that $(U_i)_{i\in I}$ is an uncountable family of nonempty open subsets in $G$ with locally bounded multiplicity. 

%Then for some $n$, the set $J_n$ of $i$ such that $V_i=U_i\cap\Omega_n$ is nonempty is uncountable. Since the multiplicity is locally bounded, it is bounded by a certain number $k$ on $\Omega_n$. Therefore, for every finite subset $J$ of $I$ we have \[\sum_J\lambda(V_i)=\sum_J\int_{\Omega_n}\mathbf{1}_{V_i}=\int_{\Omega_n}\sum_J\mathbf{1}_{V_i}\le \lambda(\Omega_n)k\]thus the family $(\lambda(V_i))$ is summable and thus has a countable support, contradicting $\lambda(V_i)>0$ for all $i\in J_n$.\end{proof}

\begin{cor}\label{cauco}
Let $G$ be a locally compact group. Then for every continuous discrete $G$-set, every commensurated subset has an open stabilizer. In other words, for every pair of sets $M\subset X$ and homomorphism $f:G\to\SX(X,M)$, the continuity of the composite map $G\to\SX(X)$ implies the continuity of $f$.\qed
\end{cor}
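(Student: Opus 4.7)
The plan is to deduce Corollary \ref{cauco} directly from Proposition \ref{autcon2}, so the task reduces to checking that any locally compact group $G$ satisfies the hypotheses of that proposition, namely that $G$ is Baire and possesses an open subgroup satisfying either condition~(*) or~(**).

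First, I would recall the classical fact that every locally compact Hausdorff space is Baire, so $G$ is a Baire topological group. Next, to supply an open subgroup satisfying~(*), let $K$ be a compact symmetric neighborhood of the identity and take $H=\bigcup_{n\ge 1}K^n$. Then $H$ is an open, compactly generated, hence $\sigma$-compact, subgroup of $G$. By Lemma~\ref{lcbm}, every family of nonempty open subsets of $H$ with locally bounded multiplicity is countable, so $H$ satisfies~(*).

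With these two ingredients in place, Proposition~\ref{autcon2} applies to any continuous discrete $G$-set $X$ and commensurated subset $M$: the function $\ell_M$ is continuous on $G$, equivalently the stabilizer of $M$ is open, equivalently the homomorphism $G\to\SX(X,M)$ is continuous. This gives the two equivalent formulations stated in the corollary.

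There is essentially no obstacle here; the only delicate point is that it must be the \emph{open} subgroup $H$ (not all of $G$) that is required to satisfy~(*), which is exactly how Proposition~\ref{autcon2} is phrased. (If one preferred to invoke~(**) instead, one would use van Dantzig's theorem: since $o(G)$ contains the identity component $G^0$ and $G/G^0$ is a totally disconnected locally compact group, $G/o(G)$ admits a basis of neighborhoods of the identity consisting of open subgroups, verifying~(**) directly for $G$ itself.)
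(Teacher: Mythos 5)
Your proof is correct and matches the route the paper intends: the remark immediately preceding the corollary ("Note that locally compact group also satisfy (**)") together with Lemma \ref{lcbm} (which supplies (*) for $\sigma$-compact open subgroups) are exactly the ingredients that make Proposition \ref{autcon2} applicable, and the Baire property of locally compact Hausdorff spaces is the remaining standard fact. Both your main argument via (*) and the parenthetical via (**) are valid and are the two options the paper leaves implicit behind the \textquotedblleft\qedsymbol\textquotedblright.
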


\subsection{Affine $\ell^p$-action}

%Let $X$ be a set and $M$ a subset. We denote by $\R^X$ the space of all functions $X\to\R$. By $p$ we denote any real number in $[1,\infty\mathclose[$.Define\[\ell^p_M(X)=\{f\in\R^X:f-1_M\in\ell^p(X)\}.\] It is endowed with a canonical structure of an affine space over $\ell^p(X)$ and the corresponding $\ell^p$-distance. It only depends on the commensuration class of $M$.Also define
%\[\ell^\circ_M(X)=\{f\in\R^X:f-1_M\in\ell^\circ(X)\},\] where $\ell^\circ(X)=\R^{(X)}$ is the space of finitely supported functions. So $\ell^\circ_M(X)\subset \ell^p_M(X)\subset \ell^q_M(X)$ for all $p\le q$. By the symbol $\star$, we mean either $p$ or $\circ$. We also denote, for $I\subset\R$, the subset $\ell^\star_M(X,I)$ as the set of elements in $\ell^\star_M(X)$ with values in $I$.
 
Let $X$ be a set and $M$ a subset. We denote by $\R^X$ the space of all functions $X\to\R$, and by $\ell^\circ(X)=\R^{(X)}$ its space consisting of of finitely supported functions. 
By $p$ we denote any real number in $[1,\infty\mathclose[$. By the symbol $\star$, we mean either $p$ or $\circ$.
Define
\[\ell^\star_M(X)=\{f\in\R^X:f-1_M\in\ell^\star(X)\}=\ell^\star(X)+1_M.\]  It only depends on the commensuration class of $M$, and for $M$ finite it is equal to $\ell^\star(X)$. The subset $\ell^p_M(X)$ is endowed with a canonical structure of an affine space over $\ell^p(X)$ and the corresponding $\ell^p$-distance. We have $\ell^\circ_M(X)\subset \ell^p_M(X)\subset \ell^q_M(X)$ for all $p\le q$. We also denote, for $I\subset\R$, the subset $\ell^\star_M(X,I)$ as the set of elements in $\ell^\star_M(X)$ with values in $I$.

%Also define\[\ell^\circ_M(X)=\{f\in\R^X:f-1_M\in\ell^\circ(X)\},\]
% where $\ell^\circ(X)=\R^{(X)}$ is the space of finitely supported functions.  By the symbol $\star$, we mean either $p$ or $\circ$. 

There is a natural linear action of $\SX(X)$ on $\R^X$, which preserves the subspaces $\ell^\star(X)$. The stabilizer of each of the affine subspaces $\ell^\star_M(X)$ is precisely $\SX(X,M)$.

\begin{lem}For every $p$, the action of $\SX(X,M)$ on $\ell^p_M(X)$ is continuous.
\end{lem}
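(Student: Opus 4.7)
The plan is to verify that the action is well-defined and affine-isometric (each element acting as an isometry), then to reduce joint continuity to continuity at the identity of the orbital maps, and finally to handle that case by approximation with finitely supported vectors.

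First, I would observe that the natural action $\sigma\cdot f=f\circ\sigma^{-1}$ on $\R^X$ preserves $\ell^p(X)$ as an isometry (an elementary change of variable), and that for $\sigma\in\SX(X,M)$ we have $\sigma\cdot 1_M=1_{\sigma M}$ with $1_{\sigma M}-1_M$ finitely supported. Writing $f=1_M+h$ with $h\in\ell^p(X)$, we get $\sigma\cdot f=1_{\sigma M}+\sigma\cdot h\in\ell^p_M(X)$, so the action is well-defined. Moreover, for $f,g\in\ell^p_M(X)$, $\sigma\cdot f-\sigma\cdot g=\sigma\cdot(f-g)\in\ell^p(X)$ and $\|\sigma\cdot f-\sigma\cdot g\|_p=\|f-g\|_p$, so each $\sigma$ acts as an isometry.

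Next, joint continuity reduces to continuity at the identity of every orbital map. Indeed, for $\sigma_0\in\SX(X,M)$ and $f_0\in\ell^p_M(X)$ we have
\[\|\sigma\cdot f-\sigma_0\cdot f_0\|_p\;\le\;\|\sigma\cdot f-\sigma\cdot f_0\|_p+\|\sigma\cdot f_0-\sigma_0\cdot f_0\|_p\;=\;\|f-f_0\|_p+\|\sigma_0^{-1}\sigma\cdot f_0-f_0\|_p,\]
using the isometric property of $\sigma$ and $\sigma_0^{-1}$. The first term is small when $f$ is close to $f_0$; the second reduces, via the substitution $\tau=\sigma_0^{-1}\sigma$, to showing that $\tau\mapsto\tau\cdot f_0$ is continuous at $\tau=\id$.

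For the last step, fix $f_0=1_M+h\in\ell^p_M(X)$ and $\varepsilon>0$. Since $h\in\ell^p(X)$, choose a finitely supported $h_0\in\ell^\circ(X)$ with $\|h-h_0\|_p<\varepsilon$, and set $F=\Supp(h_0)$. By Lemma \ref{basxm} the subgroup $H_M(F)$ (pointwise stabilizer of $F$ inside the stabilizer of $M$) is an open neighborhood of the identity in $\SX(X,M)$. For $\tau\in H_M(F)$, we have $\tau\cdot 1_M=1_M$ (since $\tau M=M$) and $\tau\cdot h_0=h_0$ (since $\tau$ fixes $F$ pointwise and hence setwise, forcing $\tau^{-1}$ to fix both $F$ and its complement setwise). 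Therefore
\[\tau\cdot f_0-f_0=\tau\cdot(h-h_0)-(h-h_0),\]
and the triangle inequality together with the isometric action of $\tau$ on $\ell^p(X)$ yields $\|\tau\cdot f_0-f_0\|_p\le 2\|h-h_0\|_p<2\varepsilon$. Combined with the reduction above, this gives joint continuity of the action. The main obstacle is purely expository: the fact that $\tau\cdot h_0=h_0$ whenever $\tau$ fixes $\Supp(h_0)$ pointwise, which is the only place where the detailed description of the neighborhood basis intervenes.
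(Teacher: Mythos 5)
Your proof is correct and follows essentially the same strategy as the paper's: reduce to continuity of orbital maps at the identity (using the isometric property), and then note that for vectors in the dense subspace $\ell^\circ_M(X)$ the orbital map is locally constant near the identity because the pointwise stabilizer of the finite support together with the stabilizer of $M$ is an open subgroup by Lemma~\ref{basxm}. The only difference is presentational: the paper invokes the standard reduction to a dense subset in one sentence and notes that such points have open stabilizers, whereas you spell out the $\varepsilon$-approximation explicitly; both rest on exactly the same two facts.
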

\begin{proof}
Since this action is isometric, it is enough to check that the orbital map $i_x:g\mapsto gx$ is continuous for every $x$ ranging over a dense subset, namely $\ell^\circ_M(X)$. For such an $x$, the stabilizer is open and thus the continuity of the orbital map follows.
\end{proof}

Let us observe that the normed affine spaces $\ell^\star_M(X)$ as well as the actions of $\SX(X,M)$ only depend on the commensuration class of $M$.

We endow $\SX(X,M)$ with the left-invariant pseudo-distance $d_M(g,h)=\#(gM\tu hM)$. Note that is is continuous, but does not define the topology of $\SX(X,M)$ since it is not Hausdorff (for $\#(X)\ge 3$); however the topology of $\SX(X,M)$ is defined by the family of pseudo-distances $d_N$ when $N$ ranges over subsets commensurate to $M$. We have $d_M(g,h)=\ell_M(g^{-1}h)$, with the length $\ell_M$ defined by $\ell_M(g)=\#(M\tu gM)$.

\begin{prop}\label{sxlp2}
The action of $\SX(X,M)$ on $\ell^p_M(X)$ is faithful, continuous and metrically proper. Moreover, the injective homomorphism
\[\alpha_p:\SX(X,M)\to\textnormal{Isom}(\ell^p_M(X))\]
has a closed image, namely the set $\Xi_M^p(X)$ of affine isometries of $\ell^p_M(X)$ that preserve the set of points in $\ell^p_M(X,\{0,1\})$, and whose linear part preserves the closed cone $\ell^p(X,[0,\infty\mathclose[)$.
\end{prop}
\begin{proof}
Note that the set $\ell^p_M(X,\{0,1\})$ is equivariantly identified with the set of indicator functions of elements of $\textnormal{Comm}_M(X)$. Since $\SX(X,M)$ acts faithfully on $\textnormal{Comm}_M(X)$, it follows that the action on $\ell^p_M(X)$ is faithful.

Another consequence is that if both $M$ and its complement are infinite, $\alpha_p(\SX(X,M))$ and $\Xi_M^p(X)$ are both transitive on $\ell^p_M(X,\{0,1\})$. If $M$ or its complement is finite, it still holds that $\alpha_p(\SX(X,M))$ and $\Xi_M^p(X)$ have the same orbits on $\ell^p_M(X,\{0,1\})$, by an argument left to the reader.

Let us check that $\alpha_p(\SX(X,M))=\Xi_M^p(X)$. The inclusion $\subset$ is clear; conversely, given $\phi\in\Xi_M^p(X)$, after composition by an element of $\alpha_p(\SX(X,M))$ (using the previous observation about orbits), we obtain an element $\phi_1$ with $\phi_1(1_M)=1_M$. The 1-sphere in $\ell^p_M(X,\{0,1\})$ around $1_M$ can be described as the disjoint union $A\sqcup B$, wherein 
$A$ consists of elements of the form $1_{M\cup\{x\}}=1_M+\delta_x$ for $x\notin M$ and $B$ of elements of the form $1_{M\smallsetminus\{x\}}=1_M-\delta_x$ for $x\in M$. Note that $\phi_1$ preserves this 1-sphere. Since it moreover satisfies the condition on the linear part, it preserves both $A$ and $B$. Thus the actions of $\phi_1$ on $A$ and $B$ defines a permutation $\sigma$ of $X$ preserving $M$ by $\phi_1(1_M+\delta_x)=1_M+\delta_{\sigma(x)}$ for $x\notin M$ and $\phi_1(1_M-\delta_x)=1_M-\delta_{\sigma(x)}$ for $x\in M$. Thus $\alpha_p(\sigma)$ and $\phi_1$ coincide on $1_M$ and $A\cup B$, which together generate affinely $\ell^p_M(X)$. Thus $\phi_1=\alpha_p(\sigma)$ and we deduce that $\phi\in\alpha_p(\SX(X,M))$.

Finally we see the metric properness as a consequence of the equality $\|g1_M-h1_M\|^p=d_M(g,h)$.
\end{proof}

\subsection{Boundedness and commensuration}

Note that if $N$ is commensurate to $M$, then $d_M-d_N$ is bounded. In particular, the bornology on $\SX(X,M)$ defined by $d_M$ is canonical.

The affine action gives a short proof of the following combinatorial result of Brailovsky, Pasechnik and Praeger \cite{BPP}. Recall from the introduction that in a $G$-set $X$, a subset $M$ is {\em transfixed} if there is a $G$-invariant subset $N$ commensurate to $M$, i.e.\ satisfying $\#(M\tu N)<\infty$.

\begin{thm}\label{btx}
Let $G$ be a subgroup of $\SX(X,M)$. Then $\ell_M(G)$ is bounded if and only if $M$ is transfixed by $G$.
\end{thm}
\begin{proof}
Obviously if $N$ is commensurate to $M$ and $G$-invariant then $d_M$ is bounded by $2\#(M\tu N)$ on $G$. Conversely, assume that $d_M$ is bounded on $G$. Then the action of $G$ on $\ell^2_M(X)$ has bounded orbits. By the center lemma (see \cite[Lemma 2.2.7]{BHV}), it has a fixed point $f$. Then, since $f$ is $G$-invariant, the subset $\{x\in X:\;f(x)\ge 1/2\}$ is $G$-invariant; moreover since $f\in\ell^2_M(X)$, this subset is commensurate to $M$.
\end{proof}

This provides an analogue of Corollary \ref{ficd} for {\em bounded} cardinal definite functions, relaxing the cofinality hypothesis.

\begin{cor}\label{bdf}Let $G$ be a topological group. Then every bounded cardinal definite function on $G$ is a finite sum of (bounded) cardinal definite functions associated to transitive actions of $G$, and cannot be written as an infinite sum of nonzero cardinal definite functions.
\end{cor}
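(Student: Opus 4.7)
The plan is to use Theorem~\ref{btx} as the central input: a bounded cardinal definite function $\ell_M$ forces $M$ to be transfixed, meaning there is a $G$-invariant subset $N\subset X$ with $F:=M\tu N$ finite. This finiteness of $F$ is what gives the finite-sum decomposition.

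First I would prove the decomposition statement. Write $X=\bigsqcup_{i\in I}X_i$ as its partition into $G$-orbits, and set $M_i=M\cap X_i$, $N_i=N\cap X_i$, $F_i=F\cap X_i$. Each $N_i$ is $G$-invariant in the single orbit $X_i$, hence equal to $\emptyset$ or $X_i$, and $M_i=N_i\tu F_i$. Since $G$ permutes the $X_i$'s and fixes each $N_i$ setwise, a short computation gives $\ell_M(g)=\sum_i\#(gM_i\tu M_i)=\sum_i\#(gF_i\tu F_i)=\sum_i\ell_{M_i}(g)$. As $F$ is finite, $F_i=\emptyset$ for all but finitely many $i$; for such $i$, $M_i$ is $G$-invariant, so $\ell_{M_i}\equiv 0$. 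The remaining finitely many $\ell_{M_i}$ are cardinal definite on the transitive $G$-set $X_i$; openness of the stabilizer of $M_i$ follows from that of $M$ (the stabilizer of $M$ is contained in every $\mathrm{Stab}(M_i)$).

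For the second assertion, suppose by contradiction that $\ell_M=\sum_{i\in I}f_i$ with $I$ infinite and each $f_i=\ell_{M_i}$ a nonzero cardinal definite function, say on $X_i$ with $M_i\subset X_i$. Assemble all data into a single action: let $Y=\bigsqcup_{i\in I}X_i$ and $M'=\bigsqcup_{i\in I}M_i\subset Y$. Then $Y$ is a continuous discrete $G$-set and $\ell_{M'}=\sum_if_i=\ell_M$ pointwise, hence is finite and continuous. Lemma~\ref{cardefsu} then applies and shows that $M'$ is in fact a commensurated subset with open stabilizer (its stabilizer is the open set $\{\ell_{M'}=0\}$). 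Now Theorem~\ref{btx} applied to $M'\subset Y$ yields a $G$-invariant $N'\subset Y$ with $F':=M'\tu N'$ finite; splitting $N'=\bigsqcup N'_i$ with $N'_i\subset X_i$ invariant, we get $M_i\tu N'_i=F'\cap X_i$. Since $F'$ is finite, $F'\cap X_i=\emptyset$ for all but finitely many $i$, forcing $M_i=N'_i$ to be $G$-invariant and hence $f_i=\ell_{M_i}=0$ — contradicting $|I|=\infty$ with all $f_i\ne 0$.

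The main (minor) subtlety is the topological bookkeeping in the second paragraph: a priori the disjoint union $M'$ need not have an open stabilizer when $I$ is infinite, so Theorem~\ref{btx} cannot be applied directly. The key observation that resolves this is that $\ell_{M'}=\ell_M$ is already continuous on $G$, whence Lemma~\ref{cardefsu} upgrades $M'$ to a genuinely cardinal definite datum. No other obstacle is anticipated.
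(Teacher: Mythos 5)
Your proof is correct and follows the same route as the paper's (whose proof is essentially a one-line compression of your first paragraph: $M$ is transfixed by Theorem~\ref{btx}, so $M\cap V$ is invariant for all but finitely many orbits $V$, and the rest is bookkeeping). One small correction to your final remark: Theorem~\ref{btx} is a purely set-theoretic statement about subgroups of $\SX(X,M)$ — no topology and no open-stabilizer hypothesis is involved — so it \emph{can} be applied directly to $M'\subset Y$ even if you hadn't verified continuity; the appeal to Lemma~\ref{cardefsu} is harmless but unnecessary for that step.
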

\begin{proof}
If $\ell=\ell_M$ is cardinal definite and bounded, then $M$ is transfixed by Theorem \ref{btx}, i.e.\ is commensurate to a $G$-invariant subset $N$. The finite subset $M\tu N$ meets finitely many $G$-orbits; if $V$ is any other orbit, then $M\cap V$ is invariant. The result follows.
\end{proof}

\begin{rem}Brailovsky, Pasechnik and Praeger \cite{BPP} proved that if $\sup_G\ell_M<\infty$ then $G$ preserves a subset $N$ commensurate to $M$ (with an explicit but non-optimal bound on $\#(N\tu M)$. An almost optimal result was subsequently provided by P. Neumann \cite{Ne}: if $\sup_G\#(gM\smallsetminus M)=m<\infty$, then there exists $N$ $G$-invariant with $\#(N\tu M)\le\max(0,2m-1)$.

This can be restated with only symmetric differences. First note that because of the existence of $N$, $gM\smallsetminus M$ and $M\smallsetminus gM$ have the same cardinality for all $g$. Therefore Neumann's result can be restated as: if $\sup_G\ell_M=m<\infty$, then $m$ is even and there exists a $G$-invariant subset $N$ of $X$, commensurate to $M$ with $\#(N\tu M)\le\max(0,m-1)$. 
\end{rem}

\begin{rem}
Let $s(m)$ be the optimal bound in the above result, so that Neumann's result can be stated as: $s(m)\le m-1$ for all $m\ge 1$ and $s(m)=s(m-1)$ for odd $m$, so we can focus on $s(m)$ for even $m$.

The inequality $s(m)\le m-1$ is maybe an equality for all even $m$. This holds when $m=2^d\ge 1$ is a power of 2, taking $X$ to be the projective space $\mathbf{P}^d(\mathbf{F}_2)$ and $M$ a hyperplane, so $\#(M)=2^d-1$, $\#(X\smallsetminus M)=2^d$; if $G$ is any subgroup of $\GL_{d+1}(\mathbf{F}_2)$ transitive on $X$, then 
$\sup_{g\in G}\ell_M(g)=2^d$ and the only $G$-invariant subsets $N$ are $\emptyset$ and the whole projective space, so the one minimizing $\#(M\tu N)$ is $N=\emptyset$, which satisfies $\#(M\tu N)=\#(M)=2^d-1$.

In general, write its binary representation as $m=\sum_{j\in J}2^j$ (since $m$ is even, $J$ is a finite subset of the positive integers), define $X=\bigsqcup_{j\in J}\mathbf{P}^j(\mathbf{F}_2)$ and $M=\bigsqcup H_j$, where $H_j$ is a hyperplane in $\mathbf{P}^j(\mathbf{F}_2)$. Then $\#(M)=m-\#(J)$, $\sup_{g\in G}\ell_M(g)=m$, and the $G$-invariant subset $N$ minimizing $M\tu N$ is $N=\emptyset$. So if $j_m$ is the number of digits 1 in the binary writing of $m$ (so $j_m\le\log_2(m)+1$), then we have $m-j_m\le s(m)\le m-1$ for all even $m\ge 2$ (thus Neumann's upper bound is ``asymptotically optimal").

The left bound $m-j_m$ is not always sharp. For $m=6$ (where $m-j_m=4$), consider the transitive action of $G=X=\Z/10\Z$ on itself. Consider the subset $M=\{0,1,2,5,7\}$. Then a direct verification shows that $\#(M\cap (g{+}M))\ge 2$ for all $g$, so $\#(M\tu (g{+}M))\le 6$ for all $g$; thus $s(6)=5$. In general, I do not know if for odd $n$, $\Z/2n\Z$ always contains an $n$-element subset $M$ such that $\#(M\cap (q{+}M))\ge (n-1)/2$ for all $q\in\Z/2n\Z$. (For $n=9$ the subset $\{0,1,2,4,5,9,11,15,17\}$ works, thus $s(10)=9$.)
\end{rem}

%The left bound $m-j_m$ is not always attained. For $m=6$, consider the transitive action of $G=X=\Z/5\Z\times\Z/2\Z$ on itself. Write $ij$ for $(i,j)$. Consider the subset $M=\{00,01,02,10,12\}$. Then a direct verification shows that $\#(M\cap gM)\ge 2$ for all $g$, so $\#(M\tu gM)\le 6$ for all $g$. In general, I do not know if for odd $n$, $\Z/2n\Z\simeq\Z/n\Z\times\Z/2\Z$ always contains an $n$-element subset $M$ such $\#((M+q)\cap M)\ge (n-1)/2$ for all $q\in\Z/2n\Z$. (For $n=9$ the subset $\{00,01,02,04,05,10,12,16,18\}$ works.)

\begin{prop}\label{elggn}
Let $G$ be a topological group and $N$ a normal subgroup. Let $\ell$ be a cardinal definite function on $G$ whose restriction to $N$ is bounded. Then there exists a cardinal definite function $\bar{\ell}$ on $G/N$ such that, denoting by $\pi:G\to G/N$ the natural projection, the function $\ell-\bar{\ell}\circ\pi$ is bounded.
\end{prop}

\begin{lem}\label{cargn}
Let $G$ be a topological group and $N$ a normal subgroup. Let $\ell$ be a cardinal definite function on $G$ vanishing on $N$ (and hence $N$-invariant). Then the resulting cardinal definite function $\bar{\ell}$ on $G/N$ is cardinal definite.
\end{lem}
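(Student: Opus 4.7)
\medskip
\noindent\textbf{Proof proposal.} Let $\ell=\ell_M$ for a continuous discrete $G$-set $X$ and commensurated $M\subset X$ with open stabilizer. The hypothesis $\ell|_N\equiv 0$ means $nM=M$ for all $n\in N$; combined with normality, this yields $kgM=g(g^{-1}kg)M=gM$ for every $g\in G$ and $k\in N$, so every $gM$ is $N$-invariant, $\ell_M$ is bi-$N$-invariant, and the induced continuous function $\bar\ell$ on $G/N$ is well defined. My goal is to construct a continuous discrete $G/N$-set $Y$ and a commensurated subset $M'\subset Y$ with open stabilizer such that $\ell_{M'}=\bar\ell$.

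First I reduce to the transitive case. Decompose $X=\bigsqcup_i X_i$ into $G$-orbits and set $M_i=M\cap X_i$: each $M_i$ is $N$-invariant and commensurated in $X_i$, and $\ell_M=\sum_i\ell_{M_i}$ is a pointwise finite sum. If for each $i$ we can produce a $G/N$-set $Y_i$ with commensurated subset $M'_i$ satisfying $\ell_{M'_i}(\bar g)=\ell_{M_i}(g)$, then $Y:=\bigsqcup Y_i$ and $M':=\bigsqcup M'_i$ do the job: the preimage in $G$ of $\textnormal{Stab}_{G/N}(M')$ equals $\bigcap_i\textnormal{Stab}_G(M_i)=\textnormal{Stab}_G(M)$, which is open by assumption. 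So I may assume $X=G/H$ with $H$ an open subgroup of $G$.

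In this transitive situation, the $N$-orbits in $G/H$ all have cardinality $[NH:H]$. If $M$ is $G$-invariant there is nothing to prove; otherwise some $M\tu gM$ is nonempty, and being finite and $N$-invariant (hence a union of $N$-orbits) it contains a whole $N$-orbit, forcing $n:=[NH:H]<\infty$. Since $NH$ is open in $G$ (it is a union of cosets of $H$) and contains $N$, the coset space $G/NH$ is naturally a continuous discrete $G/N$-set; and since $M$ is a union of $N$-orbits in $G/H$, it is the preimage of a subset $\bar M\subset G/NH$ under the projection $\pi:G/H\to G/NH$. Consequently $M\tu gM=\pi^{-1}(\bar M\tu \bar g\bar M)$ and $\ell_M(g)=n\cdot\ell_{\bar M}(\bar g)$.

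To upgrade this proportionality to equality, I take $n$ disjoint copies: set $Y=G/NH\times\{1,\dots,n\}$ (with $G/N$ acting trivially on the second factor) and $M'=\bar M\times\{1,\dots,n\}$. Then $\ell_{M'}(\bar g)=n\cdot\ell_{\bar M}(\bar g)=\ell_M(g)=\bar\ell(\bar g)$, and the stabilizer of $M'$ in $G/N$ has preimage $\textnormal{Stab}_G(M)$ in $G$, hence is open. The main (and essentially only nonroutine) step is the finiteness $[NH:H]<\infty$: it reflects the principle that a nontrivial commensuration of an $N$-invariant subset forces all $N$-orbits of the ambient transitive $G$-set to be finite. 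Once that observation is in place, everything else is bookkeeping.
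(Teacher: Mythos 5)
Your proof is correct and follows essentially the same route as the paper: decompose into $G$-orbits, pass in the transitive case to the quotient $X'=X/N$, observe that the fibers have uniform size $\alpha$ which must be finite unless $M$ is $G$-invariant, and conclude from $\ell_M=\alpha\,\ell_{\bar M}$. The only cosmetic difference is in the final step — the paper invokes Lemma~\ref{cardefsu} to assert that $\alpha\,\ell_{\bar M}$ (and the final sum over orbits) is cardinal definite, while you realize it directly by taking $\alpha$ disjoint copies and then forming a disjoint union over orbits, checking the openness of the stabilizer by hand; the two are interchangeable.
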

\begin{proof}
Note that the function $\bar{\ell}$ on $G/N$ is continuous by definition of the quotient topology.

Let $X$ be a continuous discrete $G$-set and $M\subset X$ a $G$-commensurated subset with open stabilizer such that $\ell_M=\ell$. We begin with the case when $X$ is $G$-transitive. Let $X'$ be the quotient of $X$ by the $N$-action and $M'$ the image of $M$ in $X'$. Then all fibers of $X\to X'$ have the same cardinal $\alpha$, and the inverse image of $gM'\tu M'$ is $gM\tu M$. In follows that either $M$ is $G$-invariant (in which case $\ell=0$ and there is nothing to prove), or that $\alpha$ is finite. Then $\ell_M=\alpha\ell_{M'}$. Since the action on $X'$ factors through a continuous action of $G/N$, we see that $\ell_{M'}$ is cardinal definite on $G/N$, and hence $\bar{\ell}=\alpha\ell_{M'}$ is cardinal definite on $G/N$ as well (by Lemma \ref{cardefsu}).

In general, assume $X$ arbitrary. Decompose $X$ into $G$-orbits as $X=\bigsqcup X_i$, yielding a decomposition $\ell=\sum\ell_i$. Since $M\cap X_i$ is $N$-invariant, $\ell_i$ is $N$-invariant and hence factors, by the transitive case, through a cardinal definite function $\bar{\ell}_i$ on $G/N$. Then since $\bar{\ell}=\sum\bar{\ell}_i$ is finite and continuous, it is cardinal definite by Lemma \ref{cardefsu}.
\end{proof}

\begin{proof}[Proof of Proposition \ref{elggn}]
Let $X$ be a continuous discrete $G$-set and $M\subset X$ a $G$-commensurated subset with an open stabilizer such that $\ell_M=\ell$. 
 By Theorem \ref{btx}, $M$ is commensurate to an $N$-invariant subset $M'$. Then $\ell_{M'}$ factors through a cardinal definite function on $G/N$ by Lemma \ref{cargn}, proving the proposition.
 \end{proof}

\subsection{Induction}

Let $G$ be a group and $H$ a subgroup. Let $X$ be an $H$-set. Endow $G\times X$ with left and right commuting actions of $G$ and $H$ by 
\[ g (g_0,x_0) h=(gg_0h,h^{-1}x),\]
and define the {\em additive\footnote{There is also a {\em multiplicative} induced action \[\textnormal{Ind}_H^G(X)=\{\xi:G\to X\mid \forall h\in H,g\in G,\; \xi(gh)=h^{-1}\cdot\xi(g)\},\] where $G$ acts by $g\cdot\xi(x)=\xi(g^{-1}x)$, which is notably used in representation theory; we do not consider it here.} induced action} 
\[\ind_H^G(X)=(G\times X)/H,\]
which naturally inherits from the structure of a left $G$-set. For instance, for every subgroup $L$ of $H$, we have a natural identification $\ind_H^G(H/L)=G/L$. 

Denote by $\pi$ the projection $G\times X\to (G\times X)/H$. Note that if $F$ is a right transversal (so that $G$ is set-wise the product $FH$) then $\pi$ restricts to a bijection from $F\times X$ to $(G\times X)/H=\ind_H^G(X)$. Also, note that $\pi$ is injective in restriction to $\{1\}\times X$, giving rise an $H$-equivariant embedding of $X$ into $\ind_H^G(X)$.

\begin{lem}
Suppose that $G$ is a topological group, $H$ is open in $G$ and that $X$ is a continuous discrete $H$-set. Then $\ind_H^G(X)$ is a continuous discrete $G$-set.
\end{lem}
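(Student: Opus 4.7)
The plan is to verify openness of the point stabilizers in $G$, since the $G$-set $\ind_H^G(X)$ is discrete by definition, and continuity of the action amounts exactly to this openness condition.

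First I would fix an element $\xi = \pi(g_0,x_0) \in \ind_H^G(X)$ and compute its stabilizer $G_\xi \subset G$ explicitly. An element $g \in G$ fixes $\xi$ precisely when there exists $h \in H$ with $(gg_0,x_0)\cdot h = (g_0,x_0)$, that is, $gg_0 h = g_0$ and $h^{-1}x_0 = x_0$. The second condition says $h \in H_{x_0}$, the stabilizer of $x_0$ in $H$, and the first then forces $g = g_0 h^{-1} g_0^{-1}$. Hence
\[
G_\xi \;=\; g_0\, H_{x_0}\, g_0^{-1}.
\]

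Next I would verify openness of this subgroup. Since $X$ is a continuous discrete $H$-set, the stabilizer $H_{x_0}$ is open in $H$. Because $H$ is open in $G$, any subset of $H$ that is open in $H$ is also open in $G$, so $H_{x_0}$ is open in $G$. Conjugation by $g_0$ is a homeomorphism of $G$, so $g_0 H_{x_0} g_0^{-1}$ is open in $G$. This gives openness of $G_\xi$ for every $\xi$, so the structural map $G \times \ind_H^G(X) \to \ind_H^G(X)$ is continuous, i.e.\ $\ind_H^G(X)$ is a continuous discrete $G$-set.

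There is no real obstacle here; the only subtle point is bookkeeping the action convention $(g_0,x_0)\cdot h = (g_0 h, h^{-1}x_0)$ correctly when writing down the equation defining $G_\xi$. Once the stabilizer is identified as a conjugate of $H_{x_0}$, the openness is immediate from the two hypotheses (openness of $H$ in $G$, and openness of pointwise stabilizers in $H$).
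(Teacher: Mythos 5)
Your proof is correct and takes essentially the same route as the paper: identify the stabilizer of $\pi(g_0,x_0)$ as the conjugate $g_0H_{x_0}g_0^{-1}$ of the (open) stabilizer $H_{x_0}$, then note that this is open in $H$ and hence in $G$. The only cosmetic difference is that you parameterize by $h^{-1}$ rather than $h$, which does not affect the conclusion.
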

\begin{proof}
We need to show that the stabilizer in $G$ of $\pi(g_0,x_0)$ is open for every $(g_0,x_0)\in G\times X$. An element $g\in G$ belongs to this stabilizer if and only there exists $h\in H$ such that $g(g_0,x_0)=(g_0,x_0)h$, that is, $h^{-1}x_0=x_0$ and $g=g_0hg_0^{-1}$. Thus the stabilizer of $\pi(g_0,x_0)$ is equal to $g_0H_{x_0}g_0^{-1}$, which is open in $H$ and hence in $G$. 
\end{proof}

\begin{prop}\label{fi}
Assume that $H$ has finite index in $G$. Suppose that $M$ is an $H$-commensurated subset of $X$ and $F$ is a right transversal of $G$ modulo $H$ (so $G=FH$), with $1\in F$. Identify $M$ to $\pi(\{1\}\times M)$ and define $M'=\bigcup_{f\in F}fM$. Then $M'$ is commensurated by the $G$-action and $M'\cap X=M$. In particular, the restriction of $\ell_{M'}$ to $H$ is $\ge\ell_M$.

If $G$ is a topological group, $H$ is open and $M$ has an open stabilizer in $H$, then $M'$ has an open stabilizer in $G$.
\end{prop}
\begin{proof}
Fix $g\in G$. For every $f\in F$, there exists a unique $f'\in F$ such that $gf\in f'H$. Write $gf=f'h_f$. Then $gfM=f'h_fM\subset f'M\cup (M\tu h_fM)$, where $M\tu h_fM$ is finite. Thus, using that $F$ is finite, $gM'\smallsetminus M'$ is finite for all $g\in G$. Since $G$ is a group, it follows that $M'\tu gM'$ is finite for all $g$.

Since $1\in F$, we have $M'=M\cup\bigcup_{f\in F\smallsetminus\{1\}}fM$, while if $f\notin H$ we have $fM\cap X=\emptyset$. Hence $M'\cap X=M$.

If $L$ is the stabilizer of $M$ in $H$, then $L$ is open by assumption. We then see that $M'$ is stabilized by $\bigcap_{f\in F}fLf^{-1}$, which is open. 
\end{proof}

\begin{cor}\label{fico}
Let $G$ be a topological group and $H$ an open subgroup of finite index. Then for every cardinal definite function $\ell$ on $H$, there exists a cardinal definite function $\ell'$ on $G$ such that $\ell'|_H\ge\ell$.
\end{cor}

\subsection{Wreath products}\label{s_wp}

If $H$ is a discrete group, $G$ is a topological group, $Y$ is a continuous discrete $G$-set, the wreath product $H\wr_Y G$ is by definition the semidirect product $H^{(Y)}\rtimes G$, where $G$ acts by shifting the direct sum (or restricted direct product) $H^{(Y)}=\bigoplus_{y\in Y}H$. Since the action of $G$ on the discrete group $H^{(Y)}$ is continuous, this semidirect product is a topological group with the product topology.

There is a simple way to define a commensurating action of $H\wr_Y G$ out of a commensurating action of $H$. Let $H$ act on a set $X$, commensurating a subset $M$. Then $H\wr_Y G$ acts on $X\times Y$, where the action of the $y$-th summand $H^{(y)}\simeq H$ in $H^{(Y)}$ is given by the standard $H$-action on $X\times\{y\}$ and is the trivial action on $X\times (Y\smallsetminus\{y\})$, and the action of $G$ permutes the components. Note that this action is continuous, the stabilizer of a point $(x,y)$ being the open subgroup $H^{(Y\smallsetminus\{y\})}H^{(y)}_xG_y$.
This action commensurates $M\times Y$, which has an open stabilizer (as it contains the open subgroup $G$), and the length is given by 
\[\ell_{M\times Y}((h_y)_{y\in Y}g)=\sum_{y\in Y}\ell_M(h_y).\]
Interestingly, this length is usually unbounded even if $\ell_M$ is bounded. For record:

\begin{prop}\label{wcd}
For every cardinal definite function $\ell$ on $H$, the function $(h_y)_{y\in Y}g\mapsto\sum_{y\in Y}\ell_M(h_y)$ is cardinal definite on $H\wr_YG$. In particular, the function $fg\mapsto2\#(\Supp(f))$ (where $f\in H^{(Y)}$ and $g\in G$) is cardinal definite on $H\wr_YG$.
\end{prop}
\begin{proof}
The first statement has already been proved. The second statement is the particular case where $\ell=2\mathbf{1}_{H\smallsetminus\{1\}}$; it is indeed cardinal definite, associated to the left action of $H$ on itself and the commensurated subset $M=\{1\}$.
\end{proof}

We now proceed to describe another more elaborate construction due to the author, Stalder and Valette \cite{CSVa}. The construction is described in \cite{CSVa} in terms of wallings so we need to translate it into commensurating actions. We here deal with a standard wreath product $H\wr G$ (i.e., $G$ is discrete and $Y$ is $G$ with the left action by translation).

Start from a $G$-set $X$ with a commensurated subset $M$. For $x\in X$, define $W_x=\{h\in G\mid x\in hM\}$. Let $Z_X$ be the set of pairs $(x,\mu)$, where $x\in X$ and $\mu$ is a finitely supported function from the complement $W_x^*=G\smallsetminus W_x$ to $H$. Let $H\wr G$ act on $Z_X$ as follows: 
\[g\cdot (x,\mu)=(gx,g\cdot\mu);\quad \lambda\cdot (x,\mu)=(x,\lambda|_{W_x^*}\mu),\qquad g\in G,\lambda\in H^{(G)}\]
where $g\cdot\mu(\gamma)=\mu(g^{-1}\gamma)$.
Define $N=M\times\{1\}\subset Z_X$.

\begin{prop}\label{zxest}
The subset $N$ of $Z_X$ is commensurated by the $G$-action and we have the following two lower bounds, for $g\in G$ and $w\in H^{(G)}$
\[\ell_N(wg)\ge\#(M\tu gM);\quad \ell_N(wg)\ge\sup_{\gamma\in\Supp(w)}\#(M\smallsetminus \gamma M)\]
and the upper bound
\[\ell_N(wg)\le\#(M\tu gM)+\sum_{\gamma\in\Supp(w)}\#(M\smallsetminus \gamma M)+\sum_{\gamma\in g^{-1}\Supp(w)}\#(M\smallsetminus \gamma M).\]
\end{prop}
\begin{proof}
This actually follows from Proposition \ref{can_bij} and the verifications in \cite{CSVa}, but it is instructive to provide a direct proof.

For $w\in H^{(G)}$ and $g\in G$, let us describe $N\smallsetminus wgN$. Elements $(x,\mu)$ in this set satisfy $x\in M$, $\mu=1$, and also $g^{-1}w^{-1}(x,1)\notin N$. The latter condition means that either $g^{-1}x\notin M$ or $w|_{W_x^*}\neq 1$. Note that $w|_{W_x^*}\neq 1$ means $\Supp(w)\nsubseteq W_x$, which in turn means $\Supp(w)^{-1}x\nsubseteq M$. In other words, $N\smallsetminus wgN=A\cup B$ where 
\[A=\{(x,1):\; x\in M\smallsetminus gM\};\quad B=\{(x,1):\; x\in M,\;\Supp(w)^{-1}x\nsubseteq M\};\]
similarly $wgN\smallsetminus N=C\cup D$ where 
\[C=\{(x,w|_{W_x^*}):\; x\in gM\smallsetminus M\};\quad D=\{(x,w|_{W_x^*}):\;x\in gM,\;\Supp(w)^{-1}x\nsubseteq M\}\]

Note that $\#(A\cup C)=\#(M\tu gM)$ and thus $\ell_N(wg)\ge \#(M\tu gM)$. On the other hand, we have $B=\bigcup_{\gamma\in\Supp(w)}M\smallsetminus\gamma M$ and $D=\bigcup_{\gamma\in\Supp(w)}gM\smallsetminus\gamma M$.
Since $\#(B)\le\ell_N(wg)\le \#(A\sqcup C)+\#(B)+\#(D)$, this gives the second lower bound and the upper bound.
\end{proof}

Let us also observe that if $X$ is $G$-transitive, then $Z_X$ is $(H\wr G)$-transitive, and if $L\subset G$ is the stabilizer of $x_0\in X$, then the stabilizer of $(x_0,1)\in Z_X$ is $H^{(A_{x_0})}L$.

\subsection{The transfer character}

Let $X$ be a set and $M$ a subset. We define a map, which by anticipation of Proposition \ref{transfer} we call {\em transfer character}.

\begin{eqnarray*}
\tr_M: \SX(X,M) & \to & \Z\\
g & \mapsto & \#(g^{-1}M\smallsetminus M)-\#(M\smallsetminus g^{-1}M)\\
& & =\sum_{x\in X} \mathbf{1}_{g^{-1}M}(x)-\mathbf{1}_M(x)
\end{eqnarray*}

Denote by $\SX_0(X)$ the group of finitely supported permutations of $X$, and $\SX_0^+(X)$ its subgroup of index of alternating permutations (which has index 2 in $\SX_0(X)$ unless $X$ is empty or a singleton). 

\begin{prop}\label{transfer}
The function $\tr_M$ is a continuous homomorphism from $\SX(X,M)$ to $\Z$, and is bounded above by $\ell_M$. It is surjective, unless $M$ or $M^c$ is finite (in which case it is zero). It does not depend on the choice of $M$ within its commensurability class, and $\tr_{M^c}=-\tr_M$. If $X$ is infinite, its kernel $\SX^\circ(X,M)$ is a perfect group, and is generated by $\SX(M)\cup\SX(M^c)\cup\SX_0^+(X)$.
\end{prop}
\begin{proof}
The upper bound is obvious.

Let us check that $\tr_M=\tr_N$ when $M,N$ are commensurate; it is enough to prove it when $M=N\sqcup F$ with $F$ finite.
We write
\begin{align*}
\tr_M(g)- \tr_N(g)= & \left(\sum_{x\in X} \mathbf{1}_{g^{-1}M}(x)-\mathbf{1}_M(x)\right)-\left(\sum_{x\in X}\mathbf{1}_{g^{-1}N}(x)-\mathbf{1}_N(x)\right)\\
 = & \sum_{x\in X} \mathbf{1}_{g^{-1}F}(x)-\mathbf{1}_F(x)=\tr_F(g)=0.
\end{align*}
Now we have 
\begin{align*}
\tr_M(gh)= & 
\sum_{x\in X} \mathbf{1}_{(gh)^{-1}M}(x)-\mathbf{1}_M(x)\\
= & \sum_{x\in X} \mathbf{1}_{(gh)^{-1}M}(x)-\mathbf{1}_{h^{-1}M}(x) + \sum_{x\in X} \mathbf{1}_{h^{-1}M}(x)-\mathbf{1}_M(x)\\
= & \sum_{x\in X} \mathbf{1}_{g^{-1}M}(hx)-\mathbf{1}_{M}(hx) + \tr_M(h)=\tr_M(g)+\tr_M(h)\end{align*}

The homomorphism is continuous because its kernel contains the stabilizer of $M$, which is open by definition of the topology of $\SX(X,M)$.

Now let $g$ belong to the kernel of $\tr_M$. Then the finite sets $g^{-1}M\smallsetminus M$ and $M\smallsetminus g^{-1}M$ have the same cardinal, hence there exists a finitely supported permutation $s$ exchanging these two finite subsets and being the identity on the complement; also let $\tau$ be either the identity when $s$ is even, or a transposition supported by $M$ or $M^c$ when $s$ is odd. Then $\tau s$ is an even permutation, and $\tau s g$ stabilizes $M$. 

This shows the generation statement. If both $M,M^c$ are infinite, all of $\SX(M)$, $\SX(M^c)$ and $\SX_0^+(X)$ are perfect groups and hence it follows that the kernel of $\tr_M$ is a perfect group. If one (and only one) of $M$ and $M^c$ is finite, then this kernel is just $\SX(X)$, which is perfect.
\end{proof}

\begin{cor}
If $X$ is a topological group and $X$ is a continuous discrete $G$-set and $M$ a commensurate subset with open stabilizer, then the above transfer map is a continuous homomorphism, bounded above by $\ell_M$. \qed
\end{cor}

Of course the transfer map $\tr_M$ can be bounded on $G$ even when the action is not transfixing, for instance when $G$ admits no continuous homomorphism onto~$\Z$.

It is possible to classify normal subgroups of $\SX(X,M)$. For the sake of simplicity, let us focus on the countable case. Recall that $\SX(\Z)$ has exactly 4 normal subgroups: $\{1\}$, $\SX(\Z)$, $\SX_0(\Z)$ and $\SX_0^+(\Z)$.

Define $\SX_M(X)$ the group of permutations of $X$ that are identity on a cofinite subset of $M$.

\begin{prop}
The normal subgroups of $\SX(\Z,\N)$ are the following:
\begin{itemize}
\item $\{1\}$, $\SX_0(\Z)$, $\SX_0^+(\Z)$;
\item $\SX_\N(\Z)$, $\SX_{-\N}(\Z)$
\item $\SX^\circ(\Z,\N)$ and the subgroups properly containing it (which have finite index and are indexed by positive integers, since $\SX(\Z,\N)/\SX^\circ(\Z,\N)\simeq\Z$).
\end{itemize}
In particular, the only closed normal subgroups of $\SX(\Z,\N)$ are $\{1\}$ and the subgroups containing $\Ker(\tr_\N)$ (which are open).
\end{prop}
\begin{proof}
Let $N$ be a normal subgroup. A standard argument, left to the reader, shows that if $N$ is not contained in $\SX_0(\Z)$, then it contains $\SX_0(\Z)$, which we now assume.

%We consider the intersection $N\cap\SX(\N)$, which is either $\SX_0(\N)$ or $\SX(\N)$; similarly the intersection $N\cap\SX(-\N)$ is either $\SX_0(-\N)$ or $\SX(-\N)$.

Define $N'=N\cap\SX^\circ(\Z,\N)$. The argument of the proof of Proposition \ref{transfer} shows that $N$ is generated by $\SX_0(\Z)$ and the stabilizer of $\N$ in $N$, which can be viewed as a normal subgroup of $\SX(\N)\times\SX(-\N)$ containing finitely supported permutations, and therefore, by simplicity of $\SX(\N)/\SX_0(\N)$ is one of the four possibilities: $\SX_0(\N)\times\SX_0(-\N)$, $\SX_0(\N)\times\SX(-\N)$, $\SX(\N)\times\SX_0(-\N)$, $\SX(\N)\times\SX(-\N)$. Accordingly, $N'$ is equal to $\SX_0(\Z)$, $\SX_\N(\Z)$, $\SX_{-\N}(\Z)$, or $\SX^\circ(\Z,\N)$. 

We claim that if $\tr_\N$ is nonzero on $N$, then $N'=\SX^\circ(\Z,\N)$.

Granting the claim, we deduce that either $N'=N$, in which case $N$ is equal to one of the above four subgroups, or $N$ contains $\SX^\circ(\Z,\N)$, concluding the proof. 

To check the claim, we see that if $\tr_\N(g)\neq 0$, then $g$ or $g^{-1}$ has at least one infinite orbit starting in $M^c$ and ending in $M$. Composi

A suitable commutator then provides an element in $N$ with infinite support contained in $M$, and another one in $M^c$, so that $N'=\SX^\circ(\Z,\N)$.
\end{proof}

When $X$ is uncountable the description takes a little more effort; still we have:

\begin{prop}
If $X$ is infinite and $M$ a subset, the closure of $\SX_0^+(X)$ in $\SX(X,M)$ is equal to $\SX^\circ(X,M)$, and is a topologically simple topological group.
\end{prop}
\begin{proof}
If $g\in\SX^\circ(X,M)$, then there exists $s\in\SX_0^+(X)$ such that $sg(M)=M$. Further, given a finite subset $F$ of $X$ there exists $s'\in\SX_0^+(X)$, stabilizing $M$, such that $s'sg$ is the identity on $F$. This shows that $\SX_0(X)$ is dense in $\SX^\circ(X,M)$. Since the latter is a closed subgroup, we deduce that the closure of $\SX_0(X)$ is $\SX^\circ(X,M)$. 

Since, by a simple argument, any normal subgroup of $\SX^\circ(X,M)$ not contained in $\SX_0(X)$ contains the dense subgroup $\SX_0(X)$, it follows that $\SX^\circ(X,M)$ is topologically simple.
\end{proof}

%Write $\mathrm{Gain}_M(g)=g^{-1}M\smallsetminus M$ and $\mathrm{Loss}_M(g)=M\smallsetminus g^{-1}M$, $\mathrm{gain}_M(g)=\#\mathrm{Gain}_M(g)$, and $\mathrm{loss}_M(g)=\#\mathrm{Loss}_M(g)$, so that $\tr_M(g)=\mathrm{gain}_M(g)-\mathrm{loss}_M(g)$. 

% $M=N\sqcup\{m\}$. We discuss\begin{itemize}
%\item if $g^{-1}m,gm$ both belong to $M^c$, then $\Gain_M(g)=\Gain_N(g)%\sqcup\{m\}$ and $\Loss_M(g)=\Loss_N(g)\sqcup\{g^{-1}m\}$;  
%\item if $g^{-1}m,gm$ both belong to $M$, then $\Gain_N(g)=\Gain_M(g)\sqcup\{m\}$ and $\Loss_N(g)=\Loss_M(g)\sqcup\{g^{-1}m\}$;  
%\item if $g^{-1}m\in M$ and $gm\in M^c$, then $\Gain_N(g)=\Gain_M(g)$ and $\Loss_M(g)=\Loss_N(g)\sqcup\{m\}\smallsetminus\{g^{-1}m\}$ (where $g^{-1}m\in \Loss_N(g)$)
%\item if $gm\in M$ and $g^{-1}m\in M^c$, then $\Loss_N(g)=\Loss_M(g)$ and $\Gain_M(g)=\Gain_N(g)\sqcup\{g^{-1}m\}\smallsetminus\{m\}$ (where $m\in \Gain_N(g)$)
%\end{itemize}
%In all cases we see that $\tr_M(g)=\tr_N(g)$.

% If $m$ and $gm$ both belong to $M$ or $M^c$, then $\tr_M(g)=\tr_N(g)$. If $m\in M$ and $gm\notin M$,  

Let us turn back to the definition of the transfer character (the following remarks follow discussions with Pierre-Emmanuel Caprace). Define, on $\SX(X,M)$, the gain map $s_M(g)=\#(g^{-1}M\smallsetminus M)$. Note that by definition, we have $\tr_M(g)=s_M(g)-s_M(g^{-1})$ and $\ell_M(g)=s_M(g)+s_M(g^{-1})$. We have the following immediate properties:

\begin{prop}
The gain map $s_M:\SX(X,M)\to\N$ satisfies:
\begin{itemize}
\item $s_M$ is bi-invariant by the stabilizer of $M$ and in particular is continuous;
\item $s_M$ is sub-additive: $s_M(gh)\le s_M(g)+s_M(h)$ for all $g,h\in\SX(X,M)$;
\item is $M,N$ are commensurate, then $|s_M-s_N|\le \#(M\tu N)$;
\item in restriction to $\SX^\circ(X,M)$, we have $s_M=\frac12\ell_M$.
\end{itemize}
\end{prop}
\begin{proof}
Define $S_M(g)=g^{-1}M\smallsetminus M$, so that $s_M=\#\circ S_M$.

Then for all $g,h\in\SX(X,M)$ such that $hM=M$, we have $S_M(hg)=S_M(g)$ and $S_M(gh)=h^{-1}S_M(g)$; in particular, $s_M(hg)=s_M(gh)=s_M(g)$.

Also for all $g,h\in\SX(X,M)$, we have $S_M(gh)\subset S_M(h)\cup h^{-1}S_M(g)$, which implies the sub-additivity.

The third property follows from the particular case when $\#(M\tu N)=1$, which is checked by hand, and the last assertion is trivial.
\end{proof}

A consequence of the sub-additivity is that $s_M(g^n)/n$ converges when $n\to +\infty$, to a number $\sigma_M(g)$; note that $\sigma_M(g)-\sigma_M(g^{-1})=\tr_M(g)$. An argument similar to that of Proposition \ref{decab} actually shows that this limit is an integer (namely, the number of infinite $\langle g\rangle$-orbits that start in $M^c$ and end up in $M$). Unlike $s_M$, the function $\sigma_M$ is invariant under conjugation: $\sigma_M(hgh^{-1})=\sigma_M(g)$ for all $g,h$.

\begin{rem}
Unlike the transfer character, the function $\sigma_M$, which is upper semi-continuous as an infimum of continuous functions (namely $g\mapsto s_M(g^n)/n$) fails to be continuous on $\SX(X,M)$ when $M$ and $M^c$ are both infinite: indeed, take $X=\Z\times\{\pm 1\}$ and $M=\N\times\{\pm 1\}$. Define $f,f_n\in\SX(X,M)$ as follows: $f(m,\eps)=m+\eps$; $f_n(m,1)=m+1$ if $-n\le m<n$, $f_n(n,1)=(n,-1)$, $f_n(m,1)=(m,1)$ if $|m|>n$, and $f_n(m,-1)=-f_n(-m,1)$. Then $f_n(M)=f(M)$ for all $n\ge 2$ and $(f_n)$ converges pointwise to $f$. Therefore $(f_n)$ tends to $f$ in $\SX(X,M)$. On the other hand, $\sigma_M(f_n)=0$ because $f_n$ has finite order, while $\sigma_M(f)=1$. Note that an alternative asymptotic definition of $\tr_M$ is to define it as $\sigma_M(g)-\sigma_M(g^{-1})$. 

Beware that $\sigma_M$ is not sub-additive in general; still we have $\sigma_M(gh)\le \sigma_M(g)+\sigma_M(h)$ when $g,h$ commute. \end{rem}

\section{Property FW etc.}\label{fwetc}
\subsection{Property FW}

\begin{defn}\label{d_fw}
Let $G$ be a topological group. We say that $G$ has {\em Property FW} if for every continuous discrete $G$-set, any commensurated subset with open stabilizer is transfixed (i.e., is commensurate to an invariant subset).
\end{defn}

By Theorem \ref{btx}, this amounts to saying that every cardinal definite (Definition \ref{d_cd}) function on $G$ is bounded. This allows the following generalization: if $L\subset G$, we say that $(G,L)$ has {\em relative Property FW} if every cardinal definition function on $G$ is bounded on $L$. In case $L$ is a subgroup, Theorem \ref{btx} shows that this means that for every continuous discrete $G$-set $X$ and commensurated subset $M$ with open stabilizer, $M$ is transfixed in restriction to $L$.

If in Definition \ref{d_fw} we restrict to transitive actions, we get the following a priori weaker notion.

\begin{defn}
Let $G$ be a topological group. We say that $G$ has {\em Property FW'} if for every continuous discrete transitive $G$-set, any commensurated subset is either finite or cofinite.
\end{defn}

The negation of Property FW' is also known as ``semisplittable".

We use the notion of topological groups with uncountable cofinality (or cofinality $\neq\omega$) from \S\ref{s_cof}; important examples of such groups are compactly generated groups and in particular finitely generated discrete groups. 

\begin{prop}\label{fwfwp}
Let $G$ be a topological group.
\begin{enumerate}
\item\label{fwww} If $G$ has uncountable cofinality, then $G$ has Property FW if and only it has Property FW';
\item\label{fwwc} if $G$ has countable cofinality, then $G$ does not have Property FW.
\end{enumerate}
\end{prop}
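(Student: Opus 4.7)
The plan splits along the two parts. For (1), the implication FW $\Rightarrow$ FW' is just the restriction of the defining property to transitive actions (in a transitive $G$-set, a transfixed subset is automatically finite or cofinite, since the only $G$-invariant subsets are $\emptyset$ and the whole set). For the converse under the uncountable cofinality hypothesis, I would fix a continuous discrete $G$-set $X$ with commensurated subset $M$ of open stabilizer, decompose $X = \bigsqcup_{i \in I} X_i$ into $G$-orbits, and apply Proposition \ref{cgco2}: the set $F = \{i : M \cap X_i \text{ is not } G\text{-invariant}\}$ is finite. For $i \in F$, the stabilizer of $M \cap X_i$ in the transitive $G$-set $X_i$ contains $\mathrm{Stab}(M)$ and is therefore open, so FW' forces $M \cap X_i$ to be finite or cofinite in $X_i$. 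Replacing each such boundary intersection by $\emptyset$ or $X_i$ (according to which side is finite), and leaving $M \cap X_i$ unchanged for $i \notin F$, produces a $G$-invariant subset of $X$ commensurable with $M$, proving FW.

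For (2), write $G_1 \subsetneq G_2 \subsetneq \cdots$ for open subgroups with union $G$. The plan is to exhibit an unbounded cardinal definite function on the topological group $G$, which by Theorem \ref{btx} rules out Property FW. On the $G_\delta$-set $X = \bigsqcup_{n \geq 1} G/G_n$, take
\[M = \bigsqcup_{n \geq 1} \{G_n\} \subset X,\]
one base coset per level. Since each $g$ belongs to $G_n$ for all sufficiently large $n$, the function $\ell_M(g) = 2 \cdot \#\{n : g \notin G_n\} = 2(n_0(g) - 1)$, where $n_0(g) := \min\{n : g \in G_n\}$, is finite at every $g$, so $M$ is commensurated under the abstract action. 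Strictness of the chain provides, for each $n$, some $g_n \in G_{n+1} \smallsetminus G_n$ realizing $\ell_M(g_n) = 2n$, so $\ell_M$ is unbounded.

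The key verification is continuity of $\ell_M$ on the topological group $G$. Open subgroups are always clopen, so $G_{n_0(g)-1}$ is clopen and avoids $g$ while $G_{n_0(g)}$ is open and contains $g$; hence in a sufficiently small neighborhood of $g$ the integer $n_0$ is constant, and $\ell_M$ is locally constant. Since $\ell_M$ is cardinal definite on $G_\delta$ (trivially, as every subgroup is open there) and continuous on $G$, Lemma \ref{lemcd} upgrades it to a cardinal definite function on the topological group $G$, produced by some continuous discrete $G$-set and commensurated subset with open stabilizer. Being unbounded, this subset cannot be transfixed, and $G$ fails Property FW.

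The one mild obstacle is that the set $M$ built above has stabilizer $\bigcap_n G_n$, which need not be open in $G$ for an arbitrary topological group, so Definition \ref{d_fw} cannot be applied to $M$ itself. Routing through $G_\delta$ and invoking Lemma \ref{lemcd}, whose proof replaces the given action by a quotient ensuring open stabilizers, is precisely what circumvents this issue in the topological setting.
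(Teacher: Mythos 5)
Your proof is correct and follows the same route as the paper: part (1) via Proposition \ref{cgco2} and part (2) via the commensurated set of base points in $\bigsqcup_n G/G_n$. Where you go beyond the paper's one-line sketch of part (2) is in noticing that the stabilizer of $M=\{x_n\}$ is $\bigcap_n G_n$, which need not be open, so $M$ does not a priori produce a cardinal definite function in the sense of Definition \ref{d_cd}; your detour through the local constancy of $n_0(g)$ (hence of $\ell_M$) and Lemma \ref{lemcd} is exactly the right way to close this gap, since the lemma's proof replaces the action by a quotient with open stabilizers. The paper asserts ``$\{x_n\}$ is commensurated but not transfixed'' and leaves this continuity check implicit, so your version is a genuine and worthwhile sharpening.
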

\begin{proof}The first part immediately follows from Proposition \ref{cgco2}. For the second, if $G=\bigcup G_n$ with $(G_n)$ an nondecreasing union of proper open subgroups, then if $T=\bigsqcup G/G_n$ is endowed with the natural $G$-action and $x_n$ is its base-point, then $\{g_n\mid n\ge 0\}$ is commensurated but not transfixed.
\end{proof}

\begin{rem}
If $G$ has countable cofinality (e.g.\ is discrete, infinitely generated and countable), it does not have Property FW by Proposition \ref{fwfwp}(\ref{fwwc}), while $G$ may have either Property FW' or not. For instance, no infinite countable locally finite group has Property FW' (by a result of D.\ Cohen \cite{coh}, using the action of $G$ on itself), while the infinitely generated group $\SL_n(\Q)$ has Property FW' for all $n\ge 3$ \cite{Cor2}.
\end{rem}

\subsection{Features of Property FW}

\begin{prop}\label{fwfi}
Let $G$ be a topological group and $H$ an open subgroup of finite index. Then $G$ has Property FW if and only if $H$ has Property FW.
\end{prop}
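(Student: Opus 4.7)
The plan is to prove both directions as an exercise in transport between $G$- and $H$-structures, using the induction construction of Proposition \ref{fi} for one direction and a finite union for the other.

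For the direction ``$H$ has FW $\Rightarrow$ $G$ has FW'', I would start with a continuous discrete $G$-set $X$ and a commensurated subset $M$ with open stabilizer. Restricting the action to $H$, the set $M$ remains commensurated by $H$ and its stabilizer in $H$ (being the intersection with $H$ of its open $G$-stabilizer) is still open. Property FW for $H$ then yields an $H$-invariant $N \subseteq X$ with $M \tu N$ finite. I would then set
\[ N^\star = \bigcup_{gH \in G/H} gN, \]
which is well-defined because $N$ is $H$-invariant, is $G$-invariant by construction, and is a finite union since $[G:H] < \infty$. To check $M \tu N^\star$ is finite, I would use $N^\star \setminus M \subseteq \bigcup_g (M \tu gN)$ combined with the inclusion $M \tu gN \subseteq (M \tu gM) \cup g(M \tu N)$, and use $N \subseteq N^\star$ for the other containment $M \setminus N^\star \subseteq M \setminus N$.

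For the direction ``$G$ has FW $\Rightarrow$ $H$ has FW'', I would let $X$ be a continuous discrete $H$-set with $M \subseteq X$ commensurated (with open $H$-stabilizer), and apply Proposition \ref{fi} with a right transversal $F$ containing $1$. This gives a continuous discrete $G$-set $Y = \ind_H^G(X)$ containing $X$ as an $H$-equivariant subset, together with a $G$-commensurated subset $M' = \bigcup_{f \in F} fM$ with open $G$-stabilizer satisfying $M' \cap X = M$. Property FW for $G$ produces a $G$-invariant $N' \subseteq Y$ with $M' \tu N'$ finite. I would then define $N = N' \cap X$: this is $H$-invariant (since $X$ is an $H$-stable subset of $Y$ and $N'$ is $G$-stable), and
\[ M \tu N = (M' \cap X) \tu (N' \cap X) = (M' \tu N') \cap X, \]
which is finite, exhibiting $M$ as transfixed in $X$.

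There is no genuine obstacle here; the only point requiring care is the open-stabilizer condition, which is preserved automatically under restriction to $H$ in the first direction and is built into the conclusion of Proposition \ref{fi} in the second. Once the induction-and-restriction machinery is in place, both implications reduce to the symmetric-difference estimates above.
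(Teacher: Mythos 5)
Your proof is correct and uses essentially the same architecture as the paper: for the direction ``$G$ has FW $\Rightarrow$ $H$ has FW'' both invoke Proposition~\ref{fi} (induction), and for the converse both restrict the structure to $H$ and exploit the finiteness of $[G:H]$. The only cosmetic difference is that the paper works with the cardinal definite function $\ell_M$ directly (bounding it on $G=FH$ by subadditivity) whereas you explicitly construct the $G$-invariant set $N^{\star}=\bigcup_{gH}gN$; by Theorem~\ref{btx} these are equivalent formulations of transfixing.
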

\begin{proof}
We begin with the easier implication. If $H$ has Property FW and $\ell$ is a cardinal definite function on $G$, write $G=FH$ with $F$ finite; if $m$ is an  upper bound for $\ell$ on $F\cup H$ then $2m$ is an upper bound for $\ell$.

Conversely suppose that $G$ has Property FW and let $\ell$ be a cardinal definite function on $H$. Then by Corollary \ref{fico} (which uses additive induction of actions), there exists a cardinal definite function $\ell'$ on $G$ such that $\ell'|_H\ge\ell$. By Property FW, $\ell'$ is bounded and hence $\ell$ is bounded. 
\end{proof}

The simplest example of a group without Property FW is $\Z$, using that the left action on itself commensurates $\N$; combined with Proposition \ref{fwfi} this yields.

\begin{cor}\label{vb1}
For every finitely generated group $\Gamma$ with Property FW, every finite index subgroup of $\Gamma$ has a finite abelianization. More generally, if $G$ is a totally disconnected locally compact group, then for every open subgroup of finite index $H$, the quotient $H/\overline{[H,H]}$ is compact.
\end{cor}
\begin{proof}
By Proposition \ref{fwfi}, we are reduced to check that any totally disconnected locally compact abelian group with Property FW is compact. Indeed, modding out by a compact open subgroup, we are reduced to the discrete case. Now since any infinite discrete abelian group has an infinite countable quotient, we are reduced to a infinite countable discrete abelian group $D$. By Proposition \ref{fwfwp}(\ref{fwwc}), Property FW for $D$ implies that $D$ is finitely generated, and hence admits $\Z$ as a quotient, a contradiction.
\end{proof}

Property FW is obviously stable under taking quotients. The following proposition shows it is also stable by taking extensions.

\begin{prop}\label{extfw}
Let $G$ be a topological group and $N$ a normal subgroup. Suppose that $(G,N)$ has relative Property FW and $G/N$ has Property FW. Then $G$ has Property FW.
\end{prop}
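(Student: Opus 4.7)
The plan is to reduce the statement to the characterization of Property FW in terms of boundedness of cardinal definite functions, and then to apply Proposition \ref{elggn} as the bridge between $G$ and $G/N$.

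Concretely, let $\ell$ be an arbitrary cardinal definite function on $G$; by Theorem \ref{btx} (applied to the continuous discrete $G$-set and commensurated subset realizing $\ell$), it suffices to show that $\ell$ is bounded on $G$. First, I would invoke the hypothesis that $(G,N)$ has relative Property FW, which by definition gives that the restriction $\ell|_N$ is bounded.

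With $\ell|_N$ bounded, Proposition \ref{elggn} applies and produces a cardinal definite function $\bar{\ell}$ on the quotient topological group $G/N$ such that, writing $\pi:G\to G/N$ for the canonical projection, the difference $\ell-\bar{\ell}\circ\pi$ is a bounded function on $G$. Now I would apply the assumption that $G/N$ has Property FW: this forces $\bar{\ell}$ to be bounded on $G/N$, so $\bar{\ell}\circ\pi$ is bounded on $G$. Combining the two boundedness statements yields that $\ell$ itself is bounded, which is what was to be shown.

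The proof is therefore essentially a one-line composition of results already set up: the only genuine ``content'' is Proposition \ref{elggn}, and there is no real obstacle here beyond checking that its hypothesis (boundedness of the restriction to $N$) is precisely what relative Property FW provides. No delicate point seems to arise regarding openness of stabilizers or continuity, since these are already built into the definition of cardinal definite function used in Definition \ref{d_cd} and preserved by the construction in Proposition \ref{elggn}.
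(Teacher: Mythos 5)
Your proof is correct and follows exactly the same route as the paper's: reduce to boundedness of cardinal definite functions via Theorem \ref{btx}, use relative Property FW to get boundedness on $N$, invoke Proposition \ref{elggn} to push down to $G/N$ up to a bounded error, and finish with Property FW of the quotient. There is no difference in substance from the argument in the text.
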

\begin{proof}
Let $\ell$ be a cardinal definite function on $G$. Then by relative Property FW, $\ell$ is bounded on $N$. Hence by Proposition \ref{elggn}, there exists a function $\ell'$ on $G$ factoring through a cardinal definite function on $G/N$ such that $\ell-\ell'$ is bounded. By Property FW for $G/N$, $\ell'$ is bounded. So $\ell$ is bounded as well.
\end{proof}

The next result shows that except some degenerate cases, wreath products (defined in \S\ref{s_wp}) never have Property FW.

\begin{prop}\label{wnfw}
Let $G$ be a topological group, $Y$ an infinite continuous discrete $G$-set and $H$ a nontrivial discrete group. Then the wreath product $H\wr_Y G$ does not have Property FW. In particular, if $H,G$ are discrete groups with $H$ nontrivial and $G$ infinite then $H\wr G$ does not have Property FW.
\end{prop}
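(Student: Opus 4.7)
The plan is essentially to apply Proposition \ref{wcd} and exhibit an unbounded cardinal definite function on $H\wr_Y G$. Recall that by Theorem \ref{btx} (or more precisely its reformulation at the start of \S\ref{fwetc}), Property FW for a topological group is equivalent to the statement that every cardinal definite function is bounded. So it suffices to construct an unbounded cardinal definite function.

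To do this, I would first note that since $H$ is a nontrivial discrete group, the left action of $H$ on itself with commensurated subset $M=\{1\}$ gives the cardinal definite function $\ell_M(h)=2\mathbf{1}_{H\smallsetminus\{1\}}(h)$ on $H$ (clearly continuous for the discrete topology, with open stabilizer). By Proposition \ref{wcd}, the function
\[
\Lambda\colon (h_y)_{y\in Y}\,g\ \longmapsto\ \sum_{y\in Y}\ell_M(h_y)\ =\ 2\,\#\Supp((h_y)_{y\in Y})
\]
is cardinal definite on $H\wr_YG=H^{(Y)}\rtimes G$.

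To see that $\Lambda$ is unbounded, pick any $h_0\in H\smallsetminus\{1\}$; for each finite subset $F\subset Y$, let $w_F\in H^{(Y)}$ be the element with $w_F(y)=h_0$ for $y\in F$ and $w_F(y)=1$ otherwise. Then $\Lambda(w_F)=2\#F$, and since $Y$ is infinite we may let $\#F$ go to $\infty$. Thus $\Lambda$ is unbounded, so $H\wr_YG$ fails Property FW.

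For the final ``in particular'' statement with $H,G$ discrete, $H$ nontrivial and $G$ infinite: the standard wreath product $H\wr G$ is precisely $H\wr_G G$ with $G$ acting on itself by left translation. Since $G$ is infinite and discrete, $Y=G$ is an infinite continuous discrete $G$-set, so the preceding case applies directly. The argument is self-contained modulo Proposition \ref{wcd}; the only subtlety to check along the way is that Proposition \ref{wcd} indeed gives a genuine cardinal definite function on the topological group $H\wr_YG$ (i.e.\ the associated commensurated subset has open stabilizer), but this is already built into the proposition's statement, so no extra work is needed here.
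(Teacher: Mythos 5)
Your argument is correct and matches the paper's proof: both apply Proposition \ref{wcd} with the cardinal definite function on $H$ arising from the left action of $H$ on itself with $M=\{1\}$, obtaining the function $wg\mapsto 2\#(\Supp(w))$ on $H\wr_Y G$, and then observe this is unbounded because $Y$ is infinite. No gaps; the continuity/open-stabilizer point you flag is indeed handled inside Proposition \ref{wcd} as you note.
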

\begin{proof}
By Proposition \ref{wcd}, the function $wg\mapsto 2\#(\Supp(w))$ is cardinal definite on $H\wr_YG$; it is unbounded as soon as $Y$ is infinite. (The proof of Proposition \ref{wcd} also shows that if $Y$ has an infinite $G$-orbit then $H\wr_YG$ does not have Property FW'.
\end{proof}

Proposition \ref{wnfw} was obtained for standard wreath products with a similar argument in \cite[Theorem~3]{CMV}, although claiming a weaker statement. 

\begin{rem}
A more careful look at the proof of Proposition \ref{wnfw} (see the stabilizer computations before Proposition \ref{wcd}) shows that if $H$ is a nontrivial finitely generated discrete group, $G$ is a compactly generated locally compact group and $y\in Y$ has an infinite $G$-orbit, then $H^{(Y\smallsetminus\{y\})}G_y$ is coforked in $H\wr_Y G$. In particular, if $G$ is an infinite finitely generated discrete group, then $H^{(G\smallsetminus\{1\})}$ is coforked in the standard wreath product $H\wr G$.
\end{rem}

We now recall some geometric rigidity properties
\begin{itemize}
\item A topological group is {\em strongly bounded (SB)} if every continuous isometric action on a metric space has bounded orbits, or equivalently if every continuous subadditive nonnegative real-valued function is bounded (strong boundedness is sometimes called Bergman Property, strong Bergman Property, strong uncountable cofinality).
\item A topological group has {\em Property FH} if for every continuous isometric action on a Hilbert space there is a fixed point, or equivalently (by the center lemma) orbits are bounded. For $\sigma$-compact locally compact groups, the Delorme-Guichardet Theorem \cite[\S 2.12]{BHV} states that Property FH is equivalent to Kazhdan's Property T (defined in terms of unitary representations).
\item A topological group has {\em Property FA} if for every continuous isometric action on the 1-skeleton of a tree, there is a fixed point, or equivalently (by the center lemma) orbits are bounded. By Bass-Serre Theory \cite{Ser}, a topological group has Property FA if and only if it satisfies the following three conditions 
\begin{itemize}
\item it has no continuous homomorphism onto $\Z$;
\item it has no decomposition as a nontrivial amalgam over open subgroups;
\item it has uncountable cofinality (as a topological group).
\end{itemize}
\item Cofinality $\neq\omega$ (see \S\ref{s_cof}) can also be characterized as: every continuous isometric action on any ultrametric space has bounded orbits.
\end{itemize}

\begin{prop}\label{fhfwfa}For a topological group $G$, we have the following implications
\[\textnormal{SB}\Rightarrow\textnormal{FH}\Rightarrow\textnormal{FW}\Rightarrow\textnormal{FA}\Rightarrow(\textnormal{cofinality}\neq\omega).\]
\end{prop}
\begin{proof}
The left implication is trivial. The last implication is due to Serre \cite[\S 6.1]{Ser}: let $(G_n)$ be a nondecreasing sequence of subgroups whose union is $G$) and define $T$ as the $G$-set given as the disjoint union $\bigsqcup G/G_n$; endow it with the graph structure joining any $g\in G/G_n$ to its image in $G/G_{n+1}$. Then $T$ is an unbounded tree on which $G$ acts transitively by automorphisms. So $G$ does not have Property FA.

The implication FH$\Rightarrow$FW is a consequence of Proposition \ref{sxlp2} for $p=2$ (it is essentially half of the original proof by Alperin \cite{Al} of the implication FH$\Rightarrow$FA). 

For the implication FW$\Rightarrow$FA, we essentially repeat the other half of the same result of Alperin: assume that $G$ has Property FW and let $G$ act continuously on a tree. Consider the action of $G$ on the set $X$ of oriented edges. This action is continuous, as the stabilizer of a given oriented edge is the pointwise stabilizer of a pair of vertices. Fix a vertex $x_0$ and let $M$ be the set of edges pointing towards $x_0$. Then the stabilizer of $M$ is open, since it contains the stabilizer of $x_0$. Moreover, $M$ is commensurated and $\ell_M(g)=d(x_0,gx_0)$ for all $g\in G$. By Property FW, $\ell_M$ is bounded. Thus the orbit of $x_0$ is bounded and hence there is a fixed point in the 1-skeleton.\end{proof}

\begin{rem}
The implications of Proposition \ref{fhfwfa} are not equivalences, even for countable discrete groups. Let us begin with the easiest:
\begin{itemize}
\item (uncountable cofinality) $\Rightarrow\!\!\!\!\!\!\!/\;$FA: $\Z$ is a counterexample;
\item FH$\Rightarrow\!\!\!\!\!\!\!/\;$SB: consider any infinite discrete (finitely generated) group with Property T, e.g.\ $\SL_3(\Z)$.
\item That FA$\Rightarrow\!\!\!\!\!\!\!/\;$FW is now well-known; see Example \ref{FApFW}.
\item FW$\Rightarrow\!\!\!\!\!\!\!/\;$FH: a counterexample (from \cite{Cor2}) is $\SL_2(\Z[1/2])$, see Example \ref{sl2z2}
\end{itemize}
\end{rem}

% (although the existence of such an example is by no ways surprising, it does not seem to appear anywhere in the literature before~\cite{Cor2}).

\begin{exe}\label{FApFW}
Let us provide two types of finitely generated groups with Property FA but not Property FW (the first examples appeared in \cite[Example 2.5]{Sco77}).
\begin{enumerate}
\item Finitely generated groups with Property FA but without hereditary Property FA, i.e.\ with a finite index subgroup without Property FA. By Proposition \ref{fwfi}, such groups do not have Property FW. There are a lot of such groups. For instance, any infinite finitely generated Coxeter group whose Coxeter graph has no $\infty$-label has this property. This includes the examples in \cite[Example 2]{ChN} (which are lattices in the group of isometries of the Euclidean plane). Such groups (as all finitely generated Coxeter groups) actually have Property PW \cite{BJS}, and have Property FA, as any group generated by a finite set $S$ such that $S\cup S^2$ consists of torsion elements \cite[Corollaire~2, p.~90]{Ser}. Another elementary example is the following: let $D$ be a finitely generated group with finite abelianization with a finite index subgroup $D_1$ with infinite abelianization (e.g.\ the infinite dihedral group) and $F$ a nontrivial finite group. Then the standard wreath product $D\wr F=D^F\rtimes F$ has Property FA by the general criterion of the author and A.~Kar \cite[Theorem 1.1]{CoK}; besides it admits $D_1^F$ as a subgroup of finite index with infinite abelianization and thus failing to have Property FA. Also, if $E$ is a finitely generated group with finite abelianization but splits as a nontrivial amalgam, then the wreath product $E\wr F$ has Property FA again by \cite[Theorem 1.1]{CoK} but admits the finite index subgroup $E^F$, which fails to have Property FA. In case every finite index subgroup of $E$ has a finite abelianization (e.g.\ $E$ is the free product of two infinite finitely generated simple groups), $E\wr F$ is then an example of a finitely generated group with Property FA, without hereditary Property FA but all of whose finite index subgroups have a trivial abelianization.

\item Finitely generated groups with hereditary Property FA and without Property FW. For instance, if $B$ is any nontrivial finitely generated group with finite abelianization (e.g.\ a nontrivial finite group) and if $\Gamma$ is any infinite finitely generated group with hereditary Property FA (e.g. with Property FW), then by the criterion of \cite[Theorem 1.6]{CoK}, the standard wreath product $B\wr\Gamma$ has hereditary Property FA, while it does not have Property FW by Proposition \ref{wnfw}. Other examples with hereditary Property FA and without Property FW are Grigorchuk's groups \cite{Gri}, as well as topological full groups of minimal Cantor systems (see \S\ref{negfw}).
\end{enumerate}
\end{exe}

\begin{rem}\label{relaver}
Proposition \ref{fwfwp}(\ref{fwww}) can be extended to relative versions. Namely, if $L\subset G$ is any subset and $G$ has cofinality $\neq\omega$, then relative Property FW for $(G,L)$ can be tested on transitive actions of $G$.

Also, Proposition \ref{fhfwfa} extends to the relative case: just define relative Property FH, FA, cofinality $\neq\omega$ by saying that every continuous isometric action of $G$ on a Hilbert space (resp.\ tree, resp. ultrametric space) is bounded in restriction to $L$.

Concerning Proposition \ref{fwfi}, it is obvious that relative Property FW for $(G,L)$, when $L$ is a subgroup, does not change if $L$ varies in its group commensuration class. Also, its proof shows that, for any open subgroup $H$ of $G$ and subset $L$ of $H$, $(G,L)$ has relative Property FW if and only $(H,L)$ has Property FW.

Proposition \ref{extfw} also extends the following relative version: if $(G,N)$ has relative Property FW, if $L$ is an $N$-invariant subset of $G$ and $(G/N,L/N)$ has relative Property FW then $(G,L)$ has relative Property FW.
\end{rem}

\subsection{Property PW}

\begin{defn}
A locally compact group $G$ has Property PW if it has a proper cardinal definite function.
\end{defn}

Observe that if a locally compact group $G$ has both Properties PW and FW then it is compact. More generally, if it has Property PW and $(G,L)$ has relative Property FW for some subset $L$, then $\overline{L}$ is compact. Also observe that from the bare existence of a proper continuous real-valued function, every locally compact group with Property PW is $\sigma$-compact.

Observe that Property PW for locally compact groups is stable by taking closed subgroups.

\begin{prop}\label{pwfi}
Let $G$ be a locally compact group. Then

\begin{enumerate}
\item\label{pwfi1} if $H$ an open subgroup of finite index, then $G$ has Property PW if and only if $H$ has Property PW;
\item\label{pwfi3} if $G$ is a topological direct product $G_1\times G_2$ where $G_1,G_2$ are closed subgroups with Property PW, then $G$ has Property PW.
\item\label{pwfi2} if $W$ is a compact normal subgroup, then $G$ has Property PW if and only if $G/W$ has Property PW.
\end{enumerate}
\end{prop}

\begin{proof}
We begin with (\ref{pwfi1}). Leaving aside the trivial implication, assume that $H$ has Property PW. Let $\ell$ be a proper cardinal definite function on $H$. Using additive induction of $H$-sets (Corollary \ref{fico}), there exists a cardinal definite function $\ell'$ on $G$ with $\ell'|_H\ge\ell$. In particular, $\ell'|_H$ is proper. Now it is easily checked for an arbitrary length function that properness on an open finite index subgroup implies properness; thus $\ell'$ is proper and $G$ has Property PW.

The assertion (\ref{pwfi3}) is essentially immediate, by choosing commensurating actions of $G_i$ on sets $X_i$ and considering the action of $G_1\times G_2$ on the disjoint union $X_1\sqcup X_2$.

For (\ref{pwfi2}), the ``if" implication is trivial and the converse follows from the fact that compact groups have Property FW and Proposition \ref{elggn}.
\end{proof}

Let us now mention the following slight generalization of the main result from \cite{CSVa}. There, $H$ was assumed finite, but the easy trick to carry over arbitrary $H$ was used in \cite{CSV} in a similar context.

\begin{thm}\label{stpw}
Let $H,G$ be discrete groups with Property PW. Then the standard wreath product $H\wr G$ has Property PW.
\end{thm}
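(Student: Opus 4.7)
The plan is to combine two cardinal definite functions on $H\wr G$: one built from Proposition~\ref{zxest} applied to a proper commensurating action of $G$, which controls both the $G$-coordinate $g$ and the support of the lamp configuration $w\in H^{(G)}$; and one built from Proposition~\ref{wcd} applied to a proper commensurating action of $H$, which controls the values of the lamps on that support. Their sum is cardinal definite by Lemma~\ref{cardefsu}, and I would show that it is proper.

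First, use Property PW of $G$ to fix a $G$-set $X$ and a commensurated subset $M\subset X$ with $\ell_M$ proper on $G$. A key technical point is that the second lower bound in Proposition~\ref{zxest} only involves the one-sided cardinality $\#(M\smallsetminus\gamma M)$, whose properness in $\gamma$ does not a priori follow from that of $\ell_M(\gamma)=\#(M\smallsetminus\gamma M)+\#(\gamma M\smallsetminus M)$. I would symmetrize by replacing $(X,M)$ with $(X',M')=(X\sqcup X,\,M\sqcup (X\smallsetminus M))$. Then
\[\#(M'\smallsetminus\gamma M')=\#(M\smallsetminus\gamma M)+\#(M^{c}\smallsetminus\gamma M^{c})=\#(M\smallsetminus\gamma M)+\#(\gamma M\smallsetminus M)=\ell_M(\gamma),\]
while $\ell_{M'}=2\ell_M$ is still proper. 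Applying Proposition~\ref{zxest} to $(X',M')$ produces a cardinal definite function $\ell_1$ on $H\wr G$ with
\[\ell_1(wg)\ \ge\ \ell_{M'}(g)\ =\ 2\ell_M(g)\qquad\text{and}\qquad \ell_1(wg)\ \ge\ \sup_{\gamma\in\Supp(w)}\ell_M(\gamma).\]

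Second, use Property PW of $H$ to fix a proper cardinal definite function $\ell$ on $H$. Applying Proposition~\ref{wcd} with $Y=G$ under left translation gives a cardinal definite function $\ell_2$ on $H\wr G$ with $\ell_2(wg)=\sum_{\gamma\in G}\ell(w_\gamma)$. Set $\Lambda=\ell_1+\ell_2$, cardinal definite by Lemma~\ref{cardefsu}. To see $\Lambda$ is proper, suppose $\Lambda(w_n g_n)\le C$. The first bound on $\ell_1$ forces $\ell_M(g_n)\le C/2$, so $g_n$ lies in a finite subset of $G$. The second bound forces $\ell_M(\gamma)\le C$ for every $\gamma\in\Supp(w_n)$, so by properness of $\ell_M$ there is a finite $K\subset G$ with $\Supp(w_n)\subset K$ for all $n$. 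Then $\sum_{\gamma\in K}\ell(w_n(\gamma))=\ell_2(w_n g_n)\le C$, so each $\ell(w_n(\gamma))\le C$, and by properness of $\ell$ each $w_n(\gamma)$ ranges in a finite subset of $H$. Thus $w_n$ ranges over a finite subset of $H^{(G)}$ and $w_n g_n$ over a finite subset of $H\wr G$, proving properness.

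The main obstacle is precisely the symmetrization in the first step: the construction of Proposition~\ref{zxest} is visibly non-symmetric in $M$ versus $M^{c}$, so its built-in lower bound sees only one half of $\ell_M(\gamma)$ and would leave open the possibility that the supports of the $w_n$ escape to infinity through elements $\gamma$ for which $\#(M\smallsetminus\gamma M)$ stays bounded. Once $(X,M)$ is replaced by $(X\sqcup X,M\sqcup M^{c})$, this asymmetry disappears and the rest of the argument is a routine recombination of the two cardinal definite functions supplied by the paper's wreath-product constructions.
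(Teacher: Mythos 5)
Your proposal is correct and follows essentially the same route as the paper's proof: the symmetrization step, replacing $(X,M)$ by $(X\sqcup X,\,M\sqcup M^c)$ to turn the one-sided bound $\#(M\smallsetminus\gamma M)$ of Proposition~\ref{zxest} into the full $\ell_M(\gamma)$, is precisely what the paper does (there written as $X\times\{0,1\}$ with $M'=M\times\{0\}\cup M^c\times\{1\}$), and the subsequent combination with the $\sum_\gamma\ell_0(w_\gamma)$ function from Proposition~\ref{wcd} and the properness check are also the same.
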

\begin{proof}
Start from an action of $G$ on a set $X$ with a commensurated subset $M$ such that $\ell_{M}$ is proper, and consider the action on $X'\times\{0,1\}$ with commensurated subset $M'=M\times\{0\}\cup M^*\times\{1\}$. Then perform, out of the latter action, the construction $Z_{X'}$ of Proposition \ref{zxest} to obtain a cardinal definite function $\ell$ on $H\wr G$ satisfying
\[\ell(wg)\ge \sup_{\gamma\in\{g\}\cup \Supp(w)}\ell_M(\gamma).\]
By properness of $\ell_M$, for every $n$ the subset $F_n=\ell_M^{-1}([0,n])$ of $G$ is finite. By the above inequality, for every $wg$ such that $\ell(wg)\le n$, we have $\{g\}\cup\Supp(w)\subset F_n$. (Note that in case $H$ is finite, this shows that $\ell$ is proper.)

Now let us also use that $H$ has Property PW, let $\ell_0$ be a proper cardinal definite function on $H$; by Proposition \ref{wcd}, the function $wg\mapsto\ell'(wg)$, defined as $\ell'(wg)=\sum_{\gamma\in G}\ell_0(h_\gamma)$, is cardinal definite. Define $F'_n=\ell_0^{-1}([0,n])$; this is a finite subset of $H$.

Defining $\ell_1=\ell+\ell'$, if $\ell_1(wg)\le n$, then $\ell(wg)\le n$ so $\{g\}\cup\Supp(w)\subset F_n$ by the above, and $\ell'(wg)\le n$, so $w_\gamma\in F'_n$ for all $\gamma\in G$. Thus $w\in (F'_n)^{F_n}$ and $g\in F_n$, which leaves finitely many possibilities for $wg$. Thus the cardinal definite function $\ell_1$ is proper. 
\end{proof}

\begin{rem}
There is an analogue of Property FW and its cousins for actions by group automorphisms. Namely, a group $G$ has {\em Property FG} if whenever it acts by automorphisms on a discrete group $H$ commensurating (in the group sense) a subgroup $P$ also commensurated by $H$, there exists a subgroup $P'$ commensurate to $P$ and invariant by $G$ (there is an obvious relative version). The existence of $P'$ can also be characterized by the boundedness of the length $L_P(g)=\log([P:P\cap g(P)][g(P):P\cap g(P)])$, by a result 
of Schlichting (rediscovered by Bergman and Lenstra \cite{BLe}). By a projective limit construction \cite{ShW}, these properties can also be characterized in terms of actions by topological automorphisms on totally disconnected locally compact group, namely by the condition that every such action preserves a compact open subgroup. Property FG implies Property FW (since any action on a set $X$ commensurating a subset $M$ induces an action on the group $C^{(X)}$ commensurating the subgroup $C^{(M)}$, where $C$ is a 2-element group), but the converse does not hold, since $\SL_3(\Z[1/2])$ does not have Property FG, as we see by using its action by conjugation on $\SL_3(\Q_2)$. In particular, Property FG does not follow from Property T. On the other hand, Shalom and Willis \cite[Theorem 1.3]{ShW} proved that $\SL_d(\Z)$ for $d\ge 3$ as well as many other non-uniform lattices have Property FG; the argument also carrying over $\SL_2(\Z[\sqrt{2}])$. The only source of Property FG I know uses distortion of abelian subgroups. 
\end{rem}

%I plan to write more on Property FG and the similarly defined Property PG in a subsequent paper.

\section{Cardinal definite function on abelian groups and applications}\label{s_ab}

\subsection{Cyclic groups}
Let $\N$ be the set of nonnegative integers.

\begin{prop}\label{cdz}
Let $\ell$ be an unbounded cardinal definite function on $\Z$ associated to a transitive $\Z$-set $X$. 
Then there exists a bounded function $b:\Z\to\N$ such that we have $\ell(n)=|n|+b(n)$ for all $n\in\Z$.
\end{prop}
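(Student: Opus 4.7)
The plan has three stages: reduce the ambient $\Z$-set, classify commensurated subsets of $\Z$, and estimate $\ell_M$. First, the transitive $\Z$-sets are exactly the coset spaces $\Z/d\Z$ with $d\ge 0$; for $d\ge 1$ the set is finite so $\ell_M$ takes only finitely many values, contradicting unboundedness. So $X=\Z$ with its regular translation action, and $\ell=\ell_M$ for some commensurated $M\subset\Z$.

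Second, the condition $\#(M\tu(M+1))<\infty$ forces $\mathbf{1}_M$ to be eventually constant in each direction, giving four cases: $M$ finite, cofinite, commensurable with $\N$, or commensurable with $-\N$. The first two make $M$ transfixed (commensurable with the invariant subsets $\emptyset$ or $\Z$), hence $\ell_M$ bounded; since $\ell$ is unbounded, one of the latter cases holds, and after replacing $M$ by $-M$ we may assume $M$ commensurable with $\N$. Let $F:=M\tu\N$, which is finite.

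Third, we bound $\ell_M(n)$ above and below. For the upper bound, the identity $M\tu(M+n)=(\N\tu(\N+n))\tu F\tu(F+n)$ (valid because indicator functions add modulo $2$), combined with $|F\tu(F+n)|\le 2|F|$ and the direct computation $\ell_\N(n)=|n|$, gives $\ell_M(n)\le|n|+2|F|$. For the lower bound, fix $n\ge 0$ and consider the telescoping sum
\[\sum_{x\in\Z}\bigl(\mathbf{1}_M(x+n)-\mathbf{1}_M(x)\bigr)=n,\]
which converges absolutely (only finitely many terms are nonzero, by commensurability) and equals $n$ because $\mathbf{1}_M=1$ on a cofinite subset of $\Z_{\ge 0}$ and $\mathbf{1}_M=0$ on a cofinite subset of $\Z_{\le 0}$. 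Bounding the signed sum by the sum of absolute values of its terms, each nonzero term having absolute value $1$, yields
\[n=\Bigl|\sum_{x\in\Z}\bigl(\mathbf{1}_M(x+n)-\mathbf{1}_M(x)\bigr)\Bigr|\le\sum_{x\in\Z}\bigl|\mathbf{1}_M(x+n)-\mathbf{1}_M(x)\bigr|=\#(M\tu(M-n))=\ell_M(n),\]
and the case $n<0$ is symmetric. Hence $b(n):=\ell_M(n)-|n|$ is an integer-valued function satisfying $0\le b(n)\le 2|F|$, as required.

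The main obstacle is the lower bound $\ell_M(n)\ge|n|$; the key insight is to view $\ell_M(n)$ as the total variation of the shift difference $\mathbf{1}_M(\cdot+n)-\mathbf{1}_M(\cdot)$, which necessarily dominates its net signed flux, namely $n$.
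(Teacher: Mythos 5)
Your proof is correct, and the reduction steps (transitive $\Z$-set must be $\Z$ itself, then $M$ commensurable with $\emptyset$, $\Z$, $\N$, or $-\N$, with the first two cases ruled out by unboundedness) coincide with the paper's. The difference lies in how you establish $b\ge 0$. The paper writes $\ell(n)=|n|+b(n)$ with $b$ bounded (using commensurability with $\N$, essentially your upper-bound computation) and then deduces $b\ge 0$ abstractly from subadditivity of $\ell$: from $\ell(kn)\le k\ell(n)$ one gets $|kn|+b(kn)\le k|n|+kb(n)$, so $b(kn)/k\le b(n)$, and letting $k\to\infty$ with $b$ bounded forces $0\le b(n)$. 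You instead prove the lower bound $\ell_M(n)\ge|n|$ directly by the telescoping flux identity $\sum_x(\mathbf{1}_M(x+n)-\mathbf{1}_M(x))=n$ and bounding the signed sum by the $\ell^1$-norm $\ell_M(n)$ of the shift difference. Both are valid; the paper's subadditivity trick is shorter and works for any cardinal definite (or indeed any subadditive) function satisfying the asymptotic form, while your argument is more concrete and makes the inequality $\ell_M(n)\ge|n|$ visible as a ``total variation dominates net flux'' statement, which is a nice way to think about why the homogeneous part is what it is.
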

\begin{proof}
Since $\ell=\ell_M$ is unbounded, $X$ is infinite and hence we can identify $X$ to $\Z$. Then $M\subset\Z$ is commensurated by translations and hence has a finite boundary in the standard Cayley graph of $\Z$. It follows that $M$ is commensurate to $\emptyset$, $\Z$, $\N$ or $-\N$. The first two cases are excluded since $\ell$ is unbounded, and in the last two cases, we obtain that $\ell(n)=|n|+b(n)$ with $b:\Z\to\Z$ bounded. It turns out that $b(\Z)\subset\N$; indeed, for all $k\ge 1$ and $n$, we have, by subadditivity
\[|kn|+b(kn)=\ell(kn)\le k\ell(n)=k|n|+kb(n),\]
whence, dividing by $k$ we obtain $\min(b)/k\le b(n)$ for all $k\ge 1$, and picking $k>-\min(b)$ we deduce that $b(n)>-1$, whence $b(n)\ge 0$.
\end{proof}

\begin{defn}Let $G$ be a compactly generated locally compact group and $Z$ an infinite discrete cyclic subgroup. Let $|\cdot|$ be the word length in $G$ with respect to a compact generating subset. If $g$ is a generator of $Z$, the limit $\lim_{n\to\infty}|g^n|/n$ exists; $Z$ is called {\em distorted} if this limit is zero and {\em undistorted} otherwise. (This does not depend on the choice of the compact generating subset of $G$.)

Let us also say that $G$ has {\em uniformly undistorted discrete cyclic subgroups} (the word ``discrete" can be dropped if $G$ is discrete) if \[\inf_g\lim_{n\to\infty}|g^n|/n>0,\] where $g$ ranges over non-elliptic elements, i.e., generators of infinite discrete cyclic subgroups.
\end{defn}

The following corollary was first obtained indirectly by F.\ Haglund by studying the dynamics of isometries of CAT(0) cube complexes.

\begin{cor}\label{zundi}
Let $G$ be a compactly generated locally compact group and $Z$ an infinite discrete cyclic subgroup. If $Z$ is distorted then $(G,Z)$ has relative Property FW. In particular, if $G$ has Property PW then $Z$ is undistorted; actually $G$ has uniformly undistorted cyclic subgroups.
\end{cor}
\begin{proof}
Let $f$ be a cardinal definite function on $G$. Then $f$ is a length function, and in particular $f$ is asymptotically bounded by the word length of $G$. If $Z$ is distorted, it follows that $f$ is sublinear on $Z$ with respect to the usual word length on $Z\simeq\Z$. By Lemma \ref{cdz}, it follows that $f$ is bounded on $Z$. For the uniform statement, assume that $f$ is proper; then for every non-elliptic element $g$, Lemma \ref{cdz} implies $f(g^n)\ge |n|$ for all $n\ge 1$. For some constant $c>0$, we have $f\le c|\cdot|$, where $|\cdot|$ is the word length on $G$. Hence $\lim_{n\to\infty}|g^n|/|n|\ge 1/c$, which is independent of $g$.
\end{proof}

\begin{exe}
The central generator $z$ of the discrete Heisenberg group $H$ is quadratically distorted, in the sense that $|z^n|\simeq\sqrt{n}$; it follows that $H$ does not have Property PW (this example was noticed in \cite{Hag}).
\end{exe}

\begin{exe}
Let us indicate examples of finitely generated groups in which infinite cyclic subgroups are all undistorted, but not uniformly.

\begin{enumerate}\item Let $u\in\GL_4(\Z)$ be a matrix whose characteristic polynomial $P$ is irreducible over $\Q$ with exactly 2 complex eigenvalues on the unit circle; for instance $P=(X^2+(1+\sqrt{2})X+1)(X^2+(1-\sqrt{2})X+1)$ and $u$ is its companion matrix. Then the corresponding semidirect product $\Z^4\rtimes_u\Z$ has its cyclic subgroups undistorted, but not uniformly. Indeed, if $p$ is the projection to the sum in $\R^4$ of eigenvalues of modulus $\neq 1$, then for $g\in\Z^4\smallsetminus\{0\}$ (written multiplicatively), we have $\lim_{n\to\infty}|g^n|/n\simeq \|p(g)\|$, which is never zero but accumulates at zero. It follows that $\Z^4\rtimes_u\Z$ does not have Property PW (this also follows from Proposition \ref{polyfw}).

\item If a finitely generated group admits a subgroup isomorphic to $\Z[1/p]$ for some prime $p$, or more generally a non-cyclic torsion-free locally cyclic subgroup, then its cyclic subgroups are not uniformly undistorted. It is likely that there are examples in which they are all undistorted. 
\end{enumerate}
\end{exe}

\begin{exe}
Groups with Property PW are not the only instances of finitely generated groups with uniformly undistorted cyclic subgroups. Other examples include cocompact lattices in semisimple Lie groups, and finitely generated subgroups of the group of permutations of $\Z$ with bounded displacement.
\end{exe}

\begin{exe}\label{sl2z2}
Let $A$ be the ring of integers a number field which is not $\Q$ or an imaginary quadratic extension, e.g., $A=\Z[\sqrt{2}]$. Then $\SL_2(A)$ has Property FW. Note that in contrast, it has the Haagerup Property, as a discrete subgroup (actually a non-uniform irreducible lattice) in some product $\SL_2(\R)^{n_1}\times\SL_2(\C)^{n_2}$ with $n_1+n_2\ge 2$. For instance, $A=\Z[\sqrt{2}]$ is a lattice in $\SL_2(\R)^2$. To see Property FW, first observe that the condition on $A$ implies that $A^\times$ has an infinite order element; a first consequence is that, denoting by $U_{12}$ and $U_{21}$ the upper and lower unipotent subgroups in $A$, that $U_{12}$ and $U_{21}$ are both exponentially distorted. In particular, since they are finitely generated abelian groups and are thus boundedly generated by cyclic subgroups, it follows from Corollary \ref{zundi} that $(\SL_2(A),U_{12}\cup U_{21})$ has relative Property FW. Now a more elaborate consequence of the condition on $A$ is the theorem of Carter, Keller and Paige, see Witte Morris \cite{W}: $\SL_2(A)$ is boundedly generated its two unipotent subgroups. It follows, using the trivial observation that $(G,L)$ has relative Property FW implies $(G,L^n)$ has relative Property FW for all $n\ge 1$, that $\SL_2(A)$ has Property FW.
\end{exe}

%By Lubotzky-Mozes-Raghunathan, since $\SL_2(A)$ is a lattice in higher rank, it is undistorted in the ambient Lie group, and therefore the unipotent subgroups are exponentially distorted. So Proposition \ref{dibg} applies and $\SL_2(A)$ has Property FW. 

\begin{prop}\label{cez}
Let $G$ be a compactly generated locally compact abelian group. Let $Z$ be a normal infinite cyclic discrete subgroup. Then either $(G,Z)$ has relative Property FW, or $Z$ is a topological direct factor in some open normal subgroup of finite index of $G$ containing $Z$.
\end{prop}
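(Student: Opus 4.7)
My plan is to first reduce to the totally disconnected part of $G$ via the structure theorem. By the classical structure theorem for compactly generated locally compact abelian groups, write $G \cong \R^a \times \Z^b \times K$ with $K$ a compact abelian group, and let $G_0 = \R^a \times \{0\} \times K_0$ be the identity component (where $K_0$ is that of $K$). A first key observation is that any continuous cardinal definite function $f = \ell_M$ on $G$ is trivial on $G_0$: for the associated continuous discrete $G$-set $X$, the $G_0$-orbit of any point is a continuous image of the connected group $G_0$ in a discrete space, hence a singleton, so $G_0$ acts trivially and $f|_{G_0} = 0$. Consequently every continuous cardinal definite function on $G$ factors through $\bar G := G/G_0 \cong \Z^b \times (K/K_0)$.

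Second, I would show that if the projection $n := \pi(z) \in \Z^b$ of a generator $z = (v,n,k)$ of $Z$ is zero, then $(G,Z)$ automatically has relative Property FW. Indeed the image $\bar Z$ of $Z$ in $\bar G$ then lies inside $\{0\} \times K/K_0$, which is compact. Any continuous cardinal definite $f$ on $G$ factors through $\bar G$ and, being continuous, is bounded on the compact profinite factor $K/K_0$; hence it is bounded on $\bar Z$, and therefore on $Z$. So it remains to handle the case $n \neq 0$ and to exhibit the direct-factor splitting there.

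Assume $n \neq 0$ and write $n = d\,n_0$ with $d \geq 1$ and $n_0 \in \Z^b$ primitive, so that $\Z^b = \Z n_0 \oplus M$ for some complementary $M \cong \Z^{b-1}$. Set $G' := \pi^{-1}(\Z n + M)$, an open subgroup of index $d$ containing $Z$, and $H := \pi^{-1}(M) = \R^a \times M \times K$, a closed subgroup of $G'$. A short verification gives $Z \cap H = \{1\}$ (any $z^m \in H$ satisfies $mn \in \Z n \cap M = \{0\}$, forcing $m=0$) and $G' = Z + H$ (given $g \in G'$ with $\pi(g) = jn + \mu$, the element $z^{-j}g$ lies in $H$). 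The quotient $G'/H$ identifies with $(\Z n + M)/M \cong \Z n \cong Z$, providing a continuous retraction $p : G' \to Z$, so that $g \mapsto (p(g),\, g - p(g))$ is a continuous inverse to the sum map $Z \times H \to G'$. This exhibits $Z$ as a topological direct factor of the open finite-index subgroup $G'$.

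The main obstacle I anticipate is not the algebraic splitting, which is essentially forced once $n \neq 0$ is isolated, but rather the preliminary reduction: one must leverage both the structure theorem and the fact that continuity of $f$ automatically forces boundedness on the compact profinite factor $K/K_0$, to rule out any pathological behaviour of $Z$ inside the connected/compact part of $G$. Once that topological input is in place, constructing $H$ and a continuous retraction onto $Z$ yields the topological direct product essentially for free.
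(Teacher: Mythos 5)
Your proof is correct, but it takes a genuinely different route from the paper. The paper works directly with a commensurating action: it assumes $(G,Z)$ does not have relative Property FW, picks a $G$-set $X$ and commensurated subset $M$ with $\ell_M$ unbounded on $Z$, decomposes $X$ into $Z$-orbits, uses Proposition \ref{cgco2} to isolate finitely many infinite $Z$-orbits $X_j$ on which $M$ is bi-infinite, and then takes $L$ to be the pointwise stabilizer in a suitable finite-index open subgroup $H$ of one such $X_j$; the simple transitivity of $Z$ on $X_j$ and the fact that $H$ centralizes $Z$ force $H = Z\times L$ abstractly, and the open mapping theorem for $\sigma$-compact locally compact groups upgrades this to a topological splitting. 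Your argument instead invokes the structure theorem $G\cong\R^a\times\Z^b\times K$ for compactly generated LCA groups, observes that a continuous cardinal definite function must vanish on the connected component and factor through the discrete-by-profinite quotient $\Z^b\times K/K_0$, and then splits on whether the $\Z^b$-coordinate of a generator of $Z$ vanishes; the direct-factor construction in the second case is entirely explicit in these coordinates. What the paper's approach buys is that it does not rely on the classification of LCA groups and works verbatim for non-abelian $G$ (where the centralizer $H_1$ of $Z$ is an open subgroup of index at most $2$ rather than all of $G$) — note the paper's own proof already carries this extra generality. What your approach buys is that it is more concrete, bypasses the auxiliary Propositions \ref{cgco2} and \ref{cdz} entirely, and gives an explicit continuous retraction $p:G'\to Z$ rather than appealing to the open mapping theorem for the topological isomorphism. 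Both proofs are sound.
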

\begin{proof}
Assume that $(G,Z)$ does not have relative Property FW.
Let $H_1$ be the centralizer of $Z$ in $G$, which is open of index at most 2.  
Let $X$ be a continuous discrete $G$-set and $M$ a commensurated subset with open stabilizer such that $\ell_M$ is unbounded on $Z$.  
Decompose $X$ into $Z$-orbits as $\bigcup_{i\in I}X_i$. Note that $G$ permutes the $Z$-orbits and thus naturally acts on $I$; since the stabilizer of $X_i$ contains the stabilizer of any $x\in X_i$, this action is continuous. Let $J$ be the set of $i$ such that $M_i=X_i\cap M$ is infinite and coinfinite in $X_i$. Then $X_J=\bigcup_{i\in J}X_i$ is $G$-invariant. Define $K=I\smallsetminus J$, $M_J=M\cap X_J$, $M_K=M\smallsetminus X_J$. Then $\ell_M=\ell_{M_J}+\ell_{M_K}$. Since $Z$ is compactly generated, it follows from Proposition \ref{cgco2} that $M_K$ is transfixed by $Z$, so $\ell_{M_K}$ is bounded on $Z$. Thus $\ell_{M_J}$ is unbounded on $Z$ and in particular $J\neq\emptyset$. Again by Proposition \ref{cgco2}, $J$ is finite. So some open subgroup of finite index $H_2$ of $G$ fixes $J$ pointwise. 
Define $H=H_1\cap H_2$.

Let us pick $j\in J$. Then $Z$ acts on $X_j$ with a single infinite orbit; therefore if $L$ is the pointwise stabilizer of $X_j$ in $H$, we have $H=Z\times L$ as an abstract group; thus the continuous homomorphism $Z\times L\to H$ is bijective; since $Z\times L$ is $\sigma$-compact, it is a topological group isomorphism.
\end{proof}

\begin{exe}\label{sl2ti}
Let $\Gamma$ be a cocompact lattice in $\PSL_2(\R)$ and $\tilde{\Gamma}$ its inverse image in $\widetilde{\PSL}_2(\R)$. Then $\tilde{\Gamma}$ does not have Property PW. Indeed, its center $Z\simeq\Z$ is not virtually a direct factor: this is well-known and follows from the fact that for any finite index subgroup $\Lambda$ of $\tilde{\Gamma}$ there exist $g\ge 2$ and $2g$ elements $x_1,y_1,\dots,y_g\in\tilde{\Gamma}$ (actually generating a finite index subgroup of $\Lambda$, namely $\Lambda$ itself if $\Lambda$ is torsion-free) such that $\prod_{i=1}^g[x_i,y_i]$ is a nontrivial element of $Z$.  

On the other hand, $\tilde{\Gamma}$ has its infinite cyclic subgroups undistorted (and actually uniformly undistorted). Note that it also has the Haagerup Property, because $\widetilde{\PSL}_2(\R)$ has the Haagerup Property, by \cite[Chap.~4]{CCJJV}.
\end{exe}

\subsection{Abelian groups}

\begin{lem}\label{cgacd}
Let $A$ be a compactly generated locally compact abelian group and $X$ a continuous discrete transitive $A$-set with a non-transfixed commensurated subset; denote by $K$ the kernel of the $A$-action on $X$. Then there is a continuous surjective homomorphism $\chi:A\to\Z$ (unique up to multiplication by $-1$) such that the kernel $K$ is an open finite index subgroup of $\Ker(\chi)$. In particular, the associated cardinal definite function on $A$ has the form $g\mapsto [\Ker(\chi):K]|\chi(g)|+b(g)$ where $b:A\to\N$ is a bounded continuous function.
\end{lem}
\begin{proof}
Denote $A'=A/K$. In particular, the action of $A'$ is simply transitive; since point stabilizers are open it follows that $A'$ is discrete, and hence finitely generated by assumption. Since the action is simply transitive, we also deduce that $A'$ is multi-ended, and therefore is virtually infinite cyclic. Write $A'=\Z\times F$ with $F$ finite abelian. Define $M'=\bigcup_{g\in F}gM$; it is $F$-invariant and commensurate to $M$. By Lemma \ref{cdz}, $\ell_{M'}(n,f)=|F||n|+b'(n)$ with $b'$ bounded and hence $\ell_{M}(n,f)=|F||n|+b(n)$ with $b$ bounded; the same argument as in the proof of Lemma \ref{cdz} shows that $b\ge 0$. Letting $\chi$ denote the composite homomorphism $A\to A'\to \Z$, observe that $|F|=[\Ker(\chi):K]$ and $\chi$ is determined up to the sign, so the proof is complete.
\end{proof}

\begin{lem}\label{absch}
Let $A$ be an abelian group. Let $(\chi_i)_{i\in I}$ be a family of pairwise non-proportional nonzero homomorphisms $A\to\R$. Then the family $(|\chi_i|)$ is linearly independent over $\R$, in the space of functions $A\to\R$ modulo bounded functions.
\end{lem}
\begin{proof}
We have to show that for every nonzero finitely supported family of real scalars $(\lambda_i)_{i\in I}$, the function $\sum_i\lambda_i|\chi_i|$ is unbounded. Assuming by contradiction the contrary (i.e., the above sum is bounded), we can suppose $I$ nonempty finite and that all $\lambda_i$ are nonzero. Then we can find a finitely generated subgroup of $A$ on which the restrictions of the $\chi_i$ are pairwise non-proportional. Therefore we are reduced to the case when $A$ is finitely generated, and actually we can also suppose that $A=\Z^k$, since all homomorphisms to $\R$ vanish on the torsion subgroup of~$A$.

%We can suppose $I$ finite. Then, replacing $A$ by its quotient by the intersection of the $\Ker(\chi_i)$, we can suppose that $A=\Z^k$. 

%Arguing by contradiction, we can suppose that $I=\{1,\dots,k\}$ ($k$ a positive integer) and that $g\mapsto f(g)=\sum_{i=1}^k\lambda_i|\chi_i(g)|$ is bounded, where each $\lambda_i$ is a nonzero real. 

Write $V=\R^k$, so that $A\subset V$.  Extend $\chi_i$ to a continuous homomorphism $\hat{\chi}_i:V\to\R$. Writing each element in $V$ as the sum of an element of $A$ and a bounded element, we see that the function $g\mapsto\hat{f}(g)=\sum_{i\in I}\lambda_i|\hat{\chi}_i(g)|$ is still bounded on $V$.
Since $\hat{f}(g^n)=n\hat{f}(g)$ for all $g\in V$ and $n\ge 0$, by computing $\lim_{n\to\infty}\hat{f}(g^n)/n$, we see that actually $\hat{f}(g)=0$ for all $g\in V$.  We thus have 
\[|\hat{\chi}_1(g)|=-\sum_{i>1}\frac{\lambda_i}{\lambda_1}|\hat{\chi}_i(g)|\quad\forall g\in V;\]
let $V_i\subset V$ be the hyperplane $\{\hat{\chi}_i=0\}$. The right-hand term is a smooth function outside $\bigcup_{i>1}V_i$, which does not contain $V_1$ because the hyperplanes $V_i$ are pairwise distinct. But the left-hand term is smooth at no point of $V_1$. This is a contradiction.
\end{proof}

\begin{prop}\label{decab}
Let $A$ be a compactly generated locally compact abelian group and $f$ a continuous cardinal definite function on $A$. Then there exist finitely many continuous homomorphisms $\chi_i:A\to\Z$ and a bounded function $b:A\to\N$ such that, for all $g\in A$ we have
$$f(g)=\sum_i|\chi_i(g)|+b(g).$$
Moreover, if the $\chi_i:A\to\Z$ are required to be surjective, this decomposition is unique modulo the ordering of the $\chi_i$ and changing $\chi_i$ into $-\chi_i$. We call the term $f_0(g)=\sum_i|\chi_i(g)|$ the homogeneous part of $f$; it is given by $f_0(g)=\lim_{n\to\infty}f(g^n)/n$, and satisfies $f_0(ng)=|n|f_0(g)$ for all $n\in\Z$.
\end{prop}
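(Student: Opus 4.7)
The plan is to reduce the proposition to three earlier results: Corollary \ref{ficd}, Lemma \ref{cgacd}, and Lemma \ref{absch}.

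\textbf{Existence.} Since a compactly generated locally compact group has uncountable cofinality, Corollary \ref{ficd} lets me write $f=\sum_i f_i$ as a finite sum of cardinal definite functions coming from transitive continuous discrete $A$-sets. For each $i$, either the associated commensurated subset is transfixed (making $f_i$ bounded, by Theorem \ref{btx}), or Lemma \ref{cgacd} gives a continuous surjective homomorphism $\chi_i:A\to\Z$, a positive integer $m_i$, and a bounded continuous $b_i:A\to\N$ with $f_i=m_i|\chi_i|+b_i$. Summing and listing each $\chi_i$ with multiplicity $m_i$ yields the desired decomposition, in which $b:A\to\N$ is bounded and nonnegative because each $f_i$ and each $|\chi_i|$ is nonnegative.

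\textbf{Uniqueness.} Two surjective homomorphisms $A\to\Z$ that are $\R$-proportional must have ratio $\pm 1$, so any decomposition with surjective $\chi_i$ is well-defined as a multiset modulo the sign flip $\chi\leftrightarrow -\chi$. If $\sum_i|\chi_i|+b=\sum_j|\chi'_j|+b'$ are two such decompositions, collect the proportional pairs across the two sides and subtract to get a bounded expression $\sum_k c_k|\psi_k|$ with $c_k\in\Z$ and the $\psi_k$ pairwise non-proportional surjective homomorphisms $A\to\Z$. Evaluating at $g^m$ and dividing by $m$ upgrades boundedness to the pointwise identity $\sum_k c_k|\psi_k(g)|\equiv 0$, and Lemma \ref{absch} then forces every $c_k$ to vanish.

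\textbf{Homogeneous part.} From $f(g^n)=n\sum_i|\chi_i(g)|+b(g^n)$ and boundedness of $b$, dividing by $n$ and letting $n\to\infty$ gives $f_0(g)=\lim_n f(g^n)/n$; the identity $f_0(ng)=|n|f_0(g)$ is then immediate. The main subtlety lies in the uniqueness step: the key point is that boundedness of a positively homogeneous combination of $|\chi_i|$'s, together with the rescaling $g\mapsto g^m$, upgrades Lemma \ref{absch}'s linear independence to linear independence modulo bounded functions, which is exactly what is needed.
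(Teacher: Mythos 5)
Your proof is correct and follows essentially the same route as the paper: Corollary \ref{ficd} for orbit decomposition, Lemma \ref{cgacd} for the transitive case, Lemma \ref{absch} for uniqueness, and the limit computation for the homogeneous part. The one place where you add genuine content is the uniqueness step: the paper merely says it "immediately follows from Lemma \ref{absch}," but as stated that lemma only asserts linear independence over $\R$ (i.e., that $\sum_k c_k|\psi_k|\equiv 0$ forces all $c_k=0$), whereas what the difference of two decompositions gives you is only boundedness of $\sum_k c_k|\psi_k|$. Your rescaling step (evaluating at $g^m$ and dividing by $m$) is exactly what bridges this gap; it is in fact the same device that appears inside the paper's proof of Lemma \ref{absch}, so you have correctly identified and patched a small imprecision in the paper's citation of its own lemma.
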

\begin{proof}
To prove the existence, first by Corollary \ref{ficd} we have $f=\sum_{i=1}^kf_i$ where $f_i$ is a cardinal definite function associated to a transitive continuous action. By Lemma \ref{cgacd}, we have $f_i(g)=n_i|\chi_i(g)|+b_i(g)$, where $n_i$ is a nonnegative integer and $\chi_i$ is a homomorphism to $\Z$ (which we can suppose to be surjective if $f_i$ is unbounded and 0 otherwise), and $b_i$ bounded and valued in $\N$.

The uniqueness statement immediately follows from Lemma \ref{absch} and the last statement is clear.
\end{proof}

Recall that given a compactly generated locally compact group $G$, a closed subgroup $H$ is {\em undistorted} if it is compactly generated and the word metric of $H$ is equivalent to the restriction of the word metric of $G$.
The following corollary generalizes the second statement in Corollary \ref{zundi}.

\begin{cor}\label{aund0}Let $A$ be a compactly generated locally compact abelian group; let $|\cdot|$ be the word length with respect to some compact generating subset of $A$ and let $\ell$ be a cardinal definite function on $A$. Assume that $\ell$ is proper. Then there exists some constants $c>0$, $c'\in\R$ such that $\ell\ge c|\cdot|-c'$.
\end{cor}
\begin{proof}
Any compactly generated locally compact abelian group $A$ has a cocompact lattice isomorphic to $\Z^k$ for some $k$, so we can suppose $A\simeq\Z^k$.

The result then follows from the following claim: let $f$ be a proper cardinal definite function on $\Z^k$. Then for some norm on $\R^k$ we have $f\ge \|\cdot\|$. 

%Let us now prove the claim. 

In view of Proposition \ref{decab}, we can suppose that $f=\sum_{i\in I}|\chi_i|+b$ with $\chi_i:\Z^k\to\Z$ a surjective homomorphism and $b\ge 0$ a bounded function. Denote by $\chi_i$ the unique extension $\hat{\chi}_i$ as a continuous homomorphism $\R^k\to\R$. Define $\hat{f}=\sum|\hat{\chi}_i|$. Then $\hat{f}$ is a seminorm on $\R^k$; since its restriction to $\Z^k$ is proper, it is actually a norm. Since $f\ge\hat{f}$ on $\Z^k$, the claim is proved.
\end{proof}

\begin{cor}\label{aund}
If $G$ is a compactly generated locally compact group with Property PW, then any compactly generated closed abelian subgroup $A$ is undistorted.
\end{cor}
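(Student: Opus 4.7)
The plan is to use Property PW to produce a proper cardinal definite function $f$ on $G$, restrict it to $A$, and exploit Proposition \ref{decab} to show that $f|_A$ is comparable to the word length of $A$. Combined with the general fact that $f$ is bounded above by a multiple of the word length of $G$ (since $f$ is subadditive, continuous, and $G$ is compactly generated), this will give $|a|_A \lesssim f(a) \lesssim |a|_G$, so $A$ is undistorted; the reverse inequality $|a|_G \lesssim |a|_A$ is trivial because any compact generating set of $A$ sits in a ball of $G$.

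More concretely, I would first observe that if $f = \ell_M$ on $G$, then the same data makes $f|_A$ a continuous cardinal definite function on $A$ (the underlying discrete continuous $G$-set is a continuous discrete $A$-set, and open stabilizers in $G$ remain open when intersected with $A$); moreover $f|_A$ is proper on $A$ because $f$ is proper on $G$. Then I would apply Proposition \ref{decab} to obtain a decomposition
\[ f|_A(a) = \sum_{i=1}^k |\chi_i(a)| + b(a), \]
with continuous homomorphisms $\chi_i : A \to \Z$ and bounded $b$. Using the Pontryagin--van Kampen structure theorem $A \cong \R^n \times \Z^m \times K$ with $K$ compact, the fact that $\Z$ admits no nontrivial continuous homomorphism from any connected or compact group forces each $\chi_i$ to factor through the $\Z^m$-quotient. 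Since $\sum_i |\chi_i|$ must be proper on $A$ and it vanishes on the closed subgroup $\R^n \times K$, this compels $n = 0$ (so $A$ has no vector part); moreover the combined map $\psi = (\chi_1, \ldots, \chi_k) : \Z^m \to \Z^k$ is proper, hence has finite (and therefore trivial) kernel, so is injective.

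Finally, any injective $\Z$-linear map $\Z^m \to \Z^k$ extends to an injective $\R$-linear embedding $\R^m \to \R^k$ and is therefore bi-Lipschitz onto its image, giving $\sum_i |\chi_i(a)| \asymp \|\bar a\|_{\Z^m} \asymp |a|_A$, where $\bar a$ denotes the image of $a$ in the $\Z^m$-quotient. Assembling the inequalities yields $|a|_A \lesssim f(a) \lesssim |a|_G$, as required. The main obstacle is the middle step: extracting from mere properness of $f|_A$ both the absence of an $\R$-factor in $A$ and the bi-Lipschitz comparison between $f|_A$ and $|\cdot|_A$ on the remaining $\Z^m$-part. Both rely crucially on the ``discrete'' nature of cardinal definite functions on LCA groups expressed by Proposition \ref{decab}, the key point being that no connected (or compact) piece of $A$ can ever contribute to a cardinal definite length.
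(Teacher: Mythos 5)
Your proposal is correct and follows essentially the same strategy as the paper's proof: restrict a proper cardinal definite function to $A$ (or a $\Z^k$ inside it), invoke Proposition \ref{decab} to write it as $\sum_i|\chi_i|$ plus a bounded term, and then use linear algebra to see that $\sum_i|\chi_i|$ dominates a genuine norm, hence the word length on the lattice part. The one cosmetic difference is in the reduction step: the paper simply picks a cocompact lattice $\Z^k\subset A$ (which always exists for a compactly generated LCA group) and never needs to discuss the shape of $A$, while you invoke the structure theorem $A\cong\R^n\times\Z^m\times K$, observe that the $\chi_i$ kill $\R^n\times K$, and deduce $n=0$ from properness. Both routes are valid, and in both cases the decisive point is identical: properness of $\sum_i|\chi_i|$ on the lattice forces the extended $\R$-linear map to be injective, i.e.\ forces the seminorm $\sum_i|\hat\chi_i|$ to be a norm, hence bi-Lipschitz comparable to the word length. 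The paper's route is marginally shorter because passing to the cocompact $\Z^k$ avoids the case analysis on $\R^n$ and $K$ altogether; your route has the small bonus of making explicit the (true, and used elsewhere in the paper) observation that a group admitting a proper cardinal definite function has compact identity component, so $A$ can have no $\R$-factor. Your argument also implicitly relies on the fact that $b$ can be absorbed as a bounded error, which is fine and is what the paper achieves more directly via $b\ge 0$.
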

\begin{proof}
Let $f$ be a proper cardinal definite function on $G$. Let $l$ be the word length in $G$ with respect to a compact generating subset. Then $f\le cl$ for some $c>0$. By Corollary \ref{aund}, the restriction of $f$ to $A$ is equivalent to the word length of $A$. Hence the latter is asymptotically bounded by the restriction to $A$ of the word length of $G$, which means that $A$ is undistorted.
\end{proof}

%So, taking a norm as in the claim, we have $\|\cdot\|\le cl$ on $A$. Since $\|\cdot\|$ is equivalent to the word length in $\Z^k$, we deduce that $A$ is undistorted.

%We begin with the following claim: let $f$ be a proper cardinal definite function on $\Z^k$. Then for some norm on $\R^k$ we have $f\ge \|\cdot\|$. 

%Granted the claim, any compactly generated locally compact abelian group $A$ has a cocompact lattice isomorphic to $\Z^k$ for some $k$, so we can suppose $A\simeq\Z^k$. Let $f$ be a proper cardinal definite function on $G$. Let $l$ be the word length in $G$ with respect to a compact generating subset. Then $f\le cl$ for some $c>0$. So, taking a norm as in the claim, we have $\|\cdot\|\le cl$ on $A$. Since $\|\cdot\|$ is equivalent to the word length in $\Z^k$, we deduce that $A$ is undistorted.

%Let us now prove the claim. In view of Proposition \ref{decab}, we can suppose that $f=\sum_{i\in I}|\chi_i|+b$ with $\chi_i:\Z^k\to\Z$ a surjective homomorphism and $b\ge 0$ a bounded function. Denote by $\chi_i$ the unique extension $\hat{\chi}_i$ as a continuous homomorphism $\R^k\to\R$. Define $\hat{f}=\sum|\hat{\chi}_i|$. Then $\hat{f}$ is a seminorm on $\R^k$; since its restriction to $\Z^k$ is proper, it is actually a norm. Since $f\ge\hat{f}$ on $\Z^k$, the claim is proved.

\begin{rem}
Proposition \ref{decab} shows that if $G$ is an abelian compactly generated locally compact group and $f$ a cardinal definite function on $G$, there exists a (unique) least element $f_0$ in the set of cardinal definite functions $f'$ such that $f-f'$ is bounded. I do not know if such a statement holds for more general compactly generated locally compact groups.

Let us mention in the abelian case that if $f=\ell_M$, then $f_0$ has, by construction (see the proof of Lemma \ref{cgacd}), the form $\ell_{M'}$ with $M'$ commensurate to $M$. 
\end{rem}

\begin{lem}\label{zdfini}
Let $G$ be a compactly generated locally compact group with a closed normal discrete subgroup $A$ isomorphic to $\Z^d$. Suppose that $A$ is undistorted in $G$. Then the homomorphism $G\to\GL_d(\Z)$ has a finite image. 
\end{lem}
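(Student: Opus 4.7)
The plan is to exploit the conjugation homomorphism $\phi \colon G \to \GL_d(\Z) = \Aut(A)$ and show that the image consists of matrices with uniformly bounded operator norm. Since $\{M \in \GL_d(\Z) : \|M\|_{\text{op}} \le C\}$ is a set of integer matrices with bounded entries, hence finite, this will finish the proof.

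Let $|\cdot|_G$ be the word length on $G$ with respect to a compact generating subset, and let $|\cdot|_A$ be the word length on $A \cong \Z^d$ for a finite symmetric generating set. Since $A$ is finitely generated by elements of $G$, there is a constant $K$ with $|a|_G \le K|a|_A$ for all $a \in A$. The undistortion hypothesis gives constants $\alpha,\beta > 0$ with $|a|_A \le \alpha|a|_G + \beta$ for all $a\in A$. For any $g\in G$ and $a\in A$, normality yields $\phi(g)(a) = gag^{-1} \in A$, and the triangle inequality gives $|gag^{-1}|_G \le |a|_G + 2|g|_G$. Combining,
$$|\phi(g)(a)|_A \le \alpha\bigl(K|a|_A + 2|g|_G\bigr) + \beta = C\,|a|_A + D_g,$$
where the crucial point is that the multiplicative constant $C := \alpha K$ is independent of $g$ (only the additive error $D_g = 2\alpha|g|_G + \beta$ depends on $g$).

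To remove $D_g$, we use a homogeneity/asymptotic-cone argument: after possibly replacing $|\cdot|_A$ by the bi-Lipschitz equivalent $\ell^1$-norm on $\Z^d \subset \R^d$, we have $|na|_A = n|a|_A$ for $n \ge 1$. Applying the above estimate to $na$ in place of $a$, dividing by $n$ and letting $n\to\infty$, the $D_g$ disappears and we obtain
$$|\phi(g)(a)|_A \le C|a|_A, \qquad \forall\, g\in G,\ \forall\, a\in A,$$
with $C$ independent of $g$. Extending $\phi(g)$ $\R$-linearly to $\R^d$ and using that $|\cdot|_A$ is equivalent to any fixed norm on $\R^d$, we conclude that all $\phi(g)$ lie in a bounded subset of $\GL_d(\R)$. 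Since they are integer matrices, $\phi(G)$ is finite. I do not expect any serious obstacle here: the only subtlety is correctly interpreting ``undistorted'' for $A \cong \Z^d$ as a quasi-isometric embedding (consistent with the use in Corollary~\ref{aund}), and the homogeneity trick to upgrade an affine bound on the length to a genuine linear bound.
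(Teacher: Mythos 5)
Your proof is correct and is essentially the paper's argument: both exploit the $\Z^d$-homogeneity to annihilate the additive error coming from the triangle inequality and undistortion, thereby obtaining a uniform bound on the conjugation action of $G$ on $A$. The only cosmetic differences are that you fix $g$ and let the exponent $n\to\infty$, whereas the paper couples $n$ to the word length of $g$ by taking $g\in S^n$, and you conclude via bounded operator norm while the paper concludes via finite orbits of basis vectors; both variants are equivalent.
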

\begin{proof}
Let $S$ be the image in $\GL_d(\Z)$ of a compact, symmetric generating subset of $G$. Since $\Z^d$ is undistorted, for every $x\in\Z^d$ there exists a constant $C$ such that, in $\Z^d$, for all $n\in\N$ and $g\in S^n$, we have $\|gx^ng^{-1}\|\le Cn+C$ (where we write the law of $\Z^d$ multiplicatively and fix a norm $\|\cdot\|$ on $\R^d$). Rewriting this additively gives $\|n(g.x)\|\le Cn+C$; dividing by $n\ge 1$, this yields $\|g.x\|\le C+C/n\le 2C$ for all $g\in S^n$, this bound does not depend on $n$ and this shows that for every $x\in\Z^d$, the subset $\{g.x:g\in G\}$ is bounded. In particular, the union of orbits of all basis elements in $\Z^d$ is finite, so there is some finite index subgroup of $G$ fixing all basis elements. Thus $G\to\GL_d(\Z)$ has a finite image.
\end{proof}

The following proposition partly generalizes Proposition \ref{cez}.

\begin{prop}\label{cea}
Let $G$ be a compactly generated locally compact abelian group with Property PW. Let $A$ be a closed normal abelian subgroup; suppose that $A$ is discrete and free abelian of finite rank. Then $A$ is a topological direct factor in some open normal subgroup of finite index of $G$ containing $A$.
\end{prop}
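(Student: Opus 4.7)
The plan is to induct on the rank $d$ of $A$, with Proposition~\ref{cez} as the engine at each step. Since $G$ has Property PW, a proper cardinal definite function on $G$ is unbounded on every infinite discrete closed subgroup, so for any such subgroup $Z\cong\Z$ the pair $(G,Z)$ fails relative Property FW; Proposition~\ref{cez} then furnishes an open finite index subgroup $G_1\supseteq Z$ with a topological direct sum decomposition $G_1=Z\oplus L_1$. The base case $d=1$ is immediate from this applied to $Z=A$.

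For the inductive step ($d\ge 2$), I would pick a primitive element $v\in A$ so that $Z:=\langle v\rangle$ satisfies $A/Z\cong\Z^{d-1}$, and obtain $G_1=Z\oplus L_1$ as above. Set $A_1:=A\cap G_1$, a finite index subgroup of $A$ of the same rank $d$ in which $Z$ remains primitive, and let $\pi\colon G_1\to L_1$ be the projection. Then $\pi(A_1)\cong A_1/Z\cong\Z^{d-1}$; I would check discreteness of $\pi(A_1)$ in $L_1$ by taking an identity neighborhood $\{0\}\times W\subseteq Z\oplus L_1$ disjoint from $A_1\smallsetminus\{0\}$ and showing $W\cap\pi(A_1)=\{0\}$. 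Since $L_1$ is closed in $G$ it inherits Property PW (by stability under closed subgroups), and it is compactly generated as the topological quotient $G_1/Z$ of the compactly generated (open finite-index) subgroup $G_1$. The inductive hypothesis applied to $(L_1,\pi(A_1))$ yields an open finite index subgroup $L_2\supseteq\pi(A_1)$ of $L_1$ with $L_2=\pi(A_1)\oplus L'$ topologically.

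I would then set $G_2:=Z\oplus L_2=Z\oplus\pi(A_1)\oplus L'$, open of finite index in $G$. For $a\in A_1$, writing $a=(\chi(a),\pi(a))\in Z\oplus L_1$ and noting that $\pi(a)\in\pi(A_1)\subseteq L_2$ has zero $L'$-coordinate, I see that $A_1\subseteq Z\oplus\pi(A_1)\oplus\{0\}$; a rank count forces equality $A_1=Z\oplus\pi(A_1)$ inside $G_2$, whence $A_1\cap L'=\{0\}$ and $G_2=A_1\oplus L'$ topologically. To incorporate all of $A$, I set $H:=A+L'$: from $A\cap G_2=A_1$ (using $A_1\subseteq G_2$) and $A_1\cap L'=\{0\}$ one gets $A\cap L'=\{0\}$, while $H\supseteq A_1+L'=G_2$ guarantees openness and finite index. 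The continuous bijective homomorphism $A\times L'\to H$ is then a topological isomorphism by the open mapping theorem for $\sigma$-compact locally compact groups.

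The main obstacle will be the final enlargement step: the induction naturally produces a direct factor decomposition for the finite-index subgroup $A_1$ inside $G_2$, and upgrading it to a splitting containing all of $A$ requires the identity $A\cap G_2=A_1$ to ensure that enlarging $A_1$ to $A$ does not collide with $L'$. The discreteness check for $\pi(A_1)$ in $L_1$ is the only other subtle point, but it follows from a short direct argument using an appropriately chosen product identity neighborhood.
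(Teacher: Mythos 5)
Your inductive approach is correct in outline but genuinely different from the paper's. The paper works directly with the commensurating action: it picks a proper $\ell_M$ from a $G$-set $X$, decomposes $X$ into $A$-orbits, shows only finitely many carry a non-transfixed part (Proposition~\ref{cgco2}), extracts $\Z$-valued characters $\chi_j$ of $A$ from those orbits, chooses independent ones, and then takes $L$ to be the kernel of the induced action of a suitable normal finite-index subgroup $H$ on the selected orbits, getting $H = A\times L$ in a single stroke. You instead treat Proposition~\ref{cez} as a black box and peel off one $\Z$-direct-factor at a time, with the projection $\pi:G_1\to L_1$ carrying the inductive hypothesis. Your version modularizes the argument and avoids redoing the orbit analysis; the paper's avoids the bookkeeping of nested finite-index subgroups and the discreteness verification at each stage.

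There is one small gap. The phrase ``a rank count forces equality $A_1 = Z\oplus\pi(A_1)$'' is not right as stated: a rank-$d$ sublattice of a rank-$d$ lattice need not be all of it, only of finite index, and finite index is not enough to make $A_1+L' = G_2$ open. What actually closes the argument is the inclusion $Z\subseteq A_1$: for $a\in A_1$, the element $(0,\pi(a)) = a - (\chi(a),0)$ lies in $A_1$ because $(\chi(a),0)\in Z\subseteq A_1$, hence $\pi(A_1)\subseteq A_1$ as a subset of $Z\oplus L_1$, and therefore $Z\oplus\pi(A_1) = Z + \pi(A_1)\subseteq A_1\subseteq Z\oplus\pi(A_1)$, giving the equality you need. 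This is the same use of $Z\subseteq A_1$ that makes your discreteness check for $\pi(A_1)$ work (one shifts a preimage $a$ by its $Z$-component to land in $\{0\}\times W$), so you clearly have the right mechanism in hand; only the justification via ``rank count'' is off. With that replacement, the assembly $G_2 = A_1\oplus L'$, $H = A+L'$ and the open mapping theorem step all go through. (You should also note explicitly that $\pi(A_1)$, being discrete in the Hausdorff group $L_1$, is automatically closed, which the inductive hypothesis requires; this is standard but worth saying.)
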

\begin{proof}
By Property PW, the identity component of $G$ is compact. By Corollary \ref{aund}, $A$ is undistorted in $G$. By Lemma \ref{zdfini}, the centralizer $H_1$ of $A$ in $G$ (which is closed and contains $A$) has finite index in $G$, hence is open in $G$. Let $X$ be a continuous discrete $G$-set and $M$ a commensurated subset with open stabilizer such that $\ell_M$ is proper (we will only use that $\ell_M$ is proper on $A$). 

Arguing exactly as in the proof of Proposition \ref{cez}, we can suppose that $I=J$ in the notation therein, i.e., we can suppose that $I=J$ is finite and for every $j\in J$ we have $M_j$ and $X_j\smallsetminus M_j$ both infinite. Let $H$ be a normal open subgroup of finite index of $G$ contained in $H_1$ and fixing $J$ pointwise.

For each $j\in J$, let $A_j$ be the kernel of the action of $A$ on $X_j$. Then $X_j$ can be identified to $A/A_j$, where $A$ acts by left translations. The centralizer of the group of left translations in any group $\Gamma$ in $\SX(\Gamma)$ is the group of right translations, which means left translations in case $\Gamma$ is abelian. Since $H$ centralizes $A$, we deduce that $H$ acts on $X_j$ by left translations. Let $L_j$ be the kernel of the action of $H$ on $X_j$. Let $\chi_j:A\to\Z$ be the surjective homomorphism associated to the action of $A$ on $X_j$; pick $j_1,\dots,j_k$ so that $\chi_{j_1},\dots,\chi_{j_k}$ are independent, where $A\simeq\Z^k$ (they exist by properness of $\ell_{M_j}$ on $A$). Define $L=\bigcap_{i=1}^kL_{j_i}$. We claim that $H$ is the direct product $A\times L$ as a topological group. Indeed, we have $A\cap L=\{1\}$, because since $\bigcap_{i=1}^k\Ker(\chi_{j_i})=\{1\}$ on $A$; moreover $AL=H$ because $L$ is the kernel of the image of the action homomorphism $H\to\SX(\bigcup_{i=1}^kX_{j_i})$ and the image of this homomorphism coincides with its restriction to $A$. Thus $A\times L\to H$ is a bijective continuous homomorphism; since $A\times L$ is $\sigma$-compact it follows that is is a topological group isomorphism.
\end{proof}

\subsection{Polycyclic groups}\label{s_po}

The following theorem is chronologically the first obstruction to Property~PW (although it was not yet interpreted this way then).

%, and also th unrelated to Property~T. 

\begin{thm}[Houghton \cite{Ho}]\label{houghtonpo}
Let $\Gamma$ be a virtually polycyclic group of Hirsch length $k$. Then a subgroup $\Lambda$ is coforked if and only if it has Hirsch length $k-1$ and has a normalizer of finite index.
\end{thm}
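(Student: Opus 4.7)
I handle the two implications separately. The sufficient direction is a direct construction, while the necessary direction proceeds by induction on $k = h(\Gamma)$ with a case split on a normal abelian subgroup.

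\emph{Sufficiency.} Suppose $h(\Lambda) = k-1$ and $N := N_\Gamma(\Lambda)$ has finite index. Then $N/\Lambda$ is virtually polycyclic of Hirsch length~$1$, hence virtually~$\Z$, so some finite-index subgroup $M$ of $N$ containing $\Lambda$ admits a surjection $\chi : M \to \Z$ with $\Lambda \subseteq \ker\chi$. Since $\Lambda \subseteq \ker\chi$ and both have Hirsch length $k-1$, we have $[\ker\chi : \Lambda] < \infty$. The half-space $H := \{m\Lambda \in M/\Lambda : \chi(m) \geq 0\}$ is $M$-commensurated: translation by $m \in M$ changes $H$ by the image of $\chi^{-1}([0,\chi(m)))$ in $M/\Lambda$, which is a finite union of (finite) $\ker\chi/\Lambda$-cosets. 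Both $H$ and $M/\Lambda \smallsetminus H$ are infinite. Since $\Lambda \subseteq M$, we have $\Gamma/\Lambda \cong \ind_M^\Gamma(M/\Lambda)$, and Proposition~\ref{fi} extends $H$ to a $\Gamma$-commensurated, infinite, coinfinite subset of $\Gamma/\Lambda$; hence $\Lambda$ is coforked.

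\emph{Necessity.} Induct on $k$. The cases $k \leq 1$ are direct (the virtually-$\Z$ case being the only nontrivial one: two-endedness of $\Gamma/\Lambda$ forces $h(\Lambda) = 0$ and finite-index normalizer). For $k \geq 2$, pick a nontrivial $\Gamma$-invariant torsion-free abelian subgroup $A \trianglelefteq \Gamma$ of rank $r \geq 1$ (it exists by standard polycyclic structure: take the last nontrivial abelian term of the derived series of a polycyclic finite-index subgroup of $\Gamma$, modulo torsion, intersected with its $\Gamma$-conjugates). Split into three cases on $A \cap \Lambda$. \emph{(i)}~If $A \subseteq \Lambda$: then $\Lambda/A$ is coforked in $\Gamma/A$ (as $\Gamma/\Lambda$ factors through $\Gamma/A$), the induction hypothesis combined with Hirsch additivity gives $h(\Lambda) = k-1$, and $N_\Gamma(\Lambda) = \pi^{-1}(N_{\Gamma/A}(\Lambda/A))$ (valid since $A \subseteq \Lambda$) has finite index. \emph{(ii)}~If $[A:A\cap\Lambda] < \infty$ but $A \not\subseteq \Lambda$: then $[A\Lambda : \Lambda] < \infty$, the projection $\Gamma/\Lambda \to \Gamma/(A\Lambda)$ has uniformly finite fibres so the Schreier graphs are quasi-isometric, and $A\Lambda$ is coforked; case~(i) applied to $A\Lambda$ yields $h(A\Lambda) = k-1$ and finite-index $N_\Gamma(A\Lambda)$, from which $h(\Lambda) = k-1$ and $N_\Gamma(\Lambda)$ has finite index (conjugates of $\Lambda$ are fixed-finite-index subgroups of the finitely many conjugates of $A\Lambda$, and a finitely generated group has only finitely many subgroups of any given finite index). \emph{(iii)}~If $[A:A\cap\Lambda] = \infty$: the $A$-orbit $O$ of $\Lambda \in \Gamma/\Lambda$ is infinite, and since $A$ is abelian $A \cap \Lambda$ acts trivially on $O$, so $O \cong A/(A\cap\Lambda)$ as $A$-set. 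The restriction of the commensurated witness to $O$ is $A$-commensurated and non-transfixed, so Lemma~\ref{cgacd} produces a surjection $\psi : A \to \Z$ with $[\ker\psi : A \cap \Lambda] < \infty$, giving $h(A\cap\Lambda) = r-1$ and $h(\Lambda) \leq k-1$.

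\emph{Closing case (iii) and main obstacle.} It remains in case~(iii) to show that $A\Lambda$ has finite index in $\Gamma$, after which $h(\Lambda) = k-1$ and the normalizer condition follow as in case~(ii). The plan is a descent argument: the fibres of $\pi\colon \Gamma/\Lambda \to \Gamma/(A\Lambda)$ are virtually~$\Z$, and on each fibre the trace of the commensurated witness is commensurable to $\emptyset$, to the whole fibre, or to a $\psi$-half-line, producing a ``type'' function on $\Gamma/(A\Lambda)$. Using the $\Gamma$-invariance up to sign of the finite character set of $\ell_M|_A$, which follows by applying the uniqueness clause of Proposition~\ref{decab} to the identity $f_0 \circ \textnormal{Ad}(g) = f_0$ (itself derived from the $\Gamma$-covariance $\ell_M|_A \circ \textnormal{Ad}(g) = \ell_M|_A + O(1)$ together with the homogeneity $f_0(a^n) = |n| f_0(a)$), this type function descends to a $\bar\Gamma$-commensurated subset of $\bar\Gamma/\bar{A\Lambda}$, where $\bar\Gamma = \Gamma/A$ and $\bar{A\Lambda} = A\Lambda/A$. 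If $A\Lambda$ had infinite index, the descended subset would be bi-infinite, making $\bar{A\Lambda}$ coforked in $\bar\Gamma$; by the induction hypothesis (applicable since $h(\bar\Gamma) = k-r < k$), $h(\bar{A\Lambda}) = h(\bar\Gamma) - 1$, so $h(\Lambda) = k-2$, which combined with the bi-infiniteness of the trace on fibres contradicts the two-endedness required by coforked-ness of $\Lambda$. The main obstacle is this descent: relating the fibrewise types back to a coherent $\bar\Gamma$-commensurated object is delicate, since $\Gamma$ can permute the two ends of the $\Z$-fibres, and this must be tracked through the sign ambiguity in the $\Gamma$-orbit of $\psi$ in $\textnormal{Hom}(A,\Z)/\{\pm 1\}$.
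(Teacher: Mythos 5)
The paper itself does not prove Theorem~\ref{houghtonpo}: it is cited to Houghton~[Ho82], with only a remark that Proposition~\ref{polyfw} is essentially an equivalent statement and an alternative proof of \emph{that} proposition (by a quite different induction, on the Hirsch length of $\VD(\Gamma)$). Your proof is therefore an independent attempt. The sufficiency direction is correct, and cases (i) and (ii) of the necessity induction are fine, but case (iii) has two genuine gaps. First, the claim that "the normalizer condition follows as in case~(ii)" does not hold: case~(ii) bounded the conjugates of $\Lambda$ by placing them inside the finitely many conjugates of $A\Lambda$ with a \emph{fixed finite} index $[A\Lambda:\Lambda]$, whereas in case~(iii) $[A\Lambda:\Lambda]=[A:A\cap\Lambda]$ is infinite, so the count of subgroups of a given finite index is unavailable and the finite-index-normalizer conclusion needs its own argument (for instance, show $N_\Gamma(A\cap\Lambda)$ has finite index — which is itself not automatic and must exploit coforked-ness — then pass to $\Gamma'=N_\Gamma(A\cap\Lambda)$ and apply the induction hypothesis to $\Lambda/(A\cap\Lambda)$ inside $\Gamma'/(A\cap\Lambda)$, which has smaller Hirsch length). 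Second, the closing contradiction is circular as stated: you derive $h(\Lambda)=k-2$ and then invoke "bi-infiniteness of the trace on fibres" to contradict coforked-ness, but the only reason $h(\Lambda)<k-1$ is incompatible with coforked-ness is the very implication you are proving, unless you are implicitly appealing to a lemma about one-endedness of coset spaces that fibre with quasi-$\Z$ fibres over infinite Schreier graphs, which would need to be stated and proved.

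Two lesser issues. The base case asserts that two-endedness of $\Gamma/\Lambda$ for virtually cyclic $\Gamma$ and finite $\Lambda$ forces a finite-index normalizer; this is exactly the subtlety hidden in the $D_\infty$ example, where a reflection has infinite-index normalizer and is \emph{not} coforked precisely because the Schreier graph folds into a ray — so some argument is required. And the assertion that the trace of the witness on the specific $A$-orbit $O$ of the identity coset is non-transfixed need not hold for that particular orbit; you should pass to some $A$-orbit carrying a non-transfixed trace and then use that $A$-stabilizers on the orbits are $\Gamma$-conjugates of $A\cap\Lambda$ (hence have the same Hirsch length). As an aside, a cleaner route to $[\Gamma:A\Lambda]<\infty$ in case~(iii) is to observe that the "finite~/ cofinite~/ half-line" type of $M\cap O'$ (as $O'$ ranges over $A$-orbits, each of which is infinite here) is actually a $\Gamma$-\emph{invariant}, not merely commensurated, function on the transitive $\Gamma$-set $\Gamma/(A\Lambda)$, hence constant; only the half-line type is compatible with $M$ bi-infinite, and by Proposition~\ref{cgco2} only finitely many orbits can carry that type.
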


It is sometimes tempting to consider residually virtually abelian finitely generated groups; however since these include finite groups and since virtually abelian finitely generated groups are residually finite, this is just the same as residually finite. To avoid this, the natural definition is the following classical one.

\begin{defn}
A discrete group is crystallographic if it finitely generated, virtually abelian without nontrivial normal subgroup. We say that a discrete group $\Gamma$ is residually crystallographic if the intersection $\VD(\Gamma)$ of normal subgroups $N$ with $\Gamma/N$ crystallographic, is trivial.
\end{defn}

By Bieberbach's theorem, a discrete group is crystallographic if and only it admits a faithful, cocompact proper action on a Euclidean space. (A direct consequence, which can also be seen algebraically by using the notion of FC-center, is that finite index subgroups of crystallographic groups are crystallographic.)

The notation $\VD$ comes from the fact that $\Gamma/\VD(\Gamma)$ is often virtually abelian (see Lemma \ref{reueu}), so that $\VD(\Gamma)$ can be thought as a kind of ``virtual derived subgroup".

\begin{lem}\label{hino}Let $\Gamma$ be a virtually polycyclic group with Hirsch length $k$ and $\Lambda$ a subgroup. If $\Lambda$ has Hirsch length $k-1$ and has its normalizer has finite index in $\Gamma$, then it contains a finite index subgroup of $\VD(\Gamma)$.
\end{lem}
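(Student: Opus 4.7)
The plan is to construct a normal subgroup $\Lambda'''\trianglelefteq\Gamma$ with $\Gamma/\Lambda'''$ Euclidean and with $[\Lambda''':\Lambda'''\cap\Lambda]$ finite. By the defining property of $\VD(\Gamma)$ as the intersection of normal subgroups with Euclidean quotient, the first condition yields $\VD(\Gamma)\subseteq\Lambda'''$, and then the natural injection $\VD(\Gamma)/(\VD(\Gamma)\cap\Lambda)\hookrightarrow\Lambda'''/(\Lambda'''\cap\Lambda)$ shows that $\VD(\Gamma)\cap\Lambda$ has finite index in $\VD(\Gamma)$.

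To build $\Lambda'''$, I first normalize the data: replace $N_\Gamma(\Lambda)$ by its normal core $\Gamma_0=\bigcap_{g\in\Gamma}gN_\Gamma(\Lambda)g^{-1}$, a finite-index normal subgroup of $\Gamma$ that still normalizes $\Lambda$, and replace $\Lambda$ by $\Lambda\cap\Gamma_0$, a finite-index subgroup of the original still of Hirsch length $k-1$ and now normal in $\Gamma_0$ (proving the conclusion for this smaller $\Lambda$ trivially implies it for the original). Let $\Lambda_1=\Lambda,\dots,\Lambda_m$ be the finitely many $\Gamma$-conjugates of $\Lambda$; by normality of $\Gamma_0$ each $\Lambda_i$ lies in $\Gamma_0$, is normal there, and satisfies $h(\Gamma_0/\Lambda_i)=1$, hence is virtually infinite cyclic. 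Setting $\Lambda''=\bigcap_i\Lambda_i$, the normal core of $\Lambda$ in $\Gamma$, the diagonal embedding $\Gamma_0/\Lambda''\hookrightarrow\prod_i\Gamma_0/\Lambda_i$ exhibits $\Gamma_0/\Lambda''$ as a finitely generated subgroup of a virtually $\Z^m$ group, so $\Gamma_0/\Lambda''$---and therefore the finite extension $\Gamma/\Lambda''$---is finitely generated and virtually abelian. Letting $F$ denote the maximal finite normal subgroup of $\Gamma/\Lambda''$ and $\Lambda'''$ its preimage in $\Gamma$, the quotient $\Gamma/\Lambda'''$ is Euclidean and $[\Lambda''':\Lambda'']=|F|<\infty$; since $\Lambda''\subseteq\Lambda\cap\Lambda'''$, this gives $[\Lambda''':\Lambda'''\cap\Lambda]\le|F|$, completing the argument.

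The main obstacle is that one is tempted to use $\Lambda''$ itself as a normal subgroup of $\Gamma$ sitting inside $\Lambda$ with Hirsch length $k-1$, but $\Lambda''$ may have strictly smaller Hirsch length: for instance, if $\Gamma=\Z^2\rtimes(\Z/2\Z)$ acts by coordinate swap and $\Lambda=\Z\times\{0\}$, then $\Lambda''=\{0\}$. The trick is that even when $h(\Lambda'')<k-1$ the quotient $\Gamma_0/\Lambda''$ remains virtually abelian because of its diagonal embedding into the product of the virtually cyclic $\Gamma_0/\Lambda_i$'s, so a Euclidean quotient of $\Gamma$ is still accessible after killing the finite normal subgroup $F$, the only price being the finite discrepancy between $\Lambda''$ and $\Lambda'''$ that $\Lambda$ absorbs via $\Lambda''\subseteq\Lambda$.
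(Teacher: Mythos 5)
Your proof is correct and follows essentially the same strategy as the paper: intersect the conjugates of $\Lambda$ to obtain a normal subgroup $\Lambda''$ with $\Gamma/\Lambda''$ finitely generated virtually abelian, then pass to the preimage $\Lambda'''$ of the maximal finite normal subgroup to produce a Euclidean quotient, forcing $\VD(\Gamma)\subseteq\Lambda'''$, and finally observe that $\Lambda'''$ (and hence $\VD(\Gamma)$) differs from its intersection with $\Lambda$ only by a finite amount because $\Lambda''\subseteq\Lambda$. The one genuine improvement in your write-up is the preliminary pass to the normal core $\Gamma_0$ of $N_\Gamma(\Lambda)$ and the replacement $\Lambda\rightsquigarrow\Lambda\cap\Gamma_0$: this guarantees that every $\Gamma$-conjugate $\Lambda_i$ lies in and is normal in the single finite-index normal subgroup $\Gamma_0$, so that each $\Gamma_0/\Lambda_i$ is an honest virtually cyclic group and the diagonal map $\Gamma_0\to\prod_i\Gamma_0/\Lambda_i$ is a genuine homomorphism. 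The paper's proof writes $L/g\Lambda g^{-1}\simeq\Z$ directly for a virtually cyclic overgroup $L$ of $\Lambda$, which glosses over the fact that for $g\notin N_\Gamma(\Lambda)$ the conjugate $g\Lambda g^{-1}$ need not be normal in (or even contained in) $L$; your normalization step cleanly resolves this point without changing the overall architecture of the argument.
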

\begin{proof}Let $L_1$ be the normalizer of $\Lambda$; it has finite index. The group $L_1/\Lambda$ has Hirsch length 1 so has a infinite cyclic subgroup $L/\Lambda$ of finite index.

The subgroup $g\Lambda g^{-1}$ only depends on the class of $g$ in $G/L$. Consider the finite intersection $N=\bigcap_{g\in G/L}g\Lambda g^{-1}$. Since $N=\bigcap_{g\in G}g\Lambda g^{-1}$, it is normal in $G$. Also, the diagonal map $L\to\prod_{g\in G/L}L/g\Lambda g^{-1}\simeq\Z^{G/L}$ has kernel equal to $N$ and it follows that $L/N$ is a finitely generated abelian group. Thus $\Gamma/N$ is virtually abelian. If $N'/N$ is its maximal finite normal subgroup, then $\Gamma/N'$ is crystallographic and thus $\VD(\Gamma)\subset N'$. So $\VD(\Gamma)\cap N$ has finite index in $\VD(\Gamma)$ and is contained in $\Lambda$.\end{proof}

\begin{cor}\label{coho}Let $\Gamma$ be a virtually polycyclic group and $\Lambda$ a subgroup. If $\Lambda$ is coforked then it contains a finite index subgroup of $\VD(\Gamma)$.\qed\end{cor}

\begin{prop}\label{polyfw}
Let $\Gamma$ be a virtually polycyclic group. Then $(\Gamma,\VD(\Gamma))$ has relative Property FW.
\end{prop}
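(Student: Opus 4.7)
The plan is to reduce to transitive actions, invoke the structural Corollary \ref{coho} to put a finite-index subgroup of $V := \VD(\Gamma)$ inside any coforked subgroup, and then pass to a normal core to promote this containment to a trivial sub-action.

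First I would reduce: since $\Gamma$ is finitely generated and hence has cofinality $\ne \omega$, Corollary \ref{ficd} writes any cardinal definite function on $\Gamma$ as a finite sum $\sum_i \ell_{M_i}$ where each $M_i \subset \Gamma/H_i$ is a commensurated subset of a transitive $\Gamma$-set. It therefore suffices to show that each $\ell_{M_i}$ is bounded on $V$. Fix one such summand, write it $\ell_M$ with $M \subset \Gamma/H$.

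If $M$ is transfixed, then by Theorem \ref{btx} $\ell_M$ is already bounded on all of $\Gamma$ and we are done. Otherwise the Schreier graph of $\Gamma/H$ has at least two ends (Proposition \ref{ends}), so $H$ is coforked, and Corollary \ref{coho} furnishes a subgroup $K \subset H$ with $[V:K] < \infty$. Now replace $K$ by its normal core in $\Gamma$,
\[
K^* := \bigcap_{\gamma \in \Gamma} \gamma K \gamma^{-1}.
\]
Because $V$ is characteristic (hence normal) in $\Gamma$, each conjugate $\gamma K \gamma^{-1}$ lies in $V$ and has index $[V:K]$ there; since $V$ is finitely generated it has only finitely many subgroups of each finite index, so the intersection defining $K^*$ is really a finite intersection, and $K^*$ still has finite index in $V$. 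By construction $K^*$ is normal in $\Gamma$ and contained in $H$, so $K^* \subset \bigcap_\gamma \gamma H \gamma^{-1}$; i.e., $K^*$ acts trivially on $\Gamma/H$. Consequently $\ell_M$ vanishes on $K^*$, and by subadditivity of the length $\ell_M$ we conclude that $\ell_M$ is bounded on $V$ by the maximum of $\ell_M$ over a (finite) set of coset representatives of $V/K^*$.

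The only non-routine input is Corollary \ref{coho} (itself resting on Houghton's Theorem \ref{houghtonpo}); granted this, the proof is essentially a clean normal-core manipulation, and I do not anticipate a serious obstacle. The subtlety worth double-checking is that ``$M$ non-transfixed'' really matches Houghton's notion of coforked: this is exactly the content of Proposition \ref{ends} combined with the definition of coforked subgroup given earlier in the paper.
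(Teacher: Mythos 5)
Your proof is correct and follows essentially the same route as the paper's: reduce to transitive actions, invoke Corollary \ref{coho} to put a finite-index subgroup of $\VD(\Gamma)$ inside the stabilizer, then promote it to a normal subgroup of $\Gamma$ acting trivially on $\Gamma/H$ and conclude by finiteness of $\VD(\Gamma)$ modulo this subgroup. The only cosmetic difference is that the paper takes a characteristic finite-index subgroup of $\VD(\Gamma)$ inside $K$ rather than the normal core $K^*$, and concludes via Property FW for the finite quotient rather than by a direct subadditivity estimate; both produce the same normal-in-$\Gamma$, finite-index-in-$\VD(\Gamma)$ subgroup and the same bound.
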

\begin{proof}
Consider a transitive commensurating action of $\Gamma$ on a set $X$ with commensurated subset $M$, and $x\in X$, and let $H$ be the stabilizer of $x$. Then by Corollary \ref{coho}, $H$ contains a finite index subgroup $Q_1$ of $\VD(\Gamma)$. Since $\VD(\Gamma)$ is finitely generated, it contains a characteristic subgroup of finite index $Q$ contained in $Q_1$. So $Q$ is normal in $\Gamma$ and fixes a point in the transitive $\Gamma$-set $X$. It follows that the $Q$-action on $X$ is identically trivial. Thus the action of $\VD(\Gamma)$ on $X$ factors through a finite group, which has Property FW, so it leaves invariant a subset commensurate to $M$. This shows that $(\Gamma,\VD(\Gamma))$ has relative Property FW.
\end{proof}

Note that Proposition \ref{polyfw}, which has just been proved using Theorem \ref{houghtonpo}, easily implies Theorem \ref{houghtonpo} (precisely, it boils down the proof of Theorem \ref{houghtonpo} to the case when $\Gamma$ is virtually abelian). Let us now provide a proof of Proposition \ref{polyfw} not relying on Theorem \ref{houghtonpo}, but relying instead on the results of Section \ref{s_ab}.

\begin{proof}[Alternative proof of Proposition \ref{polyfw}]
We argue by induction on the Hirsch length of $\VD(\Gamma)$. If it is zero, there is nothing to prove. Otherwise, $\VD(\Gamma)$ contains a an infinite $\Gamma$-invariant subgroup $\Lambda$ of minimal nonzero Hirsch length. Passing to a finite index characteristic subgroup, we can suppose that $\Gamma$ is abelian and torsion-free. Let us show that $(\Gamma,\Lambda)$ has relative Property FW. Otherwise, by contradiction there is a cardinal definite function $f$ on $\Gamma$ such that $f$ is unbounded on $\Lambda$. Let $P\subset\Lambda$ be the maximal subgroup of $\Lambda$ on which $f$ is bounded (which exists by an easy argument using that $\Lambda$ is finitely generated abelian). By minimality of $\Lambda$, the action of $\Gamma$ on $\Lambda\otimes\Q$ is irreducible. It follows that the intersection of $\Gamma$-conjugates of $P$ is zero. Thus there exist $\gamma_1,\dots,\gamma_k\in\Gamma$ such that $\bigcap_{i=1}^k\gamma_iP\gamma_i^{-1}=\{0\}$. Therefore, defining $f'(g)=\sum_if(\gamma_i^{-1}g\gamma_i)$, the function $f'$ is cardinal definite and is not bounded on any nontrivial subgroup of $\Lambda$. By Proposition \ref{decab}, it follows that $f$ is proper on $\Lambda$. By Proposition \ref{cea} (which, as indicated in its proof, only uses the properness of the cardinal definite function on $\Lambda$), $\Lambda$ is a direct factor of some normal finite index subgroup $H$ of $\Gamma$; thus clearly $\Lambda\cap\VD(H)=\{1\}$. Thus the image of $\Lambda$ in $\Gamma/\VD(H)$ is infinite. Since $H$ has finite index in $\Gamma$, the group $\Gamma/\VD(H)$ is virtually abelian thus its quotient by some finite normal subgroup is a crystallographic quotient of $\Gamma$ in which $\Lambda$ has an infinite image. Thus the image of $\Lambda$ in $\Gamma/\VD(\Gamma)$ is infinite. This contradicts $\Lambda\subset\VD(\Gamma)$.

Thus $(\Gamma,\Lambda)$ has relative Property FW. Since $\Lambda\subset\VD(\Gamma)$, we have $\VD(\Gamma/\Lambda)=\VD(\Gamma)/\Lambda$. By induction, $(\Gamma/\Lambda,\VD(\Gamma)/\Lambda)$ has relative Property FW. So by the relative version of Proposition \ref{extfw} (see Remark \ref{relaver}), $(\Gamma,\VD(\Gamma))$ has relative Property FW.
\end{proof}

If $\Gamma$ is a group, recall that its {\em first Betti number} $\bbb_1(\Gamma)$ is the $\Q$-rank of $\textnormal{Hom}(\Gamma,\Z)$, which is either a finite integer or $+\infty$ (this definition is questionable when $\Gamma$ is infinitely generated but this is the one we use; for instance $\bbb_1(\Q)=0$ with this definition). Note that if $\Lambda\subset\Gamma$ has finite index then $\bbb_1(\Lambda)\ge\bbb_1(\Gamma)$. Also, its {\em first virtual Betti number} is defined as $\vb_1(\Gamma)=\sup_\Lambda \bbb_1(\Lambda)$, where $\Lambda$ ranges over finite index subgroups of $\Gamma$.

\begin{lem}\label{intereu}
Let $\Gamma$ be a group and $N_1,N_2$ normal subgroups. If $\Gamma/N_i$ is crystallographic for $i=1,2$ then so is $\Gamma/(N_1\cap N_2)$.
\end{lem}
\begin{proof}
The group $\Gamma/(N_1\cap N_2)$ is naturally a fibre product of $\Gamma/N_1$ and $\Gamma/N_2$, i.e.\ a subgroup of the product $\Gamma/N_1\times\Gamma/N_2$ both of whose projections are surjective. In particular, it is virtually abelian, and if $F$ is a finite normal subgroup, each of the projections of $F$ is normal and therefore is trivial, so that $F$ is trivial.
\end{proof}

\begin{lem}\label{reueu}
Let $\Gamma$ be a discrete group with $\vb_1(\Gamma)<\infty$. Then $\Gamma/\VD(\Gamma)$ is crystallographic.
\end{lem}
\begin{proof}
Equivalently, we have to show that if $\Gamma$ is residually crystallographic and $k=\vb_1(\Gamma)<\infty$ then $\Gamma$ is crystallographic.

Let $N$ be a normal subgroup of finite index in $\Gamma$ having a surjective homomorphism onto $\Z^k$. Let $N'/[N,N]$ be the largest finite normal subgroup of $\Gamma/[N,N]$; then $\Gamma/N'$ is crystallographic, so it is enough to prove that $N'=1$. Since $\Gamma$ is residually crystallographic, this amounts to proving that for every normal subgroup $P$ such that $\Gamma/P$ is crystallographic, we have $N'\subset P$. By Lemma \ref{intereu}, we can suppose $P\subset N'$ and thus have to show that $P=N'$. The crystallographic group $\Gamma/P$ is the extension of $N'/P$ and $\Gamma/N'$. Since $\vb_1(\Gamma)=\vb_1(\Gamma/N')=k$ and $\Gamma/P$ is virtually abelian, we deduce that $N'/P$ is finite; since this is a finite normal subgroup of $\Gamma/P$, it is then trivial and thus $P=N'$.
\end{proof}

%Let $P$ be a finite index subgroup surjecting onto $\Z^k$; replacing if necessary $P$ by the intersection of its conjugates, we can suppose that $P$ is normal. So $[P,P]$ is normal and $\Gamma/[P,P]$ is virtually $\Z^k$. We claim that $[P,P]=1$. Indeed, by contradiction take $x\in [P,P]\smallsetminus 1$. By assumption, there exists $N$ such that $\Gamma/N$ is crystallographic and $x\notin N$. Replacing $N$ by $N\cap [P,P]$, we can suppose $N\subset [P,P]$. Since $k=\vb_1(\Gamma)$, it follows that $\Gamma/N$ is virtually $\Z^k$. So the kernel of the surjection $\Gamma/N\to\Gamma/P$ is finite. Since $\Gamma/N$ is crystallographic, this kernel is trivial, i.e., $N=P$, so $x\in N$, a contradiction. This shows that $\Gamma$ is virtually abelian. Clearly, if it has a nontrivial finite normal subgroup then it is not residually crystallographic, so $\Gamma$ is crystallographic.

Thus we have the following corollary of Proposition \ref{polyfw}:

\begin{cor}
Let $\Gamma$ be a virtually polycyclic group. Then $\Gamma$ has Property PW if and only it is virtually abelian.
\end{cor}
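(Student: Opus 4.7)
The plan is to combine Proposition \ref{polyfw} with Lemma \ref{reueu} for the nontrivial direction, and rely on the Introduction's remark that finitely generated virtually abelian groups have Property PW (PW is stable under finite-index overgroups and direct products, and $\Z$ has PW) for the converse.

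First I would dispose of the easy implication: if $\Gamma$ is virtually polycyclic and virtually abelian, then it is finitely generated virtually abelian, and the Introduction already records that such groups have Property PW.

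For the main implication, suppose $\Gamma$ has Property PW; let $\ell$ be a proper cardinal definite function on $\Gamma$. By Proposition \ref{polyfw}, $(\Gamma,\VD(\Gamma))$ has relative Property FW, which by Theorem \ref{btx} (applied to the $\VD(\Gamma)$-action underlying $\ell$) means exactly that $\ell$ is bounded in restriction to $\VD(\Gamma)$. Since $\ell$ is proper on the discrete group $\Gamma$, the sublevel set $\{\ell\le r\}$ is finite for every $r$, so $\VD(\Gamma)$ is finite.

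Now I would invoke Lemma \ref{reueu}: virtually polycyclic groups have finite Hirsch length, and the first Betti number of any finite-index subgroup is bounded above by its Hirsch length (hence by that of $\Gamma$), so $\vb_1(\Gamma)<\infty$; consequently $\Gamma/\VD(\Gamma)$ is Euclidean, and in particular virtually abelian. Thus $\Gamma$ is a finite extension (with kernel $\VD(\Gamma)$) of a virtually abelian group, hence virtually abelian itself: concretely, taking an abelian subgroup $A/\VD(\Gamma)$ of finite index in $\Gamma/\VD(\Gamma)$, the centralizer in $A$ of the finite normal subgroup $\VD(\Gamma)$ has finite index in $A$ and has $\VD(\Gamma)$ central, so it is abelian-by-finite; pulling back to $\Gamma$ gives a finite-index abelian-by-finite, hence virtually abelian, subgroup.

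There is no real obstacle here: the content is entirely packaged in Proposition \ref{polyfw} (which reduces PW-style properness on the ``twisted part'' $\VD(\Gamma)$ to a mere cardinality question) and in Lemma \ref{reueu} (which turns residual Euclideanness plus finite virtual Betti number into actual Euclideanness). The only point requiring care is to record that for virtually polycyclic groups $\vb_1<\infty$, so that Lemma \ref{reueu} applies to the quotient $\Gamma/\VD(\Gamma)$.
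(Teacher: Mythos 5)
Your proof is correct and follows the paper's approach almost exactly: easy direction via PW for finitely generated virtually abelian groups, then Proposition \ref{polyfw} plus properness to conclude $\VD(\Gamma)$ is finite, then Lemma \ref{reueu} (after noting $\vb_1<\infty$, which you helpfully make explicit) to get $\Gamma/\VD(\Gamma)$ Euclidean, hence $\Gamma$ finite-by-(virtually abelian).

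The only divergence is the final step. The paper dispatches ``finite-by-(virtually abelian) $\Rightarrow$ virtually abelian'' by invoking residual finiteness of virtually polycyclic groups: choose a finite-index normal subgroup $K$ with $K\cap\VD(\Gamma)=1$; then $K$ embeds in $\Gamma/\VD(\Gamma)$ and is virtually abelian, hence so is $\Gamma$. You instead use a centralizer argument, which also works but is stated a bit too quickly: letting $C=C_A(\VD(\Gamma))$, what you actually get is that $\VD(\Gamma)\cap C$ is central and finite in $C$ with $C/(\VD(\Gamma)\cap C)$ abelian, i.e.\ $[C,C]$ is finite and central. The passage from ``finitely generated with finite (central) derived subgroup'' to ``virtually abelian'' is true but deserves its own justification (e.g.\ each generator has finite conjugacy class since its orbit under conjugation lies in a single $[C,C]$-coset, so its centralizer has finite index; intersecting over a finite generating set shows $Z(C)$ has finite index). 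With that sentence inserted your route is complete; the paper's residual-finiteness step is slightly slicker but both are elementary.
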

\begin{proof}
Clearly $\Z^k$ has Property PW and hence, by Proposition \ref{pwfi}(\ref{pwfi1}), every finitely generated virtually abelian group has Property PW.

Conversely, if $\Gamma$ is virtually polycyclic with Property PW, it follows from Proposition \ref{polyfw} and Lemma \ref{reueu} that $\Gamma$ is finite-by-(virtually abelian). Since any virtually polycyclic group is residually finite, it follows that $\Gamma$ is virtually abelian.
\end{proof}

\begin{rem}
The classes of crystallographic groups and residually crystallographic groups is obviously not stable under taking subgroups, since nontrivial finite subgroups are not (residually) crystallographic but can be embedded into crystallographic groups: every finite group $F$ embeds into the crystallographic group $\Z\wr F$. Also, if $\Gamma$ is a torsion-free finite index subgroup in $\SL_3(\Z)$, then $\Gamma\ast\Z$ is torsion-free and residually crystallographic although its subgroup $\Gamma$ is not. A source of (torsion-free) residually crystallographic groups is given by finitely generated RFRS groups (``residually finite rational solvable"): a group $\Gamma$ is RFRS if it has a descending sequence $(\Gamma_n)$ of finite index normal subgroups such that $\Gamma_0=\Gamma$ and $\Gamma_{i+1}$ contains the intersection of all kernels of homomorphisms $\Gamma_i\to\Q$ for all $i\ge 1$: indeed for every $x$ we have $p_i(x)\neq 1$ for large $i$, where $p_i$ is the quotient map to the quotient $\Gamma/[\Gamma_i,\Gamma_i]$ by its largest finite normal subgroup. On the other hand, the class of RFRS groups is stable under taking subgroups.
\end{rem}

Let us give a little variation beyond the virtually polycyclic case. We need the following lemma.

\begin{lem}\label{indexma}
Let $A$ be a finitely generated abelian group and $f$ a proper cardinal definite function on $A$. Then there exists $m$ such that for every abelian overgroup of finite index $B\supset A$ and measure definite function $f'$ on $B$ extending $f$, we have $[B/T_B:A/T_A]\le m$, where $T_A$ and $T_B$ denote the torsion groups in $A$ and $B$. In particular, if $B$ is torsion-free then $[B:A]\le m$.
\end{lem}
\begin{proof}
Define $V=A\otimes_\Z\R$ and let $\Gamma_A$ be the image of $A$ in $V$; it is a lattice. Let $f_0$ be the homogeneous part of $f$, which extends naturally to $V$; since $f$ is proper, $f_0^{-1}(\{0\})=\{0\}$. Consider the open polyhedron $\Omega=\{x\in V:|f_0(x)|<1\}$. There exists a positive lower bound for the covolume of a lattice $\Lambda$ in $V$ such that $\Lambda\cap\Omega=\{0\}$; in particular, there is an upper bound $m$ for the index of an overgroup $\Lambda$ of $\Gamma_A$ such that $\Lambda\cap\Omega=\{0\}$.

Let now $f'_0$ be the homogeneous part of $f'$. Since for $g\in B$ we have $f'_0(g)=\lim_{n\to\infty}f'(g^{n!})/n!$ and $g^{n!}\in A$ for large $n$, we have $(f'_0)_{|A}=f_0$. Also define $\Gamma_B$ as the image $B$ in $V$. So $\Gamma_B$ is a lattice in $V$ containing the lattice $\Gamma_A$, and since $f_0$ takes integer values on $B$, we have $\Gamma_B\cap\Omega=\{0\}$. So $[\Gamma_B:\Gamma_A]\le m$. But $\Gamma_B=B/T_B$ and $\Gamma_A=A/T_A$, so $[\Gamma_B:\Gamma_A]=[B/T_B:A/T_A]\le m$.
\end{proof}

\begin{cor}
Let $G$ be a locally compact group with Property PW. Then every discrete torsion-free abelian subgroup $H$ of finite $\Q$-rank is free abelian (of finite rank).
\end{cor}
\begin{proof}
(Recall that by definition a torsion-free abelian group has finite $\Q$-rank if it is isomorphic to a subgroup of $\Q^k$ for some $k$, and its $\Q$-rank is the minimal possible $k$.)

Let $A\subset H$ be a free abelian subgroup of maximal rank. So we can write $H=\bigcup A_n$ (nondecreasing union) with $[A_n:A]<\infty$. By Lemma \ref{indexma}, we have an upper bound on $[A_n:A]$, and therefore $[H:A]<\infty$ and thus $H$ is free abelian of finite rank.
\end{proof}

\section{Median graphs}\label{medgr}

In the previous chapters, we developed the theory avoiding references to median graphs and CAT(0) cube complexes. In this chapter we pursue the study using the notion of median graphs, still avoiding CAT(0) cube complexes, with the exceptions of Remarks \ref{mediancat0} and \ref{cat0med}, and Corollary \ref{fixmedian}. 

\subsection{Median graphs: main examples and first properties}\label{mger}

On a metric space $(D,d)$, we write, unless ambiguous, $xy=d(x,y)$. 
Define the total interval $[x,y]$, for $x,y\in D$ as the set of $t$ such that $xt+ty=xy$. The metric space $D$ is called {\em median} if for all $x,y,z$, the intersection $[x,y]\cap [y,z]\cap [z,x]$ is a single point, called the median of $(x,y,z)$ and denoted $m(x,y,z)$.

A subset $E$ of a median space $D$ is called {\em totally convex} if $[x,y]\subset E$ for all $x,y\in E$, and {\em biconvex}\footnote{A biconvex subset is sometimes called a {\em halfspace}. In this paper, we rather reserve the word {\em halfspace} to the context of wallings, although in the sequel we will indeed prove that in connected median graphs, there is a walling for which the halfspaces are the biconvex subsets.} if both $E$ and its complement $E^c$ are totally convex; we call {\em proper biconvex subsets} those biconvex subsets distinct from $\emptyset$ and $D$. Denote by $\CB_D$ the set of biconvex subsets of $D$. For $x\in D$, denote by $\CB_D(x)$, or $\CB(x)$ if there is no ambiguity, the set of biconvex subsets of $D$ containing $x$.

%For $x,y\in D$, denote by $\CB_D(x,y)$, or $\CB(x,y)$ if there is no ambiguity, the set of biconvex subsets of $D$ containing $x$ and not $y$.

A graph (not oriented, without multi-edges and self-loops and identified with its set of vertices) is {\em median} if each of its connected components (as a graph) is a median metric space. 

A {\em median subgraph} of a connected median graph is a connected full subgraph that is isometrically embedded, and stable under taking median of triples.

\begin{exe}[hypercubes]\label{commed}
Let $X$ be a set. Endow the power set $2^X$ with a graph structure by calling $N,N'$ adjacent if $\#(N\tu N')=1$; thus $N,N'$ are in the same component if and only if they are commensurate (in other words, the connected component of $N$ is the set $2^X_N$ of subsets of $X$ having finite symmetric difference with $N$) and their graph distance is then $\#(N\tu N')$. Then this graph is median. Indeed, the total interval $[N,N']$ between any two commensurate subsets $N$, $N'$ is the set of subsets trapped between $N\cap N'$ and $N\cup N'$. It easily follows that the intersection $[N,N']\cap [N',N'']\cap [N'',N]$ is the singleton $\{(N\cap N')\cup (N'\cap N'')\cup (N''\cap N)\}$ (more symmetrically described as the set of elements belonging to at least two of the three subsets $N,N',N''$) and thus the graph is median. 

Note that all components of the graph $2^X$, which are called {\em hypercubes} (or cubes when $X$ is finite), are isomorphic as graphs, but not canonically; it is useful to define all of them altogether, especially when dealing with group actions.
\end{exe}

\begin{exe}\label{commedz}
Let $X$ be a set. Endow $\Z^X$ with a graph structure by calling $f,f'$ adjacent if $f-f'$ or $f'-f$ is the Dirac function at some $x\in X$. Similarly as in Example \ref{commed}, this is a median graph; the connected component of $f$ is the set of $f'$ such that $f'-f$ has a finite support, in which case the total interval $[f,f']$ is the set of functions $g$ such that $\min(f,f')\le g\le\max(f,f')$, and the median of $f,f',f''$ is the function mapping $x$ to the median point in $(f(x),f'(x),f''(x))$.
\end{exe}

\begin{prop}\label{cdmg}
Let $G$ be a topological group and $f$ a cardinal definite function on $G$. Then there exists a connected median graph with a continuous isometric action of $G$ and a vertex $v$ such that $f(g)=d(v,gv)$ for all $G$.
\end{prop}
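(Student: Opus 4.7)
The plan is to produce the desired median graph directly from the defining $G$-commensurated subset, by invoking the construction of Example~\ref{commed}. Unwrapping the definition, fix a continuous discrete $G$-set $X$ and a topological commensurated subset $M\subset X$ with $f(g)=\#(M\tu gM)$.

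First I would form the ``symmetric difference graph'' on $2^X$ from Example~\ref{commed} and restrict to the connected component of $M$, which is precisely $\Comm_M(X)=\{N\subset X:\#(M\tu N)<\infty\}$. By that example this is a median graph, and the graph distance between $N$ and $N'$ in this component equals $\#(N\tu N')$.

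Next, the natural action $g\cdot N=gN$ of $G$ on $2^X$ is by graph automorphisms, since $g(N\tu N')=gN\tu gN'$ so adjacency is preserved tautologically; in particular the action is isometric on each component. To see that $G$ preserves the component of $M$, note that for $N\in\Comm_M(X)$ one has $gN\tu M\subset (gN\tu gM)\cup(gM\tu M)=g(N\tu M)\cup(M\tu gM)$, which is finite because $N$ is commensurable with $M$ and $M$ is $G$-commensurated. Setting $v:=M$, we read off $d(v,gv)=\#(M\tu gM)=f(g)$, which is the required equality.

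The only remaining point, and really the only content beyond Example~\ref{commed}, is continuity of the action in the topological setting: since $\Comm_M(X)$ is to be viewed as a discrete vertex set, continuity amounts to openness of the stabilizer of each vertex $N\in\Comm_M(X)$. I would verify this by writing $N=M\tu F$ with $F\subset X$ finite, and noting that the intersection of the stabilizer of $M$ (open because $M$ is a topological commensurated subset) with the pointwise stabilizer of $F$ (open as a finite intersection of point-stabilizers in the continuous discrete $G$-set $X$) is an open subgroup fixing $N$. I do not anticipate a real obstacle: the proposition is, in substance, just the observation that Example~\ref{commed}, restricted to the component of $M$, already provides the desired median graph.
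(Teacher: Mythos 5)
Your proof is correct and follows the same route as the paper's: restrict the median graph of Example~\ref{commed} to the component $\Comm_M(X)$ of $M$, take $v=M$, and verify continuity via openness of stabilizers. You merely spell out the invariance and continuity verifications that the paper's one-paragraph proof leaves implicit.
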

\begin{proof}
Let $X$ be a discrete continuous $G$-set and $M$ a commensurated subset with open stabilizer such that $f=\ell_M$. Consider the action of $G$ on the power set $2^X$, endowed with the median graph structure given in Example \ref{commed}. Then by assumption, this action preserves the connected component $D=2^X_M$. Moreover, since points in $X$ and $M$ have open stabilizers, so do all elements in $2_M^X$. So the action of $G$ on $X$ is continuous. If $v=M$, then we have $f(g)=d(v,gv)$ for all $g\in G$. 
\end{proof}

Although the construction is canonical, there is no uniqueness statement in Proposition \ref{cdmg}. Here the median graph constructed is, in a certain sense, huge (e.g., it is not locally finite unless $X$ is finite). There are indeed improved versions of the proposition, see \S\ref{icm}. Before this, we will show (Corollaries \ref{medca2} and \ref{orcd}) that, up to multiplication by 2, the converse of proposition holds: every action on a connected median graph and choice of vertex gives rise to a cardinal definite function.

Let us now give basis properties of median graphs. As a warm-up let us begin with the following easy but very useful observation.

\begin{lem}[Bipartite lemma]\label{medeven}
Every median graph is bipartite, in the sense that it admits a 2-coloring of the set of vertices for which every edge is bicolor. In other words, it has no loop of odd length. Still equivalently, the distance is additive modulo 2: $xy+yz\equiv xz\mod 2$ for all $x,y,z$.
\end{lem}
\begin{proof}
If we have a counterexample of length $2n+1$ with $n$ minimal, then it is a geodesic loop (since otherwise we could find a smaller loop of odd length). Hence there exists 3 points $x,y,z$ with $xy=1$, $yz=xz=n$. If $m=m(x,y,z)$, then $m\in\{x,y\}$ and we obtain $xz=yz\pm 1$, a contradiction.
\end{proof}

In particular, any median graph has no loop of length 3. In contrast, median graph usually have many loops of length 4, by the following result.

\begin{prop}\label{cubusc}
In any connected median graph, the fundamental group is generated by squares. More precisely, if we fix a vertex $x_0$ and $V^1$ is the 1-skeleton, then $\pi_1(V^1,x_0)$ is generated by the $\gamma\lambda\gamma^{-1}$ where $\gamma$ ranges over paths emanating from $x_0$ and $\lambda$ ranges over squares based at the endpoint of $\gamma$.
\end{prop}
\begin{proof}
Consider a combinatorial loop $c$ given by consecutive vertices $x_0,x_1,\dots,x_n=x_0$. This means that $x_i$ is adjacent to $x_{i+1}$ for all $i$ (in particular, $x_i\neq x_{i+1}$).
Define $r=r(c)=\max_id(x_0,x_i)$. If $r=0$, there is nothing to do; assume $r>0$.

First, if there exists $i$ such that $x_i=x_{i+2}$, we pass to another homotopic loop of combinatorial length $n-2$ by removing $x_{i+1}$ and $x_{i+2}$. Do this again until this operation is impossible (no backtrack except maybe at $x_0$). Let $c'=(x'_0,\dots,x'_{n'})$ for the resulting loop. 

Define $I_{\max}=\{i:d(x_0,x_i)=r\}$.
Observe that whenever $i,j\in I_{\max}$ are distinct, we have $|i-j|\ge 2$, as a consequence of the bipartite Lemma (Lemma \ref{medeven}).

For each $i$, define $y_i$ as follows: if $i\notin I_{\max}$, we set $y_i=x'_i$; if $i\in I_{\max}$, then $d(x'_0,x'_{i-1})=d(x'_0,x'_{i+1})=r-1$ (since these cannot be equal to $r+1$ by maximality and to $r$ by Lemma \ref{medeven}). Then define $y_i=m(x'_0,x'_{i-1},x'_{i+1})$. Then, since  $x'_{i-1}\neq x'_{i+1}$, we see that $d(x'_0,y_i)=r-2$.

Let $c''=(y_0,\dots,y_{n'})$ be the resulting loop (note that $y_0=x'_0=x_0$). Then $r(c'')<r(c)$, and $c''$ and $c'$ are homotopic up to a finite product of ``squares". By iterating the process (at most $r(c)$ times), we see that $c$ is homotopic to the trivial loop.
\end{proof}

%unless $x'_{i-1}=x'_{i+1}$, in which case $y_i=x'_{i-1}=x'_{i+1}$ and $d(x'_0,y_i)=r-1$.

%Finally define a new combinatorial loop $c'=(z_0,\dots,z_m)$ by eliminating the redundancies in $(y_0,\dots,y_n)$. Then $\max_id(x_0,z_i)=r-1$ and we can reiterate until we obtain the trivial loop. Then in the 2-skeleton of the cube complex, the loop $c$ is homotopic to the trivial loop.

Obviously, every tree is median. Conversely, we obtain:

\begin{cor}\label{treemed}
If a connected median graph has no square (that is, no injective loop of size 4), then it is a tree.\qed
\end{cor}

\begin{rem}
An isometrically embedded subgraph of a connected median graph need not be median. For instance, consider the 3-cube (as a graph with 8 vertices). Removing two opposite vertices yields a graph isomorphic to a hexagon, which is not median (for instance, it contradicts Corollary \ref{treemed}).
\end{rem}

\subsection{Median orientations}

\begin{defn}\label{d_orien}
In a graph, by a {\em directed edge}, we mean a pair $(x,y)$ of adjacent vertices. We say that two directed edges $(x,y)$ and $(x',y')$ are {\em elementary parallel} if $xx'=yy'=1$ and $xy'=x'y=2$. We define the {\em parallelism} relation between directed edges as the equivalence relation generated by {\em elementary parallelism}.

An orientation on a graph is a map $\eps$ from the set of directed edges to $\{\pm 1\}$ such that $\eps(y,x)=-\eps(x,y)$ for every directed edge $(x,y)$; the number $\eps(x,y)$ is called the orientation of $(x,y)$; this is usually represented by an edge pointing from $x$ to $y$ if $\eps(x,y)=1$. An orientation is called {\em median} if all parallel directed edges have the same orientation, or equivalently if all elementary parallel direct edges have the same orientation. 
\end{defn}

%Note that if $(x,y)$ and $(x',y')$ are elementary parallel, then $\{x,x',y,y'\}$ is isometric to the standard square graph, with diagonals $\{x,y'\}$ and $\{x',y\}$. 

\begin{defn}
If $x,y$ are vertices in a connected graph, define
$B_{x,y}=\{z:zx\le zy\}$.
\end{defn}

\begin{lem}
Let $V$ be a connected bipartite graph and $(x,y)$ is a directed edge, then $B_{y,x}$ is equal to the complement of $B_{x,y}$.
\end{lem}
\begin{proof}
This amounts to proving that no $z$ satisfies $zx=zy$, which more generally holds whenever $xy$ is odd.
\end{proof}

\begin{lem}\label{everybi}
Let $V$ be a connected bipartite graph. Then any strict biconvex subset $B$ has the form $B_{x,y}$ for some directed edge $(x,y)$, actually for any directed edge $(x,y)$ such that $x\in B$ and $y\notin B$.
\end{lem}
\begin{proof}
(Note that $(x,y)$ fulfilling the last condition exists by connectedness of $V$.) Let us show $B=B_{x,y}$. It is enough to show that $B\subset B_{x,y}$, since the other inclusion means $B^c\subset B_{y,x}$ and has the same proof.

Let $z$ belong to $B$, and assume by contradiction $z\in B_{y,x}$. Then $y\in [z,x]$. Since both $z,x$ belong to $B$, we deduce that $y\in B$, a contradiction.
\end{proof}

\begin{lem}\label{parb}
Let $V$ be a connected median graph and $(x,y)$, $(x',y')$ be parallel directed edges. Then $B_{x,y}=B_{x',y'}$. In particular,
\begin{itemize}\item the directed edge $(x,y)$ is not parallel to $(y,x)$; equivalently, every connected median graph admits a median orientation;
\item if $x_0,\dots,x_n$, $n\ge 2$, is a geodesic segment, then $(x_0,x_1)$ is parallel to none of $(x_{n-1},x_n)$, $(x_n,x_{n-1})$.
\end{itemize}
\end{lem}
\begin{proof}
We can suppose that $(x,y)$ and $(x',y')$ are elementary parallel. By contradiction, suppose that $z\in B_{x,y}\cap B_{y',x'}$. Set $k=zx$, so that $zy=k+1$. Then, using the bipartite lemma, $zx'\in\{k-1,k+1\}$, $zy'\in\{k,k+2\}$; since $zx'>zy'$, we deduce $zx'=k+1$ and $zy'=k$. Then we see that both $x$ and $y'$ are medians for $(z,x',y)$. Hence they are equal, a contradiction. (Note that we only used the uniqueness of the median.) 

The last statement follows since $B_{y,x}\neq B_{x,y}$.
\end{proof}

Also recall that in a graph, if $D$ is a set of vertices, $\partial D$ is the set of vertices in $D$ adjacent to some vertex outside $D$.

\begin{lem}\label{totcvxb}
In a connected graph, every subset $D$ with totally convex boundary is totally convex.
\end{lem}
\begin{proof}
Let $x_0,\dots,x_n$ be a geodesic segment with $x_0,x_n\in D$. If by contradiction $x_j\notin D$ for some $j$, let $i<j$ be maximal and $k>j$ be minimal such that $x_i,x_k\in D$. Then $i<j<k$ and $x_i,x_k\in\partial D$. Since the latter is totally convex, we deduce that $x_i\in\partial D\subset D$, a contradiction. 
\end{proof}

\begin{lem}\label{starred}
If $V$ is a connected median graph and $(x,y)$ is a directed edge, then $\partial B_{x,y}$ is totally starred at $x$, in the sense that for any $x'\in \partial B_{x,y}$, the total interval $[z,x]$ is contained in $\partial B_{x,y}$. Moreover, there for any $y'\in B_{y,x}$ adjacent to $x'$, the directed edge $(x',y')$ is parallel to $(x,y)$.
\end{lem}
\begin{proof}
It is enough to show that for every $n\ge 1$ and geodesic segment $(x_0,x_1,\dots,x_n)$ in $V$ with $x_0=x$, and $x_n\in\partial B_{x,y}$, we have $x_{n-1}\in\partial B_{x,y}$ and there exists $y_n\in B_{y,x}$ such that $(x_n,y_n)$ is elementary parallel to $(x,y)$. This is proved by induction on $n\ge 1$, the case $n=1$ being trivial. 

We have $yx_n\ge xx_n+1=n+1$. Then $xx_{n-1}=n-1$ and $yx_{n-1}\ge yx_n-1=n$; since $|xx_{n-1}-yx_{n-1}|\le 1$, it follows that $yx_{n-1}=n$. Let $y_n$ be an element in $B_{y,x}$ adjacent to $x_n$. Since $xy_n\neq xx_{n-1}$, we have $y_n\neq x_{n-1}$; since they are both adjacent to $x_n$, we deduce that $x_{n-1}y_n=2$. Let $y_{n-1}$ be the median $m(x_{n-1},y_n,y)$. Then $yy_{n-1}=n-1$ and $y_ny_{n-1}=x_{n-1}y_{n-1}=1$. Then $xy_{n-1}\ge xy_n-y_ny_{n-1}=n$, and $xy_{n-1}\le xx_{n-1}+x_{n-1}y_{n-1}=n$. Thus $y_{n-1}\in B_{y,x}$ and is adjacent to $x_{n-1}$, which implies that $x_{n-1}\in\partial B_{x,y}$.

Moreover, $(x_n,y_n)$ is elementary parallel to $(x_{n-1},y_{n-1})$, which by induction is parallel to $(x,y)$; hence $(x_n,y_n)$ is parallel to $(x,y)$. 
\end{proof}

\begin{thm}\label{tbico}
Let $V$ be a connected median graph. Then for every directed edge $(x,y)$, the subset $\partial B_{x,y}$ is totally convex in $V$, the subset $B_{x,y}$ is biconvex in $V$, and all strict biconvex subsets have this form. Moreover, for every directed edge $(x',y')$, we have $B_{x,y}=B_{x',y'}$ if and only if $(x,y)$ and $(x',y')$ are parallel.
\end{thm}
\begin{proof}
Let us check that $\partial B_{x,y}$ is totally convex. Suppose we have a geodesic segment joining $x'$ to $x''$, which are in $\partial B_{x,y}$. Let $y'\in B_{y,x}$ be adjacent to $x'$. Then by the second assertion of Lemma \ref{starred}, $(x',y')$ is parallel to $(x,y)$, and then by Lemma \ref{parb}, $B_{x,y}=B_{x',y'}$. Then $\partial B_{x',y'}$ is starred at $x'$, by the first assertion of Lemma \ref{starred}, which implies that it contains the given segment between $x'$ and $x''$, proving that $\partial B_{x,y}$ is totally convex.

That every strict biconvex subset has the form $B_{x,y}$ was observed in Lemma \ref{everybi} under the bare assumption that $V$ is bipartite. Conversely, we have to check that $B_{x,y}$ is biconvex. Since its boundary is totally convex as we have just proved, we obtain that it is totally convex by the easy Lemma \ref{totcvxb}. The same argument holds for its complement $B_{y,x}$, and therefore $B_{x,y}$ is biconvex.

For the last statement, Lemma \ref{parb} yields one implication. Conversely, assume that $B_{x,y}=B_{x',y'}$. Then $(x',y')$ is a directed edge with $x'\in B_{x,y}$ and $y'\in B_{y,x}$; by the second assertion of Lemma \ref{starred}, we deduce that $(x',y')$ is parallel to $(x,y)$.
\end{proof}

%, and if $y_n$ is an element in $B_{y,x}$ adjacent to $x_n$, we have $

%\begin{lem}
%Let $e=(x,y)$ be a connected edge in a connected median graph. Then for every $e'=(x',y')$ parallel to $e$, we have $x'\in\partial B_{x,y}$.????????\end{lem}
%\begin{proof}By contradiction, consider a counterexample $(x',y')$ with a minimal number of steps, in the sense that we have $e=e_0,\dots,e_i=(x_i,y_i),\dots,e_n=e'$ with $e_i$ elementary parallel to $e_{i+1}$ for all $i=0,\dots,n-1$ and $x'=x_n\notin B_{x,y}$, with $n$ minimal. Hence $x'_{n-1}\in B_{x,y}$ and $y'_{n-1}\in B_{y,x}$ by minimality of $n$.

%Write $xx_{n-1}=k-1$. Then $yy_{n-1}=k-1$, $xy_{n-1}=yx_{n-1}=k$. Since $x_nx_{n-1}=1$ and using that the graph is bipartite, we obtain $xx_n\in\{k-2,k\}$ and $yx_n\in\{k-1,k+1\}$. Since $x_n\in B_{y,x}$, we deduce that $xx_n=k$ and $yx_n=k-1$...???
%end{proof}

%\begin{lem}Let $e=(x,y)$ and $e'=(x',y')$ be parallel. Let $n$ be the minimal number such that there exist $e=e_0,\dots,e_n=e'$ such that $e_i$ and $e_{i+1}$ are elementary parallel for all $i$. Then $xx'=yy'=n$ and $xy'=x'y=n+1$.????\end{lem}
%\begin{proof}This is proved by induction on $n$. This is trivial for $n=0$. Otherwise, write $e_{n-1}=(x'',y'')$. Then by induction, $xx''=yy''=n-1$ and $xy''=yx''=n$. Using the bipartite Lemma, we have $xx'\in\{n-2,n\}$ and $xy'\in\{n-1,n+1\}$. 

%Assume by contradiction that $xx'=n-2$. Then $xy'=n-1$. Thus both $x'$ and $y''$ are medians for $(x,y',x'')$\end{proof}

\begin{exe}\label{orx}
The graphs $2^X$ and $\Z^X$ of Examples \ref{commed} and \ref{commedz} are canonically oriented, namely by putting an oriented edge from $f$ to $f'$ if $f'-f$ is a Dirac function; this orientation is median.
\end{exe}

As a corollary of the last statement in Theorem \ref{tbico}, we have

\begin{cor}Let a group $G$ act by graph automorphisms on a connected median graph. Equivalences:
\begin{enumerate}
\item $G$ preserves some median orientation;
\item $G$ preserves every median orientation;
\item the action of $G$ has no wall inversion, in the sense that for every nonempty biconvex subset $B$ and $g$ we have $gB\neq B^c$.\qed
\end{enumerate}
\end{cor}

\begin{rem}In the classical case of trees, a wall inversion means the existence of an edge inversion, that is, a directed edge $(x,y)$ mapped to $(y,x)$. In the more setting of connected median graphs, an edge inversion is an example of a wall inversion, but it is not the only example. For instance, if $\Z^2$ is endowed with its standard Cayley graph structure, the action of the cyclic group generated by the graph automorphism $(m,n)\mapsto (1-m,n+1)$ has a wall inversion, although it has no edge inversion.
\end{rem}

\begin{cor}\label{bijmur}
Let $V$ be a connected median graph and $B$ a biconvex subset. Then every element $v\in\partial B$ is adjacent to a unique $\phi_B(v)$ in $\partial(B^c)$. The map $\phi_B$ is a graph isomorphism $\partial B\to\partial (B^c)$ with inverse $\phi_{B^c}$.
\end{cor}
\begin{proof}
Let $x\in\partial B$ be adjacent to both $y,y'\in B^c$. By Lemma \ref{everybi}, we have $B_{x,y}=B_{x,y'}$. Hence $y'\in B_{y,x}$, which means that $1=y'x\ge y'y$. Since $yy'$ is even, this forces $yy'=0$. This proves the uniqueness statement, the existence being trivial; also it clear that $\phi_B$ admits $\phi_{B^c}$ as inverse. Finally let us show that it is a graph homomorphism: let $(x_1,x_2)$ be a directed edge in $\partial B$, and $y_i=\phi_B(x_i)$. If $y_1y_2\neq 1$, it follows that $y_1y_2=3$, and hence $x_1\in [y_1,y_2]$. This contradicts Theorem \ref{tbico}, namely that $\partial B^c$ is totally convex. Hence $\phi_B$ is a graph homomorphism; since so is its inverse $\phi_{B^c}$, we deduce that $\phi_B$ is a graph isomorphism.
\end{proof}

\begin{rem}
In a connected median graph, the set of $\partial B$, when $B$ ranges over strict biconvex subsets, are often called {\em hyperplanes}. Also define $\ppt B=\partial B\cup \partial (B^c)$; it is called {\em carrier} of $B$; it is also a totally convex subgraph.

Simple examples (for instance, with $V$ a tree) show that $B\smallsetminus\pt B$ can fail to be totally convex. It can actually be empty (with $B$ proper biconvex), or disconnected. Also, the hyperplane $\pt B$ does not always determine $B$.

Similarly, the carrier $\ppt B$ does not always determine the unordered pair $\{B,B^c\}$ (although this holds in a tree): for instance, in a cube or hypercube, the only thick hyperplane is the graph itself.

%Also, the maps $B\mapsto\partial B$ and $B\mapsto\ppt B$, from the set of biconvex subsets to the set of hyperplanes (resp.\ thick hyperplanes) can have large fibers (for the latter, trees cannot be counterexamples, but a square with an additional edge rooted (as a hair) at every vertex of the square provides one examples in which fibers are larger than a pair $\{B,B^*\}$ of complementary biconvex subsets).

On the other hand, the mapping $B\mapsto (\ppt B,\pt B)$ is injective.
\end{rem} 

%However, if we associate to any $B$ the pair $(\ppt B,E_B)$ or the pair $(\ppt B,\mathcal{P}_B)$, where $E_B$ is the set of unoriented edges joining $B$ and $B^*$ and $\mathcal{P}_B$ is the partition $\{\pt B,\pt^* B\}$ of $\ppt B$, then we obtain a data equivalent to that of a biconvex subset modulo complementation. Also, the mapping $B\mapsto (\ppt B,\pt B)$ from the set of biconvex subsets to the set of pairs of subsets, is injective. 

\subsection{Canonical walling of a median graph}

Let $V$ be a connected graph. For any vertex $x\in V$, recall that $\CB(x)$ denotes the set of biconvex subsets of $V$ containing $x$. 

%For two vertices $x,y\in V$, define $\CB(x,y)=\CB(x)\smallsetminus\CB(y)$.

\begin{prop}\label{numberwap}
Let $V$ be a connected bipartite graph, and $x=x_0,\dots,x_n=y$ a geodesic segment between vertices $x$ and $y$. Then the $B_{x_i,x_{i+1}}$, for $0\le i\le n-1$ are pairwise distinct and 
\[\CB(x)\smallsetminus\CB(y)\subset\{B_{x_i,x_{i+1}}:0\le i\le n-1\};\]
in particular, $\#(\CB(x)\smallsetminus\CB(y))\le d(x,y)$.
%, with equality if $V$ is median.
\end{prop}
\begin{proof}
It is clear that for $i<j$, we have $B_{x_i,x_{i+1}}\neq B_{x_j,x_{j+1}}$ because only the second contains $x_j$ (using that the segment is geodesic), and the inclusion goes as follows: if $B\in\CB(x)\smallsetminus\CB(y)$, then there exists $i$ such that $x_i\in B$ and $x_{i+1}\notin B$, and then $B=B_{x_i,y_i}$ by Lemma \ref{everybi}.
\end{proof}

\begin{thm}\label{numberwa}
Under the assumptions of Proposition \ref{numberwap}, assume in addition that $V$ is median. Then the inclusion and inequality of Proposition \ref{numberwap} are equalities:
\[\CB(x)\smallsetminus\CB(y)=\{B_{x_i,x_{i+1}}:0\le i\le n-1\};\quad\#(\CB(x)\smallsetminus\CB(y))= d(x,y).\]
\end{thm}
\begin{proof}
All $B_{x_i,x_{i+1}}$ are biconvex by Theorem \ref{tbico} and we obtain the reverse inclusion from Proposition \ref{numberwap}. The last assertion follows.
\end{proof}

\begin{cor}
Let $V$ be a connected bipartite graph. Then the set of biconvex subsets is a self-indexed walling of (the vertex set of) $V$. If moreover $V$ is median, then the wall distance is equal to twice the graph distance.

If moreover $V$ is median, and some median orientation on $V$ is given, and if we call a strict biconvex subset $B_{y,x}$ positive if $(x,y)$ is an oriented edge, then the resulting self-indexed walling on $V$ induces the graph distance.\qed
\end{cor}

\begin{cor}\label{cormed}
If $V$ is a nonempty connected median graph, then the map $x\mapsto\CB(x)$ is a canonical isometric embedding of $(V,2d)$ into the some component of $2^{\CB_V}$; this embedding is equivariant with respect to group actions. Moreover, it is a median homomorphism: $m(\CB(x),\CB(x'),\CB(x''))=\CB(m(x,x',x''))$ for all $x,x',x''\in V$.
\end{cor}
\begin{proof}By Proposition \ref{numberwap}, all $\CB(x)$ for $x\in V$ belong to the same component. 
The symmetric difference $\CB(x)\tu\CB(y)$ is equal to $(\CB(x)\smallsetminus\CB(y))\sqcup(\CB(y)\smallsetminus\CB(x))$ and thus by Theorem \ref{numberwa} it has exactly $2d(x,y)$ elements.

The last assertion is straightforward since an isometry between median metric spaces is automatically a median homomorphism. Still, let us provide a direct argument: observe that for all $x,y\in D$ and $w$ in the total interval $[x,y]$, we have 
\[\CB(x)\cap \CB(y)\subset \CB(w)\subset \CB(x)\cup \CB(y).\]
It follows that 
\[(\CB(x)\cap \CB(x'))\cup (\CB(x)\cap \CB(x''))\cup (\CB(x')\cap \CB(x''))\subset \CB(m(x,x',x''))\]
\[\subset (\CB(x)\cup \CB(x'))\cap (\CB(x)\cap \CB(x''))\cap (\CB(x')\cap \CB(x'')).\]
Since the left-hand and the right-hand term both coincide with $m(\CB(x),\CB(x'),\CB(x''))$, this yields the result.
\end{proof}

\begin{cor}\label{cormed2}
Let $V$ be a connected median graph and $x_0$ a vertex. Then the map $x\mapsto\CB(x_0)\smallsetminus\CB(x)$ is an isometric embedding of $(V,d)$ into $2^{(\CB(x_0))}$ mapping $x_0$ to 0.
\end{cor}
\begin{proof}
Observe that $(\CB(x_0)\smallsetminus\CB(x))\tu (\CB(x_0)\smallsetminus\CB(y))$ is equal to $\CB(x_0)\cap(\CB(x)\tu\CB(y))$. The set $\CB(x)\tu\CB(y)$ contains exactly $d(x,y)$ pairs of opposite biconvex subsets, and for each such pair exactly one belongs to $\CB(x_0)$, whence the cardinal of $\CB(x_0)\cap(\CB(x)\tu\CB(y))$ equals $d(x,y)$.
\end{proof}

%$\Comm_{\emptyset}(\CB(x_0))$.\qed

%Denote by $\mathcal{H}_D$ the set of biconvex subsets of $D$.

\begin{cor}\label{extbi}
If $V$ is a connected median graph and $W$ an isometrically embedded median subgraph, then the map $B\mapsto B\cap W$ from $\CB_V$ to $\CB_W$ is surjective, and every strict biconvex of $W$ has a unique preimage; in other words, every strict biconvex subset of $W$ is contained in a unique (strict) biconvex subset of~$D$.\qed  
\end{cor}

In case of a group action on a median graph by graph automorphisms we obtain the following, which can be viewed as a converse to Proposition \ref{cdmg}.

\begin{cor}\label{medca2}
Let $G$ be a topological group with a continuous action on a connected median graph. Then for every vertex $v\in V$, the function $g\mapsto 2d(v,gv)$ is cardinal definite on $G$.
\end{cor}
\begin{proof}
Indeed, the action of $G$ on $\CB_V$ is continuous (because its point stabilizers contains the edge stabilizers of the $G$-action on $V$), it commensurates $\CB(v)$, which also has an open stabilizer (the stabilizer of $v$), and
\begin{align*}\#(\CB(v)\tu g\CB(v))= & \#(\CB(v)\tu \CB(gv))\\=&\#((\CB(v)\smallsetminus\CB(gv))\sqcup(\CB(gv)\smallsetminus\CB(v)))=2d(v,gv).\qedhere\end{align*} 
\end{proof}

In terms of lengths, the multiplication by 2 ``loss" in the combination of Proposition \ref{cdmg} and Corollary \ref{medca2} can be avoided if we consider orientations, see Corollary \ref{orcd}.

% Indeed, by construction the proof of Proposition 

If a connected median graph $V$ is endowed with a median orientation $\eps$, define a biconvex subset to be positive if it has the form $B_{y,x}$ with $(x,y)$ positively oriented (so that the arrow goes from the negative biconvex $B_{x,y}$ to the positive $B_{y,x}$). Let $\CB_V^+$ be the set of positive biconvex subsets, $\CB^+(x)=\CB(x)\cap\CB^+_V$.
% and $\CB^+(x,y)=\CB(x,y)\cap\CB^+_V$. 
For any strict biconvex subset, define $B^+$ as the unique positive biconvex subset in $\{B,B^c\}$ (if necessary, write $B^+(\eps)$ to specify $\eps$).
Then Theorem \ref{numberwa} implies:

%For an edge $\{x,y\}$, define $B_{x,y}^+$ as the unique positive biconvex subset in $\{B_{x,y},B_{y,x}\}$; if necessary, write $B_{x,y}(\eps)$. 

\begin{cor}\label{egnb}
Let $V$ be a connected median graph endowed with a median orientation, and $x=x_0,\dots,x_n=y$ a geodesic segment between vertices $x$ and $y$. Then the $B^+_{x_i,x_{i+1}}$, for $0\le i\le n-1$ are pairwise distinct and 
\[\CB^+(x)\tu\CB^+(y)=\{B_{x_i,x_{i+1}}^+:0\le i\le n-1\}.\]
In particular, $\#(\CB^+(x)\tu\CB^+(y))=d(x,y)$.\qed
\end{cor}
%\begin{proof}
%By Theorem \ref{numberwa}, \end{proof}

Then if $G$ preserves this orientation, its action on $\CB_V$ preserves $\CB_V^+$ and commensurates $\CB^+(x)$.

\begin{cor}\label{orcd}
Let $G$ be a topological group and $\ell:G\to\N$ a function. Then $\ell$ is cardinal definite if and only if there exists a connected median graph, endowed with a median orientation, and a continuous action of $G$ on this graph preserving the structure of oriented graph.
\end{cor}
\begin{proof}
For $\Rightarrow$, it is enough to observe that the proof of Proposition \ref{cdmg} actually provides an action preserving the canonical median orientation on $2^X$ (described in Example \ref{orx}). 

Conversely, given a continuous action on a connected oriented median graph $V$ with some vertex $x$ such that $\ell(g)=d(x,gx)$ for all $g\in G$, we obtain a commensurating action on $(\CB_V,\CB(x))$, which is continuous (in the sense that the stabilizers of points and of $\CB(x)$ are continuous); by restriction, we obtain that the ``sub-action" on $(\CB_V^+,\CB^+(x))$ is continuous as well. The associated cardinal function is then equal to $\ell$, by Corollary \ref{egnb}.
\end{proof}

\subsection{Roller orientations and Roller boundary}

Recall that orientations are introduced in Definition \ref{d_orien}.

\begin{defn}
Let $V$ be a connected bipartite graph. If $z$ is a vertex, define the $z$-{\em orientation} $\eps_z$ to be such that whenever $(x,y)$ is a directed edge, $\eps_z(x,y)=1$ if and only if $xz\ge yz$ (that is, $z\in B_{y,x}$: this means that $(x,y)$ points towards $z$). Define an orientation to be {\em principal} if there exists such a $z$.
\end{defn}

Obviously $x\mapsto\eps_x$ is injective, because if $w\neq z$ are vertices and $x$ is the first vertex in a geodesic segment joining $w$ to $z$, then $\eps_w(w,x)=-1$ and $\eps_z(w,x)=1$.

%$\eps_w(w,x)=1$ ww\ge wx NON
% $\eps_z(w,x)=1$ wz\ge zx OUI\ref{d_orien}

\begin{defn}
On a connected median graph $V$, define an orientation to be {\em Roller} if it is median and for any finite family of biconvex subsets $B_1,\dots,B_n$, we have $\cap B_i^+\neq\emptyset$. The set $\mathfrak{R}(V)$ of Roller orientations is a compact subset of the set of orientations and, identifying $V$ to its image in $\mathfrak{R}(V)$ through the injective map $z\mapsto\eps_z$, it is called the Roller boundary of $V$.
\end{defn}

Beware that the opposite of a Roller orientation is not necessarily Roller.

On a connected median graph, any principal orientation is Roller, since then the nonempty intersection property holds for arbitrary families. This defines an injective map $V\to\mathfrak{R}(V)$. 

\begin{lem}\label{sqcom}
Let $(x,y_1)$ and $(x,y_2)$ be oriented edges in a connected median graph endowed with a Roller orientation, with $y_1\neq y_2$. Then $\{x,y_1,y_2\}$ is contained in a square: there exists a vertex $z$ such that $zy_1=zy_2=1$ and $zx=2$.
\end{lem}
\begin{proof}
By assumption, $B_{y_i,x}$ is positive for $i=1,2$. Define $s$ to be any point in $B_{y_1,x}\cap B_{y_2,x}$, and $z=m(s,y_1,y_2)$. Then $zy_1=zy_2=1$, and $z\neq x$; hence $zx=2$.
\end{proof}

\begin{lem}\label{carprin}
Let $V$ be a connected median graph endowed with a Roller orientation $\eps$ and $v$ a vertex with no outwards edge (that is, for every $x$ adjacent to $v$, we have $\eps(x,v)=1$). Then $\eps=\eps_v$.
\end{lem}
\begin{proof}
We claim that for every $n\ge 1$ and every finite path $v=x_0,\dots,x_n$ starting at $v$, we have $\eps(x_i,x_{i-1})=1$ for all $1\le i\le n$.  This holds by assumption for $n=1$; assume $n\ge 2$. Then this holds by induction (on $n$) for all $i<n$. In particular $\eps(x_{n-1},x_{n-2})=1$. Assume by contradiction that $\eps(x_{n-1},x_n)=1$. Then by Lemma \ref{sqcom}, there exists an vertex $x'_{n-1}$ such that $x'_{n-1}x_{n-2}=x'_{n-1}x_n=1$ and $x'_{n-1}x_{n-1}=2$. Since $\eps$ is median, we have $\eps(x_{n-2},x'_{n-1})=1$. Moreover, for all $i\le n-2$, we have $x'_{n-1}x_i\ge x_nx_i-1=n-i-1$ and $x'_{n-1}x_i\le x_{n-2}x_i+1=n-i-1$, whence $x'_{n-1}x_i=n-i-1$. Thus $(v=x_0,\dots,x_{n-2},x'_{n-1})$ is a smaller path with $\eps(x'_{n-1},x_{n-2})=-1$, contradicting the induction assumption. Hence $\eps(x_n,x_{n-1})$, finishing the induction step and proving the lemma.
\end{proof}

\begin{lem}\label{denrol}
Let $V$ be a connected median graph. The canonical inclusion map $V\to\mathfrak{R}(V)$, mapping $v$ to $\eps_v$, has a dense image. If $V$ is locally finite, then it also has an open discrete image.
\end{lem}
\begin{proof}
Let $\eps$ be a Roller orientation, and let $e_1,\dots,e_k$ be directed edges. Let us show the result by finding a principal orientation coinciding with $\eps$ on all $e_i$. By assumption $\bigcap B_{e_i}^+(\eps)$ contains some vertex $v$. By definition, $v\in B_{e_i}^+(\eps_v)$. This means that $\eps_v(e_i)=\eps(e_i)$ for all $i$.

To prove the last result, let us check that more generally if $v$ has only finitely many adjacent vertices $x_1,\dots,x_k$, then $\eps_v$ is isolated in $\mathfrak{R}(V)$. Indeed, let $W$ be the set of orientations $\eps$ such that $\eps(x_i,v)=1$ for all $i$. Then $W$ is a clopen neighborhood of $\eps_v$. Let us show that $W=\{\eps_v\}$. If $\eps\in W$, we have $\bigcap_i B_{x_i,v}(\eps)=\{v\}$. 

%Now if by contradiction $\eps\neq\eps_v$, there exists a directed edge $(x,y)$ such that $\eps(x,y)\neq\eps_v(x,y)$, which means that $v\notin B_{x,y}^+(\eps)$. Hence $B_{x,y}^+(\eps)\cap\bigcap_i B_{x_i,v}^+(\eps)$ is empty, a contradiction. 
\end{proof}

\begin{rem}
Roller orientations are often called ultrafilters by analogy (choice for any unordered pair of complementary biconvex subset of a representative, with some compatibility condition), but we have to be careful with this analogy because the set of biconvex subsets is not stable under taking intersections. Besides, let us emphasize that, unlike non-principal ultrafilters, it is usually easy to exhibit non-principal Roller orientations.
\end{rem}

\begin{exe}
If we consider the usual graph structure on $\Z$, it is median and admits exactly two opposite non-principal median orientations, one of which being defined by $\eps(n,n+1)=1$ and $\eps(n,n-1)=-1$ for all $n\in\Z$.
\end{exe}

\begin{exe}
On the hypercube $2^{(X)}=2^X_\emptyset$, if $M\subset X$, we can define the directed edge $(F\sqcup\{x\},F)$ to be oriented if and only if $x\notin M$; let $\eps^M$ be the corresponding orientation. Then $\eps^M$ is a Roller orientation, and is principal if and only if $M$ is finite (then it is equal to $\eps_M$; moreover every median orientation has this form (see Proposition \ref{cacu} for a converse); thus we see that the Roller boundary of $2^{(X)}$ is naturally identified with $2^X$ with its natural product topology.
\end{exe}

\begin{prop}\label{pathgeo}
On a connected median graph endowed with a Roller orientation, every oriented path is geodesic.
\end{prop}
\begin{proof}
It is enough to prove it for a finite path $x_0,\dots,x_n$ with $(x_i,x_{i+1})$ positive for all $i$; in turn, by density (Lemma \ref{denrol}), it is enough to prove it for a principal orientation $\eps_v$. Hence we see that $d(x_i,v)=d(x_0,v)-i$ for all $0\le i\le n$ by an immediate induction on $i$, which implies that the path is geodesic.
\end{proof}

\begin{prop}\label{orgera}
In a connected median graph endowed with a non-principal Roller orientation $\eps$, there exists a positively oriented infinite geodesic ray.
\end{prop}
\begin{proof}
Fix a vertex $x_0$ and define by induction a positively oriented geodesic ray $(x_0,\dots,x_n)$. Since $\eps\neq\eps_{x_n}$, there exists by Proposition \ref{carprin} a neighbor $x_{n+1}$ of $x_n$ such that $\eps(x_{n},x_{n+1})=1$. Hence $(x_0,\dots,x_{n+1})$ is an oriented path; by Proposition \ref{pathgeo}, it is geodesic.
\end{proof}

\begin{cor}
A connected median graph admits a non-principal Roller orientation if and only if it contains an infinite geodesic ray.
In particular, on a bounded connected median graph, every Roller orientation is principal.
\end{cor}
\begin{proof}
The forward implication is given by Proposition \ref{orgera}. Conversely, if $(x_n)_{n\ge 0}$ is an infinite geodesic ray, then there exists, by compactness, a limit point $\eps$ of the sequence of orientations $\eps_{x_n}$. Then $\eps$ is Roller. For $\eps$, the geodesic ray $(x_n)_{n\ge 0}$ is positively oriented, and hence $\eps$ is not principal.
\end{proof}

\begin{exe}
On a tree, any orientation is median, while an orientation is Roller if and only if every vertex has at most one outwards edge (in other words, if $(x,y)$ and $(x,y')$ are oriented edges then $y=y'$). Thus we easily see that the Roller boundary of $T$ can be identified, pointwise, with the union of $V$ with its usual boundary defined as geodesic rays up to eventual coincidence modulo translation.

Even in a tree, $V$ need not be open in $\mathfrak{R}(V)$: for instance, if in a tree there exists a vertex $x_0$ and infinitely many geodesic rays $(x_n^{(i)})_{n\ge 0}$ with $x_0^{(i)}=x_0$ and $x_1^{(i)}\neq x_1^{(j)}$ for all $i\neq j$ (e.g., the tree is regular of infinite valency), then $V$ is not open in $\mathfrak{R}(V)$.
\end{exe}

\begin{exe}
Consider the standard Cayley graph of $\Z^2$. Then its Roller orientations are
\begin{itemize}
\item The principal orientations $\eps_{m,n}$, $(m,n)\in\Z^2$;
\item the orientations $\eps_{m,\infty}$: all vertical edges are oriented upwards, horizontal edges are oriented to the right or to the left according to whether they are contained in $\{(x,y):x\le m\}$ or $\{(x,y):x\ge m\}$; and the similarly defined orientations $\eps_{m,-\infty}$, $\eps_{\pm\infty,n}$;
\item the 4 orientations $\eps_{\pm\infty,\pm\infty}$. For instance, in $\eps_{\infty,\infty}$, all edges are oriented to the right or upwards.
\end{itemize}

\end{exe}

%First, we can suppose that $\eps(x_i,y_i)=1$ for all $i$ (up to switch $y_i$ and $x_i$.

\subsection{Convex hulls}

\begin{defn}
Let $V$ be a connected median graph (identified with its 0-skeleton). The {\em total convex hull} of a subset $S\subset D$ is the intersection $\TConv_V(S)$, of all totally convex subsets of $D$ containing $S$
\end{defn}

\begin{prop}\label{hull}
Let $V$ is a connected median graph. Then the following properties hold:
\begin{enumerate}
\item\label{ic1} every intersection of totally convex subsets of $V$ is totally convex and is a median subgraph; in particular this holds for the total convex hull of any subset;
%\item\label{ic2} {\bf (Definition)} in particular, if $S\subset D$, the intersection $\TConv_D(S)$, of all biconvex subsets of $D$ containing $S$ is totally convex and called the {\em essential hull} of $S$ in $D$;
\item\label{ic3} 
%if $E\subset D$ is totally convex then $\TConv_D(E)=E$; in particular,
for every $S\subset V$, its total convex hull is equal the intersection of all biconvex subsets containing $S$;
%and the intersection of all biconvex subsets of $D$ containing $S$ coincide; it totally convex and called the 
\item\label{ic4} if $S\subset D$ is finite then $\TConv_V(S)$ is finite.
\end{enumerate}
\end{prop}
\begin{proof}
The assertion (\ref{ic1}) is trivial.

% and (\ref{ic2}) follows.

The assertion (\ref{ic3}) amounts to proving that if $E\subset V$ is totally convex and $x\in V\smallsetminus E$, then there exists a biconvex subset of $V$ containing $E$ but not $x$.
Let $y$ be a point in $E$ with $d(x,y)$ minimal. Let $y=y_0,y_1,\dots,x$ be a geodesic segment joining $y$ to $x$. Let us check that for all $z\in E$ we have $d(y_1,z)=d(y_1,y)+d(y,z)$. Indeed, consider the median $m=m(y_1,y,z)\in\{y,y_1\}$; we have $m\in E$ by total convexity and hence $m=y$, whence the desired equality. It follows that $B_{y,y_1}$ is a biconvex subset containing $E$ but not $x$.

For (\ref{ic4}), suppose $Y$ finite and define $W=\TConv(Y)$. If $B$ is a strict biconvex subset of $W$, then by Corollary \ref{extbi}, $B$ is contained in a unique strict biconvex subset of $V$ and thus it follows from the definition of $W$ that there exist $y,y'\in Y$ such that $Y$ contains $y$ but not $y'$. Since the number of possible $B$ is finite for each given $(y,y')$ by Proposition \ref{numberwap}, we deduce that the number of biconvex subsets of $W$ is finite. By Theorem \ref{numberwa}, the biconvex subsets of $W$ separate the points in $W$ and we deduce that $W$ is finite.

Let us give an alternative proof of (\ref{ic4}). Thanks to (\ref{ic3}), it is enough to check that every finite subset $Y$ of $V$ is contained in a finite totally convex subset. When $V=2^{(X)}$ (Example \ref{commed}) for some set $X$, this holds, observing that the set of subsets of a given finite subset is totally convex. We obtain the result for $V$ arbitrary by embedding it as an isometrically embedded median subgraph of such a hypercube $V'=2^{(X)}$ by Corollary \ref{cormed2}: pick a finite totally convex subset $E\subset V'$ containing $Y$; then $W\cap V$ is totally convex in $V$ and contains $Y$.
\end{proof}

\begin{rem}
In contrast to Proposition \ref{hull}(\ref{ic4}), it is not true that if $S\subset V$ is bounded then its essential hull is bounded. Indeed, if $V=2^{(X)}$ for some infinite set $X$ and $S$ is the set of singletons, then $S$ is bounded (of diameter 2), and its essential hull is equal to $V$, which is unbounded.
\end{rem}

\begin{defn}\label{defeft}
Let $V$ be a connected median graph, and let $G$ be a group acting on $V$ by graph automorphisms. We say that the $G$-action on $V$ is
\begin{itemize}
\item {\em of finite type} if there are finitely many $G$-orbits of biconvex subsets;
\item {\em essentially of finite type} if there exists a nonempty $G$-invariant totally convex invariant subset $E$ with finitely many $G$-orbits of biconvex subsets.
\end{itemize}
\end{defn}

\begin{prop}\label{orbibi}
Let $V$ be a connected median graph, $x_0$ a vertex of $V$ and let $G$ be a group acting on $V$ by graph automorphisms. Equivalences:
\begin{enumerate}
\item\label{fo1} $\CB(x_0)\cap \mathcal{Z}\in\{\emptyset,\mathcal{Z}\}$ for all but finitely many $G$-orbits $\mathcal{Z}\subset\CB_V$;
\item\label{fo2} for every vertex $x$, $\TConv(Gx)$ has finitely many $G$-orbits of biconvex subsets;
\item\label{fo3} the $G$-action on $V$ is essentially of finite type.
%there exists a nonempty $G$-invariant totally convex invariant subset $E$ with finitely many $G$-orbits of biconvex subsets.
\end{enumerate}
In particular, being essentially of finite type is inherited by totally convex $G$-invariant subgraphs.
\end{prop}
\begin{proof}
It is clear that (\ref{fo2}) implies (\ref{fo3}). For the sequel, first observe that since for every $x$ we have $\CB(x)\tu\CB(x_0)$ finite, the condition of (\ref{fo1}) holds for some $x_0$ if and only if it holds for all $x$ instead of $x_0$.

Suppose (\ref{fo3}); by the previous remark we can suppose $x_0\in E$. The 
set of biconvex subsets of $V$ whose intersection with $E$ is strict biconvex in $E$ is canonically in bijection with the set of strict biconvex subsets of $E$ by Corollary \ref{extbi} and thus consists of finitely many $G$-orbits; if $\mathcal{Z}$ is another $G$-orbit, for every $P\in \mathcal{Z}$ either $P$ contains $E$ or $P\cap E=\emptyset$, and since $E$ is $G$-invariant and $\mathcal{Z}$ is $G$-transitive, this does not depend on the choice of $P$ in $\mathcal{Z}$; thus $\CB(x_0)\cap \mathcal{Z}\in\{\emptyset,\mathcal{Z}\}$ and (\ref{fo1}) is proved.

Finally let us check that (\ref{fo1}) implies (\ref{fo2}). 
Let $B_1,\dots,B_k$ be representatives of $G$-orbits $\mathcal{Z}$ in $\mathcal{H}_D$ such that $\CB(x)\cap \mathcal{Z}\notin\{\emptyset,\mathcal{Z}\}$. Define $E=\TConv(Gx)$. If $B'$ is a proper biconvex subset of $E$, then $B'=B\cap E$ for some biconvex subset $B$ of $D$, by Corollary \ref{extbi}. Since $B$ induces a nontrivial partition of $Gx$, we have $B=gB_i$ for some $i$. Hence $B'=g(B_i\cap E)$ for some $i$; thus $E$ has finitely many $G$-orbits of biconvex subsets.
\end{proof}

We now state a useful corollary of Proposition \ref{cgco2}.

\begin{cor}\label{fmo}
Let $G$ be a topological group generated by a compact subset (or more generally with uncountable cofinality, see \S\ref{s_cof}). Let $G$ act continuously by automorphisms on a nonempty connected median graph $V$. Then the $G$-action on $V$ is essentially of finite type. 
%
%Then there exists a nonempty totally convex $G$-invariant subgraph $V'$ of $V$, such that the action of $G$ on $V'$ has finitely many orbits of biconvex subsets.
\end{cor}
\begin{proof}
Consider the action of $G$ on the set $\mathcal{H}_V$ of biconvex subsets of $V$. This action commensurates the subset $\CB(x_0)$ of biconvex subsets containing a given vertex $x_0$, which has an open stabilizer since it contains the stabilizer of $x_0$. Hence by Proposition \ref{cgco2}, $\CB(x_0)\cap\mathcal{Z}\in\{\emptyset,Z\}$ for all but finitely many orbits $\mathcal{Z}\subset\mathcal{H}_V$. By Proposition \ref{orbibi}, we deduce that the essential hull $V'=\TConv(Gx_0)$ of the orbit $Gx_0$ (Proposition \ref{hull}) is totally convex, $G$-invariant and has finitely many $G$-orbit of biconvex subsets.
\end{proof}

%Observe in general that if $V'$ is an totally convex subgraph of $V$ then there is an inclusion $\mathcal{H}_{V'}$ (the set of biconvex subsets of $V'$) is naturally a subset of $\mathcal{H}_V$, as can been seen by seeing $\mathcal{H}_{V'}\smallsetminus\{\emptyset,V'\}$ as a quotient of the set of edges in $V'$ by the equivalence relation of parallelism (Definition \ref{para}). Thus every biconvex subset $B'$ of $V'$ is given as the intersection of a unique biconvex of $V$ with $V'$, which is unique unless $B'\in\{\emptyset,V'\}$.let $V'$ be the intersection of all totally convex subsets of $V$ containing $Gx_0$. 

%Fix a vertex $x_0$ in $V$ and consider the total convex hull $V'=\TConv(Gx_0)$ of the orbit $Gx_0$ (Proposition \ref{hull}). Let $B'$ be a proper biconvex subset of $V'$ and write, by Corollary \ref{extbi}, $B'=B\cap V'$ with $B$ biconvex in $V$. Then $B$ induces a nontrivial partition of $Gx_0$. Thus by Corollary \ref{fior}, $V'$ has finitely many $G$-orbits of biconvex subsets.

\begin{exe}
Conversely, every topological group $G$ with cofinality $\omega$ (e.g., any infinitely generated countable group) admits an action on a median graph that is not essentially of finite type.
Indeed, write $G$ as the increasing union of a sequence $(H_n)$ of proper open subgroups. The disjoint union $\bigsqcup_n G/H_n$ is naturally the vertex-set of a tree, 
by linking any element in $G/H_n$ to its projection in $G/H_{n+1}$; here biconvex subsets are in bijection with directed edges; the only non-empty invariant subtrees are the disjoint unions $\bigsqcup_{n\ge k}G/H_n$ and have infinitely many edge orbits.
\end{exe}

\subsection{Cubes in median graphs}

\begin{defn}
Let $V$ be a bipartite graph and $x$ a vertex. 
We say that two directed edges $(x,y),(x,z)$ emanating from $x$ are {\em orthogonal} if $y\neq z$ and there exists a vertex $x'\neq x$ adjacent to both $y$ and $z$. (Note that this implies that $\{x,y,z,x'\}$ is a full subgraph, isometric to a square.)

If $(y_i)$ is a family of vertices adjacent to $x$, we call it orthogonal if $(x,y_i)$ is orthogonal to $(x,y_j)$ for all $i\neq j$.
\end{defn}

Denote by $C_k$ the standard $k$-cube, namely $\{0,1\}^k$; denote by $(u_i)$ the canonical basis of $\R^k$, so that $u_i$ can be viewed as a vertex of $C_k$. 
For any subset $I\subset\{1,\dots,k\}$, we also write $u_I=\sum_{i\in I}u_i$, so that the $u_I$ are the vertices of $C_k$.
We think of $C_k$ as a graph, with oriented edges of the form $(u,u+u_i)$ whenever $u$ and $u+u_i$ both vertices; such an edge is called $i$-labeled. 
Besides, if $(x,y)$ is an edge in $D$, we say that it is labeled by the biconvex subset $B=P_{(x,y)}$. Thus $(x,y)$ is labeled by a biconvex subset $B$ if and only if $x\in B$ and $y\notin B$.

\begin{lem}\label{uniqare}
Let $x,x'$ be at distance 2 in a median graph. Then there exist at most 2 elements at distance 1 from both $x$ and $x'$.
\end{lem}
\begin{proof}
Assuming we have 3 distinct such elements $y,y',y''$, they are pairwise at distance 2; hence $x,x'$ belong to all three intervals $[y,y']$, $[y',y'']$, $[y'',y]$, contradicting the uniqueness of the median.
\end{proof}

\begin{lem}\label{cubevrai}
Let $V$ be a median graph and $f$ be a graph homomorphism $C_k\to V$, such that $f(u_{\{i,j\}})\neq f(0)$ and $f(u_i)\neq f(u_j)$ for all $i\neq j$. Then $f$ is injective and is an isometric embedding onto a full subgraph.
\end{lem}
\begin{proof}
We argue by induction on $k$.
The case $k\le 2$ is immediate. Assume that the result holds until $k-1$.

Let us first check the injectivity.
First assume that $k=3$. Suppose that $f(u_I)=f(u_J)$; if $I\cup J\neq\{1,2,3\}$ we are done by induction. Hence up to symmetry (and using the bipartite lemma), we have two cases to consider: $(I,J)=(\{1\},\{1,2,3\})$ or $(I,J)=(\{1,2\},\{2,3\})$. In both cases, we deduce that $d(f(u_1),f(u_{\{2,3\}}))=1$. It follows that $f(u_{\{2,3\}})$ is a median for $f(u_1)$, $f(u_2)$ and $f(u_3)$, but $f(0)$ is also such a median. This implies $f(0)=f(u_{\{2,3\}})$, contradicting the assumption.

%This implies that whenever $I$ and $J$ are distinct and $I\cup J\neq\{1,\dots,k\}$, we have $f(u_I)\neq f(u_J)$.
 
Now assume $k\ge 3$ arbitrary. Define, for $J\subset\{1,\dots,k-1\}$, $f'(u_J)=f(u_{J\cup\{k\}})$; then $f'$ is a graph homomorphism $C_{k-1}\to V$ satisfying, by the case $k=3$, the assumptions of the lemma. Hence by induction we deduce that $f'$ is injective. We deduce that whenever $I,J$ contains $k$ and are distinct, we have $f(u_I)\neq f(u_J)$. By a change of indices, the same conclusion holds whenever $I\cap J\neq\emptyset$.

%$\{1,\dots,k\}$ is the disjoint union of $I$ and $J$.

Now assume that $f(u_I)=f(u_J)$ and $I,J$ are distinct. By the above, $I\cap J=\emptyset$. Up to switch $I$ and $J$, we can suppose that some $i\in I$ does not belong to $J$.
Using the induction on a smaller cube, we obtain that $1=d(f(u_{I\smallsetminus\{i\}}),f(u_J))=\#(I\tu J)-1$. Hence $\#(I\tu J)=2$; since they are disjoint, we have $I\tu J=I\cup J$, so $I\cup J$ has cardinal at most 2. But then the case $k=2$ implies $I=J$, contradiction. 

Now it remains to check that $f$ is an isometric embedding. Consider any $I,J\subset\{1,\dots,k\}$ and let us show that $d(f(u_I),f(u_J))=\#(I\tu J)$. By induction, $f$ is an isometric embedding in restriction to any $(k-1)$-cube; in particular we obtain the conclusion whenever $I\cup J\neq\{1,\dots,k\}$ or $I\cap J\neq\emptyset$. Now assume $I\sqcup J=\{1,\dots,k\}$; up to a change of origin, we can suppose that $J=\emptyset$ and $I=\{1,\dots,k\}$. Then if by contradiction $d(f(0),f(u_I))\neq k$, then it is equal to $k-2$. Thus $f(0)\in [f(u_i),f(u_I)]$ for all $i\in I$. On the other hand, $f(0)\in [f(u_i),f(u_j)]$ for all $i\neq j$. Hence $f(0)$ is a median for $f(u_I)$, $f(u_i)$, and $f(u_j)$ for all $i\neq j$; but this is also the case of $f(u_{\{i,j\}})$. We deduce that $f(0)=f(u_{\{i,j\}})$, a contradiction.

We deduce that $f$ is an isometric embedding; in particular its image is a full subgraph. (Note that we only used the uniqueness of the median.)
\end{proof}

% Suppose that $f(u_I)=f(u_J)$. Then $I\cup J=\{1,\dots,k\}$ since otherwise we could argue by induction using a smaller cube. Also if $I\cap J\neq\emptyset$, let $\ell$ belong to the intersection; up to a change of variables, we can suppose $\ell=k$.

% If $I\neq J$, up to switch $I$ and $J$, we can suppose that some $i\in I$ does not belong to $J$. Then using the induction on a smaller cube, we obtain that $1=d(f(u_{I\smallsetminus\{i\}}),f(u_J))=\#(I\tu J)-1$. Hence $\#(I\tu J)=2$. Since $I\cup J=\{1,\dots,k\}$, we deduce that either???

%Suppose that some $k$ belongs to $I\cap J$. Then 

\begin{lem}\label{fdet}
Let $V$ be a median graph. Let $f:C_k\to V$ be an injective graph homomorphism, such that $f(u_{\{i,j\}})\neq f(0)$ and $f(u_i)\neq f(u_j)$ for all $i\neq j$. Then $f$ is uniquely determined by its restriction to $\{0,u_1,\dots,u_k\}$.
\end{lem}
\begin{proof}
Let us show that $f(u_J)$ is determined by induction on $\#(J)$. For $\#(J)=1$ this holds by definition; for $\#(J)=2$, this holds as a consequence of the assumptions along with Lemma \ref{uniqare}. Define $x_i=f(u_{I\smallsetminus\{i\}})$. 

For $\#(J)\ge 3$, fix three distinct elements $i,j,k$ in $J$; observe that $f(u_J)$ is adjacent to $x_\ell$ for all $\ell\in\{i,j,k\}$. Since $x_i,x_j,x_k$ are pairwise distinct (by the injectivity assumption, which is actually unnecessary by Lemma \ref{cubevrai}), we deduce that $f(u_J)$ is a median of $x_i,x_j,x_k$, and hence is uniquely determined. (Again, note that we only used the uniqueness of the median.)
\end{proof}

\begin{thm}\label{cubumed}Let $V$ be a median graph, with a finite family of directed edges $(x,y_i)_{1\le i\le k}$ based at a single vertex. Equivalences:
\begin{enumerate}
\item\label{cu_ort} the family $(x,y_i)_{1\le i\le k}$ is orthogonal;
\item\label{cu_bic} the intersection $\bigcap_{1\le i\le k}B_{y_i,x}$ is not empty;
\item\label{cu_grh} there exists a graph homomorphism of the standard $k$-cube $C_k$ into $V$ mapping $0$ to $x$ and $u_i$ to $y_i$, and mapping, for all $i\neq j$, $u_i+u_j$ to a vertex distinct from $x$; 
\item\label{cu_iso} there exists an isometric embedding of the standard $k$-cube $C_k$ onto a full subgraph of $V$ mapping $0$ to $x$ and $u_i$ to $y_i$.
\end{enumerate}
Moreover, if these hold, the graph homomorphism in (\ref{cu_grh}) (or the embedding in (\ref{cu_iso})) is unique; its image is equal to the total convex hull of $\{x,y_1,\dots,y_k\}$.
\end{thm}

\begin{proof}
(\ref{cu_iso})$\Leftrightarrow$(\ref{cu_grh}): $\Rightarrow$ is trivial; the converse is ensured by Lemma \ref{cubevrai}.

(\ref{cu_iso})$\Rightarrow$(\ref{cu_bic}): indeed, if $f$ is such an isometric embedding, then $f(u_{\{1,\dots,k\}})$ belongs to this intersection.

(\ref{cu_bic})$\Rightarrow$(\ref{cu_ort}): for $i\neq j$, let $z$ be any element in $B_{y_i,x}\cap B_{y_j,x}$. Then the median $m(z,y_i,y_j)$ is distinct from $x$ and adjacent to both $y_i$ and $y_j$.

To conclude, let us prove (\ref{cu_ort})$\Rightarrow$(\ref{cu_iso}) by induction on $k$.
The case $k=1$ is trivial and the case $k=2$ just follows from the definition (and the bipartite lemma). Assume that $k\ge 3$ and that the implication is proved for $k-1$.

Denote $C_{[i]}$ and $C'_{[i]}$ the subgraph of $C_k$ consisting of the elements $u_I$ with $i\notin I$ (resp.\ $i\in I$); they are both isomorphic to $C_{k-1}$.
 
Then for every $i\neq k$, there exists an isometric embedding $f_i$ of $C_{[i]}$ into $V$ such that $f_i(0)=x$ and $f_i(u_j)=y_j$ for all $j\neq i$. For any $i\neq j$, by Lemma \ref{fdet} applied to the $(k-2)$-cube $C_{[i]}\cap C_{[j]}$, the functions $f_i$ and $f_j$ coincide on $C_{[i]}\cap C_{[j]}$. Hence, writing $I=\{1,\dots,k\}$ there exists a function $f$ on $C_k\smallsetminus\{u_I\}$ whose restriction to $C_{[i]}$ equals $f_i$ for every $i$.

Fix $i$. We can apply the induction hypothesis to the restriction of $f$ to $C'_{[i]}$: thus there exists an isometric graph homomorphism $f'_i:C'_{[i]}\to V$ such that $f'_i(u_i)=f(u_i)(=y_i)$ and $f'_i(u_i+u_j)=f(u_i+u_j)$ for all $j\neq i$. Then for all $j\neq i$, $f$ and $f'_i$ coincide on $C'_{[i]}\cap C_j$. Thus $f'_i(s)=f(s)$ for every $s\in C'_{[i]}\smallsetminus\{u_I\}$. It follows that $f(u_{I\smallsetminus\{j\}})$ and $f(u_{I\smallsetminus\{\ell\}})$ have distance 2 whenever $j,\ell$ are distinct: indeed, we can choose $i$ distinct from $i,j$ and argue that they are equal to $f'_i(u_{I\smallsetminus\{j\}})$ and $f'_i(u_{I\smallsetminus\{\ell\}})$ and use that $f'_i$ is isometric. 

Now let us separately deal with $k=3$ and $k\ge 4$. If $k=3$, the three points $f(u_{I\smallsetminus\{i\}})$, for $i=1,2,3$ are pairwise at distance 2, if we define $f(u_I)$ as their median, then $f$ is a graph homomorphism $C_3\to V$.

Suppose now $k\ge 4$. For any $i$ and $j,\ell$ all three distinct, observe that both points $f'_i(u_I)$, $f(u_{I\smallsetminus\{j,\ell\}})$ are adjacent to the two distinct points $f(u_{I\smallsetminus\{j\}})$ and $f(u_{I\smallsetminus\{\ell\}})$. Since $f'_i(u_I)$ and $f(u_{I\smallsetminus\{j,\ell\}})=f'_i(u_{I\smallsetminus\{j,\ell\}})$ are distinct (by injectivity of $f'_i$), we deduce from Lemma \ref{uniqare} that $f'_i(u_I)$ is the unique point adjacent to both $f(u_{I\smallsetminus\{j\}})$ and $f(u_{I\smallsetminus\{\ell\}})$ and distinct from $f(u_{I\smallsetminus\{j,\ell\}})$. In particular, it only depends on $\{j,\ell\}$; write it as $g(j,\ell)$. Then for $i,m$ distinct, we can choose $j,\ell$ such that all four are distinct, and then $f'_i(u_I)=g(j,\ell)=f'_m(u_I)$. Hence $f'_i(u_I)$ does not depend on $i$; in particular, its distance to $f(u_{I\smallsetminus\{i\}})$ is also equal to 1 and then $f$ is a graph homomorphism $C_k\to V$.

In both cases, we conclude that $f$ is isometric by Lemma \ref{cubevrai}. 
\end{proof}

\begin{prop}
Let $V$ be a connected median graph. The image of every isometric graph homomorphism $f:C_k\to V$ is totally convex.
\end{prop}
\begin{proof}
We can suppose that $V$ is equal to the total convex hull of $f(C_k)$, and we have to show that $f$ is surjective. Let $x$ be a vertex in $V$. Let $I$ be the set of $i$ such that $x\in B_{f(u_i),f(0)}$. Up to change the origin in $f$, we can suppose that $I=\emptyset$. Let $B$ be a biconvex subset containing $x$ and not $f(0)$. Then since $V$ is the total convex hull of $f(C_k)$, by Proposition \ref{hull}(\ref{ic3}), there exists an edge of $f(C_k)$ whose vertices belong to both sides of $B$. Hence by Lemma \ref{everybi}, $B=B_{y,z}$ for some such directed edge; moreover by Lemma \ref{parb}, this directed edge can be chosen to have the form either $(f(0),f(u_i))$ or $(f(u_i),f(0))$. Since by assumption $x\in\bigcap_j B_{f(0),f(u_i)}$, we deduce that $B=B_{f(0),f(u_i)}$; then this contradicts $f(0)\notin B$.
\end{proof}

\begin{defn}
In a median graph, a $k$-cube is the image of an injective graph homomorphism $f:C_k\to V$. A cube is a $k$-cube for some $k$.
\end{defn}

Thus in a connected median graph, every cube is actually isomorphic to $C_k$ and is a totally convex subgraph.

\begin{rem}\label{mediancat0}
Let $V$ be a median graph. It can be viewed as a cubical complex, where cubes are the cubes defined above. If $v$ is a vertex, the link of this cubical complex structure at $v$ is by definition the simplicial graph, a priori with possibly multiple simplices, for which the vertices are the neighbors of $v$ in $V$, and the $k$-simplices are indexed by the $(k+1)$-cubes through $v$: if $C$ is such a cube and if $y_0,\dots,y_k$ are the neighbors of $v$ in $C$, then it corresponds to a $k$-simplex $s_C=\{y_0,\dots,y_k\}$. 

For instance, Lemma \ref{uniqare} means that there are no multiple edges, and the more general Lemma \ref{fdet} implies that there are no multiple simplices (i.e., $C\mapsto s_C$ is injective). Theorem \ref{cubumed} (namely, (\ref{cu_ort})$\Rightarrow$(\ref{cu_iso})) implies that this complex is flag, in the sense that whenever it contains the 1-skeleton of a simplex, it contains the whole simplex. This condition is called the {\em combinatorial local CAT(0) condition}.  

On the other hand, Proposition \ref{cubusc} shows that this cubical complex (or equivalently its 1-skeleton) is simply connected. Combined with the previous condition, this is called the {\em combinatorial CAT(0) condition} and implies that if the $k$-cubes are endowed with the Euclidean metric from $[0,1]^k$ and the complex with the resulting length distance, this is a geodesic CAT(0) metric space. We refer to \cite{BH} for a thorough discussion of polyhedral complexes.
\end{rem}

\begin{rem}\label{cat0med}
Conversely, let $V$ be a connected graph endowed with a family of finite full subgraphs, each of which being isomorphic to some cube, defining a cubical complex. Assume that the link at each vertex satisfies the combinatorial local CAT(0) condition (this means that the link has no double simplex and is flag), and that the resulting complex is simply connected. Then $V$ is a median graph. See \cite{Che}.
\end{rem}

\begin{prop}\label{cacu}
Let $V$ be a nonempty connected median graph. Equivalences:
\begin{enumerate}
\item\label{cacu_1} $V$ is isomorphic to some hypercube;
\item\label{cacu_2} for every vertex $x$, any two distinct edges $(x,y)$ and $(x,y')$ are orthogonal;
\item\label{cacu_2bis} there is no nontrivial inclusion between strict biconvex subsets (or equivalently, any two nonempty biconvex subsets have a nontrivial intersection); 
\item\label{cacu_3} for every finite family $(B_i)$ of pairwise non-opposite nonempty biconvex subsets, we have $\bigcap B_i\neq\emptyset$;
\item\label{cacu_4} every median orientation on $V$ is Roller.
\end{enumerate}
If $V$ is finite, these conditions are also equivalent to: 
\begin{enumerate}
\addtocounter{enumi}{5}
\item\label{cacu_5} $\Aut(V)$ acts transitively on $\CB_V\smallsetminus\{\emptyset,V\}$  .
\end{enumerate}
\end{prop}
\begin{proof}

Suppose (\ref{cacu_2bis}) or (\ref{cacu_3}). If $(x,y)$ and $(x,y')$ are distinct directed edges, then we deduce that $B_{y,x}\cap B_{y',x}$ is nonempty, proving that $(x,y)$ and $(x,y')$ are orthogonal (e.g., by Theorem \ref{cubumed}), so (\ref{cacu_2}) holds. 

That (\ref{cacu_3}) implies (\ref{cacu_4}) is clear, and the converse holds because given $(B_i)$ as in (\ref{cacu_3}), there exists some median orientation for which $B_i=B_i^+$ for all $i$.

That (\ref{cacu_1}) implies each of (\ref{cacu_2}), (\ref{cacu_2bis}) and (\ref{cacu_3}) is clear.

That (\ref{cacu_4}) implies (\ref{cacu_2bis}) is easy by contraposition: if $B,B'$ are strict biconvex subsets and disjoint, we can prescribe them to be positive for some median orientation, which by definition is not Roller.

Assume (\ref{cacu_2}) and let us show (\ref{cacu_1}). Write $\CB^*(x_0)=\CB(x_0)\smallsetminus\{V\}$.
Fix a vertex $x_0$. Define $\Phi:V\to 2^{(\CB^*(x_0))}$ by $\Phi(x)=\CB(x_0)\smallsetminus\CB(x)$. By Corollary \ref{cormed2}, this is an isometric embedding. Let us show that it is surjective; denote by $V'$ its image, which satisfies (\ref{cacu_2}).

First, let $S$ be the set of $B\in\CB^*(x_0)$ such that $\{B\}\notin V'$, and assume by contradiction that $S\neq\emptyset$. Observe that $\bigcup_{x\in V}\Phi(x)=\CB^*(x_0)$, because if $B\in\CB^*(x_0)$, then we can choose $x\notin B$, and hence $B\notin\CB(x)$, which means that $B\in\Phi(x)$. Therefore, let $M\subset\CB^*(x_0)$ be an element of $V'$ of minimal cardinal such that $M$ contains some $s\in S$. Then $M$ is not a singleton, since otherwise we would have $\{s\}\in V'$, contradicting the definition of $S$. Hence, using the first three points in a geodesic segment joining $M$ to $\emptyset$, we can write $M=N\sqcup\{y\}$ and find $x\in N$  such that and $N\smallsetminus\{x\}$ and $N$ belong to $V'$. The minimality of $M$ then implies that $s\notin N$, so that $y=s$. Then the directed edges $(N,N\sqcup\{s\})$ and $(N,N\smallsetminus\{x\})$ being orthogonal in $V'$, the fourth vertex in the square they generate is $M\smallsetminus\{s\}$ and belongs to $V'$; this contradicts the minimality of $M$. Therefore $S=\emptyset$.

%Using a segment in $V'$ joining $M$ to $\emptyset$, we can write $M=N\sqcup\{s\}$ with $N\in V'$. Then $N\neq\emptyset$, since otherwise we would have $\{s\}\in V$, contradicting the definition of $S$. Hence, agaiwrite $N=P\sqcup\{x\}$; then the directe

Now let us show that $V'=V$.
By contradiction, let $M$ be a finite subset of $\CB(x_0)$, of minimal cardinality, such that $M$ does not belong to $V'$. Then $M\neq\emptyset$. Since $S=\emptyset$, we know that $M$ is not a singleton. So we can write $M=N\sqcup\{x,y\}$ with $x,y\notin N$, $x\neq y$. Then the directed edges $(N,N\sqcup\{x\})$ and $(N,N\sqcup\{y\})$ are orthogonal in $V'$, which by (\ref{cacu_2}) implies that $M$ belongs to $V'$, a contradiction.

It is clear that (\ref{cacu_1}) implies (\ref{cacu_5}) (here in the case of the 0-cube we agree to call the action on the empty set transitive). Conversely, (\ref{cacu_5}) implies that all strict biconvex subsets have the same cardinal; in particular if $V$ is finite, this implies there are no nontrivial inclusion between strict biconvex subsets, showing (\ref{cacu_2bis}).
\end{proof}

\subsection{Gerasimov's theorem}

The following theorem was proved by Gerasimov \cite{Ger} for finitely generated groups. The following generalization, which relaxes the finite generation assumption, uses in a more or less hidden way several of his arguments, although the final layout of the proof is substantially simplified.

\begin{thm}\label{gthm}
Let a group $G$ act isometrically on a connected median graph $V$ with a bounded orbit. Then it has a finite orbit of vertices.
\end{thm}

We need the following general lemma, extracted from Gerasimov's paper:

\begin{lem}\label{1gh}
Let a group $G$ act transitively on an a infinite set $X$. Then for every finite subset $F$ of $G$ there exist $g,g'\in G$ such that $F$, $gF$ and $g'F$ are pairwise disjoint.
\end{lem}
\begin{proof}
Let us first find $g\in G$ with $F\cap gF=\emptyset$. The set $P$ of $g$ such that $F\cap gF\neq\emptyset$ is precisely $\bigcup_{x,y\in F}\{g:gx=y\}$. Fix $x_0\in X$, let $H$ be its stabilizer and fix a finite set $K\subset G$ such that $F=Kx_0$. Then 
\[P=\bigcup_{h,k\in K}\{g:ghx_0=kx_0\}=\bigcup_{h,k\in K}\{g:k^{-1}gh\in H\}=\bigcup_{h,k\in K}(kg^{-1})gHg^{-1}.\]
This is a finite union of left cosets of subgroups of infinite index; by B.H. Neumann \cite{Ne54}, it follows that $P\neq G$. So taking $g\notin P$ we have $F\cap gF=\emptyset$.

Now let us prove the lemma. By the previous case, find $g$ such that $F\cap gF=\emptyset$. Then apply the previous case again to $F\cup gF$: there exists $g'$ such that $F\cup gF$ and $g'F\cup g'gF$ are disjoint. In particular, $F$, $gF$ and $g'F$ are pairwise disjoint.
\end{proof}

\begin{proof}[Proof of Theorem \ref{gthm}]

%Let $\mathcal{H}$ be the set of biconvex subsets of $D$. For $x\in D$, define $\CB(x)$ as the set of biconvex subsets of $D$ containing $x$. Thus for all $x,y\in D$ we have $\CB(x)\smallsetminus \CB(y)=\mathcal{H}(x,y)$, which is finite by the easier inclusion of Proposition \ref{prmed}. The map $\CB:x\mapsto \CB(x)$ is equivariant; by the more difficult inclusion of Proposition \ref{prmed} (based on Lemma \ref{chp}(\ref{pxybi})), it is injective. Observe that for all $x,y\in D$ and $w$ in the total interval $[x,y]$, we have \[\CB(x)\cap \CB(y)\subset \CB(w)\subset \CB(x)\cup \CB(y).\]
%A direct consequence is that $\CB$ is a ``median homomorphism" in the sense that\[\CB(m(x,y,z))=m(\CB(x),\CB(y),\CB(z))\quad\forall x,y,z\in D,\] where $m(A,B,C)=(A\cap B)\cup (B\cap C)\cup (C\cap A)$ for any $A,B,C$ sets of subsets of $D$. 

The group $G$ naturally acts on $\CB_V$, commensurating $\CB(x)$ for each vertex $x$ (by Proposition \ref{numberwap}).

%. For $g\in G$ and $x\in D$, we have \[g\CB(x)\tu \CB(x)=\CB(gx)\tu \CB(x)=\CB(x,gx)\sqcup\mathcal{H}(gx,x),\] and thus $\CB(x)$ is commensurated by the $G$-action.

Define $\mathcal{E}(x)$ as the set of $G$-orbits $Z\subset\CB_V$ such that $Z\cap \CB(x)\notin\{\emptyset,Z\}$. Since $\ell_{\CB(x)}$ is bounded, $\CB(x)$ is transfixed by Theorem \ref{btx}. This implies that $\mathcal{E}(x)$ is finite. Define $\mathcal{E}_\infty(x)\subset \mathcal{E}(x)$ as the set of $Z\in\mathcal{E}(x)$ such that $Z$ is infinite, and let $n(x)$ be the cardinal of $\mathcal{E}_\infty(x)$; we have $n(x)\le\#(\mathcal{E}(x))<\infty$.

For $x\in V$, if $n(x)=0$, then the orbit of $\CB(x)$ is finite, since it ranges over subsets $Y$ of $\CB_V$ such that $Y\tu \CB(x)\subset\bigcup_{Z\in\mathcal{E}(x)}Z$. Thus, by injectivity of $x\mapsto \CB(x)$ (Corollary \ref{cormed}) the orbit of $x$ in $V$ is finite, as required.

So to prove the theorem, it is enough to show that whenever $x\in V$ and $n(x)\ge 1$, there exists $x'\in V$ such that $n(x')<n(x)$. Indeed, let $Z\in\mathcal{E}(x)$ be an infinite orbit. By Theorem \ref{btx}, since $Z\cap \CB(x)$ is transfixed, it is either finite or has finite complement in $Z$. In other words, there exists a finite subset $N$ of $Z$ such that $Z\cap \CB(x)$ is either equal to $N$ or $Z\smallsetminus N$. 
By Lemma \ref{1gh}, there exist $g,g'\in G$ such that $N$, $gN$ and $g'N$ are pairwise disjoint. Therefore $m(N,gN,g'N)$ is either equal to $\emptyset$ or $Z$. It follows, defining $x'=m(x,gx,g'x)$, that $m(\CB(x),g\CB(x),g'\CB(x))=\CB(x')$. We see that $\mathcal{E}(x')\subset\mathcal{E}(x)\smallsetminus\{Z\}$. We deduce that $n(x')<n(x)$.  
\end{proof}

\begin{cor}\label{corge}Let $G$ be a topological group with Property FW and let $X$ be a continuous discrete $G$-set. Then $H^1(G,\Z X)=0$, where $\Z X$ is the set of finitely supported functions $X\to\Z$.\end{cor}
\begin{proof}
Any continuous 1-cocycle of $G$ in $\Z X$ defines a continuous affine action of $G$ on the discrete abelian group $\Z X$, which lifts the projection $\Z X\rtimes G\to G$. Note that the action of $\Z X\rtimes G$ on $\Z X$ is generated by the linear action of $G$ and the action by translations. In particular, it preserves the natural partial ordering of $\Z X$. By Property FW and Gerasimov's theorem (Theorem \ref{gthm}), it preserves a finite orbit $\{f_1,\dots,f_n\}$. Then $\min(f_1,\dots,f_n)$ is $G$-invariant and has finite support, hence is a fixed point. This means that the cocycle is a 1-coboundary. Hence $H^1(G,\Z X)=0$.\end{proof}

Let us now give a corollary making use of the non-positively curved cubulation result.

\begin{cor}\label{fixmedian}
Let a group $G$ act isometrically on a connected median graph $V$ with a bounded orbit. Then there is an invariant cube. If moreover the action on $V$ preserving a median orientation, then there is a fixed vertex.
\end{cor}
\begin{proof}
By Theorem \ref{gthm}, there is a finite orbit; by Theorem \ref{hull}(\ref{ic4}), its total convex hull is finite; hence we can suppose that $V$ is finite. Therefore, as observed in Remark \ref{mediancat0}, its canonical cubulation is CAT(0); moreover it is a complete metric space (it is indeed compact). Hence $G$ has a fixed point \cite[Corollary II.2.8]{BH} in the canonical cubulation. This fixed point belongs to the interior of some cube, and therefore this cube $C$ is invariant. 

In case $G$ preserves some median orientation, there exists a unique vertex $v$ in $C$ such that all directed edges $(v,w)$ with $w\in C$ are oriented. Therefore $v$ is invariant by $G$.
\end{proof}

\begin{rem}
It is likely that the corollary can be proved avoiding the non-positively curved cubulation result: the missing step is to find a combinatorial proof that for any non-empty connected finite median graph $V$, there is an $\Aut(V)$-invariant cube in $V$, or still equivalently, show that if $V$ is a non-empty connected finite median graph $V$ and is not isomorphic to a cube (see Proposition \ref{cacu} for various characterizations), then there is a proper nonempty $\Aut(V)$-invariant median subgraph.
\end{rem}

\subsection{Involutive commensurating actions and the Sageev graph}\label{icm}

\subsubsection{Involutions}

\begin{defn} An {\em involutive preposet} $(E,\le,\sigma)$ is a partially preordered set endowed with a order-reversing fixed-point-free involution $\sigma$. 

%is any partially preordered set endowed with a order-reversing fixed-point-free involution $\sigma$ (

%Actually in his original paper, Sageev (accurately) mentioned that these satisfy ``{\em certain} ultrafilter-like properties", but

If $(E,\le,\sigma)$ involutive preposet, define an {\em ultraselection}\footnote{Ultraselections are sometimes called ultrafilters, but this terminology is incoherent and confusing, partly because there is no natural way to characterize ultrafilters as ultraselections. 
Indeed ultrafilters require an additional (and essential!) condition on intersections, which is not reflected here, for instance when $E$ is a power set endowed with inclusion and complementation.} on $E$ as a subset $S\subset E$ satisfying
\begin{itemize}
\item $S$ is a {\em selection}, i.e., namely $x<y$ and $x\in S$ implies $y\in S$ ($x<y$ means $x\le y$ and $x\ge\!\!\!\!\!\!/\; y$).
\item $\sigma(S)=S^c$ (i.e., we have a partition $E=S\sqcup \sigma(S)$).
\end{itemize}
\end{defn}

An equivalent data is that of a function $j:E\to \{0,1\}$ such that $j(x)+j(\sigma(x))=1$ for all $x\in E$, and $j$ is non-decreasing, namely $x< y$ and $j(x)=1$ implies $j(y)=1$ . Given $j$, we get $S$ by $S=j^{-1}(\{1\})\subset E$, and given $S$, we obtain $j=\mathbf{1}_S$.
Note that the set of ultraselections is obviously compact under the pointwise convergence topology.

Say that two ultraselections $S,T$ are {\em incident} if $\#(S\tu T)=2$. In this case, there exists $z\in E$ such that $S\tu T=\{z,\sigma(z)\}$; moreover $z$ is a minimal element of $S$ (in the sense that there is no $x\in S$ such that $x<z$, or equivalently $y\in S$ and $y\le z$ imply $y\ge z$). Conversely, if $S$ is an ultraselection and $z$ is a minimal element of $S$, then $S\cup\{\sigma(z)\}\smallsetminus\{z\}$ is an ultraselection, incident to $S$ by definition. This incidence relation defines a graph structure (non-oriented, with no self-loop and with no multiple edges), denoted by $\textnormal{Sel}(E,\le,\sigma)$. The next two lemmas are straightforward generalizations of Nica's Lemma 4.3 and Proposition 4.5 in \cite{Nic}; on the other hand, the idea of associating a cubing to an abstract poset appeared in Niblo-Reeves in \cite{NRe} (with a few restrictions).

\begin{lem}\label{zcompo}Two ultraselections $S,T$ on the involutive preposet $E$ are in the same connected component of the graph $\Sel(E,\le,\sigma)$ if and only if $S\tu T$ is finite. Moreover, the inclusion of connected components of $\Sel(E,\le,\sigma)$ into $\Sel(E,=,\sigma)$ is isometric.
\end{lem}
\begin{proof}
In the first assertion, the condition is clearly necessary. Let us prove it is sufficient by induction on $2n=\#(S\tu T)$ (which is even because $S\smallsetminus T=\sigma(T\smallsetminus S)$). The case $n=0$ is clear. Otherwise, find an minimal element $z$ in $S\smallsetminus T$. If by contradiction there exists $x\in S$ with $x<z$, then by minimality of $z$, necessarily $x\in T$, but since $T$ is an ultraselection this forces $z\in T$, contradiction. Thus $S'=S\cup\{\sigma(z)\}\smallsetminus\{z\}$ is an ultraselection incident to $j$, and $\#(S'\tu T)=2n-2$, so $S'$ is in the same component as $T$ by induction.

The above proof actually proves the isometric statement as well.
\end{proof}

We have the following lemma (see \S\ref{mger} for the definition of median graph).

\begin{prop}\label{l_median}
The ultraselection graph of any involutive preposet is median.
\end{prop}
\begin{proof}
We begin with the case of the discrete preposet $\Sel(E,=,\sigma)$. Thus an ultraselection here is just a subset $S\subset E$ such that $E=S\sqcup \sigma(S)$. Let $A\subset E$ be a fundamental domain for $\sigma$ (in the sense that $X=A\sqcup\sigma(A)$). Then the function $2^A\to 2^E$ mapping $B$ to $B\sqcup \sigma(A\smallsetminus B)$ is a bijection from $2^A$ to the set of ultraselections of $(E,=,\sigma)$; if $2^A$ is endowed with its median graph structure from Example \ref{commed}, this is a graph isomorphism, so $\Sel(E,=,\sigma)$ is median.

Now let the preposet be arbitrary. By Lemma \ref{zcompo}, the embedding of the graph $\textnormal{Sel}(E,\le,\sigma)$ into the graph $\textnormal{Sel}(E,=,\sigma)$ is isometric (if we allow infinite distances). This shows that total intervals $[x,y]$ in the first graph are contained in total intervals $[x,y]'$ in the second one. In particular, it immediately follows that for all $S_1,S_2,S_3\in \textnormal{Sel}(E,\le,\sigma)$, the intersection $[S_1,S_2]\cap [S_2,S_3]\cap [S_3,S_1]$ contains at most one point; we only have to check that the median point $T=(S_1\cap S_2)\cup (S_2\cap S_3)\cup (S_3\cap S_1)$ belongs to $\textnormal{Sel}(E,\le,\sigma)$, i.e., is an ultraselection. Clearly the set of selections is stable under taking finite unions and intersections (unlike the set of ultraselections!), and therefore $T$ is a selection. Moreover, keeping in mind that $\sigma(S_i)=S_i^c$, we see that $\sigma(T)=T^c$ and thus $T$ is an ultraselection and the proof is complete.
\end{proof}

Let now $G$ be a topological group.
Let $X$ be a continuous discrete $G$-set and $A\subset X$ a commensurated subset with an open stabilizer; let $\sigma$ be a $G$-equivariant involution of $X$ such that $X=A\sqcup\sigma(A)$.
Define, as in Proposition \ref{can_bij}, the corresponding walling $W_x=\{h\in G\mid x\in hA\}$. 
Endow $X$ with the partial order given by $x\le y$ if $W_x\le W_y$. The above graph structure on $X$ is clearly $G$-invariant. On the other hand, $A$ itself is an ultraselection, as well as any translate $gA$. This is clear by observing that $gA=\{x\in X\mid g\in W_x\}$; we thus call translates of $A$ {\em principal ultraselections}. 

A first observation is that all principal ultraselections belong to the same connected component of the graph, by Lemma \ref{zcompo}. 

\begin{defn}
The {\em Sageev graph} associated to $(G,X,A,\sigma)$ is the component of $\textnormal{Sel}(X,\le,\sigma)$ containing principal ultraselections.
\end{defn}

The Sageev graph is connected by definition and is median by Proposition \ref{l_median}; the action of $G$ is continuous. It contains an isometric copy of the set of translates of $A$, with the symmetric difference metric. 

%It carries a natural median orientation, preserved by $G$.????????

\begin{rem}Actually, Sageev \cite{S95} directly cubulated the set of ultraselections and the link with median graphs was brought out later by Chepoi \cite{Che}. The point of view given here is closer to that of Nica \cite{Nic}.
\end{rem}

Define $\Comm_A^\sigma(X)$ as the set of subsets $M$ of $X$ commensurate to $A$ and such that $M^c=\sigma(M)$; it is a connected component of $\Sel(X,=,\sigma)$. The Sageev graph can be viewed as an isometrically embedded subgraph of $\Comm^\sigma_A(X)$, by Lemma \ref{zcompo}. In general, finding a cubulation associated to a commensurating action amounts to finding ``small" invariant connected median subgraphs of $\Comm_A^\sigma(X)$. Here small can, among others, mean that the graph is locally finite, or that the underlying cubing is finite-dimensional.

A construction, assuming in addition that $G$ as well as the stabilizers in $G$ of points in $X$ are finitely generated, is done by Niblo, Sageev, Scott and Swarup \cite{NSSS}. It consists in replacing the partial preorder on $X$ by a larger one, requiring that $x\preceq y$ if $W_x\smallsetminus W_y$ is ``small" in a suitable sense. This can be done consistently under the assumption that the walling $(W_x)$ is in ``good" position; they actually show that this can always be supposed at the cost of replacing $A$ by a commensurate subset (and using crucially that the acting group and stabilizers are finitely generated).

\subsection{Link with other properties}\label{liin}
Let us check the equivalences of the introduction for Property FW and PW. The following proposition summarizes several of the previous results.

\begin{prop}\label{fc}
Let $G$ be a topological group. Consider the following properties, for functions $f:G\to\R$:
\begin{enumerate}[(I)]
\item\label{fc1} $f$ is cardinal definite;
\item\label{fc4} there exists a topological space $Y$ with a continuous $G$-action (i.e., the map $G\times Y\to Y$ is continuous), a continuous discrete $G$-set $X$, a $G$-walling $\mathcal{W}=(W_x)_{x\in X}$ of $Y$ by clopen subsets, and $y_0\in Y$ such that $f(g)=d_{\mathcal{W}}(y_0,gy_0)$ for all $g$.
\item\label{fc3} there exists a continuous action of $G$ on a connected median graph and a vertex $x_0$ such that $f(g)=d(x_0,gx_0)$ for all $g$;
\item\label{fc2} there exists a continuous action of $G$ on a CAT(0) cube complex and a vertex $x_0$ such that $f(g)=d(x_0,gx_0)$ for all $g$;
\item\label{fc5} there exists an isometric action of $G$ on an ``integral Hilbert space" $\ell^2(X,\Z)$ ($X$ any discrete set), or equivalently on $\ell^2(X,\R)$ preserving integral points and $v_0\in\ell^2(X,\Z)$ such that $f(g)=\|v_0-gv_0\|_2^2$ for all $g\in G$. 
\end{enumerate}
Then (\ref{fc1})$\Leftrightarrow$(\ref{fc4}), (\ref{fc3})$\Leftrightarrow$(\ref{fc2}); (\ref{fc1})$\Rightarrow$(\ref{fc5})$\Rightarrow$(\ref{fc3}), and (\ref{fc3}) for $f$ implies (\ref{fc1}) for $2f$. Thus, roughly speaking, all properties are equivalent up to multiplication by 2.
\end{prop}
\begin{proof}
The implication (\ref{fc1})$\Rightarrow$(\ref{fc4}) follows from Proposition \ref{can_bijt} by taking $Y=G$. Let us show the converse: assume (\ref{fc4}) and consider a $G$-walling $\mathcal{W}$ on $Y$ and $y_0\in Y$ as in (\ref{fc4}). Fix $y_0\in Y$ and pull the walling back to a walling $\mathcal{W}'$ on $G$ by the orbital map $g\mapsto gy_0$. Then for all $g\in G$ we have $d_{\mathcal{W}'}(1,g)=d_{\mathcal{W}}(y_0,gy_0)$. By Proposition \ref{can_bijt}, this is a cardinal definite function.

The equivalence (\ref{fc3})$\Leftrightarrow$(\ref{fc2}) follows from Chepoi's result (Remarks \ref{mediancat0} and \ref{cat0med}) that median graphs are precisely the 1-skeleta of CAT(0) cube complexes. 

The implication (\ref{fc1})$\Rightarrow$(\ref{fc5}) is provided by the Niblo-Roller construction, as formulated in Proposition \ref{sxlp2}. 

Let us now consider (\ref{fc5}). First note that since the closed affine subspace generated by $\ell^2(X,\Z)$ is $\ell^2(X,\R)$, by the GNS-construction (see \cite[Appendix C]{BHV}) isometries of $\ell^2(X,\Z)$ have a unique extension to $\ell^2(X,\R)$, which is affine by the Mazur-Ulam theorem. In particular, both versions of (\ref{fc5}) are equivalent.

We have (\ref{fc5})$\Rightarrow$(\ref{fc3}), by observing that the obvious graph structure on $\ell^2(X,\Z)$, obtained by joining any two points at distance 1, is connected median (it also implies (\ref{fc2}) directly).

Finally, if $f$ satisfies (\ref{fc3}), then $2f$ satisfies (\ref{fc1}) by Corollary \ref{medca2}.
\end{proof}

%The equivalence (\ref{fc1})$\Leftrightarrow$(\ref{fc3}) follows from Corollary \ref{comcu}. 

%\begin{lem}[Gerasimov]\label{lemgera}If a group action on a CAT(0) cube complex has bounded orbits, then it has a fixed point.\end{lem}\begin{proof} By Theorem \ref{gthm}, there is a finite orbit of vertices. The convex hull of this finite orbit is a CAT(0) cube complex with finitely many cells; we can then apply the Bruhat-Tits fixed point lemma. \end{proof}

\begin{lem}\label{vanh}
Let $G$ be a topological group and $X$ a continuous discrete $G$-set. Let $A$ be an arbitrary nonzero discrete abelian group. Suppose that $H^1(G,A^{(X)})=0$. Then every $G$-commensurated subset of $X$ with an open stabilizer is transfixed.
\end{lem}
\begin{proof}
Let $M$ a commensurated subset with an open stabilizer; fix $a\in A\smallsetminus\{0\}$. For $N\subset X$, and $x\in X$, define $\mathbf{1}^a_N(x)$ as equal to $a$ if $x\in N$ and 0 otherwise.

Then $g\mapsto \mathbf{1}^a_M-\mathbf{1}^a_{gM}$ is a continuous 1-cocycle of $G$ in $A^{(X)}$ and hence by the vanishing of $H^1(G,A^{(X)})$ is a 1-coboundary, i.e.\ has the form $f-gf$ for some $f\in A^{(X)}$. Hence the function $h=\mathbf{1}^a_M-f$ is $G$-invariant. So $N=\{x\in X:\;h(x)=a\}$ is $G$-invariant. Write $h=1^a_N+f'$, where $f'$ has finite support. Then $1_M^a-1_N^a=f+f'$, which has finite support, so $M$ and $N$ are commensurate and thus $M$ is transfixed.
\end{proof}

\begin{prop}\label{fwint}Let $G$ be a topological group. Equivalences:
\begin{enumerate}[(i)]
\item\label{fwdef2} $G$ has Property FW;
\item\label{fwcb2} every cardinal definite function on $G$ is bounded;
\item\label{fwca2} every continuous cellular action on any CAT(0) cube complex with the $\ell^2$-metric has bounded orbits (we allow infinite-dimensional cube complexes);
\item\label{fwcf2} every continuous cellular action on any nonempty CAT(0) cube complex has a fixed point;
\item\label{fwmeb2} every continuous action on a connected median graph has bounded orbits;
\item\label{fwme2} every action on a nonempty connected median graph has a finite orbit;
\item\label{fwcod2} (if $G$ is compactly generated and endowed with a compact generating subset) for every open subgroup $H\subset G$, the Schreier graph $G/H$ has at most 1~end;
\item\label{fww2} for every topological space $Y$ with a continuous $G$-action (i.e., the map $G\times Y\to Y$ is continuous), every continuous discrete $G$-set $X$ and every $G$-walling $\mathcal{W}=(W_x)_{x\in X}$ of $Y$ by clopen subsets, the $G$-orbits in $(Y,d_\mathcal{W})$ are bounded.
\item\label{zhilb2} every isometric action on an ``integral Hilbert space" $\ell^2(X,\Z)$ ($X$ any discrete set), or equivalently on $\ell^2(X,\R)$ preserving integral points, has bounded orbits. 
\item\label{zx2} for every continuous discrete $G$-set $X$ we have $H^1(G,\Z X)=0$.
\end{enumerate}
\end{prop}
\begin{proof}
Proposition \ref{fc} immediately entails the equivalences (\ref{fwcb2}) $\Leftrightarrow$ (\ref{fwca2}) $\Leftrightarrow$ (\ref{fwmeb2}) $\Leftrightarrow$ (\ref{fww2}) $\Leftrightarrow$ (\ref{zhilb2}).

One direction in the equivalence (\ref{fwdef})$\Leftrightarrow$(\ref{fwcb}) is trivial and the other follows from Theorem \ref{btx}.

The equivalence (\ref{fwcb2})$\Leftrightarrow$(\ref{fwcod2}) follows from Proposition \ref{ends}.

One direction in the equivalence (\ref{fwmeb2})$\Leftrightarrow$(\ref{fwme2}) is trivial and the other follows from Theorem \ref{gthm}.

(\ref{fwcf2}) $\Rightarrow$ (\ref{fwca2}) is trivial and the converse follows from Corollary \ref{fixmedian}.

Let us finally prove (\ref{fwdef2})$\Leftrightarrow$(\ref{zx2}). The implication $\Leftarrow$ follows from Lemma \ref{vanh}, and conversely the implication $\Rightarrow$ is a corollary of Gerasimov's theorem (Corollary \ref{corge})
\end{proof}

%In particular, it preserves the oriented median connected graph structure of $\Z X$ (Examples \ref{commedz} and \ref{orme}). Hence by Property FW and Corollary \ref{fixmedian}, the affine action of $G$ fixes an element of $\Z X$. This means that it is defined by

\begin{rem}\label{h1}
In Proposition \ref{fwint}(\ref{zx2}), the ring $\Z$ cannot be replaced by an arbitrary nonzero unital ring. While the proof carries over to show that if $R$ is an arbitrary unital nonzero ring, $X$ is a discrete continuous $G$-set with a commensurated subset $M$ with an open stabilizer, $H^1(G,RX)=0$ implies that $M$ is transfixed, the converse does not always hold. More precisely, consider $n\ge 2$, the group $G=\Z/n\Z$ and the $G$-set $X=\{x\}$ reduced to a singleton. Since $G$ is finite, it has property FW. On the other hand, it is straightforward that  for $R=\Z/n\Z$ we have $H^1(G,RX)$ nonzero (since it consists of the group of group endomorphisms of $\Z/n\Z$). 

The 1-cohomology of $G$ in $\Z X$ has been studied in \cite{Ho78}.
\end{rem}

Note that Proposition \ref{fc} also immediately implies the equivalence between Property PW and its the various reformulations given in the introduction.

\begin{rem}\label{r_wa}
For a topological group $G$, Property FW is also equivalent to the property of having bounded orbits on space with walls (in its most popularized sense, not allowing multiplicities of walls). Obviously it implies this property; to see the converse, assume that $G$ does not have Property FW. Then it has an unbounded cardinal definite function $f$. According to an orbit decomposition, write $f=\sum_{i\in I} n_if_i$, where $(n_i)$ is a family of positive integers and $(f_i)$ is an injective family of nonzero cardinal definite functions, such that no $f_i$ is a proper multiple, i.e.\ has the form $kf''$ with $k\ge 2$ integer and $f''$ cardinal definite (otherwise we decompose again).

Define $f'=\sum_{i\in I}f_i$. Then $f'\le f$ and $f'$ is cardinal definite. We claim that $f'$ is unbounded as well. If $I$ is finite, then $f\le nf'$ where $n=\max_in_i$, so this is clear. If $I$ is infinite, we evoke the contraposition of Corollary \ref{bdf} to infer that $f'$ is unbounded. 

Now write $f_i=\ell_{M_i}$, where $M_i$ is a commensurated subset in a set $X_i$ with open stabilizer, if $(W^{(i)}_x)_{x\in X_i}$ is the associated walling on $G$ given by Proposition \ref{can_bij}. Since $f_i$ is a not proper multiple, the walling $x\mapsto W_x^{(i)}$ is injective. Also since the $f_i$ are distinct, we have $W_x^{(i)}\neq W_y^{(j)}$ for any $i\neq j$, $x\in X_i$ and $y\in X_j$. So the structure of space with walls on $G$ for which the walls are the $W_x^{(i)}$ for $i\in I$ and $x\in X_i$ (and their complement if necessary) gives rise to an unbounded distance on $G$.
\end{rem}

\section{About this paper}\label{abt}

Most of the paper consists of a ``digest" of known results but also contain some new ones; the purpose of this final paragraph is to clarify this. Maybe the main goal of this paper is to start at the beginning (that is, Section \ref{comto}!) with the very elementary notion of commensurating action, rather than the more elaborate notion of CAT(0) cube complex or even median graphs. The introduction of the point of view of commensurating actions in the measurable context in relation to Property~T is due to Robertson and Steger \cite{RS}. Let us now point out the new results of the paper.

\begin{itemize}
\item Maybe the most apparent contribution is the use of topological groups rather than discrete ones. However, this change is mainly secondary and was certainly not the main motivation for writing this paper; it nevertheless provides a more coherent setting, for instance for the study of Property FW for irreducible lattices \cite{Cor2}. It involves some technical issues such as continuity, which are notably addressed in \S\ref{auco}. Also, part of this generalization is to remove finite generation assumptions, even in the case of discrete groups.

\item The notion of $G$-walling is very similar to many definitions appearing in various places. It was cooked up so that the (almost tautological) Proposition  \ref{can_bij} holds. Although the correspondence between actions on (various forms of) spaces with walls and commensurating actions (sometimes dressed up as almost invariant subsets) is known in both directions, such a simple statement as Proposition  \ref{can_bij} was not previously extracted.

\item The finiteness result of \S\ref{s_cof} and some of its corollaries (including Proposition \ref{fwfwp}(\ref{fwww})) seem to be new, even when specified to finitely generated discrete groups; they can be seen as a combinatorial interpretation of the folklore Corollary \ref{fmo}. By Chepoi's correspondence, a version of Corollary \ref{fmo} holds for CAT(0)-cubings. Corollary \ref{fmo} was obtained for proper actions of finitely generated groups on locally compact finite-dimensional cube complexes \cite[Proposition 3.12]{CS}, but Pierre-Emma\-nuel Caprace and Fr\'ed\'eric Haglund independently mentioned to me that the argument can be adapted to provide the general statement of Corollary \ref{fmo} (when $G$ is finitely generated, without extra-assumptions).

\item The notion of Property FW was briefly addressed in Sageev's question ``which classes of finitely generated groups admit a coforked subgroup?" \cite[Q1]{S97}; the underlying property would rather be FW', which is less convenient to deal with for infinitely generated groups. Anyway none of these properties was subsequently studied for its own right and the propositions following the definition (from Proposition \ref{fwfwp} to Proposition \ref{extfw}) can be termed as new. For instance the stability by extensions is trickier than we could expect.

\item Most equivalences in Proposition \ref{fwint} (i.e., those of the introduction, restated for topological groups) are classical or essentially classical, but the characterizations (\ref{zhilb2}) and (\ref{zx2}) are new (note that Proposition \ref{fwint}(\ref{zx2}) is not as obvious as it may seem at first sight, see Remark \ref{h1}).

\item A part of Section \ref{s_ab} is a direct proof of Haglund's result on non-distortion of cyclic subgroups. The application of Example \ref{sl2z2} is new. Proposition \ref{cez} is entirely new as well as its application Example \ref{sl2ti}. I am not comfortable to decide whether to call Proposition \ref{polyfw} a new result or an immediate application of Houghton's Theorem \ref{houghtonpo}; however, had this this observation been made earlier, the question of finding a group with the Haagerup Property with no proper action on a CAT(0) cube complex (or space with walls) would not have been considered as open between the late nineties and Haglund's paper \cite{Hag}. Section \ref{s_ab} also emphasizes new properties (uniform non-distortion, $\VD(\Gamma)$) which hopefully could prove relevant in other contexts.
\end{itemize}


\begin{thebibliography}{KM98b}

\bibitem[ABJLMS]{ABJLMS} G. Arzhantseva, M. Bridson, T. Januszkiewicz, I. Leary, A. Minasyan, J. Swiatkowski. Infinite groups with fixed point properties. Geom. Topol. 13 (2009) 1229--1263.  

\bibitem[AO]{AO} G. Arzhantseva, D. Osajda. Infinitely presented small cancellation groups have Haagerup property. J. Topol. Anal. 7(3) (2015), 389--406.

%ArXiv:math/1212.4866, 2012. 

\bibitem[AW]{AW} C. Akemann, M. Walter. Unbounded negative definite functions. Canad. J. Math. 33 (4) (1981) 862--871.


%\bibitem[Alpe--82]{Alpe--82} Roger {\sc Alperin}, \emph{Locally compact groups acting on trees}, Pacific J.\ Math.\ \textbf{100} (1982), 23--32.

\bibitem[Al]{Al} R. Alperin. Locally compact groups acting on trees and Property T. Monatsh. Math. 93, 261--265, 1982.

\bibitem[BC]{BC} A. Barnhill and I. Chatterji. Property (T) versus Property FW, 
Section 5 in {\em Guido's book of conjectures}. Collected by Indira Chatterji. Enseign. Math. (2) 54 (2008), no. 1--2, 3--189.

\bibitem[BH]{BH}
M. Bridson, A. Haefliger.
\newblock ``Metric spaces of non-positive curvature''.
\newblock Springer-Verlag, Berlin, 1999.

\bibitem[BHV]{BHV} B. Bekka, P. de la Harpe, A. Valette. Kazhdan's Property (T). New Math. Monographs 11, Cambridge Univ. Press, Cambridge 2008.

\bibitem[BJS]{BJS} M. Bozejko, T. Januszkiewicz, R. Spatzier. Infinite Coxeter groups do not have Kazhdan's property. J. Operator Theory 19 (1988), no. 1, 63--67. 

\bibitem[BLe]{BLe} G. Bergman and H. Lenstra. Subgroups close to normal subgroups. J. Algebra 127, 80--97 (1989).

\bibitem[BLi]{BL} Y. Benjamini, J. Lindenstrauss. Geometric nonlinear functional analysis. Vol. 1 American Mathematical Society Colloquium Publications 48, American Mathematical Society, Providence, RJ, 2000.

\bibitem[BPP]{BPP} L. Brailovsky, D. Pasechnik, C. Praeger. Subsets close to invariant subsets for group actions. Proc. Amer. Math. Soc. 123 (1995), no. 8, 2283--2295. 

\bibitem[BV]{BV} L. Bartholdi and B. Vir\'ag. Amenability via random walks. Duke Math. J. 130(1) (2005) 39--56.

\bibitem[BW]{BW} N. Bergeron, D. Wise. A boundary criterion for cubulation. 
 Amer. J. Math. 134(3) (2012), 843--859.

\bibitem[CaS]{CS} P-E. Caprace and M. Sageev. Rank rigidity for CAT(0) cube complexes.
Geom. Funct. Anal. 21(4) (2011) pp. 851--891.

\bibitem[CCJJV]{CCJJV}
P.-A. Cherix, M. Cowling, P. Jolissaint, P. Julg, and A.
  Valette.
\newblock Groups with the {H}aagerup property, Progress in Mathematics, vol. 197.
\newblock Birkh\"auser Verlag, Basel, 2001.

\bibitem[CDH]{CDH} I. Chatterji, C. Drutu and F. Haglund. Kazhdan and Haagerup properties from the median viewpoint. 
Adv. Math. 225 (2010), no. 2, 882--921.



\bibitem[Che]{Che} V. Chepoi. Graphs of some CAT(0) complexes.
Adv. in Appl. Math. 24, 125--179 (2000).

\bibitem[ChK]{CK} I. Chatterji, M. Kassabov. New examples of finitely presented groups with strong fixed point properties. J. Topol. Anal. 1(1), 1--12.

\bibitem[ChN]{ChN} I. Chatterji, G. Niblo. From wall spaces to CAT(0) cube complexes. Int. J. Algebra Comput. 15 (5-6) (2005) 875--885.

\bibitem[CMV]{CMV} P-A. Cherix, F. Martin, and A. Valette. Spaces with measured walls, the Haagerup property and property (T). Ergod. Th. Dynam. Sys. (2004), 24, 1895--1908.

\bibitem[Coh]{coh} D. E. Cohen. Groups of cohomological dimension one. Lecture Notes in Mathematics 245, Springer, Berlin, 1972.

\bibitem[CoK]{CoK} Y. Cornulier, A. Kar. On Property (FA) for wreath products. J. Group Theory 14 (2011), 165--174.

%\bibitem[Cor1]{CorJLT}Y. Cornulier. Kazhdan and Haagerup Properties in algebraic groups over local fields, J. Lie Theory 16 (2006) 67--82.

\bibitem[Cor2]{Cor} Y. Cornulier. Relative Kazhdan Property. Annales Sci. \'Ecole Normale Sup. 39 (2006), no. 2, 301--333.

\bibitem[Cor3]{Cor2} Y. Cornulier. Irreducible lattices, invariant means, and commensurating actions. Math. Z. 279(1) (2015) 1--26.

%Commensurating actions of lattices and invariant means. In preparation (2012).

\bibitem[CSVa]{CSVa} Y. Cornulier, Y. Stalder, A. Valette. Proper actions of lamplighter groups associated with free groups. C. R. Acad. Sci. Paris, Ser. I 346 (2008), no. 3-4, 173--176.

\bibitem[CSVb]{CSV} Y. Cornulier, Y. Stalder, A. Valette. Proper actions of wreath products and generalizations. Trans. Amer. Math. Soc. 364(6) (2012) 3159--3184.

\bibitem[DDMN]{DDMN} D. D'Angeli, A. Donno, M. Matter, T. Nagnibeda. Schreier graphs of the Basilica group. J. of Modern Dynamics 4(1) (2010) 139--177. 

\bibitem[Eym]{Eym} P. Eymard. Moyennes invariantes et repr\'esentations unitaires. Lecture Notes in Mathematics 300, Springer-Verlag (1972).

\bibitem[Far]{Far} D. S. Farley. Finiteness and CAT(0) properties of diagram groups, Topology 42 (2003), no. 5, 1065--1082.

\bibitem[Gau]{Gau} F. Gautero. A non-trivial example of a free-by-free group with the Haagerup property. Groups, Geometry and Dynamics 6(4) (2012) 677--699.

\bibitem[Ger]{Ger} V. Gerasimov. Fixed-point-free actions on cubings. Siberian Adv. Math. 8 (1998), no. 3, 36--58. [translation of Algebra, geometry, analysis and mathematical physics (Russian) (Novosibirsk, 1996), 91--109, 190, Izdat. Ross. Akad. Nauk Sibirsk. Otdel. Inst. Mat., Novosibirsk, 1997].  

\bibitem[Gri]{Gri} R. Grigorchuk. Degrees of growth of finitely generated groups, and the theory of invariant means. Izvestiya: Mathematics, 25(2): 259--300, 1985.

\bibitem[GrK]{GrK} R. Grigorchuk and Ya. Krylyuk. The spectral measure of the Markov operator related to 3-generated 2-group of intermediate growth and its Jacobi parameters.
Algebra and Discrete Mathematics.
Volume 13(2) (2012) 237--272.

\bibitem[Gro]{Gro} M. Gromov. Asymptotic invariants of infinite groups. Geometric group theory (ed. G. Niblo and M. Roller), London Mathematical Society Lecture Note Series 182 (Cambridge Univ. Press, 1993).

\bibitem[GrZ]{GrZ} R. Grigorchuk and A. Zuk. On a torsion-free weakly branch group defined by a three state automaton. Internat. J. Algebra Comput., 12(1-2) (2002) 223--246.

% International Conference on Geometric and Combinatorial Methods in Group Theory and Semigroup Theory (Lincoln, NE, 2000).

\bibitem[Hag]{Hag} F. Haglund. Isometries of CAT(0) cube complexes are semi-simple. Arxiv 0705.3386v1 (2007).

\bibitem[HaW1]{HW} F. Haglund, D. Wise. 
Special cube complexes.
Geom. Funct. Anal. 17 (2008), no. 5, 1551--1620.

\bibitem[HaW2]{HW2} F. Haglund, D. Wise. 
Coxeter groups are virtually special.
Adv. Math. 224 (2010), 1890--1903.


%\bibitem[Hjo]{Hjo} G. Hjorth. A new zero-dimensional Polish group. Unpublished note, 1998, \url{http://www.math.ucla.edu/~greg/zero.ps}.

\bibitem[HK]{HK} N. Higson, G. Kasparov. E-theory and KK-theory for groups which act properly and isometrically on Hilbert space. Invent. Math. 144 (2001), no. 1, 23--74.

\bibitem[HP]{HP}
Fr{\'e}d{\'e}ric Haglund and Fr{\'e}d{\'e}ric Paulin.
\newblock Simplicit\'e de groupes d'automorphismes d'espaces \`a courbure
  n\'egative.
\newblock In {\em The Epstein birthday schrift}, volume~1 of {\em Geom. Topol.
  Monogr.}, pages 181--248 (electronic). Geom. Topol. Publ., Coventry, 1998.

\bibitem[Ho74]{Ho74} C.H. Houghton. Ends of locally compact groups and their coset spaces. J. Australian Math. Soc., 17 (1974) 274­--284.

\bibitem[Ho78]{Ho78} C.H. Houghton. The first cohomology of a group with permutation module coefficients.
Archiv der Math. 31(1) (1978) 254--258.

\bibitem[Ho82]{Ho} C.H.Houghton, End invariants of polycyclic by finite group actions, J. Pure Appl. Alg. 25 (1982) 213--225.

\bibitem[HrWi]{HrWi} C. Hruska, D. Wise. Finiteness properties of cubulated groups. Compos. Math., Volume 150, Number 3, p.453--506 (2014).

\bibitem[HsW]{HsW} T. Hsu and D. Wise.
Cubulating graphs of free groups with cyclic edge groups.
Amer. J. Math. 132 (2010), no. 5, 1153--1188.

\bibitem[Hu]{Hu} B. Hughes. Local similarities and the Haagerup property. With an appendix by Daniel S. Farley. Groups Geom. Dyn. 3 (2009), no. 2, 299--315.


\bibitem[JM]{JM} K. Juschenko, N. Monod. Cantor systems, piecewise translations and simple amenable groups. Preprint (2012), to appear in Annals of Math. Annals of Math., Volume 178, Issue 2, 775--787, 2013.

\bibitem[KM]{KM} J. Kahn and V. Markovic. Immersing almost geodesic surfaces in a closed hyperbolic three manifold. Annals of Math (2) 175(3) (2012) 1127--1190.

\bibitem[LMR]{LMR} A. Lubotzky, Sh. Mozes, M. S. Raghunathan, The word and Riemannian metrics on lattices of semisimple groups, Publ. Math. IHES, No. 91 (2000), 5--53.

\bibitem[Luk]{Luk} J. Lukes. Integral representation theory: applications to convexity, Banach spaces and potential theory. Walter de Gruyter, 2010.

\bibitem[Ma1]{Ma1} H. Matui. Some remarks on topological full groups of Cantor minimal systems. Internat. J. Math., 17(2):231--251, 2006.

\bibitem[Mar]{Mar} G. Margulis. Discrete subgroups of semisimple Lie groups, Springer Ergebnisse (1991).

\bibitem[Mat]{Mat} H. Matsumura. Commutative Ring Theory. Cambridge Studies in Advanced Mathematics. Cambridge University Press, 1989.

\bibitem[Ne54]{Ne54} B. H. Neumann, Groups covered by finitely many cosets, Publ. Math. Debrecen 3 (1954), 227--242.


\bibitem[Ner]{Ner} Yu. A. Neretin. Notes on affine isometric actions of discrete groups. Analysis on infinite-dimensional Lie groups and algebras (Marseille, 1997), 274--320, World Sci. Publ., River Edge, NJ, 1998.

\bibitem[Neu]{Ne} P.M. Neumann. Permutation groups and close-knit families of sets. Arch. Math. (Basel) 67 (1996), no. 4, 265--274.

\bibitem[Nic]{Nic} B. Nica. Cubulating spaces with walls. Alg. Geom. Topol. 4 (2004) 297--309.

\bibitem[Nic2]{Nic2} B. Nica. Group actions on median spaces. Unpublished maniscript. ArXiv:0809.4099 (2008).

\bibitem[NRe]{NRe} G. Niblo and L. Reeves. Coxeter groups act on CAT(0) cube complexes. J. Group. Theory 6, pages 399--413, 2003.

\bibitem[NRo]{NRo} G. Niblo, M. Roller. Groups acting on cubes and Kazhdan's Property (T). Proc. Amer. Math. Soc. 126(3) (1998) 693--699.

\bibitem[NSSS]{NSSS} G. Niblo, M. Sageev, P. Scott, G. Swarup. 
Minimal cubings. 
Internat. J. Algebra Comput. 15 (2005), no. 2, 343--366. 

\bibitem[OW]{OW} Y. Ollivier and D. Wise. 
Cubulating random groups at density less than $1/6$.
Trans. Amer. Math. Soc. 363 (2011), no. 9, 4701--4733.

\bibitem[Pl]{Pl} V. P. Platonov. Lokal projective nilpotent radicals in topological groups. Dokl. Akad. Nauk BSSR 9 (1965), 573--577.

\bibitem[RS]{RS} G. Robertson, T. Steger Negative definite kernels and a dynamical characterization of property T. Ergodic Theory Dynam. Systems 18 (1998), no. 1, 247--253.

\bibitem[Sa95]{S95} M. Sageev, Ends of group pairs and non-positively curved cube complexes, Proc. London Math. Soc. 71 (1995), 585--617.

\bibitem[Sa97]{S97} M. Sageev, Codimension-1 subgroups and splittings of groups, J. of Algebra 189 (1997), 377--389.

\bibitem[Sc77]{Sco77} P. Scott. Ends of pairs of groups. Journal of Pure and Applied Algebra 11 (1977) 179--198.

\bibitem[Sc98]{Sco98} P. Scott, The Symmetry of Intersection Numbers in Group Theory, Geometry and Topology 2 (1998), 11--29, Correction (ibid) (1998).

\bibitem[Sch]{Sch} G. Schlichting. Operationen mit periodischen Stabilisatoren. Arch. Math. 34, 97--99 (1980).

\bibitem[Ser]{Ser} J-P. Serre. Arbres, amalgames, SL2. Astérisque 46, SMF, 1977.

\bibitem[Seg]{Seg} D. Segal. Polycyclic groups. Cambridge Univ Press, 1983.

%\bibitem[Sha]{Sha} Y. Shalom. Rigidity of commensurators and irreducible lattices. Invent. Math. 141, 1--54 (2000).

\bibitem[ShW]{ShW} Y. Shalom and G. Willis. Commensurated subgroups of arithmetic groups, totally disconnected groups and adelic rigidity. Geom. Funct. Anal. 23(5) (2013) 1631--1683.


\bibitem[Ste]{Ste} C. Stevens.
Connectedness of complete metric groups. 
Colloq. Math. 50 (1986), no. 2, 233--240. 

\bibitem[Tit]{Tit} J. Tits. Buildings of spherical type and finite BN-pairs. Lectures Notes in Math. 386, Springer-Verlag, New York, 1974.

\bibitem[Wi04]{Wi04} D. T. Wise.
Cubulating small cancellation groups. 
Geom. Funct. Anal. 14 (2004), no. 1, 150--214. 


\bibitem[Wit]{W} D. Witte Morris. Bounded generation of $\textnormal{SL}(n,A)$ (after D. Carter, G. Keller, and E. Paige). New York J. Math. 13 (2007) 383--421.

\bibitem[Wr]{Wr} N. Wright. Finite asymptotic dimension for CAT(0) cube complexes. Geom. Topol. 16 (2012) 527--554.

\end{thebibliography}
\end{document}